\newtheorem{theorem}{Theorem}[section]
\newtheorem{lemma}[theorem]{Lemma}
\newtheorem{proposition}[theorem]{Proposition}
\theoremstyle{definition}
\newtheorem{definition}[theorem]{Definition}
\newtheorem{assumption}[theorem]{Assumption}
\newtheorem{remark}[theorem]{Remark}
\numberwithin{equation}{section}
\acrodef{LDP}{Large Deviation Principle}
\newcommand{\E}{\mathbf{E}}	
\newcommand{\spt}{\mathrm{spt}} 
\newcommand{\be}{\begin{equation}}
\newcommand{\ee}{\end{equation}}
\newcommand{\bea} {\begin{array}{rl}}
\newcommand{\eea} {\end{array}}
\newcommand{\bepa}{\left\{ \begin{array}{l}}
\newcommand{\eepa} {\end{array}\right.}
\newcommand\norm[1]{\left\Vert#1\right\Vert}
\newcommand{\R}{\mathbb{R}} 
\newcommand{\N}{\mathbb{N}}
\newcommand{\op}{\mathrm{op}}
\newcommand{\ds}{\displaystyle}
\renewcommand{\hat}{\widehat}
\newcommand{\bd}{{\bf d}}
\newcommand{\ov}{\overline}
\newcommand{\bx}{\bm{x}}
\newcommand{\bxi}{\bm{\xi}}
\newcommand{\bX}{\bm{X}}
\newcommand{\bY}{\bm{Y}}
\newcommand{\bP}{\mathbb{P}}
\newcommand{\cP}{\mathcal{P}}
\newcommand{\eps}{\varepsilon}
\newcommand{\linf}{L^{\infty}}
\newcommand{\tr}{\text{tr}}
\newcommand{\cA}{\mathcal{A}}
\newcommand{\cl}{\text{CL}}
\newcommand{\ol}{\text{OL}}
\newcommand{\dis}{\text{disp}}
\newcommand{\bv}{\bm{v}}
\newcommand{\bZ}{\bm{Z}}
\newcommand{\bA}{\bm{A}}
\newcommand{\cF}{\mathcal{F}}
\newcommand{\cK}{\mathcal{K}}
\newcommand{\cL}{\mathcal{L}}
\newcommand{\bbF}{\mathbb{F}}
\newcommand{\bu}{\bm{u}}
\newcommand{\by}{\bm{y}}
\newcommand{\bz}{\bm{z}}
\newcommand{\mf}{\text{MF}}
\newcommand{\balpha}{\bm{\alpha}}
\newcommand{\ba}{\bm{a}}
\newcommand{\bp}{\bm{p}}
\newcommand{\diag}{D_{\text{diag}}}
\newcommand{\cS}{\mathcal{S}}
\newcommand{\cM}{\mathcal{M}}
\title[Displacement monotone MFGC]{Quantitative convergence for displacement monotone Mean Field Games of control}
\author[J.\ Jackson]{Joe Jackson}
\address{J.\ Jackson,
	Department of Mathematics, University of Chicago,
	\newline\hphantom{\quad \ \ J. Jackson}
	5734 S.~University Avenue, Chicago, Illinois 60637 USA
}
\email{jsjackson@uchicago.edu}
\author[A.R. M\'esz\'aros]{Alp\'ar R. M\'esz\'aros}  
\date{\today}
\address{Department of Mathematical Sciences, University of Durham, Durham DH1 3LE, United Kingdom}
\email{alpar.r.meszaros@durham.ac.uk}
\begin{document}
	\begin{abstract}

In this paper we establish quantitative convergence results for both open and closed-loop Nash equilibria of $N$-player stochastic differential games in the setting of {Mean Field Games of Controls} (MFGC), a class of models where interactions among agents occur through both states and controls. Our analysis covers a general class of {\it non-separable Hamiltonians} satisfying a {\it displacement monotonicity} condition, along with mild regularity and growth conditions at infinity. A major novelty of our work is the rigorous treatment of a nontrivial fixed-point problem on a space of measures, which arises naturally in the MFGC formulation. Unlike prior works that either restrict to separable Hamiltonians -- rendering the fixed-point map trivial -- or assume convergence or regularity properties of the fixed point map, we develop a detailed structural analysis of this equation and its $N$-player analogue. This leads to new regularity results for the fixed-point maps and, in turn, to quantitative convergence of open-loop equilibria. We further derive sharp a priori estimates for the $N$-player Nash system, enabling us to control the discrepancy between open and closed-loop strategies, and thus to conclude the convergence of closed-loop equilibria. Our framework also accommodates common noise in a natural way.

	\end{abstract}
	
	
	\maketitle
	{
		}
    \setcounter{tocdepth}{1}
  \tableofcontents

%
\section{Introduction and problem statement} 
 
This paper is dedicated to the proof of global in time quantitative convergence of both open and closed-loop Nash equilibria associated to a general class of {\it Mean Field Games of Controls} (MFGC), in the framework of displacement monotone data. 

\medskip 
 
The theory of Mean Field Games (MFGs for short) was introduced roughly two decades ago in \cite{HuaMalCai, LasLio:06-i, LasLio:06-ii, LasLio:07}. The general aim of this theory is to characterize limits of Nash equilibria of symmetric stochastic differential games, when the number of agents tends to infinity. Such $N$-player games are notoriously complex, and it is difficult to compute or analyze their Nash equilibria directly. In the mean field limit the curse of dimensionality can be lifted, and the equilibria for the limiting MFG can be described by the well-known ``MFG system": a coupled system of finite-dimensional PDEs consisting of a Kolmogorov--Fokker--Planck equation evolving forward in time and a Hamilton--Jacobi--Bellman equation evolving backwards in time.

While MFGs are in general easier to analyse than their $N$-player game counterparts, establishing a rigorous connection between the $N$-player games and their mean field limits is far from trivial. In particular, the question of quantitative or qualitative convergence of the Nash equilibria of $N$-player games towards their mean field limits turned out to be one of the most challenging ones in the field. The goal of this paper is to address this convergence problem in the setting of MFGC. In the remainder of the introduction, we first review the existing approaches to the convergence problem (mostly in the setting of standard MFGs, without interactoin through the controls), and then introduce the class of MFGC that we will study in the present paper, and state precisely our main results. We then turn to a comparison of our results with the (relatively sparse) existing literature on the convergence problem for MFGC, and give an overview of our proof techniques.

\subsection{The convergence problem in mean field games}

The convergence problem in mean field games is concerned with the rigorous connection between $N$-player games and their mean field counterparts, and in particular the asymptotic behavior of equilibria for the $N$-player game. When the limiting MFG has a unique Nash equilibrium, the goal is to show that the (closed and open-loop) equilibria of the $N$ player game converge, in a suitable sense, towards this Nash equilibrium. The existing approaches to the convergence problem in MFG theory can be roughly categorized into \textit{compactness methods}, analytical arguments based on the \textit{master equation}, and stochastic-analytic techniques based on (forward-backward) \textit{propagation of chaos}. There is of course significant overlap between these approaches, but we nevertheless find the categorization suggested above a useful organizational tool.
\newline \newline
\noindent 
\textbf{The master equation.}
Following P.-L. Lions' idea presented in his lectures at Coll\`ege de France, the work \cite{CarDelLasLio} established for the first time the quantitative convergence of closed-loop Nash equilibria via the master equation. This is a nonlocal and nonlinear PDE of hyperbolic type, set in general on the state space $\R^d\times\cP_2(\R^d)$, which encodes all the relevant information about the MFG. In \cite{CarDelLasLio}, the convergence result was obtained as a consequence of a well-posedness theory for the master equation (see also \cite{JakRut} in connection to these results). In particular, \cite{CarDelLasLio} provided a clear `recipe' for treating the convergence problem: if one can construct a smooth enough solution to the master equation, then its ``projections" nearly solve the $N$-player Nash system, and provided there is non-degenerate idiosyncratic noise, this leads to a quantitative convergence result. We note also that in some cases, even suitably defined weak solutions to the master equation can help to establish convergence rates; see for instance \cite{MouZha24}. The existence of smooth enough classical solutions to the master equation can be established either under suitable smallness assumptions (for instance, short time horizon; but in general without any structural assumptions on the Hamiltonian, nor the necessity of non-degenerate idiosyncratic noise), see for instance in \cite{GanSwi, Carmona2018, May, CarCirPor, AmbMes}, or globally, under additional {\it monotonicity} and structural conditions on the data. The two most widely used classes of monotonicity conditions available in the literature are the so-called {\it Lasry--Lions} (LL) monotonicity condition (proposed in \cite{LasLio:07}; this in general has to be paired with a {\it separability} condition on the underlying Hamiltonian and the presence of a non-degenerate idiosyncratic noise) on the one hand, and the {\it displacement} monotonicity (D-monotonicity) condition, stemming from the notion of displacement convexity in the theory of optimal transport (see \cite{McC} and also \cite{Par}), on the other hand. The global well-posedness of the master equation in the LL-monotone setting was established in \cite{ChaCriDel22, CarDelLasLio, Carmona2018}, and in the case of D-monotone data we refer to \cite{GanMes, GanMesMouZha, BanMesMou, BanMes:master}. While D-monotonicity has appeared in earlier works on MFG under different names (see for instance \cite{Ahu, ChaCriDel22}), it became evident only later that this could serve as an alternative sufficient condition (beside the LL-monotonicity) for the global well-posedness of master equations and to provide a uniqueness criterion for MFGs (see also \cite{MesMou}). We refer also to \cite{GraMes:23, GraMes:24} for recent developments on different kinds of monotonicity conditions for MFG in connection to master equations. 
\newline \newline 
\noindent 
\textbf{Stochastic-analytic methods.} Another approach to quantitative convergence was developed in \cite{LauTan, PosTan, JacTan}. In particular, the approach in these works is to characterize the mean field equilibrium via a McKean--Vlasov forward-backward stochastic differential equation (FBSDE), characterize the $N$-player equilibrium by an $N$-dimensional FBSDE system, and then establish the convergence of the $N$-dimensional FBSDE system towards the McKean--Vlasov FBSDE by a synchronous coupling argument. In particular, \cite{LauTan, PosTan} executed this approach in the case of sufficiently small time horizon or sufficiently ``dissipative" drift, while \cite{JacTan} treated the D-monotone case. We note that while the stochastic-analytic approach is perhaps best suited to the open-loop convergence problem (in particular \cite{LauTan} and \cite{JacTan} treat only the open-loop case), the recent preprints \cite{CirRed, CirJacRed} show that under appropriate conditions it is possible to quantify the gap between open and closed-loop equilibria of the $N$-player games, and thus extend convergence results from the open-loop to the closed-loop case.
\newline \newline 
\textbf{Compactness methods.} Outside of the monotone regimes, when the uniqueness of mean field Nash equilibria is missing, a qualitative answer to the convergence problem was obtained via compactness arguments (\cite{Fis, Lac:16}). The corresponding question for closed-loop equilibria required more sophisticated arguments, but was settled in a breakthrough work by Lacker (see \cite{Lac:20}; we also refer to \cite{LacLeF} for the extensions of these results). There have been many other extensions of this compactness approach, but we do not comment further because our focus here is on quantitative results in monotone settings.

\subsection{Mean Field Games of Controls} In order to better describe our main results and their place in the literature, we now introduce the class of models which we will study in this paper. In the vast majority of the literature on mean field games, the interaction between agents is modeled via their distributions in the state space. In applications, especially in economics or finance, agents often interact also through their controls. Such generalized MFGs in the literature are referred to as {\it Mean Field Games of Controls} (MFGC). Models of this type were introduced for the first time in the works \cite{GomVos:13, GomPatVos, GomVos:16} under the terminology of {\it extended mean field games}. For a probabilistic description of such models we refer to \cite{CarLac, ChaCriDel22, DjePosTan} and \cite[Section 4.6]{CD1}, and for an interesting model in finance, we mention \cite{CarLeh}. Such models have been studied intensively using also PDE or variational techniques, and for a non-exhaustive list of works we refer to \cite{Kob:22-cpde,Kob:22-nodea,AchKob, GraSir, SanShi, BonGiaPfe, GraMulPfe}.

\medskip

\noindent {\bf Problem setting for the $N$-player games and the corresponding MFGC.} We fix $d, N \in \N$, $T > 0$. We work on a fixed, filtered probability space $(\Omega, \bbF = (\cF_t)_{0 \leq t \leq T}, \bP)$ hosting independent Brownian motions $W$, $W^0$ and $(W^i)_{i = 1,\dots,N}$, which represent the common noise and idiosyncratic noises, respectively. We assume that $\cF_0$ is atomless. The data for our MFGC consists of functions 
\begin{align*}
    L : \R^d \times \R^d \times \cP_2(\R^{d} \times \R^d) \to \R, \quad G : \R^d \times \cP_2(\R^d) \to \R, 
\end{align*}
together with a non-negative constant $\sigma_0$. Here $\cP_2(\R^{d})$ and $\cP_2(\R^{d}\times\R^{d})$ stand for the set of Borel probability measures with finite second moment, supported on $\R^{d}$ and on $\R^{d}\times\R^{d}$, respectively. We will equip $\cP_2(\R^{d})$ and $\cP_2(\R^{d}\times\R^{d})$ with the classical 2-Wasserstein distance, denoted by $\bd_{2}$.

We refer to $L$ as the Lagrangian or running cost, and we refer to $G$ as the terminal cost. The constant $\sigma_0$ represents the intensity of the common noise. The typical input for $L$ is denoted by $(x,a,m)$, where $x$ stands for the position and $a$ for the control variable. We define a Hamiltonian $H : \R^d \times \R^d \times \cP_2(\R^d \times \R^d) \to \R$ from the Lagrangian $L$ via the formula 
\begin{align*}
    H(x,p,\mu) := \sup_{a \in \R^d} \left\{ - a \cdot p  - L(x,a,\mu) \right\}.
\end{align*}
\noindent 
\textbf{The open-loop $N$-player game.} In the open-loop version of the $N$-player game of interest, we fix an initial time $t_0 \in [0,T)$ and an initial condition $\bxi = (\xi^1,\dots,\xi^N)$ with $\xi^i \in L^2(\cF_{t_0} ; \R^d)$. We define the set of admissible controls (starting from time $t_0$) by
\begin{align*}
    \cA_{t_0}^{\ol} := \Big\{ \text{square-integrable, $\bbF$-progressive, $\R^d$-valued processes } \alpha = (\alpha_t)_{t_0 \leq t \leq T} \Big\}.
\end{align*}
Player $i$ chooses $\alpha^i \in \cA_{t_0}^{\ol}$, which determines the private state process $X^i$ via the dynamics
\begin{align} \label{dynamocs.openloop}
    dX_t^i = \alpha_t^i dt + \sqrt{2} dW_t^i + \sqrt{2\sigma_0} dW_t^0 , \quad X_{t_0}^i = \xi^i.
\end{align}
Thus $\xi^i$ represents the initial state of player $i$. Player $i$ seeks to minimize a cost function of the form 
\begin{align} \label{costs.openloop}
    J^{\ol,N,i}_{t_0,\bxi}(\balpha) = \E\bigg[ \int_{t_0}^T L\big(X_t^i,\alpha_t^i, m_{\bX_t,\balpha_t}^{N,-i} \big) dt + G(X_T^i,m_{\bX_T}^{N,-i}) \bigg], 
\end{align}
where for $\bx, \ba \in (\R^d)^N$, we use the notation 
\begin{align*}
    m_{\bx}^{N,-i} := \frac{1}{N-1} \sum_{j \neq i} \delta_{x^j}, \quad m_{\bx,\ba}^{N,-i} := \frac{1}{N-1} \sum_{j \neq i} \delta_{(x^j,a^j)}.
\end{align*}
Similarly, throughout the text we will be using the notations
\begin{align*}
    m_{\bx}^{N} := \frac{1}{N} \sum_{j = 1}^{N} \delta_{x^j}, \quad m_{\bx,\ba}^{N} := \frac{1}{N} \sum_{j = 1}^{N} \delta_{(x^j,a^j)}.
\end{align*}
\begin{definition}
An {\emph{open-loop Nash equilibrium}} (started from initial time $t_0$ and inital state $\bxi$) is an $N$-tuple of controls $\bm \alpha = (\alpha^1,\dots,\alpha^N) \in (\cA_{t_0}^{\ol})^N$ such that for each $i = 1,\dots,N$ and each $\beta \in \cA^{\ol}_{t_0}$, we have
\begin{align*}
    J^{\ol,N,i}_{t_0,\bxi}(\balpha) \leq J^{\ol,N,i}_{t_0, \bxi} \big(\balpha^{-i}, \beta \big), 
\end{align*}
where we use the notation 
\begin{align*}
    (\balpha^{-i}, \beta) := (\alpha^1,\dots,\alpha^{i-1},\beta,\alpha^{i+1},\dots,\alpha^N).
\end{align*}
\end{definition}
Suppose for a moment that the \textit{generalized Isaacs' condition} holds, which means that we can find a map $\bm a^N : (\R^d)^N \times (\R^d)^N \to (\R^d)^N $ with the property that 
\begin{align} 
    a^{N,i}(\bx, \bp) = \text{argmax}_{a\in\R^{d}} \left\{ a \mapsto  - a \cdot p^i - L(x^i,a,m_{\bx, \ba^N(\bx,\bp)}^{N,-i})  \right\}, 
\end{align}
or equivalently 
\begin{align} \label{def.ani}
    a^{N,i}(\bx,\bp) =  - D_p H\left(x^i,p^i, m_{\bx, \ba^N(\bx,\bp)}^{N,-i}\right).
\end{align}
Then, we can apply the Pontryagin maximum principle to characterize Nash equilibria in the open-loop formulation via the system of FBSDEs
\begin{align} \label{pontryaginNplayer}
    \begin{cases}
        dX_t^{N,i} = - D_p H \big(X_t^{N,i},Y_t^{N,i}, m_{\bX_t, \ba^N(\bX_t,\bY_t)}^{N,-i} \big) dt + \sqrt{2} dW_t^{i} + \sqrt{2\sigma_0} dW_t^0 , 
       \vspace{.1cm} \\
        dY_t^{N,i} = D_x H \big( X_t^{N,i}, Y_t^{N,i}, m_{\bX_t^N, \ba^N(\bX_t^N,\bY^N_t)}^{N,-i} \big) dt + \sum_{j = 0}^N Z_t^{N,i,j} dW_t^j
       \vspace{.1cm} \\
        X_{t_0}^{N,i} = \xi^i, \quad Y_T^{N,i} = D_x G(X_T^{N,i},m_{\bX_T^N}^{N,-i}),
    \end{cases}
\end{align}
where $i = 1,\dots,N$. A solution to \eqref{pontryaginNplayer} consists of a triple $\bX^N = (X^{N,i})_{i = 1,\dots,N}$, $\bY^N = (Y^{N,i})_{i = 1,\dots,N}$, $\bZ^N = (Z^{N,i,j})_{i,j = 1,\dots,N}$, such that for each $i$, $X^{N,i}$ and $Y^{N,i}$ are continuous, adapted and square-integrable $\R^d$-valued processes defined on $[t_0,T]$, and for each $i,j,$ $Z^{N,i,j}$ is an adapted, square integrable process taking values in $\R^{d \times d}$. More precisely, Pontryagin's maximum principle shows that any open-loop Nash equilibrium must take the form 
\begin{align} \label{ol.pont.connection}
    \alpha_t^i = - D_pH \big(X_t^i, Y_t^i, m_{\bX_t, \ba^N(\bX_t, \bY_t)}^{N,-i} \big) 
\end{align}
for some solution $(\bX,\bY,\bZ)$ to \eqref{pontryaginNplayer}. On the other hand, if 
\begin{align} \label{convexity}
    (x,a) \mapsto L(x,a, \mu) \text{ is convex for each } \mu, \quad x \mapsto G(x,m) \text{ is convex for each } m
\end{align}
then Pontryagin's principle also gives a sufficient condition, so that for any solution $(\bX,\bY,\bZ)$ to \eqref{pontryaginNplayer}, \eqref{ol.pont.connection} defines an open-loop Nash equilibrium. In particular, some of our standing assumptions (see for instance Assumption \ref{assump.monotone} when $C_{L,x} = 0$ and $C_G = 0$) would imply that \eqref{convexity} holds.

The $N$-player Pontryagin system \eqref{pontryaginNplayer} is ``decoupled" by the following PDE system: 
\begin{align} \label{pontryagin.pde}
\begin{cases} 
   \ds  - \partial_t v^{N,i} - \sum_{j = 1}^N \Delta_j v^{N,i} - \sigma_0 \sum_{j,k = 1}^N \tr\big( D_{x^jx^k} v^{N,i} \big)+ \sum_{j = 1}^N D_j v^{N,i} D_pH\big(x^j,v^j, m_{\bx, \ba^N(\bx, \bv)}^{N,-j} \big)
   \\
   \ds \qquad \qquad + D_x H\big(x^i, v^{N,i}, m_{\bx, \ba^N(\bx, \bv)}^{N,-i} \big) = 0, \quad  (t,\bx) \in [0,T] \times (\R^d)^N, 
    \vspace{.1cm} \\
   \ds  v^{N,i}(T,\bx) = D_x G(x^i,m_{\bx}^{N,-i}), \quad \bx \in (\R^d)^N.
\end{cases}
\end{align}
In \eqref{pontryagin.pde}, the unknowns are the maps $(v^{N,i})_{i = 1,\dots,N}$, with $v^{N,i} : [0,T] \times (\R^d)^N \to \R^d$. More precisely, if the map $\ba^N$ is Lipschitz continuous (which holds for large enough $N$ by Lemma \ref{lem.ani} below), then \eqref{pontryagin.pde} has a unique classical solution which is globally Lipschitz continuous in space, and for any initial conditions $(t_0,\bxi_0)$, \eqref{pontryaginNplayer} has a unique solution $(\bX^N,\bY^N,\bZ^N)$, which satisfies
\begin{align*}
    Y_t^{N,i} = v^{N,i}(t,\bX_t), \quad Z_t^{N,i,j} = \sqrt{2} D_j v^{N,i}(t,\bX_t), \quad Z_t^{N,i,0} = \sqrt{2\sigma_0} \sum_{j = 1}^N D_jv^{N,i}(t,\bX_t), \quad i = 1,\dots,N.
\end{align*}
This is a consequence of the so-called ``four step scheme" for solving FBSDE, see for instance \cite{Delarue2002} or \cite{ma1994}.
\newline \newline \noindent 
\textbf{The closed-loop $N$-player game.} In the closed-loop version of the $N$-player game, we again fix an initial time $t_0 \in [0,T)$ and initial conditions $\bxi = (\xi^1,\dots,\xi^N)$, with $\xi^i \in L^2(\cF_{t_0}; \R^d)$. This time, player $i$ chooses a feedback $\alpha(t,\bx)$ from the set of closed-loop controls 
\begin{align*}
    \cA_{t_0}^{\cl} = \Big\{ \alpha : [t_0,T] \times (\R^d)^N \to \R^d : \alpha \text{ is measurable and } |\alpha(t,\bx)| \lesssim 1 + |x| \Big\}, 
\end{align*}
where the notation $a \lesssim b$ means there exists a constant $C>0$ such that $a \leq Cb$. Player $i$ chooses $\alpha^i \in \cA_{t_0}^{\cl}$, and then $\balpha = (\alpha^1,\dots,\alpha^N)$ determines the state processes $\bX = (X^1,\dots,X^N)$ via 
\begin{align} \label{dynamics.closedloop}
    dX_t^i = \alpha^i(t,\bX_t) dt + \sqrt{2} dW_t^i + \sqrt{2\sigma_0} dW_t^0, \quad X_{t_0}^i = \xi^i.
\end{align}
Player $i$ seeks to minimize the cost 
\begin{align} \label{costs.closedloop}
    J^{\cl,N,i}_{t_0,\bxi}(\balpha) = \E\bigg[ \int_{t_0}^T L\big(X_t^i,\alpha^i(t,\bX_t), m_{\bX_t,\balpha(t,\bX_t)}^{N,-i}\big)dt + G(X_T^i,m_{\bX_T}^{N,-i}) \bigg], 
\end{align}
where $\bX$ is determined from $\balpha$ via the formula \eqref{dynamics.closedloop}.
A closed-loop Nash equlibirum (started from initial time $t_0$ and initial state $\bxi$) is a tuple $\balpha = (\alpha^1,\dots,\alpha^N) \in (\cA_{t_0}^{\cl})^N$ such that for each $i = 1,\dots,N$ and each $\beta \in \cA_{t_0}^{\cl}$, we have 
\begin{align*}
    J^{\cl,N,i}_{t_0,\bxi}(\balpha) \leq J^{\cl,N,i}_{t_0,\bxi} \big(\balpha^{-i},\beta \big).
\end{align*}
Supposing again that the generalized Isaacs' condition holds, i.e. that we can find a map $\ba^N : (\R^d)^N \times (\R^d)^N \to (\R^d)^N$ satisfying \eqref{def.ani}, closed-loop Nash equilibria are characterized by the following Nash system: 
\begin{align} \label{nashsystem}
    \begin{cases}
       \ds  - \partial_t u^{N,i} - \sum_{j = 1}^N \Delta_{j} u^{N,i} - \sigma_{0}\sum_{j,k = 1}^N \tr\big(D_{jk} u^{N,i} \big) + H\big(x^i, D_{i} u^{N,i}, m_{\bx, \ba^N(\bx, \diag \bu^N)}^{N,-i} \big)
       \\
       \ds \qquad + \sum_{j \neq i} D_p H\big(x^j, D_j u^j, m_{\bx, \ba^N(\bx, \diag \bu)}^{N,-j} \big) \cdot D_j u^{N,i} = 0, 
      \quad  (t,\bx) \in [0,T] \times (\R^d)^N, 
       \\
       \ds u^{N,i}(T,\bx) = G(x^i,m_{\bx}^{N,-i}), \quad \bx \in (\R^d)^N.
    \end{cases}
\end{align}
Here the unknown is a collection $(u^{N,i})_{i = 1,\dots,N}$ with $u^{N,i} : [0,T] \times (\R^d)^N \to \R$, and we are using the shorthand 
\begin{align}
    \diag \bu^N := (D_1u^{N,1},\dots,D_Nu^{N,N}) \in (\R^d)^N.
\end{align}
When we say that closed-loop Nash equilibria are characterezed by \eqref{nashsystem}, we mean that under appropriate technical conditions, there is a unique closed-loop Nash equlibria (for any initial time $t_0$ and initial conditions $\bxi$), and this Nash equilibria is given by the formula 
\begin{align} \label{cl.nash.connection}
    \alpha^{N,i}(t,\bx) = - D_p H(x^i, D_iu^{N,i}, m_{\ba^N(\diag \bu^N(t,\bx), \bx)}^{N,-i}) = a^{N,i}(\bx, \diag \bu^N(t,\bx)). 
\end{align}
Unfortunately, the well-posedness of \eqref{nashsystem} does not follow immediately from the literature unless $G$ is globally bounded and $H$ is bounded on sets of the form $\R^d \times B_R \times \cP_2(\R^d \times \R^d)$. In particular, we are not aware of any results which obtain even local in time existence results for \eqref{nashsystem} when $H$ and $G$ can grown quadratically in $x$ (which is permitted under our standing assumptions, see Assumption \ref{assump.regularity}). While it should be possible to use the a-priori estimates obtained in this paper to obtain a well-posedness result for \eqref{nashsystem} when $N$ is large enough, we choose to focus here on the convergence problem, and simply assume the existence of a sufficiently nice solution to \eqref{nashsystem}. In particular, we make the following definition:
\begin{definition} \label{def.admissible}
    An admissible solution to \eqref{nashsystem} is a classical solution $\bu^N = (u^{N,1},\dots,u^{N,N})$ which is \textit{exchangeable}, in that 
    \begin{align*}
        u^{N,i}(t,\bx) = u^{N,1}\big(t, (x^i, \bx^{-i}) \big), \quad i = 1,\dots,N, \quad \bx \in (\R^d)^N, 
    \end{align*}
(here we use the notation $(x^i, \bx^{-i})=(x^{i},x^{1},\dots,x^{i-1},x^{i+1},\dots,x^{N})$) and in addition
    \begin{align*}
        u^{N,1}(t,\bx) = u^{N,1}(t,x^1, x^{\sigma(2)},\dots,x^{\sigma(N)}),
    \end{align*}
    for each permutation $\sigma$ of $\{2,\dots,N\}$, and one has bounded second spatial derivatives, i.e.
    \begin{align*}
        \| D_{kj} u^{N,i} \|_{\infty} < \infty, \quad \forall \,\, i,j,k = 1,\dots,N.
    \end{align*}
    An admissible closed-loop Nash equilibrium is a closed-loop Nash equilibrium which takes the form \eqref{cl.nash.connection} for some admissible solution $\bu^N= (u^{N,1},\dots,u^{N,N})$ to \eqref{nashsystem}.
\end{definition}
\begin{remark}
    The exchangeability condition in Definition \ref{def.admissible} is automatic if we have a uniqueness result for \eqref{nashsystem}, but again this is not immediate when $G$ and $H$ are allowed to have quadratic growth in $x$, so we make this part of the assumptions. 
\end{remark}
\noindent
\textbf{The mean field game.} To formulate the mean field game, we start with an initial distribution $m_0 \in \cP_2(\R^d)$. We denote by $\bbF^0 = (\cF_t^0)_{0 \leq t \leq T}$ the filtration generated by the common noise $W^0$. In the mean field game, a continuous, $\cP_2(\R^d \times \R^d)$-valued, $\bbF^0$-adapted process $\mu$ is fixed. It is assumed that $\mu_0$ is deterministic, and $\mu_0^x = m_0$, where for any measure $\mu$ on $\R^d \times \R^d$, we use $\mu^x$ to denote its first marginal and $\mu^a$ to denote its second marginal. A representative player chooses a control $\alpha$ which determines their state process $X$ via the dynamics
\begin{align} \label{mfdynamics}
    dX_t = \alpha_t dt + \sqrt{2} dW_t + \sqrt{2\sigma_0} dW_t^0, \quad 0 \leq t \leq T, \quad X_0 = \xi \sim m_0, \quad \xi \perp W^0, 
\end{align}
where the last condition says that $\xi$ and $W^0$ are independent.
The player's goal is to minimize the cost functional 
\begin{align} \label{mfcost}
    J_{\mu}(\alpha) = \E\bigg[ \int_0^T L(X_t,\alpha_t,\mu_t) dt + G(X_T,\mu_T^x) \bigg].
\end{align}
A mean field equilibrium (MFE) is a measure flow $\mu$ such that for some minimizer $\alpha$ of the cost \eqref{mfcost}, we have $\cL^0(X_t,\alpha_t) = \mu_t$ for a.e. $t$, where $X$ denotes the corresponding state process determined by \eqref{mfdynamics}, and $\cL^0$ denotes the conditional law with respect to $\bbF^0$, i.e. $\cL^0(X_t,\alpha_t) = \cL(X_t, \alpha_t | \cF_T^0) = \cL(X_t, \alpha_t | \cF_t^0)$. With $\mu$ fixed, the representative player faces a standard stochastic control problem, which has a unique minimum that can be characterized by the stochastic maximum principle. In particular, the optimizer $\alpha^*$ is given by 
\begin{align*}
    \alpha^{*}_t = - D_pH(X_t,Y_t,\mu_t), 
\end{align*}
where $(X,Y,Z)$ satisfies 
\begin{align} \label{pontryaginmufixed}
    \begin{cases}
      \ds   dX_t = - D_p H(X_t,Y_t,\mu_t) dt + \sqrt{2} dW_t + \sqrt{2\sigma_0} dW_t^0, 
      \vspace{.1cm}  \\
       \ds  dY_t = D_x H \big(X_t,Y_t,\mu_t \big) dt + Z_t dW_t + Z_t^0 dW_t^0, 
       \vspace{.1cm}  \\
        \ds X_0 = \xi \sim m_0, \quad Y_T = D_x G(X_T,\mu_T^x).
    \end{cases}
\end{align}
Thus, the fixed point condition becomes
\begin{align} \label{fixedpoint}
    \cL^0\big(X_t, - D_p H(X_t,Y_t, \mu_t) \big) = \mu_t.
\end{align}
Now, suppose that for each $\nu \in \cP_2(\R^d \times \R^d)$, there is a unique fixed-point $\Phi(\nu)$ of the map 
\begin{align} \label{def.Phi}
    \mu \mapsto \Big( (x,p) \mapsto \big(x, - D_p H(x, p, \mu)\big) \Big)_{\#} \nu = \mu.
\end{align}
Then, \eqref{fixedpoint} can be rewritten 
\begin{align} \label{mu.char}
    \mu_t = \Phi\big( \cL^0(X_t,Y_t) \big), 
\end{align}
and so in this case MFE are characterized by the McKean--Vlasov FBSDE 
\begin{align} \label{pontryagin}
    \begin{cases}
       \ds  dX_t = - D_p H\Big(X_t,Y_t,\Phi\big(\cL^0(X_t,Y_t) \big) \Big) dt + \sqrt{2} dW_t + \sqrt{2\sigma_0} dW_t^0, 
       \vspace{.1cm}  \\
        \ds dY_t = D_x H \Big(X_t,Y_t, \Phi(\cL^0(X_t,Y_t))  \Big) dt + Z_t dW_t, \vspace{.1cm} 
        \\
        \ds X_0 = \xi \sim m_0, \quad Y_T = D_x G(X_T,\cL^0(X_T)).
    \end{cases}
\end{align}
More precisely, if a MFE equilibrium exists, then it must satisfy \eqref{mu.char} for some solution $(X,Y,Z)$ to \eqref{pontryagin}. On the other hand, if \eqref{convexity} holds, then Pontryagin's maximum principle becomes a sufficient condition, and so if $(X,Y,Z)$ is any solution to \eqref{pontryagin}, then \eqref{mu.char} defines a Nash equilibrium.

\begin{remark}
\begin{enumerate}
\item[(i)] We note that for the mean-field game, there is no difference between open and closed-loop formulations, because with $\mu$ fixed, the representative player faces a standard stochastic control problem, which under mild regularity conditions is not sensitive to the choice of admissible controls. We choose an open-loop formulation to make the application of the stochastic maximum principle more transparent.
\item[(ii)] We would like to emphasize that the fixed point map $\Phi:\cP_2(\R^d\times\R^d)\to \cP_2(\R^d\times\R^d)$ is an unknown function, and to determine this and study its properties will be part of the problem. 
\end{enumerate}
\end{remark}

\medskip

\subsection{Description of our main results}
In this subsection, we describe our main results. We will postopone the statement of our main hypotheses, Assumptions \ref{assump.regularity} (on the regularity and growth conditions of the data) and \ref{assump.monotone} (on the displacement monotonicity of the data) until Section \ref{sec:not}.
\newline \newline \noindent 
\textbf{Analysis of the fixed-point equations.} Our first preparatory results concern the fine properties of the fixed point maps \eqref{def.Phi} and \eqref{def.ani}. These results will turn out to be a crucial starting point in our analysis, as these fixed point maps appear explicitly in the underlying FBSDE systems describing the equilibria.

First, the standing D-monotonicity assumptions on the data (see Assumption \ref{assump.monotone}) will readily imply the existence, uniqueness and $\bd_2$-Lipschitz continuity of the fixed point map $\Phi$. Second, under the same standing assumptions on the data, for $N$ large enough, we show similar properties for the `finite dimensional' fixed point maps $\ba^{N}$ (when the Lipschitz continuity is understood in the appropriate sense). Finally, again for $N$ large enough, we show that the fixed point map $\Phi$ is close to $\ba^{N}$, in a suitable sense, in a quantified manner. We summarize these results in the following proposition, which is proved in a sequence of lemmas in Section \ref{sec.fixedpoint}.

\begin{proposition}[Lemmas \ref{lem.Phi}, \ref{lem.ani}, and \ref{lem:FPmap_disc}] \label{prop.fixedpointstuff}
    Let Assumptions \ref{assump.regularity} and \ref{assump.monotone} hold. Then the fixed-point equation \eqref{def.Phi} uniquely defines a map 
    \begin{align*}
        \Phi : \cP_2(\R^d \times \R^d) \to \cP_2(\R^d \times \R^d), 
    \end{align*}
    which is Lipschitz continuous. In addition, for all $N$ large enough, the equation \eqref{def.ani} uniquely defines a map
    \begin{align*}
        \ba^N : (\R^d)^N \times (\R^d)^N \to (\R^d)^N, 
    \end{align*}
    which is Lipschitz uniformly in $N$ in the sense that there is a constant $C$ independent of $N$ such that for all $N$ large enough, 
    \begin{align*}
        \sum_{i = 1}^N |a^{N,i}(\bx,\bp) - a^{N,i}(\ov{\bx},\ov{\bp})|^2 \leq C \sum_{i = 1}^N \Big( |x^i - \ov{x}^i|^2 + |p^i - \ov{p}^i|^2 \Big)
    \end{align*}
    for all $\bx,\bp, \ov{\bx}, \ov{\bp} \in (\R^d)^N$.
    Finally, $\ba^N$ converges to $\Phi$ in the sense that there is a constant $C$ independent of $N$ such that
    \begin{align*}
         \bd_2^{2} \Big( \Phi(m_{\bx,\bp}^N), {m}^N_{\bx, \ba^N(\bx,\bp)} \Big) \leq \frac{C}{N^2} \sum_{i = 1}^N \big(|x^i|^2 + |p^i|^2 \big)
    \end{align*} 
for all $\bx, \bp \in (\R^d)^N$ and all $N$ large enough. 
\end{proposition}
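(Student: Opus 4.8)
\textbf{Strategy and the map $\Phi$ (Lemma \ref{lem.Phi}).} I would deduce all three assertions from one principle: Assumption \ref{assump.monotone} (together with the uniform convexity of $L$ in $a$ from Assumption \ref{assump.regularity}) is exactly the statement that the relevant fixed-point operators are \emph{strongly monotone}, after which existence, uniqueness and Lipschitz bounds are routine --- Browder--Minty/Zarantonello in $L^2$, and its Brouwer-degree counterpart in $(\R^d)^N$. Concretely, for $\Phi$: fix $\nu\in\cP_2(\R^d\times\R^d)$, realize it as $\cL(X,P)$ on the atomless space $(\Omega,\cF_0,\bP)$, and note that since $-D_pH(x,p,\mu)$ is the unique maximizer in the definition of $H$, \eqref{def.Phi} is equivalent to finding $A\in L^2(\cF_0;\R^d)$ with $D_aL(X,A,\cL(X,A))=-P$, i.e.\ a zero of $\cG\colon L^2\to L^2$, $\cG(A):=D_aL(X,A,\cL(X,A))+P$. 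The growth bounds make $\cG$ well-defined and continuous, Assumption \ref{assump.monotone} makes it $\lambda$-strongly monotone, so it has a unique zero $A^*$; comparing two realizations of $\nu$ (again via strong monotonicity) shows $\cL(X,A^*)$ depends only on $\nu$, which defines $\Phi(\nu)$. For the Lipschitz estimate I would realize $\nu_1,\nu_2$ through an optimal coupling, start from $0=\langle\cG_1(A_1)-\cG_2(A_2),A_1-A_2\rangle_{L^2}$, absorb $\lambda\|A_1-A_2\|^2$ by strong monotonicity of $\cG_1$, and bound the remaining term $\langle\cG_1(A_2)-\cG_2(A_2),A_1-A_2\rangle_{L^2}$ using the Lipschitz dependence of $D_aL$ on $(x,\mu)$ (with $\bd_2(\cL(X_1,\cdot),\cL(X_2,\cdot))\le\|X_1-X_2\|_{L^2}$); this yields $\|A_1-A_2\|\le C\bd_2(\nu_1,\nu_2)$, hence $\bd_2(\Phi(\nu_1),\Phi(\nu_2))\le C\bd_2(\nu_1,\nu_2)$ because $(X_i,A_i)$ realizes $\Phi(\nu_i)$.

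\textbf{The map $\ba^N$ (Lemma \ref{lem.ani}).} On $(\R^d)^N$ I would work with the normalized inner product $\langle\ba,\bm b\rangle_N:=\frac1N\sum_i a^i\cdot b^i$, $\|\ba\|_N^2:=\langle\ba,\ba\rangle_N$, and rephrase \eqref{def.ani} as: $\ba^N(\bx,\bp)$ is a zero of $\Psi^N(\ba):=\big(D_aL(x^i,a^i,m_{\bx,\ba}^{N,-i})+p^i\big)_{i}$. Split $\Psi^N=\Psi^N_0+R^N$, where $\Psi^N_0(\ba):=\big(D_aL(x^i,a^i,m_{\bx,\ba}^{N})+p^i\big)_i$ uses the \emph{full} empirical measure and $R^N(\ba)_i:=D_aL(x^i,a^i,m_{\bx,\ba}^{N,-i})-D_aL(x^i,a^i,m_{\bx,\ba}^{N})$. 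Applying Assumption \ref{assump.monotone} to $m_{\bx,\ba}^N$ and $m_{\bx,\bm b}^N$ coupled index-by-index gives $\langle\Psi^N_0(\ba)-\Psi^N_0(\bm b),\ba-\bm b\rangle_N\ge\lambda\|\ba-\bm b\|_N^2$, uniformly in $N$ and $\bx$. The identity $m_{\bx,\ba}^N=\frac{N-1}{N}m_{\bx,\ba}^{N,-i}+\frac1N\delta_{(x^i,a^i)}$ gives $\bd_2\big(m_{\bx,\ba}^{N,-i},m_{\bx,\ba}^N\big)\le\frac{C}{\sqrt N}\big(|x^i|+|a^i|+\|\bx\|_N+\|\ba\|_N\big)$, hence $\|R^N(\ba)\|_N\le\frac{C}{\sqrt N}\big(\|\bx\|_N+\|\ba\|_N\big)$; writing the same measure differences through the linear derivative $\frac{\delta}{\delta m}D_aL$ and using its boundedness and Lipschitz continuity (Assumption \ref{assump.regularity}) upgrades this to $\|R^N(\ba)-R^N(\bm b)\|_N\le\eps_N\|\ba-\bm b\|_N$ with $\eps_N\to0$. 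So for $N$ large $\Psi^N$ is coercive and $\frac\lambda2$-strongly monotone, giving a unique zero $\ba^N(\bx,\bp)$ (existence from coercivity plus a Brouwer/degree argument, uniqueness from strong monotonicity). For the Lipschitz bound, write $\ba=\ba^N(\bx,\bp)$, $\ov\ba=\ba^N(\ov\bx,\ov\bp)$; strong monotonicity bounds $\|\ba-\ov\ba\|_N$ by $\frac1\lambda$ times the distance between the operators $\Psi^N$ for data $(\bx,\bp)$ and $(\ov\bx,\ov\bp)$ evaluated at $\ov\ba$, which is $\le C(\|\bx-\ov\bx\|_N+\|\bp-\ov\bp\|_N)$ by the Lipschitz dependence of $D_aL$ on $x$ and $\mu$ (using $\bd_2(m_{\bx,\ov\ba}^{N,-i},m_{\ov\bx,\ov\ba}^{N,-i})\le C\|\bx-\ov\bx\|_N$); multiplying through by $N$ gives exactly the claimed uniform Lipschitz estimate.

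\textbf{Convergence $\ba^N\to\Phi$ (Lemma \ref{lem:FPmap_disc}).} Fix $\bx,\bp$ and set $\nu:=m_{\bx,\bp}^N$. By the first step $\Phi(\nu)$ exists, and since $\nu$ is carried by the points $\{(x^i,p^i)\}_i$, $\Phi(\nu)$ must equal $m_{\bx,\bar\ba}^N$ for some $\bar\ba\in(\R^d)^N$; substituting back into \eqref{def.Phi} shows $\bar\ba$ solves $\Psi^N_0(\bar\ba)=0$ (unique by strong monotonicity). Coupling index to index, $\bd_2^2\big(\Phi(\nu),m_{\bx,\ba^N(\bx,\bp)}^N\big)\le\|\bar\ba-\ba^N(\bx,\bp)\|_N^2$; and since $\Psi^N_0(\bar\ba)=0$ while $\Psi^N_0(\ba^N)=-R^N(\ba^N)$, strong monotonicity of $\Psi^N_0$ gives $\|\bar\ba-\ba^N\|_N\le\frac1\lambda\|R^N(\ba^N)\|_N$. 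The crude bound on $\|R^N\|_N$ from the previous step is not sharp enough (it leaves a spurious additive $O(1/N)$ stemming from the $O(1)$ constant part of $\ba^N$), so I would compare with the reference vector $\ba^\star:=\ba^N(\mathbf 0,\mathbf 0)$: by symmetry of its defining system and uniqueness, all components of $\ba^\star$ coincide with a single $a^\star$, and $\|\ba^\star\|_N=|a^\star|\le C$ uniformly in $N$ (again by strong monotonicity). Because the control components of $\ba^\star$ agree, $m_{\bx,\ba^\star}^{N,-i}$ and $m_{\bx,\ba^\star}^N$ differ only through the spread of the $x^j$, so $\|R^N(\ba^\star)\|_N\le\frac{C}{\sqrt N}\|\bx\|_N$; combining with the $\eps_N$-Lipschitz bound on $R^N$ and the Lipschitz bound on $\ba^N$ (so that $\|\ba^N(\bx,\bp)-\ba^\star\|_N\le C(\|\bx\|_N+\|\bp\|_N)$) gives $\|R^N(\ba^N)\|_N\le\frac{C}{\sqrt N}(\|\bx\|_N+\|\bp\|_N)$, and therefore $\bd_2^2\big(\Phi(\nu),m_{\bx,\ba^N}^N\big)\le\frac{C}{N}(\|\bx\|_N^2+\|\bp\|_N^2)=\frac{C}{N^2}\sum_i(|x^i|^2+|p^i|^2)$.

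\textbf{Expected main obstacle.} The strong-monotonicity bookkeeping is routine. The genuinely delicate point is the discrepancy between the ``$-i$'' measure $m_{\bx,\ba}^{N,-i}$ and the full empirical measure in the $\ba^N$ step: one must show the correction $R^N$ is not merely pointwise small but \emph{Lipschitz with a vanishing constant}, which forces the use of second-order (linear-derivative) regularity of $L$ in its measure argument and careful tracking of the $1/N$ and $1/\sqrt N$ factors --- together with the matching refinement in the convergence step, where the crude estimate for $\|R^N(\ba^N)\|_N$ must be recentered at the symmetric reference vector $\ba^N(\mathbf 0,\mathbf 0)$ in order to cancel the spurious $O(1/N)$ term and recover the sharp $N^{-2}$ rate.
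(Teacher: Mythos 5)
Your proposal is correct, and it rests on the same underlying mechanism as the paper's proof (Lemmas \ref{lem.Phi}, \ref{lem.ani} and \ref{lem:FPmap_disc}): displacement monotonicity specialized to equal $X$-marginals, combined with the Legendre identity $D_aL(x,-D_pH(x,p,\mu),\mu)=-p$, makes the relevant control maps strongly monotone. The packaging, however, is genuinely different and worth recording. For existence the paper uses Schauder's theorem on compactly supported measures followed by a density/extension step (Lemmas \ref{lem.sublin} and \ref{lem.schauder}) and Brouwer's theorem for $\ba^N$ via the sublinear growth of $D_pH$, whereas you get existence and uniqueness in one stroke from Zarantonello/Browder--Minty for Lipschitz strongly monotone operators; this avoids both the bounded-support restriction and the extension argument. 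Your decomposition $\Psi^N=\Psi^N_0+R^N$ with $\|R^N(\ba)-R^N(\bm b)\|_N\le (C/N)\|\ba-\bm b\|_N$ is exactly the content of the paper's Lemma \ref{lem.dispmonotonefiniteN}, expressed at the level of operators rather than of the monotonicity inequality. The most substantive divergence is in the last step: the paper couples the two random controls directly, telescopes $D_aL$ terms, and then needs the linear-growth bound \eqref{anlingrowth} on $\ba^N$ obtained by a bootstrap at the origin; you instead observe that $\Phi(m^N_{\bx,\bp})=m^N_{\bx,\bar\ba}$ with $\bar\ba$ the unique zero of $\Psi^N_0$, and compare the zeros of $\Psi^N_0$ and $\Psi^N$. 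Your recentering of $R^N$ at the symmetric point $\ba^N(\mathbf 0,\mathbf 0)$ is a nice touch: it lands exactly on the stated $N^{-2}\sum_i(|x^i|^2+|p^i|^2)$ bound with no additive $O(1/N)$ remainder, consistent with the fact that the two measures coincide when $\bx=\bp=\mathbf 0$, whereas \eqref{anlingrowth} read literally without an additive constant would force $\ba^N(\mathbf 0,\mathbf 0)=\mathbf 0$; so on this point your route is arguably the cleaner way to obtain the estimate exactly as stated (the extra $O(1/N)$ would in any case be harmless for the downstream convergence theorems).
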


\noindent {\bf Convergence of open-loop equilibria.} Our first set of main results deal with the quantitative convergence of the {\it open-loop Nash equilibria}. Having established the previous regularity results of the fixed point maps, since the FBSDE systems involve the derivatives of the Hamiltonian/Lagrangian composed with the fixed point maps, the convergence results for the open-loop equilibria follow from more or less standard arguments, used in general to prove the stability of FBSDE systems. Interestingly, these convergence results do not require more regularity than Lipschitz continuity of the fixed point maps in the $\bd_{2}$-sense.

To describe in details these results, let us now introduce some notation. 
Given $m_0 \in \cP_2(\R^d)$, we fix a sequence $(\xi^i)_{i \in \N \cup \{0\}}$ of i.i.d., $\cF_0$-measurable random vectors with common law $m_0$. Given an open-loop equilibrium $\balpha^{\ol,N} = (\alpha^{\ol,N,i})_{i = 1,...,N}$ started from time $0$ with initial conditions $\xi^i$, we will denote by $\bX^{\ol,N} = (X^{\ol,N,1},\dots,X^{\ol,N,N})$ the corresponding trajectories. In particular, by Pontryagin's maximum principle, this means that
\begin{align*}
    \alpha_t^{\ol,N,i} = - D_p H \big(X_t^{\ol,N,i},Y_t^{\ol, N,i}, m_{\bX_t^{\ol,N}, \ba^N(\bX_t^{\ol,N},\bY^{\ol, N}_t)}^{N,-i} \big) = a^{N,i} \big( \bX_t^{\ol,N}, \bY_t^{\ol,N} \big), 
\end{align*}
where $\bY^{\ol,N} = (Y^{\ol,N,1},\dots,Y^{\ol,N,N})$ and $\bZ^{\ol,N} = (Z^{\ol, N,i,j})_{i,j = 1,\dots,N}$ are the unique solution of
\begin{align} \label{pontryaginNplayer2}
    \begin{cases}
        dX_t^{\ol,N,i} = - D_p H \big(X_t^{\ol,N,i},Y_t^{\ol, N,i}, m_{\bX_t^{\ol,N}, \ba^N(\bX_t^{\ol,N},\bY^{\ol, ,N}_t)}^{N,-i} \big) dt + \sqrt{2} dW_t^i + \sqrt{2} dW_t^0, 
       \vspace{.1cm} \\
        dY_t^{\ol,N,i} = D_xH \big( X_t^{\ol,N,i}, \bY_t^{\ol, N}, m_{\bX_t^N, \ba^N(\bX_t^{\ol,N},\bY^{\ol, N}_t)}^{N,-i} \big) dt + \sum_{j = 0}^N Z_t^{\ol,N,i,j} dW_t^j
       \vspace{.1cm} \\
        X_0^{\ol,N,i} = \xi^i, \quad Y_T^{\ol,N,i} = D_x G(X_T^{\ol,N,i},m_{\bX_T^{\ol,N}}^{N,-i}).
    \end{cases}
\end{align}
Given a MFE $\mu$ started from initial time $0$ and initial state $\xi \sim m_0$, we let $X^{\mf}$ denote the corresponding optimal state process. By Theorem \ref{thm.wellposedness} and the necessity of the stochastic maximum principle, we deduce that there are processes $Y^{\mf}, Z^{\mf}, Z^{\mf,0}$ such that $(X^{\mf}, Y^{\mf}, Z^{\mf}, Z^{\mf,0})$ is the unique solution of 
\begin{align} \label{pontryagin.mf.intro}
    \begin{cases}
        dX_t^{\mf} = - D_p H\Big(X_t^{\mf} , Y_t^{\mf}, \Phi\big( \cL^0(X_t^{\mf} ,Y_t^{\mf} \big) \Big)dt + \sqrt{2} dW_t + \sqrt{2 \sigma_0} dW_t^0, 
       \vspace{.1cm} \\
        dY_t^{\mf} = D_x H \Big(X_t^{\mf} , Y_t^{\mf}, \Phi( \cL^0(X_t^{\mf} , Y_t^{\mf})) \Big) dt
        + Z_t^{\mf} dW_t^i + Z_t^{\mf,0} dW_t^0,
       \vspace{.1cm} \\
        X_0^{\mf}  = \xi, \quad Y_T^{\mf} = D_x G(X_T^{\mf}, \cL^0(X_T^{\mf} )).
    \end{cases}
\end{align}
We denote by $(X^{\mf,i},Y^{\mf,i},{Z}^{\mf,i}, Z^{\mf,i,0})$ the unique solution of
\begin{align} \label{pontryagin.iidcopies}
    \begin{cases}
        dX_t^{\mf,i} = - D_p H\Big(X_t^{\mf,i} , Y_t^{\mf,i}, \Phi\big( \cL^0(X_t^{\mf,i} ,Y_t^{\mf,i} \big) \Big)dt + \sqrt{2} dW_t^i + \sqrt{2 \sigma_0} dW_t^0, 
       \vspace{.1cm} \\
        dY_t^{\mf,i} = D_x H \Big(X_t^{\mf,i} , Y_t^{\mf,i}, \Phi( \cL^0(X_t^{\mf,i} , Y_t^{\mf,i})) \Big) dt
         + Z_t^{\mf,i} dW_t^i + Z_t^{\mf,i,0} dW_t^0,
       \vspace{.1cm} \\
        X_0^{\mf,i}  = \xi^i, \quad Y_T^{\mf,i} = D_x G(X_T^{\mf,i}, \cL^0(X_T^{\mf,i} )).
    \end{cases}
\end{align}
By Theorem \ref{thm.wellposedness}, this equation indeed has a unique solution, and by symmetry $(X^{\mf,i}, Y^{\mf,i}, Z^{\mf,i}, Z^{\mf,i,0})$ are i.i.d., conditionally on $W^0$, with
\begin{align*}
    \cL^0\Big(X^{\mf,i}, Y^{\mf,i}, Z^{\mf,i}, Z^{\mf,i,0} \Big) = \cL^0\Big( X^{\mf}, Y^{\mf}, Z^{\mf}, Z^{\mf,0} \Big), \quad i \in \N.
\end{align*}
Finally, we set
\begin{align} \label{def.alphamfi}
    \alpha_t^{\mf,i} = - D_p H\Big(X_t^{\mf,i} , Y_t^{\mf,i}, \Phi\big( \cL^0(X_t^{\mf,i} ,Y_t^{\mf,i} \big) \Big).
\end{align}

\begin{theorem} \label{thm.OLconvergence}
    Let Assumptions \ref{assump.regularity} and \ref{assump.monotone} hold, and let $m_0 \in \cP_p(\R^d)$ for some $p > 2$, with $p \notin \{4, d/(d-2)\}$. Assume that there exists a mean-field equilibrium $\mu$ (started from initial condition $m_0$ at time $0$), and that for each $N$, there exists an open-loop equilibrium $\balpha^{\ol,N}$ (started from initial conditions $(\xi^i)_{i = 1,\dots,N}$ at time $0$). Then, using the above notation, there is a constant $C>0$ independent of $N$ such that
    \begin{align*}
       \E\bigg[ \sup_{0 \leq t \leq T} \left|X_t^{\ol,N,i} - X_t^{\mf,i}\right|^2 + \int_0^T \left|\alpha_t^{\ol,N,i} - \alpha_t^{\mf,i}\right|^2dt \bigg]\leq Cr_{d,p}(N), 
    \end{align*}
    for each $N \in \N$ and each $i = 1,\dots,N$, where 
    \begin{align} \label{ratedef}
        r_{d,p}(N) = \begin{cases}
                 N^{-1/2} + N^{-(p-2)/p} & d < 4,
                 \\
                 N^{-1/2} \log(1 + N) + N^{-(p-2)/p} & d = 4,
                 \\
                 N^{-2/d} + N^{-(p-2)/p} & d > 4.
        \end{cases}
    \end{align}
    As a consequence, we have
    \begin{align*}
        \sup_{0 \leq t \leq T} \E\Big[ \bd_2^2\Big( \mu_t^x, m_{\bX^{\ol,N}_t}^N  \Big) \Big] +  \E\bigg[\int_0^T \bd_2^2\Big( \mu_t, m_{\bX_t^{\ol,N}, \balpha_t^{\ol,N}}^N \Big) dt \bigg] \leq C r_{d,p}(N).
    \end{align*}
\end{theorem}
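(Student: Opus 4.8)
The plan is to run a forward--backward synchronous coupling between the $N$-player Pontryagin system \eqref{pontryaginNplayer2} and the system of i.i.d.\ mean-field copies \eqref{pontryagin.iidcopies}: both are driven by the \emph{same} Brownian motions $(W^i)_{i=1}^N$, $W^0$ and the same initial data $(\xi^i)$. Writing $\Delta X_t^i := X_t^{\ol,N,i} - X_t^{\mf,i}$ and $\Delta Y_t^i := Y_t^{\ol,N,i} - Y_t^{\mf,i}$, the key structural point is that $\Delta X^i$ has no martingale part (the two systems share the diffusion coefficients), hence is of finite variation. Applying It\^o's formula to $t \mapsto \sum_{i=1}^N \langle \Delta X_t^i, \Delta Y_t^i\rangle$, using $\Delta X_0^i = 0$ and that the stochastic integrals vanish in expectation, one gets
\begin{align*}
    \E\Big[ \sum_{i=1}^N \langle \Delta X_T^i, \Delta Y_T^i\rangle \Big]
    = \int_0^T \E\Big[ \sum_{i=1}^N \Big( \langle \Delta X_t^i, \delta_x^i(t)\rangle - \langle \delta_p^i(t), \Delta Y_t^i\rangle \Big)\Big]\,dt,
\end{align*}
where $\delta_x^i(t)$, $\delta_p^i(t)$ are the increments of $D_xH$, $D_pH$ between the arguments of the two systems. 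The whole proof consists in bounding the two sides of this identity.

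\textbf{Invoking displacement monotonicity.} On the left, I would combine the displacement monotonicity of $G$ with the convergence of the empirical measures $m_{\bX_T^{\ol,N}}^{N,-i}$ — comparing first against $m_{\bX_T^{\mf}}^{N,-i}$ (where D-monotonicity of $G$ applies to the synchronously coupled particles), then against $\cL^0(X_T^{\mf})$ via Lipschitz continuity of $D_xG$ in the measure slot — to obtain a lower bound of the form $-C\,\E[\sum_i|\Delta X_T^i|^2] - CN r_{d,p}(N)$. On the right, I would split $\delta_x^i,\delta_p^i$ into a part with frozen measure argument and a part recording the discrepancy of the measures. The frozen-measure part, via the convexity of the data in $(x,a)$ together with the displacement monotonicity of $H$, is exactly where one re-uses the a priori coercivity estimate underlying Theorem \ref{thm.wellposedness}: applied to the present perturbation it controls $\int_0^T \E[\sum_i(|\Delta Y_t^i|^2 + |\alpha_t^{\ol,N,i}-\alpha_t^{\mf,i}|^2)]\,dt$, up to the usual $C_{L,x}\int_0^T\E[\sum_i|\Delta X_t^i|^2]\,dt$. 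The measure-discrepancy part is handled, using Lipschitz continuity of $D_xH$, $D_pH$ and of $\Phi$, through three contributions, each governed by Proposition \ref{prop.fixedpointstuff} or by empirical-measure rates: (i) the leave-one-out versus full empirical measure gap, of order $1/N$; (ii) the gap between $m^N_{\bX_t^{\ol,N},\ba^N(\bX_t^{\ol,N},\bY_t^{\ol,N})}$ and $\Phi(m^N_{\bX_t^{\ol,N},\bY_t^{\ol,N}})$, which by the last assertion of Proposition \ref{prop.fixedpointstuff} is at most $\tfrac{C}{N^2}\sum_i(|X_t^{\ol,N,i}|^2+|Y_t^{\ol,N,i}|^2)$; and (iii) the propagation-of-chaos gap between $\Phi(m^N_{\bX_t^{\ol,N},\bY_t^{\ol,N}})$ and $\Phi(\cL^0(X_t^{\mf},Y_t^{\mf}))$, which by Lipschitzness of $\Phi$ and synchronous coupling reduces to $\tfrac1N\sum_i(|\Delta X_t^i|^2+|\Delta Y_t^i|^2)$ plus $\bd_2^2(m^N_{\bX_t^{\mf},\bY_t^{\mf}},\cL^0(X_t^{\mf},Y_t^{\mf}))$.

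\textbf{A priori bounds, absorption, Gr\"onwall.} To make (ii) negligible I need the $N$-uniform second-moment bound $\E[\sum_i(|X_t^{\ol,N,i}|^2+|Y_t^{\ol,N,i}|^2)]\le CN$, which follows from the growth hypotheses of Assumption \ref{assump.regularity} and a standard FBSDE a priori estimate; this turns (ii) into $O(1/N)$. Next, conditionally on $W^0$ the $(X_t^{\mf,i},Y_t^{\mf,i})$ are i.i.d.\ with law $\cL^0(X_t^{\mf},Y_t^{\mf})$, so standard quantitative Wasserstein estimates for empirical measures (of Fournier--Guillin type), using $m_0\in\cP_p(\R^d)$ with $p>2$, $p\notin\{4,d/(d-2)\}$ and the uniform moment bounds, dominate the conditional-law errors in (i)--(iii) and at the terminal time by $Cr_{d,p}(N)$ in expectation; the moment exclusions are precisely what rules out logarithmic corrections at the crossover and produces the $N^{-(p-2)/p}$ correction from truncation. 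Choosing the small parameters so that the $\tfrac1N\sum_i(|\Delta X_t^i|^2+|\Delta Y_t^i|^2)$ pieces are absorbed into the coercive term, one arrives at
\begin{align*}
    \int_0^T \E\Big[ \sum_{i=1}^N \big(|\Delta Y_t^i|^2 + |\alpha_t^{\ol,N,i}-\alpha_t^{\mf,i}|^2\big)\Big]\,dt
    \leq C N r_{d,p}(N) + C \int_0^T \E\Big[ \sum_{i=1}^N |\Delta X_t^i|^2\Big]\,dt.
\end{align*}
Feeding this into the forward equation for $\Delta X^i$ — whose drift is Lipschitz in $(\Delta X^i,\Delta Y^i)$ and in the same three measure discrepancies — and applying Gr\"onwall to $t\mapsto\E[\sum_i|\Delta X_t^i|^2]$ followed by a Burkholder--Davis--Gundy estimate for the supremum gives $\E[\sup_t\sum_i|\Delta X_t^i|^2] + \int_0^T\E[\sum_i|\Delta Y_t^i|^2]\,dt \le CNr_{d,p}(N)$. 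By exchangeability every index contributes equally, so dividing by $N$ yields the stated per-player bound; the control of $\int_0^T|\alpha_t^{\ol,N,i}-\alpha_t^{\mf,i}|^2\,dt$ is already in the last display (or follows from Lipschitzness of $D_pH$, $\Phi$ applied to \eqref{def.alphamfi}). Finally the last display of the theorem follows by triangle inequality, $\bd_2(\mu_t^x,m^N_{\bX_t^{\ol,N}})\le \bd_2(\cL^0(X_t^{\mf}),m^N_{\bX_t^{\mf}}) + \bd_2(m^N_{\bX_t^{\mf}},m^N_{\bX_t^{\ol,N}})$, the first term an empirical-measure error and the second at most $(\tfrac1N\sum_i|\Delta X_t^i|^2)^{1/2}$, and analogously for the $(\bX,\balpha)$ empirical measures.

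\textbf{Main obstacle.} The delicate part is not the FBSDE coupling but the bookkeeping of the measure arguments: one must apply displacement monotonicity of $H$ and $G$ only on the couplings for which it holds (empirical measures of the synchronously coupled particles), cleanly separate the three sources of measure error, and in particular secure the $N$-uniform moment bound $\E[\sum_i(|X_t^{\ol,N,i}|^2+|Y_t^{\ol,N,i}|^2)]\lesssim N$ so that the $\ba^N$-versus-$\Phi$ discrepancy of Proposition \ref{prop.fixedpointstuff} contributes only at order $1/N$; one must also make sure the error terms that re-express themselves through $\Delta X$ and $\Delta Y$ carry small enough constants to be absorbed, rather than producing a circular estimate.
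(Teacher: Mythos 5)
Your proposal is correct and follows essentially the same route as the paper: synchronous coupling of \eqref{pontryaginNplayer2} with the conditionally i.i.d.\ copies \eqref{pontryagin.iidcopies}, It\^o's formula applied to $\sum_i \Delta X_t^i\cdot\Delta Y_t^i$ combined with the (discrete) displacement monotonicity of $L$ and $G$ to extract the coercive $\sum_i|\Delta\alpha^i|^2$ term, the three-way decomposition of the measure discrepancy controlled by Lemmas \ref{lem.Phi}, \ref{lem.ani}, \ref{lem:FPmap_disc} and Fournier--Guillin, and exchangeability to pass to the per-player rate. The only organizational difference is that the paper packages the coupling estimate as the standalone stability result (Proposition \ref{prop.uniformstability}) and writes the mean-field copies as an approximate solution of the $N$-player system -- so that the moment bounds needed for the error terms come from Lemma \ref{lem.Lpbound} on the McKean--Vlasov side -- whereas you place the perturbation on the $N$-player side and therefore need the additional (but obtainable) $N$-uniform second-moment bound on $(\bX^{\ol,N},\bY^{\ol,N})$.
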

\begin{remark}
    As noted above, the Pontryagin system \eqref{pontryaginNplayer2} admits a unique solution for all $N$ large enough, and if we assume that \eqref{convexity} holds, then this unique solution must correspond to the unique open-loop Nash equilibrium. Thus, if \eqref{convexity} holds (in particular if $C_{L,x} = C_{G} = 0$ in Assumption \ref{assump.monotone}), then the assumption that an open-loop equilibrium exists is superfluous. Similarly, Theorem \ref{thm.wellposedness} shows that the mean field Pontryagin system \eqref{pontryagin} admits a unique solution, and if \eqref{convexity} holds, then this unique solution must correspond to the unique MFE, so in this case there is no need to assume the existence of an MFE. 
\end{remark}

\medskip

\noindent {\bf Convergence of closed-loop equilibria.} 
To give a precise description of our results, we will further use the following notation: given an admissible CLNE $\bm{\alpha}^{\cl,N} = (\alpha^{\cl,N,1},\dots,\alpha^{\cl,N,N})$, and we will denote by $\bX^{\cl,N} = (X^{\cl,N,1},\dots,X^{\cl,N,N})$ the corresponding state process (started at time $0$ with $X_0^{\cl,N,i} = \xi^i$). That is, $\bX^{\cl,N}$ is defined by
\begin{align*}
    dX_t^{\cl,N,i} &= \alpha_t^{\cl,N,i}(t,\bX^{\cl,N}_t) dt + \sqrt{2} dW_t^i + \sqrt{2\sigma_0} dW_t^0
    \\
    &= a^{N,i}\big(t, \diag \bu^N(t,\bX_t^{\cl,N})\big)dt + \sqrt{2} dW_t^i + \sqrt{2\sigma_0} dW_t^0 , \quad X_0^{\cl,N,i} = \xi^i, 
\end{align*}
with $\bu^N =(u^{N,1},\dots,u^{N,N})$ an admissible solution to the Nash system \eqref{nashsystem}. We still use the notation $(X^{\mf,i})_{i = 1,\dots,N}$ for the i.i.d. copies of the mean field-equilibrium, as defined above, as well as the notation $(\alpha^{\mf,i})_{i = 1,\dots,N}$ for the corresponding controls as in \eqref{def.alphamfi}.

\begin{theorem} \label{thm.clconv}
   Let the conditions of Theorem \ref{thm.OLconvergence} hold, and assume in addition that for all $N$ large enough there exists an admissible CLNE,  $\balpha^{\cl,N}$ (started from initial conditions $(\xi^i)_{i = 1,...,N}$ at time $0$). Then using the above notation, there is a constant $C$ such that for all $N$ large enough, we have 
    \begin{align*}
     \E\bigg[ \sup_{0 \leq t \leq T}  \left|X_t^{\mf,i} - X_t^{\cl,N,i}\right|^2  + \int_0^T \left|\alpha_t^{\mf,i} - \alpha^{\cl,N,i}(t,\bX_t^{\cl,N,i})\right|^2 dt \bigg] \leq Cr_{d,p}(N),
    \end{align*}
    for $i = 1,\dots,N$, where $r_{d,p}(N)$ is given by \eqref{ratedef}. 
    As a consequence, we have
    \begin{align*}
        \sup_{0 \leq t \leq T} \E\Big[ \bd_2^2\Big( \mu_t^x, m_{\bX^{\cl,N}_t}^N \Big) \Big] +  \E\bigg[\int_0^T \bd_2^2\Big( \mu_t, m_{\bX_t^{\cl,N}, \balpha^{\cl,N}(t,\bX_t^{\cl,N})}^N \Big) dt \bigg] \leq C r_{d,p}(N).
    \end{align*}
\end{theorem}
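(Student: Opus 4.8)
\medskip
\noindent\textbf{Proof strategy for Theorem \ref{thm.clconv}.}
The plan is to deduce the statement from the open-loop convergence result, Theorem \ref{thm.OLconvergence}, by quantifying the gap between the closed-loop equilibrium and the open-loop equilibrium of the \emph{same} $N$-player game --- that is, between $\bX^{\cl,N}$ and the state process $\bX^{\ol,N}$ of the (unique, for $N$ large) solution of \eqref{pontryaginNplayer2}, both driven by the same Brownian motions and started from the same $\xi^i$. By the triangle inequality it then suffices to prove $\E\big[\sup_t|X_t^{\cl,N,i}-X_t^{\ol,N,i}|^2+\int_0^T|\alpha^{\cl,N,i}(t,\bX_t^{\cl,N})-\alpha_t^{\ol,N,i}|^2\,dt\big]\lesssim r_{d,p}(N)$. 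The underlying principle, in the spirit of \cite{CirRed,CirJacRed}, is that a closed-loop equilibrium is an \emph{approximate} open-loop equilibrium because the off-diagonal derivatives $D_ju^{N,i}$ with $j\neq i$ of an admissible solution to the Nash system are small. Concretely, I would set $\hat Y_t^{N,i}:=D_iu^{N,i}(t,\bX_t^{\cl,N})$ and let $\hat Z^{N,i,j}$ be the corresponding martingale integrands arising from It\^o's formula, then apply It\^o's formula to $\hat Y^{N,i}$ along the closed-loop dynamics \eqref{cl.nash.connection}, using the Nash system \eqref{nashsystem} differentiated once in $x^i$. After the usual cancellations --- the transport term produced by It\^o cancels against the terms obtained by differentiating the Nash coupling together with the diagonal Hessian term $D_{pp}H\,D_{ii}u^{N,i}$ --- one finds that $(\bX^{\cl,N},\hat{\bY}^{N},\hat{\bZ}^{N})$ solves \eqref{pontryaginNplayer2} verbatim (same noises, same initial and terminal conditions), except that the backward drift of $\hat Y^{N,i}$ acquires an extra forcing term $R_t^{N,i}$. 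The key structural point is that $R^{N,i}$ is a finite sum of terms, each of which contains at least one ``off-diagonal'' spatial derivative of $u^{N,i}$ --- either $D_ju^{N,i}$ with $j\neq i$, or a second derivative $D_{jk}u^{N,i}$ with $\{j,k\}\neq\{i\}$ --- or else a measure-derivative of $H$ contracted against the small, concentrated displacement field generated by $D_i$ of the empirical measure $m^{N,-i}_{\bx,\ba^N(\bx,\diag\bu^N)}$.

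The main obstacle is the a priori analysis of \eqref{nashsystem} needed to show that $\E\big[\int_0^T|R_t^{N,i}|^2\,dt\big]\lesssim r_{d,p}(N)$ --- in fact $\lesssim N^{-2}$ --- uniformly in $N$ and $i$. What is required is a uniform-in-$N$ Lipschitz-in-space bound $\|D_{ii}u^{N,i}\|_\infty\leq C$, together with the quantitative decay of the off-diagonal derivatives, e.g.\ $\sum_{j\neq i}\|D_ju^{N,i}\|_\infty^2\lesssim N^{-1}$ and the analogous $\ell^2$-type bound $\sum_{\{j,k\}\neq\{i\}}\|D_{jk}u^{N,i}\|_\infty^2\lesssim N^{-1}$ for the second derivatives. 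These are obtained by differentiating \eqref{nashsystem}, writing down the linearized vector-valued parabolic equations satisfied by the spatial derivatives of $u^{N,i}$, and closing Gr\"onwall-type estimates using the displacement monotonicity of the data (Assumption \ref{assump.monotone}) and the uniform-in-$N$ Lipschitz continuity of $\ba^N$ from Proposition \ref{prop.fixedpointstuff}; carrying the quadratic growth of $H$ and $G$ in $x$ and the common noise through these estimates is where most of the technical effort lies. (Since an admissible solution is only assumed to have bounded second spatial derivatives, the differentiation of \eqref{nashsystem} and the application of It\^o to $D_iu^{N,i}$ should first be justified --- either by an interior parabolic-regularity bootstrap, \eqref{nashsystem} being a nondegenerate parabolic system whose coefficients are smooth in $(t,\bx)$ up to the Lipschitz map $\ba^N$, or by running the argument on difference quotients of $u^{N,i}$ in the variable $x^i$.)

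With the residual under control, the proof is closed by a standard FBSDE stability estimate. Since the projected closed-loop system and \eqref{pontryaginNplayer2} are driven by the same noises, share initial and terminal data, and differ only through the forcing $R^{N,i}$ in the backward drift, the increment $\delta X^i:=X^{\cl,N,i}-X^{\ol,N,i}$ is a pure drift difference (its martingale part vanishes); one then expands $\sum_i\E\langle\delta X_T^i,\delta Y_T^i\rangle$ with $\delta Y^i:=\hat Y^{N,i}-Y^{\ol,N,i}$, uses the monotonicity of $-D_pH(x,\cdot,\cdot)$ in the relevant variables, the displacement monotonicity of $D_xH$ and $D_xG$ (the same mechanism underlying the proof of Theorem \ref{thm.OLconvergence}), controls the empirical-measure arguments via the uniform Lipschitz bound on $\ba^N$, and absorbs the $R^{N,i}$-contribution by Young's inequality, arriving at $\E\big[\sup_t|\delta X_t^i|^2+\int_0^T|\delta Y_t^i|^2\,dt\big]\lesssim r_{d,p}(N)$. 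Because $\alpha^{\cl,N,i}(t,\bX_t^{\cl,N})=-D_pH\big(X_t^{\cl,N,i},\hat Y_t^{N,i},m_{\bX_t^{\cl,N},\ba^N(\bX_t^{\cl,N},\hat{\bY}_t^{N})}^{N,-i}\big)$ has the same form as $\alpha_t^{\ol,N,i}$ with the open-loop processes, the control gap is bounded by the same quantity (using the Lipschitz continuity of $D_pH$ and of $\ba^N$). Combining with Theorem \ref{thm.OLconvergence} via the triangle inequality gives the stated estimate against $(X^{\mf,i},\alpha^{\mf,i})$, and the final Wasserstein assertion follows exactly as in the proof of Theorem \ref{thm.OLconvergence}: one combines this pathwise closeness with the conditional (given $W^0$) law-of-large-numbers rates for the empirical measures of the i.i.d.\ mean-field copies $\{(X_t^{\mf,i},\alpha_t^{\mf,i})\}_{i}$ towards $\mu_t$ and $\mu_t^x$, which are responsible for the $N^{-(p-2)/p}$ and dimension-dependent contributions to $r_{d,p}(N)$.
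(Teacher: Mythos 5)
Your overall architecture matches the paper's exactly: the paper proves Theorem \ref{thm.clconv} by combining Theorem \ref{thm.OLconvergence} with a proposition comparing $\bX^{\cl,N}$ to $\bX^{\ol,N}$ (Proposition \ref{prop.OLCL.comp}), and that comparison is obtained precisely as you describe --- setting $\hat Y^i_t=D_iu^{N,i}(t,\bX^{\cl,N}_t)$, observing that $(\bX^{\cl,N},\hat\bY,\hat\bZ)$ solves the $N$-player Pontryagin system up to a residual built from off-diagonal derivatives of $\bu^N$ and measure-derivative contractions, and invoking the dimension-free displacement-monotone stability estimate (Proposition \ref{prop.uniformstability}). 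You correctly identify that all of the difficulty sits in the a priori analysis of the Nash system.

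However, the mechanism you propose for that a priori analysis has a genuine gap: it is circular as stated. You claim uniform bounds such as $\|D_{ii}u^{N,i}\|_\infty\le C$ and $\sum_{j\neq i}\|D_ju^{N,i}\|^2_\infty\lesssim N^{-1}$, to be obtained "by differentiating \eqref{nashsystem} \ldots and closing Gr\"onwall-type estimates." But the linearized equation for the off-diagonal derivatives $D_ju^{N,i}$ contains the term $\sum_{k\neq i}(D_{jk}u^{N,k})^\top D_{pp}H\,D_ku^{N,i}$, whose coefficient is the full matrix $\bA^N=(D_{ji}u^{N,i})_{i,j}$ --- exactly the quantity one is trying to control. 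A direct Gr\"onwall closure therefore requires a uniform-in-$N$ bound on $\bA^N$ as input, while any bound on $\bA^N$ (via comparison of $\diag\bu^N$ with the uniformly Lipschitz decoupling field $\bv^N$ of \eqref{pontryagin.pde}) requires the off-diagonal decay as input. The paper breaks this circle with a self-improving bootstrap that you do not describe: one \emph{assumes} only the integrated, along-trajectory bound $\sup_{t_0,\bx_0}\E\big[\int_{t_0}^T|\bA^N(t,\bX_t^{t_0,\bx_0})|^2_{\op}\,dt\big]\le K$ on a terminal window $[T_0,T]$ (not a pointwise $L^\infty$ bound), derives the off-diagonal decay and the comparison $|\diag\bu^N-\bv^N|\lesssim \exp(CK)/N$ under that hypothesis, concludes that the assumed quantity is in fact $\le C+C\exp(CK)/N$, and then propagates this to all of $[0,T]$ by a continuation argument (Lemma \ref{lem.improvementofM} and Proposition \ref{prop.closetheloop}). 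Note also that the paper never establishes the pointwise bounds you assert ($\|D_{ii}u^{N,i}\|_\infty\le C$, summed $L^\infty$ bounds on all second derivatives); it only controls $|\bA^N|^2_{\op}$ in the integrated sense along equilibrium trajectories and $D_{jk}u^{N,i}$ ($j\ne i$) in $L^2(dt\otimes d\bP)$, and this weaker information suffices for the residual estimate. Without the bootstrap, the estimates you sketch would not close, particularly given the quadratic growth of $H$ and $G$ in $x$.
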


\medskip

\subsection{Comparison with the literature} The literature on the convergence problem for MFGC is relatively sparse. Qualitative convergence results were obtained in various settings by Djete in the series of works \cite{Dje:22, Dje:23-ejp, Dje:23-aap}. These results are obtained in remarkably general settings and are based on compactness arguments, without requiring the uniqueness of the corresponding mean field equilibria. However, all of these works require the separability condition 
\begin{align} \label{cond:sep}
    H(x,p,\mu) = H_0(x,p,\mu^x) - f(x,\mu), 
\end{align}
where $\mu^x$ is the first marginal of $\mu$. In this case, the fixed point equation \eqref{fixedpoint} reduces to the much simpler one
\begin{align*}
    \cL\big(X_t, - D_p H_{0}(X_t,Y_t, \cL(X_t) \big) = \mu_t,
\end{align*}
which in particular is not an implicit equation for $\mu$ anymore. This means that no analysis of the fixed-point equations \eqref{def.Phi} and \eqref{def.ani} is necessary. The separability assumption \eqref{cond:sep} on the Hamiltonian seems to be purely technical, and is not satisfied in many relevant economic models; see for instance the Hamiltonians in \cite{GraSir, GraMat} and the discussions in \cite[Section 3.1]{GraMes:23}.

To the best of our knowledge, the quantitative convergence for MFG of controls has been studied in only two prior works: in \cite{LauTan} Lauri\`ere and Tangpi consider the convergence of open-loop equilibria, while in \cite{PosTan} Possama\"i and Tangpi consider the convergence of both open and closed-loop equilibria.
Both \cite{LauTan} and \cite{PosTan} require either {\it short time} horizon or sufficiently {\it `dissipative' properties} on the underlying drift terms (which after time rescaling could eventually be comparable to short time results). In particular, neither \cite{PosTan} nor \cite{LauTan} obtain global in time results under monotonicity conditions. In addition, \cite{LauTan} requires the separability \eqref{cond:sep}, and \cite{PosTan} imposes {\it implicit regularity assumptions} on the {\it composition} of the drift and the corresponding fixed point maps. These jointly imposed regularity assumptions are then verified only when \eqref{cond:sep} holds.

\medskip

Compared to these prior contributions, ours seems to be the first work in the literature which:
\begin{itemize}
\item does not require the separability condition \eqref{cond:sep}, or assume any regularity properties on the fixed-point maps $\Phi$ and $\ba^N$;
\item provides a global in time quantitative result for MFG of controls via $D$-monotonicity, rather than smallness conditions.
\end{itemize}
As a consequence of D-monotonicity, all required properties of the fixed point maps are obtained explicitly as part of our analysis.

\subsection{Proof strategy}

Our analysis of the fixed-point equations \eqref{def.ani} and \eqref{def.Phi} is carried out in Section \ref{sec.fixedpoint}, and is based on systematically exploiting displacement monotonicity via coupling arguments. Once we establish Proposition \ref{prop.fixedpointstuff}, the convergence of open-loop equilibria is obtained via a synchronous coupling argument, similar to the one carried out in \cite{JacTan}. In particular, displacement monotonicity is used to quantitatively compare the $N$-player Pontryagin system \eqref{pontryaginNplayer} to ``conditionally i.i.d. copies" of the mean field Pontryagin system \eqref{pontryagin}.

The convergence of closed-loop equilibria is much more subtle. We would like to note right away that the previously established Lipschitz-type properties of the fixed point maps will not be enough to study the convergence of the closed-loop equilibria. 

The key idea behind our analysis in this case can be described as follows: suppose that $(u^{N,1},\dots,u^{N,N})$ is the solution to the Nash system \eqref{nashsystem} and $(v^{N,1},\dots,v^{N,N})$ is the solution to \eqref{pontryagin.pde}. Then, we claim that
$$
\diag \bu^{N}=(D_{1}u^{N,1},\dots,D_{N}u^{N,N})
$$
can be viewed, at least formally, as a perturbation of $\bv^N = (v^{N,1},\dots,v^{N,N})$. If one can rigorously compare $\diag \bu^N$ to $\bv^N$, then it is possible to quantify the gap between the open and closed-loop equilibria, and thus obtain Theorem \ref{thm.clconv} as a consequence of Theorem \ref{thm.OLconvergence}. 

The key point is thus to rigorously compare $\diag \bu^N$ and $\bv^N$. To do this, one needs to differentiate the Nash system and work with the quantities $\bA^{N}=(A^{N,i,j})_{i,j=1,\dots,N}$ and $\hat\ba^{N}=(\hat a^{N,1},\dots,\hat a^{N,N})$ defined as
$$
A^{N,i,j}(t,\bx):=D_{ji}u^{N,i}(t,x),\ \ \hat a^{N,i}(t,x):=a^{N,i}(\bx,\diag\bu^{N}(t,\bx)). 
$$
In particular, a quantitative comparison of $\diag \bu^N$ and $\bv^N$ is possible provided that one can show that the quantity
\begin{align}\label{intro:apriori.0}
\sup_{t_{0}\in[0,T]}\sup_{\bx_{0}\in(\R^{d})^{N}}\E\left[\int_{t_{0}}^{T}\left|\bA^{N}(t,\bX_{t}^{t_{0},\bx_{0}}) \right|^{2}_{{\rm{op}}} dt\right]
\end{align}
is bounded independently of $N$, where $(\bX_{t}^{t_{0},\bx_{0}})_{t\in[t_{0},T]}$ are the optimal agent trajectories associated to the closed-loop equilibrium, initiated at $(t_{0},\bx_{0})$. The heart of our analysis is based on a crucial a priori estimate: we first \textit{assume} that there exists $T_{0}\in[0,T)$ and $K>0$ (independent of $N$) such that 
\begin{align}\label{intro:apriori}
\sup_{t_{0}\in[T_{0},T]}\sup_{\bx_{0}\in(\R^{d})^{N}}\E\left[\int_{t_{0}}^{T}\left|\bA^{N}(t,\bX_{t}^{t_{0},\bx_{0}}) \right|^{2}_{{\rm{op}}} dt\right] \le K.
\end{align}
Based on this assumption, we show that $D_{j}u^{N,i}$ and $D_{jk}u^{N,i}$ have precise decay estimates in $N$, which in turn lead to a bound
\begin{align}\label{intro:apriori2}
\sup_{t_{0}\in[T_{0},T]}\sup_{\bx_{0}\in(\R^{d})^{N}}\E\left[\int_{t_{0}}^{T}\left|\bA^{N}(t,\bX_{t}^{t_{0},\bx_{0}}) \right|^{2}_{{\rm{op}}} dt\right] \le C + C \exp(CK)/N,
\end{align}
with $C$ a constant independent of $K$ and $N$. In other words, rather than estimating the quantity in \eqref{intro:apriori.0} direclty, we prove an implication of the form $\eqref{intro:apriori} \implies \eqref{intro:apriori2}$. But this turns out to be enough to establish uniform in $N$ control of the quantity \eqref{intro:apriori.0}, at least when $N$ is large enough.

The roadmap for this analysis is philosophically related to the one in the recent work \cite{CirJacRed} (and the one in the earlier work \cite{CirRed}) for $N$-player games without interaction through the controls. However, compared to those works, several layers of new ideas are necessary, because of the presence of the additional fix point maps.

\medskip
It is worth mentioning that our approach does not rely in any way on the use of the underlying master equation. To the best of our knowledge, only three recent works address the solvability of the master equation in the case of MFG of controls. In \cite{MouZha:2022, LiaMou}, the authors assume that the map  
\begin{align} \label{hatH}
    \hat{H} : \R^d \times \R^d \times \cP_2(\R^d \times \R^d), \quad \hat{H}(x,p,\mu) = H\big(x,p, \Phi(\mu) \big)
\end{align}
is smooth enough. To check this assumption in practice, one would need to show that $\Phi : \cP_2(\R^d \times \R^d) \to \cP_2(\R^d \times \R^d)$ is ``smooth" in some appropriate sense. In the present work, we show that $\Phi$ is Lipchitz in the D-monotone case, but we do not have any higher regularity. Thus the results of \cite{MouZha:2022, LiaMou} do not immediately apply here. In \cite{GraSir} the authors consider a very specific master equation associated to MFG of controls with absorption in one dimension. There are thus no existing results on the master equation for MFGC which apply in the setting treated here, and until a more general study of the master equation for MFGC is completed, it does not seem possible to adapt the analytical approach to the convergence problem developed in \cite{CarDelLasLio}. 

\medskip

The structure of the rest of the paper is as follows. In the short Section \ref{sec:not} we have collected some notations and the standing assumptions on D-monotonicity, regularity and growth conditions on the data. Section \ref{sec.fixedpoint} concerns the fine regularity properties of the fixed point maps $\Phi$ and $\ba^{N}$. In Section \ref{sec:four} we have derived some general uniform in $N$ stability results on the Pontryagin systems, which together with the properties of the fixed point operators are then used for the convergence open-loop equilibria in the following section. Section \ref{sec:six} contains uniform in $N$ estimates on the Nash systems, which are crucial in Section \ref{sec:seven} for the convergence of closed-loop Nash equilibria. We conclude the paper with an appendix section, where we collect some well-known results for experts, on the well-posedness mean field-type Pontryagin systems.

\medskip

\noindent {\bf Acknowledgements.} J.J. is supported by the NSF under Grant No. DMS2302703. A.R.M. has been supported by the EPSRC New Investigator Award ``Mean Field Games and Master equations'' under award no. EP/X020320/1.

\section{Notations and standing assumptions}\label{sec:not}

We will make two main assumptions. The first is on the regularity of the data. Here, for $p\ge 1$ by $\cP_{p}(\R^{d})$ we denote the set of Borel probability measures supported on $\R^{d}$ which have finite $p^{th}$-moments, i.e. $\displaystyle M_{p}(\mu):=\left(\int_{\R^{d}}|x|^{p}\mu(d x)\right)^{\frac1p}<+\infty$, for any $\mu\in \cP_{p}(\R^{d})$. In our analysis in $\cP_{2}(\R^{d})$ we make use of the classical Monge--Kantorovich--Wasserstein distances, that we denote by $\bd_1$ and $\bd_2$. In particular, for $p\ge 1$ and $\mu,\nu\in \cP_p(\R^d)$ we set
$$
\bd_p(\mu,\nu):=\inf\left\{\iint_{\R^d\times\R^d}|x-y|^p d\gamma(x,y):\ \ \gamma\in\cP_p(\R^d\times\R^d),\ (\pi^x)_\sharp\gamma = \mu, (\pi^y)_\sharp\gamma = \nu\right\}^{\frac1p},
$$
where $\pi^x, \pi^y:\R^{d\times d}\to\R^d$ stand for the canonical projection operators. For a measure defined on a product space, i.e. if $\mu\in\cP(\R^{d}\times\R^d)$ and if a typical variable has the form $(x,y)\in\R^d\times\R^d$, we use the shorthand notations $\mu^x = (\pi^x)_\sharp\mu$ and $\mu^y=(\pi^y)_\sharp\mu$ for the first and second marginals of $\mu$, respectively.

For a function $F:\cP_{2}(\R^{d})\to\R$ that is differentiable at $\mu\in\cP_{2}(\R^{d})$, we denote by $D_{\mu}F(\mu,\cdot):\spt(\mu)\to\R^{d}$ is intrinsic Wasserstein derivative at $\mu$. We refer to \cite{AGS, CD1, GanTud:19} for further details on this. We say that $F:\cP_{2}(\R^{d})\to\R$ has a first variation or flat derivative at $\mu\in\cP_{2}(\R^{d})$ if there exists $\frac{\delta}{\delta m}F(\mu):\R^{d}\to\R$, a continuous function, such that the limit
$$
\lim_{t\downarrow 0}\frac{F(\mu+t(\nu-\mu))-F(\mu)}{t}=\int_{\R^{d}}\frac{\delta}{\delta m}F(\mu)(x) d(\nu-\mu)(x)
$$
exists and has this representation for all $\nu\in\cP_{2}(\R^{d}).$ If $\frac{\delta}{\delta m}F(\mu)$ is $C^{1}$, then $F$ is Wasserstein differentiable at $\mu$ and $D_{\mu}F(\mu,x) = D_{x}\frac{\delta}{\delta m}F(\mu)(x).$ Again, when considering product spaces and if $F:\cP_{2}(\R^{d}\times\R^d)\to\R$ is differentiable at $\mu\in\cP(\R^{d}\times\R^d)$ and if a typical variable has the form $(x,y)\in\R^d\times\R^d$, we use the shorthand notations $D_{\mu}^xF$ and $D_{\mu}^yF$ to refer to the first $d$-coordinates and the last $d$-coordinates of the Wasserstein derivative $D_\mu F$, respectively.

\begin{assumption} \label{assump.regularity}
Both $L$ and $G$ are assumed to be fully $C^2$ (in the sense of \cite[Section 5.6.2]{CD1}), and all of their second derivatives are uniformly bounded. The first derivatives $D_{\mu} L$ and $D_{\mu} G$ are also uniformly bounded. Moreover, $L$ is uniformly strictly convex in $a$, i.e. there is a constant $C> 0$ such that 
\begin{align} \label{strictconvexity}
    D_{aa} L(x,a,\mu) \geq C I_{d \times d}, \,\, \text{for all } (x,a,\mu) \in \R^d \times \R^d \times \cP_2(\R^d \times \R^d),
\end{align}
and we also have the coercivity condition 
\begin{align} \label{coercive}
  \frac{1}{C} |a|^2 - C\big(1 + |x|^2 + \bd_2(\mu,\delta_0) \big) \leq  L(x,a,\mu) \leq C |a|^2 + C\big(1 + |x|^2 + \bd_2(\mu,\delta_0) \big).
\end{align}
Finally, the map
\begin{align*}
 \R^d \times \R^d \times \cP_2(\R^d \times \R^d) \times \R^d \times \R^d \ni  (x,a,\mu,x',a') \mapsto \frac{\delta}{\delta m} D_x L \big(x,a,\mu,x',a')
\end{align*}
is uniformly Lipschitz continuous, and likewise $\frac{\delta}{\delta m} D_a L$ and $\frac{\delta}{\delta m} D_x G$ are uniformly Lipschitz continuous. The Lipschitz continuity in the measure variable is with respect to $\bd_{1}$.

\end{assumption}
\begin{remark} \label{rmk.Ham}
    Under Assumption \ref{assump.regularity}, the Hamiltonian $H$ is $C^2$, with bounded second derivatives, and in addition $D_{\mu} H$ is uniformly bounded. Furthermore, there exists a constant $C'>0$ such that 
    \begin{align} \label{coerciveH}
         \frac{1}{C'} |p|^2 - C'\big(1 + |x|^2 + \bd_2(\mu,\delta_0) \big) \leq  H(x,p,\mu) \leq C' |p|^2 + C'\big(1 + |x|^2 + \bd_2(\mu,\delta_0) \big).
    \end{align}
Indeed, the growth conditions in \eqref{coerciveH} are consequences of the Fenchel duality and the growth conditions on $L$. For instance, one deduces 
\begin{align*}
H(x,p,\mu) &= - D_{p}H(x,p,\mu)\cdot p - L(x,-D_{p}H(x,p,\mu),\mu)\\
& \le - D_{p}H(x,p,\mu)\cdot p - \frac{1}{C} |D_{p}H(x,p,\mu)|^2 +  C\big(1 + |x|^2 + \bd_2(\mu,\delta_0) \big)\\
& \le \varepsilon |D_{p}H(x,p,\mu)|^{2} + \frac{1}{\varepsilon}|p|^{2} - \frac{1}{C} |D_{p}H(x,p,\mu)|^2 +  C\big(1 + |x|^2 + \bd_2(\mu,\delta_0) \big),
\end{align*}
and by choosing $\varepsilon>0$ sufficiently small, we obtain the second inequality in \eqref{coerciveH}, and the first one is obtained in a similar fashion. 

Furthermore, the uniform bounds on $D_{\mu}H$ are deduced from the uniform bounds on $D_{\mu}L$ via the envelope theorem, since
$$
D_{\mu}H(x,p,\mu) = - D_{\mu}L(x,-D_{p}H(x,p,\mu),\mu),\ \ \forall (x,p,\mu)\in \R^{d}\times\R^{d}\times\cP_{2}(\R^{d}\times\R^{d}).
$$

\end{remark}

The second assumption is concerning the \textit{displacement monotonicity} of $L$ and $G$.
\begin{assumption} \label{assump.monotone}
There are constants $C_{L,a} > 0$, $C_{L,x}, C_G  \geq 0$ such that
\begin{align} \label{dispmonotone.L}
    &\E\Big[ \big( D_aL(X,\alpha, \cL(X,\alpha)) - D_aL(\ov{X},\ov{\alpha}, \cL(\ov{X},\ov{\alpha}))  \big) \cdot (\alpha - \ov{\alpha})
   \nonumber  \\
    &\qquad + \big(  D_x L(X,\alpha, \cL(X,\alpha)) - D_x L(\ov{X},\ov{\alpha}, \cL(\ov{X},\ov{\alpha})) \big) \cdot (X - \ov{X}) \Big] \geq C_{L,a} \E\big[ |\alpha - \ov{\alpha}|^2 \big] - C_{L,x} \E\big[ |X- \ov{X}|^2 \big]
\end{align}
and 
\begin{align} \label{dispmonotone.G}
    \E\Big[ \big( D_x G(X, \cL(X)) - D_x G(\ov{X}, \cL(\ov{X})) \big) \cdot  (X - \ov{X}) \Big] \geq -C_G \E\big[ |X- \ov{X}|^2 \big], 
\end{align}
for all square-integrable $\R^d$-valued random vectors $X,\ov{X}, \alpha, \ov{\alpha}$. Finally, we have 
\begin{align}\label{disp:const}
    C_{\dis} \coloneqq C_{L,a} - T C_G - \frac{T^2}{2} C_{L,x} > 0.
\end{align}
\end{assumption}

\begin{remark} \label{rmk.data}
   Suppose that $L$ and $G$ take the form 
   \begin{align*}
       L(x,a,\mu) = \frac{\kappa_1}{2} |x|^2 + \frac{\kappa_2}{2} |a|^2 + L_0(x,a,\mu), \quad G(x,m) = \frac{\kappa_3}{2} |x|^2 + G_0(x,m), 
   \end{align*}
   with 
   \begin{align*}
       L_0 : \R^d \times \R^d \times \cP_2(\R^d \times \R^d) \to \R, \quad G_0 : \R^d \times \cP_2(\R^d) \to \R 
   \end{align*}
   being $C^2$ with bounded first and second derivatives. Then, it is straightforward to check that \eqref{assump.regularity} holds, and that $\eqref{assump.monotone}$ holds if $\kappa_1,\kappa_2,\kappa_3 > 0$ are chosen large enough (compared to the upper bounds on the second derivatives of $L_0$ and $G$). For similar examples and computations to verify the assumptions, we refer to \cite[Remark 2.8]{MesMou}.
\end{remark}

In fact, for the results in Section \ref{sec.fixedpoint} about the fixed-point maps $\Phi$ and $\ba^N$, we do not need the full strength of the displacement monotonicity condition \eqref{disp:const}. Instead, we only need to know that \eqref{dispmonotone.L} holds for some $C_{L,a} > 0, C_{L,x} \geq 0$. We thus record the following weaker version of Assumption \ref{assump.monotone}.

\begin{assumption} \label{assump.weakmonotone}
    There are constants $C_{L,a} > 0$, $C_{L,x} \geq 0$ such that \eqref{dispmonotone.L} holds for all square-integrable $\R^d$-valued random vectors $X, \ov{X}, \alpha, \ov{\alpha}$. 
\end{assumption}

\section{Analysis of the fixed-point maps} \label{sec.fixedpoint}

First, we note that the fixed-point condition \eqref{def.Phi} is equivalent to 
\begin{align*}
    \mu = \cL\big(X,- D_pH(X,Y,\mu) \big), \quad \text{where } (X,Y) \sim \mu.
\end{align*}
So, we are looking for a map $\Phi : \cP_2(\R^d \times \R^d) \to \cP_2(\R^d \times \R^d)$ with the property that 
\begin{align} \label{Phidef.RV}
    \Phi\big(\cL(X,Y) \big) = \cL\Big( X, -D_pH \big(X,Y,\Phi(\cL(X,Y)) \big) \Big)
\end{align}
for each pair of square-integrable random variables. Our next objective is to show that under our standing assumptions the equation \eqref{Phidef.RV} uniquely defines a Lipschitz continuous map $\Phi$.

\begin{lemma} \label{lem.sublin}
Let Assumption \ref{assump.regularity} hold. Then there is a constant $C>0$ such that for all $(x,p,\mu)\in\R^d\times\R^d\times\cP_2(\R^d\times\R^d)$ we have
\begin{align*}
    |D_p H(x,p,\mu)| \leq C\left(1 + |x| + |p| + \left(\int_{\R^d\times\R^{d}} |y|^{2} \mu(dy) \right)^{1/4} \right).
\end{align*}
\end{lemma}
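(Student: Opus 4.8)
The plan is to use the first-order optimality condition defining $D_pH$ together with the coercivity and regularity of $L$ (equivalently, the coercivity \eqref{coerciveH} of $H$ established in Remark~\ref{rmk.Ham}). Recall that $a^* := -D_pH(x,p,\mu)$ is the unique maximizer in the definition of $H$, so by the envelope/first-order condition it satisfies $D_aL(x,a^*,\mu) = -p$, i.e. $a^*$ is characterized by $D_aL(x,a^*,\mu) + p = 0$. Since $L$ is uniformly strictly convex in $a$ (see \eqref{strictconvexity}), we have for any $a$
\begin{align*}
    \big(D_aL(x,a^*,\mu) - D_aL(x,0,\mu)\big)\cdot a^* \geq C|a^*|^2,
\end{align*}
and taking $a = a^*$, $D_aL(x,a^*,\mu) = -p$ gives $C|a^*|^2 \leq (-p - D_aL(x,0,\mu))\cdot a^* \leq (|p| + |D_aL(x,0,\mu)|)|a^*|$, hence $|a^*| \leq C^{-1}(|p| + |D_aL(x,0,\mu)|)$.

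The remaining point is to bound $|D_aL(x,0,\mu)|$. From Assumption~\ref{assump.regularity}, the second derivatives of $L$ are uniformly bounded, so $|D_aL(x,0,\mu) - D_aL(0,0,\mu)| \leq C|x|$. To control $|D_aL(0,0,\mu)|$ in terms of $\mu$, I would use that $D_\mu L$ is uniformly bounded together with a standard interpolation/moving-the-measure estimate: moving $\mu$ to $\delta_0$ along a Wasserstein geodesic and integrating $D_\mu L$ yields $|D_aL(0,0,\mu) - D_aL(0,0,\delta_0)| \leq C\,\bd_1(\mu,\delta_0) \leq C\,\bd_2(\mu,\delta_0) \leq C\big(\int |y|^2\,\mu(dy)\big)^{1/2}$. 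Combining, $|D_aL(x,0,\mu)| \leq C\big(1 + |x| + (\int |y|^2\,\mu(dy))^{1/2}\big)$, which after plugging into the bound on $|a^*|$ would give the estimate with a $(\int|y|^2\mu(dy))^{1/2}$ term rather than the claimed $1/4$ power.

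The $1/4$ exponent in the statement suggests that one does \emph{not} want to pay a full power of the second moment, and the refinement comes from the coercivity \eqref{coerciveH} rather than from the convexity argument above. Here is the sharper route: using the identity $H(x,p,\mu) = -a^*\cdot p - L(x,a^*,\mu)$ together with the lower bound $L(x,a^*,\mu) \geq \frac1C|a^*|^2 - C(1+|x|^2+\bd_2(\mu,\delta_0))$ and the upper bound $H(x,p,\mu) \leq C'|p|^2 + C'(1+|x|^2+\bd_2(\mu,\delta_0))$ from \eqref{coerciveH}, we get
\begin{align*}
    \frac1C|a^*|^2 \leq |a^*||p| + C'|p|^2 + C\big(1 + |x|^2 + \bd_2(\mu,\delta_0)\big).
\end{align*}
Applying Young's inequality $|a^*||p| \leq \tfrac1{2C}|a^*|^2 + \tfrac{C}{2}|p|^2$ and absorbing gives $|a^*|^2 \leq C(1 + |x|^2 + |p|^2 + \bd_2(\mu,\delta_0))$, and finally $\bd_2(\mu,\delta_0) = (\int|y|^2\mu(dy))^{1/2}$, so $|a^*| \leq C(1 + |x| + |p| + (\int|y|^2\mu(dy))^{1/4})$, which is exactly the claim. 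The main (and only real) obstacle is bookkeeping: making sure the constant $C$ from the coercivity of $L$, the constant $C'$ from \eqref{coerciveH}, and the constants from Young's inequality are tracked consistently so that the quadratic term in $|a^*|$ can genuinely be absorbed into the left-hand side; everything else is routine.
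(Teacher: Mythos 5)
Your second ("sharper") route is exactly the paper's proof: start from the Fenchel identity $H(x,p,\mu) = -D_pH\cdot p - L(x,-D_pH,\mu)$, apply the coercivity bounds \eqref{coercive} and \eqref{coerciveH} to get a quadratic inequality for $|D_pH|$, and absorb the cross term via Young's inequality, with the $1/4$ power coming from $\bd_2(\mu,\delta_0)=(\int|y|^2\,\mu(dy))^{1/2}$. The first convexity-based attempt in your write-up is correctly discarded as yielding only the $1/2$ power, and the argument you settle on is complete and matches the paper.
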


\begin{proof}
    We start from the well-known formula 
    \begin{align*}
        H(x,p,\mu) = - L\big(x,-D_pH(x,p,\mu),\mu\big) + D_pH(x,p,\mu) \cdot p.
    \end{align*}
   From the coercivity conditions \eqref{coercive} and \eqref{coerciveH}, we deduce that 
   \begin{align*}
       |D_p H(x,p,\mu)|^2 &\leq C \Big(1 + |H(x,p,\mu)| + |D_pH(x,p,\mu)||p| + |x|^2 + \bd_2(\mu,\delta_0) \Big)
       \\
       &\leq C \Big( 1 + |p|^2 + |D_pH(x,p,\mu)||p| + |x|^2 + \bd_2(\mu,\delta_0) \Big),
   \end{align*}
   and then an application of Young's inequality completes the proof.
\end{proof}

\begin{lemma} \label{lem.schauder}
Let Assumption \ref{assump.regularity} hold. Then for any bounded random variables $X$ and $Y$, there exists a fixed point of the map 
\begin{align*}
    \mu \mapsto \cL\big(X, - D_pH(X,Y,\mu) \big).
\end{align*}
\end{lemma}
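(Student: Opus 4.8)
The name of the lemma is the hint: the plan is to exhibit the map as a continuous self-map of a compact convex set and apply the Schauder--Tychonoff fixed point theorem. Write $T(\mu) := \cL\big(X,-D_pH(X,Y,\mu)\big)$, and fix $M>0$ with $|X|\le M$ and $|Y|\le M$ almost surely. First I would choose the ambient compact set. For any probability measure $\mu$ supported in $\overline B_M\times\overline B_R$ one has $\int|y|^2\,\mu(dy)\le M^2+R^2$, so Lemma \ref{lem.sublin} gives, almost surely,
\begin{align*}
    \big|D_pH(X,Y,\mu)\big| \le C\big(1+2M+(M^2+R^2)^{1/4}\big) \le C\big(1+2M+M^{1/2}\big) + C R^{1/2}.
\end{align*}
The right-hand side is $\le R$ as soon as $R\ge\max\{4C^2,\,2C(1+2M+M^{1/2})\}$, so I fix such an $R$. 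This is the only place the structure of the problem really enters: the \emph{sublinear} (fourth-root) growth in the second moment provided by Lemma \ref{lem.sublin} is precisely what allows this inequality to close for large $R$.

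With $R$ fixed, set $\mathcal{K} := \{\mu\in\cP(\R^d\times\R^d) : \spt\mu\subseteq\overline B_M\times\overline B_R\}$, viewed as a subset of the space of finite signed Borel measures on the compact set $\overline B_M\times\overline B_R$, equipped with the topology of weak convergence (which on $\mathcal{K}$ agrees with $\bd_2$-convergence, and makes $\mathcal{K}$ a metrizable, compact, convex set by Banach--Alaoglu/Prokhorov). I would then check $T(\mathcal{K})\subseteq\mathcal{K}$: the first marginal of $T(\mu)$ is $\cL(X)$, supported in $\overline B_M$, while the displayed bound shows $\big|{-}D_pH(X,Y,\mu)\big|\le R$ almost surely whenever $\mu\in\mathcal{K}$, so the second marginal of $T(\mu)$ is supported in $\overline B_R$.

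Next I would verify continuity of $T:\mathcal{K}\to\mathcal{K}$. If $\mu_n\to\mu$ in $\bd_2$ inside $\mathcal{K}$, then since $H$ is jointly $C^2$ with bounded derivatives (Remark \ref{rmk.Ham}), in particular $\nu\mapsto D_pH(x,p,\nu)$ is continuous for fixed $(x,p)$, so for $\bP$-a.e.\ $\omega$ we get $D_pH(X(\omega),Y(\omega),\mu_n)\to D_pH(X(\omega),Y(\omega),\mu)$; these random variables are bounded by $R$, hence $\big(X,-D_pH(X,Y,\mu_n)\big)\to\big(X,-D_pH(X,Y,\mu)\big)$ almost surely and boundedly, and bounded convergence yields $T(\mu_n)\to T(\mu)$ weakly, hence in $\bd_2$. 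Then the Schauder--Tychonoff theorem (a continuous self-map of a compact convex subset of a locally convex Hausdorff topological vector space has a fixed point) produces $\mu\in\mathcal{K}$ with $T(\mu)=\mu$, i.e.\ the desired fixed point; note it automatically satisfies $\mu=\cL(X,-D_pH(X,Y,\mu))$ with first marginal $\cL(X)$.

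The only genuinely delicate point is organizational rather than analytic: because $\cP(\R^d\times\R^d)$ is not a vector space, one must first truncate to a fixed compact support --- legitimate precisely thanks to the a priori bound of Lemma \ref{lem.sublin} --- and then invoke the version of Schauder's theorem valid on compact convex subsets of a locally convex space. Everything else (invariance, continuity) is a routine boundedness-and-dominated-convergence check.
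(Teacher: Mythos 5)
Your proof is correct and follows essentially the same route as the paper: use the sublinear (fourth-root) moment bound of Lemma \ref{lem.sublin} to show the map preserves a compact convex set of compactly supported measures, check continuity, and invoke Schauder's fixed point theorem. The only cosmetic difference is that you establish continuity via pointwise convergence of $D_pH$ plus bounded convergence in the weak topology, whereas the paper works with the Kantorovich--Rubinstein metric and a quantitative $\bd_1$-Lipschitz estimate obtained by coupling; both are minor variants of the same argument.
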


\begin{proof}
    Define $\Psi:\cP_2(\R^d\times\R^d) \to \cP_2(\R^d\times\R^d)$ by $\Psi(\mu) := \cL\big(X,- D_pH(X,Y,\mu) \big)$. Denote by $\cM = \cM(\R^d \times \R^d)$ the space of signed Radon measures $\mu$ on $\R^d \times \R^d$, equipped with the Kantorovich--Rubinstein metric
    \begin{align*}
      \bd_{\text{KR}}(\mu,\nu) :=  \| \mu - \nu \|_{\text{KR}} = \sup_{\phi \text{ is 1-Lipschitz}, \,\, |\phi| \leq 1} \int_{\R^{d}\times\R^{d}} \phi \, d(\mu - \nu).
    \end{align*}
    Now for $R > 0$, consider the compact, convex subset $\cK_R$ of $(\cM, d_{\text{KR}})$ consisting of probability measures $\mu \in \cP(\R^d \times \R^d)$ such that $\mu(B_R \times B_R) = 1$, $B_R$ denoting the ball of radius $R$ in $\R^d$ centred at 0. By Lemma \ref{lem.sublin}, and the boundedness of $X$ and $Y$, we have that if $\mu \in \cK_R$, then 
    \begin{align*}
        |D_p H(X,Y,\mu)| \leq C(1 + \|X\|_{\infty} + \|Y\|_{\infty} + R^{1/2})
    \end{align*}
    almost surely, which means that for $R$ large enough, $\Psi(\cK_R \cap \cP_2(\R^d \times \R^d)) \subset \cK_R$.
    
    We next claim that $\Psi : \cK_R \cap \cP_2(\R^d \times \R^d) \to \cK_R \cap \cP_2(\R^d \times \R^d)$ is continuous with respect to the metric $\bd_{\text{KR}}$. Indeed, it is straighforward to check that $\bd_{\text{KR}}$ is equivalent to $\bd_1$ on $\cK_R \cap \cP_2(\R^d \times \R^d)$ (with a constant depending on $R$), and if $\mu^i$, $i = 1,2$ are in $\cK_R \cap \cP_2(\R^d \times \R^d)$, then by coupling
    \begin{align*}
        \bd_1\big( \Psi(\nu^1),\Psi(\nu^2) \big) \leq \E\big[ |D_p H(X,Y,\mu^1) - D_p H(X,Y,\mu^2) \big] \leq C \bd_1(\mu^1,\mu^2), 
    \end{align*}
    thanks to the Lipschitz continuity of $D_pH$ in $\mu$. 
    
    Since the restriction of $\Psi$ to $\cK_R \cap \cP_2(\R^d \times \R^d)$, i.e. $\Psi : \cK_R \cap \cP_2(\R^d \times \R^d) \to \cK_R \cap \cP_2(\R^d \times \R^d)$ is  continuous with respect to $\bd_{\text{KR}}$, we can apply Schauder's Fixed Point Theorem to conclude. 
\end{proof}

\begin{lemma} \label{lem.Phi}
    Let Assumptions \ref{assump.regularity} and \ref{assump.weakmonotone} hold. Then the formula \eqref{Phidef.RV}
    uniquenely defines a map $\Phi: \cP_2(\R^d \times \R^d) \to \cP_2(\R^d \times \R^d)$, which is Lipschitz continuous in the sense that 
    \begin{align*}
        \bd_2 \big( \Phi(\nu^1), \Phi(\nu^2) \big) \leq C \bd_2(\nu^1,\nu^2), 
    \end{align*}
    for some constant $C$.
\end{lemma}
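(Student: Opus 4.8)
The plan is to prove a single a priori estimate from which uniqueness and Lipschitz continuity follow simultaneously, and then combine it with Lemma~\ref{lem.schauder} to extend existence to all of $\cP_2(\R^d\times\R^d)$. The key preliminary observation is that, by the strict convexity and coercivity of $L$ in $a$ (Assumption~\ref{assump.regularity}), the supremum defining $H(x,p,\mu)$ is attained at the unique point $a^\ast = -D_pH(x,p,\mu)$, whose first-order optimality condition reads $D_aL(x,a^\ast,\mu) = -p$. Consequently, whenever $\alpha = -D_pH(X,Y,\mu)$ one has the pointwise identity $Y = -D_aL(X,\alpha,\mu)$; this is precisely what allows us to invoke the displacement monotonicity \eqref{dispmonotone.L}.

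Now fix $\nu^1,\nu^2\in\cP_2(\R^d\times\R^d)$ and let $\mu^1,\mu^2\in\cP_2(\R^d\times\R^d)$ be fixed points of $\mu\mapsto\cL(X^i,-D_pH(X^i,Y^i,\mu))$ respectively, where $(X^i,Y^i)\sim\nu^i$ are chosen so that $\E[|X^1-X^2|^2+|Y^1-Y^2|^2]=\bd_2^2(\nu^1,\nu^2)$. Set $\alpha^i:=-D_pH(X^i,Y^i,\mu^i)$, so that $\mu^i=\cL(X^i,\alpha^i)$, $Y^i=-D_aL(X^i,\alpha^i,\mu^i)$, and $\E[|\alpha^i|^2]<\infty$ by Lemma~\ref{lem.sublin}. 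Writing $Y^1-Y^2 = -\big(D_aL(X^1,\alpha^1,\mu^1)-D_aL(X^2,\alpha^2,\mu^2)\big)$, pairing with $\alpha^1-\alpha^2$, and applying \eqref{dispmonotone.L} (valid since $\mu^i=\cL(X^i,\alpha^i)$) gives
\[
-\E\big[(Y^1-Y^2)\cdot(\alpha^1-\alpha^2)\big] \ge C_{L,a}\,\E\big[|\alpha^1-\alpha^2|^2\big] - C_{L,x}\,\E\big[|X^1-X^2|^2\big] - \E\big[\big(D_xL(X^1,\alpha^1,\mu^1)-D_xL(X^2,\alpha^2,\mu^2)\big)\cdot(X^1-X^2)\big].
\]
The left-hand side is at most $\E[|Y^1-Y^2|\,|\alpha^1-\alpha^2|]$, while the last term on the right is controlled, using that all second derivatives of $L$ and $D_\mu L$ are bounded (so $D_xL$ is globally Lipschitz in $(x,a)$ and in $\mu$ with respect to $\bd_1\le\bd_2$), by $C\,\E\big[(|X^1-X^2|+|\alpha^1-\alpha^2|+\bd_2(\mu^1,\mu^2))\,|X^1-X^2|\big]$; here $\bd_2(\mu^1,\mu^2)$ is deterministic and, because $(X^i,\alpha^i)\sim\mu^i$, satisfies $\bd_2^2(\mu^1,\mu^2)\le\E[|X^1-X^2|^2+|\alpha^1-\alpha^2|^2]$. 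Repeated applications of Young's inequality to absorb the $\E[|\alpha^1-\alpha^2|^2]$ contributions, together with $\E[|X^1-X^2|^2],\E[|Y^1-Y^2|^2]\le\bd_2^2(\nu^1,\nu^2)$, then yield $\E[|\alpha^1-\alpha^2|^2]\le C\,\bd_2^2(\nu^1,\nu^2)$, and hence $\bd_2^2(\mu^1,\mu^2)\le\E[|X^1-X^2|^2+|\alpha^1-\alpha^2|^2]\le C\,\bd_2^2(\nu^1,\nu^2)$. Taking $\nu^1=\nu^2$ gives uniqueness of the fixed point, and for general $\nu^1,\nu^2$ this is the claimed Lipschitz estimate.

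Finally, Lemma~\ref{lem.schauder} provides a fixed point whenever $\nu$ is compactly supported, so $\Phi$ is well-defined on the $\bd_2$-dense set of such measures; the uniform Lipschitz bound just established extends $\Phi$ uniquely to a Lipschitz map on all of $\cP_2(\R^d\times\R^d)$. To check the extension still solves \eqref{Phidef.RV}, take $\nu_n\to\nu$ with $\nu_n$ compactly supported and couplings $(X_n,Y_n)\to(X,Y)$ in $L^2$; since $D_pH$ is globally Lipschitz in $(x,p)$ (its second $p$- and mixed $xp$-derivatives are bounded, cf.\ Remark~\ref{rmk.Ham}) and Lipschitz in $\mu$ with respect to $\bd_1$, and $\Phi(\nu_n)\to\Phi(\nu)$ in $\bd_2$, we get $-D_pH(X_n,Y_n,\Phi(\nu_n))\to -D_pH(X,Y,\Phi(\nu))$ in $L^2$, and passing to the limit in $\Phi(\nu_n)=\cL\big(X_n,-D_pH(X_n,Y_n,\Phi(\nu_n))\big)$ yields \eqref{Phidef.RV}. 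The main obstacle is the bookkeeping in the second paragraph: because the target $\bd_2(\mu^1,\mu^2)$ appears on the right-hand side of the monotonicity inequality, the estimate only closes after carefully absorbing it and the various cross terms via Young's inequality, and one must use the a priori finiteness of $\E[|\alpha^1-\alpha^2|^2]$ (guaranteed by Lemma~\ref{lem.sublin} since $\nu^i,\mu^i\in\cP_2$) to justify this absorption.
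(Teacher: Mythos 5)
Your proof is correct and follows essentially the same route as the paper's: Schauder existence for compactly supported data, the Legendre identity $D_aL(x,-D_pH(x,p,\mu),\mu)=-p$ combined with displacement monotonicity \eqref{dispmonotone.L} applied with $\mu^i=\cL(X^i,\alpha^i)$ to get the a priori Lipschitz/uniqueness estimate, and then extension by density. If anything, you are more explicit than the paper on two points it glosses over — absorbing the $D_xL$ cross terms (including the $\bd_2(\mu^1,\mu^2)$ contribution) via Young's inequality, and verifying that the Lipschitz extension still satisfies \eqref{Phidef.RV} by passing to the limit.
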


\begin{proof}
   Fix $\nu^1,\nu^2\in \cP_2(\R^d \times \R^d)$, and choose $(X^i,Y^i)_{i = 1,2}$ so that $\nu^i = \cL(X^i,Y^i)$, and 
    \begin{align*}
        \bd_2^2(\nu^1,\nu^2) = \E\big[ |X^1 - X^2|^2 + |Y^1 - Y^2|^2 \big].
    \end{align*}
    Suppose first that for some $R > 0$, we have $\nu^i(B_R \times B_R) = 1$, so that $(X^i,Y^i)$ are bounded random variables. Then Lemma \ref{lem.schauder} shows that we can find $(\mu^i)_{i = 1,2}$ satisfying
    \begin{align*}
        \mu^i = \cL \Big( X^i, - D_p H(X^i,Y^i,\mu^i) \Big).
    \end{align*}
    Now for notational simplicity, set $\alpha^i = - D_p H(X^i,Y^i,\mu^i)$, and set 
    \begin{align*}
        \Delta X = X^1 - X^2, \quad \Delta Y = Y^1 - Y^2, \quad \Delta \alpha = \alpha^1 - \alpha^2.
    \end{align*}
    Then by Assumption \ref{assump.monotone}, we find that
    \begin{align*}
        \E\left[ |\Delta \alpha|^2 \right] &\leq C \E\left[ \big( D_a L(X^1,\alpha^1,\mu^1) - D_a L(X^2,\alpha^2,\mu^2) \big) \cdot \Delta \alpha \right] + C\E\big[ |\Delta X|^2 \big]
        \\
        &
        = C \E\Big[ -\Delta Y \cdot \Delta \alpha \Big] +  C \E\big[ |\Delta X|^2 \big]
        \\
        &\leq \frac{1}{2} \E\big[ |\Delta \alpha|^2 \big] + C \E\big[ |X^1 - X^2|^2 + |Y^1 - Y^2|^2 \big],
    \end{align*}
    where we used the identity 
    \begin{align}\label{legendre:dual}
        D_a L(x, - D_p H(x,p,\mu),\mu) = -p.
    \end{align}
    We deduce that
    \begin{align} \label{FP.apriori}
        \bd_2^2 \big( \mu^1,\mu^2 \big) \leq \E\big[ |X^1 - X^2|^2 + |\alpha^1 - \alpha^2|^2 \big] \leq C \E\big[ |X^1 - X^2|^2 + |Y^1 - Y^2|^2  \big].
    \end{align}
    This proves that $\Phi$ is well-defined (i.e. single valued, since $\nu^1=\nu^2$ implies directly that $\mu^1=\mu^2$) on the dense subset $\cP_{\infty}(\R^d \times \R^d) \subset \cP_2(\R^d \times \R^d)$ consisting of measures with bounded support, and satisfies the stated Lipschitz bound on this set. We can thus extend $\Phi$ uniquely to a Lipschitz map on all of $\cP_2(\R^d \times \R^d)$, which still satisfies the equation \eqref{Phidef.RV}. Finally, the above stability argument shows that uniqueness holds even for $\nu \notin \cP_{\infty}(\R^d \times \R^d)$. This completes the proof.
\end{proof}

\begin{lemma} \label{lem.dispmonotonefiniteN}
   Suppose that Assumptions \ref{assump.regularity} and \ref{assump.weakmonotone} hold. Then there is a constant $C>0$ independent of $N$ such that 
    \begin{align} \label{dispmonotone.L.N}
        &\sum_{i = 1}^N \Big( \big(D_x L(x^i,a^i,m_{\bx,\ba}^{N,-i}) - D_x L(\ov{x}^i,\ov{a}^i,m_{\ov{\bx}, \ov{\ba}}^{N,-i} \big) \cdot (x^i - \ov{x}^i) 
        \nonumber \\
        &\qquad + \big(D_a L(x^i,a^i,m_{\bx,\ba}^{N,-i}) - D_a L(\ov{x}^i,\ov{a}^i,m_{\ov{\bx}, \ov{\ba}}^{N,-i} \big) \cdot (a^i - \ov{a}^i) \Big) 
        \\
        &\quad \geq C_{L,a} \sum_{i = 1}^N |a^i - \ov{a}^i|^2 - C_{L,x} \sum_{i = 1}^N |x^i - \ov{x}^i|^2 - \frac{C}{N} \sum_{i = 1}^N \Big(|a^i - \ov{a}^i|^2 +  |x^i - \ov{x}^i|^2 \Big). 
    \end{align}
    Similarly, if \eqref{dispmonotone.G} holds, there is a constant $C$ independent of $N$ such that 
    \begin{align} \label{dispmonotone.G.N}
        \sum_{i = 1}^N \big( D_x G(x^i,m_{\bx}^{N,-i}) - D_xG(\ov{x}^i,m_{\ov{\bx}}^{N,-i}) \big) \cdot (x^i - \ov{x}^i) \geq - \big(C_G + \frac{C}{N}\big) \sum_{i = 1}^N |x^i - \ov{x}^i|^2.
    \end{align}
\end{lemma}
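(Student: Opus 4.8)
The plan is to derive both \eqref{dispmonotone.L.N} and \eqref{dispmonotone.G.N} from the continuum conditions \eqref{dispmonotone.L}--\eqref{dispmonotone.G} in two steps: first ``lift'' the continuum inequality to the full empirical measure $m^N_{\bx,\ba}$ by testing it against a uniformly random index, and then quantify the error incurred in passing from $m^N_{\bx,\ba}$ to the leave-one-out measures $m^{N,-i}_{\bx,\ba}$.

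For the lifting step, let $\iota$ be uniformly distributed on $\{1,\dots,N\}$ and put $X=x^\iota$, $\alpha=a^\iota$, $\ov{X}=\ov{x}^\iota$, $\ov{\alpha}=\ov{a}^\iota$, so that $\cL(X,\alpha)=m^N_{\bx,\ba}$ and $\cL(\ov{X},\ov{\alpha})=m^N_{\ov{\bx},\ov{\ba}}$. Taking expectations over $\iota$ in \eqref{dispmonotone.L} and multiplying by $N$ yields precisely \eqref{dispmonotone.L.N}, except that every $m^{N,-i}_{\bx,\ba}$ (resp.\ $m^{N,-i}_{\ov{\bx},\ov{\ba}}$) is replaced by $m^N_{\bx,\ba}$ (resp.\ $m^N_{\ov{\bx},\ov{\ba}}$); the same device applied to \eqref{dispmonotone.G} gives the corresponding version of \eqref{dispmonotone.G.N}.

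The substance of the proof --- and the step I expect to be the main obstacle --- is controlling, for each $i$, the discrepancy
\[
\mathcal{E}_i := \big[D_aL(x^i,a^i,m^{N,-i}_{\bx,\ba}) - D_aL(x^i,a^i,m^N_{\bx,\ba})\big] - \big[D_aL(\ov{x}^i,\ov{a}^i,m^{N,-i}_{\ov{\bx},\ov{\ba}}) - D_aL(\ov{x}^i,\ov{a}^i,m^N_{\ov{\bx},\ov{\ba}})\big]
\]
and its $D_xL$-analogue, once dotted with $a^i-\ov{a}^i$ (resp.\ $x^i-\ov{x}^i$) and summed over $i$. The naive bound $|\mathcal{E}_i| \lesssim \bd_1(m^{N,-i}_{\bx,\ba},m^N_{\bx,\ba}) + \bd_1(m^{N,-i}_{\ov{\bx},\ov{\ba}},m^N_{\ov{\bx},\ov{\ba}})$ is useless: these distances are $O(1/N)$ only relative to an \emph{uncontrolled} spread of the configurations, and Cauchy--Schwarz would then leave a factor $\big(\sum_i(|x^i|^2+|a^i|^2)\big)^{1/2}$ that cannot be absorbed into the right-hand side of \eqref{dispmonotone.L.N}. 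The fix is to keep the difference structure of $\mathcal{E}_i$ and expand to second order: via the identity $m^{N,-i}_{\bx,\ba}-m^N_{\bx,\ba} = \frac1N\big(m^{N,-i}_{\bx,\ba}-\delta_{(x^i,a^i)}\big)$ and the fundamental theorem of calculus along the segment from $m^N_{\bx,\ba}$ to $m^{N,-i}_{\bx,\ba}$, the first bracket of $\mathcal{E}_i$ equals $\frac1{N(N-1)}\int_0^1\sum_{j\ne i}\big[h(x^i,a^i,m^i_t,(x^j,a^j)) - h(x^i,a^i,m^i_t,(x^i,a^i))\big]\,dt$, where $h := \frac{\delta}{\delta m}D_aL$ and $m^i_t$ is the interpolating measure (and similarly for the barred bracket). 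Regrouping the summand as $\big[h(\cdots,(x^j,a^j))-h(\cdots,(\ov{x}^j,\ov{a}^j))\big]-\big[h(\cdots,(x^i,a^i))-h(\cdots,(\ov{x}^i,\ov{a}^i))\big]$ and invoking the uniform Lipschitz continuity of $h=\frac{\delta}{\delta m}D_aL$ from Assumption \ref{assump.regularity} (with $\bd_1$ in the measure slot), the convexity of $\bd_1$, and the crude bounds $\bd_1(m^N_{\bx,\ba},m^N_{\ov{\bx},\ov{\ba}}),\,\bd_1(m^{N,-i}_{\bx,\ba},m^{N,-i}_{\ov{\bx},\ov{\ba}}) \le \frac1{N-1}\sum_j\big(|x^j-\ov{x}^j|+|a^j-\ov{a}^j|\big)$, one obtains, after summing over $j\ne i$,
\[
|\mathcal{E}_i| \le \frac{C}{N}\big(|x^i-\ov{x}^i|+|a^i-\ov{a}^i|\big) + \frac{C}{N^2}\sum_{j=1}^N\big(|x^j-\ov{x}^j|+|a^j-\ov{a}^j|\big), \qquad N\ge 2,
\]
with $C$ depending only on the Lipschitz constant of $h$.

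To conclude, dot $\mathcal{E}_i$ with $a^i-\ov{a}^i$, sum over $i$, apply Young's inequality to the first term and Cauchy--Schwarz to the second (which turns $\frac1{N^2}\big(\sum_j(\cdots)\big)\big(\sum_i|a^i-\ov{a}^i|\big)$ into a multiple of $\frac1N\sum_i(|x^i-\ov{x}^i|^2+|a^i-\ov{a}^i|^2)$); this gives $\big|\sum_i \mathcal{E}_i\cdot(a^i-\ov{a}^i)\big| \le \frac CN\sum_i(|x^i-\ov{x}^i|^2+|a^i-\ov{a}^i|^2)$, and the $D_xL$ term is identical. Combined with the lifting step, this proves \eqref{dispmonotone.L.N}. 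The estimate \eqref{dispmonotone.G.N} follows the same scheme, now using the Lipschitz continuity of $\frac{\delta}{\delta m}D_xG$ and the identity $m^{N,-i}_{\bx}-m^N_{\bx}=\frac1N(m^{N,-i}_{\bx}-\delta_{x^i})$, and is in fact simpler since there is no control variable.
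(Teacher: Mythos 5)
Your proposal is correct and follows essentially the same route as the paper: the lifting step via a uniformly random index is exactly the paper's specialization of \eqref{dispmonotone.L} to random variables with joint law $m^N_{\bx,\ov{\bx},\ba,\ov{\ba}}$, and your treatment of the leave-one-out error — expanding along the interpolation $[m^N,m^{N,-i}]_t$ via the flat derivative, keeping the difference structure, and invoking the uniform Lipschitz continuity of $\frac{\delta}{\delta m}D_xL$ and $\frac{\delta}{\delta m}D_aL$ before closing with Cauchy--Schwarz and Young — is precisely the paper's argument, down to the intermediate bound on $\mathcal{E}_i$.
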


\begin{proof}
   First, choose $\bx, \ba, \ov{\bx}, \ov{\ba} \in (\R^d)^N$. If we specialize \eqref{dispmonotone.L} to the case that the joint law of $X, \ov{X}, \alpha, \ov{\alpha}$ is 
    \begin{align} \label{jointlaw}
        \cL(X,\ov{X},\alpha, \ov{\alpha}) = m_{\bx,\ov{\bx}, \ba, \ov{\ba}}^N = \frac{1}{N} \sum_{i = 1}^N \delta_{(x^i,\ov{x}^i,a^i,\ov{a}^i)}, 
    \end{align}
    so that in particular $\cL(X, \alpha) = m_{\bx,\ba}^N$, we find that 
    \begin{align*}
        & \sum_{i = 1}^N \Big( \big(D_x L(x^i,a^i,m_{\bx,\ba}^N) - D_x L(\ov{x}^i,\ov{a}^i,m_{\ov{\bx}, \ov{\ba}}^{N}) \big) \cdot (x^i - \ov{x}^i)
        \\
        &\qquad + \big(D_a L(x^i,a^i,m_{\bx,\ba}^N) - D_a L(\ov{x}^i,\ov{a}^i,m_{\ov{\bx}, \ov{\ba}}^{N}) \big) \cdot (a^i - \ov{a}^i) \Big) \geq C_{L,a} \sum_{i = 1}^N |a^i - \ov{a}^i|^2 - C_{L,x} \sum_{i = 1}^N |x^i - \ov{x}^i|^2.
    \end{align*}
    We need to replace $m_{\bx,\ba}^{N}$ by $m_{\bx,\ba}^{N,-i}$ in the above expression, and control the resulting error.

   To this end, we write 
    \begin{align*}
       D_x L(x^i,a^i,m_{\bx,\ba}^{N,-i}) &= D_xL(x^i,a^i,m_{\bx,\ba}^N)  + \int_0^1 \int_{\R^d} \frac{\delta}{\delta m} D_x L(x^i,a^i,[m_{\bx,\ba}^N,m_{\bx,\ba}^{N,-i}]_t,y) d( m_{\bx,\ba}^{N,-i} - m_{\bx,\ba}^N)
        \\
        &= D_xL(x^i,a^i,m_{\bx,\ba}^N)  + \frac{1}{N(N-1)} \int_0^1 \sum_{j \neq i} \frac{\delta}{\delta m} D_x L(x^i, a^i, [m_{\bx,\ba}^N,m_{\bx,\ba}^{N,-i}]_t, x^j) dt 
        \\
        &\qquad - \frac{1}{N} \int_0^1 \frac{\delta}{\delta m} D_x L(x^i, a^i, [m_{\bx,\ba}^N,m_{\bx,\ba}^{N,-i}]_t, x^i),
    \end{align*}
    where we use the notation $[\mu,\nu]_t = (1-t)\nu + t\mu$. Using the Lipchitz continuity of $\frac{\delta}{\delta m} D_x L$, we find that
   \begin{align*}
       \Big|  D_xL(x^i,a^i,m_{\bx,\ba}^N) - D_x L(x^i,a^i,m_{\bx,\ba}^{N,-i}) &- \Big( D_xL(\ov{x}^i,\ov{a}^i,m_{\ov{\bx},\ov{\ba}}^N) - D_x L(\ov{x}^i,\ov{a}^i,m_{\ov{\bx},\ov{\ba}}^{N,-i}) \Big) \Big|
       \\
       &\leq \frac{C}{N} \Big(|x^i - \ov{x}^i| + |a^i - \ov{a}^i| + \frac{1}{N} \sum_{j = 1}^N |x^j - \ov{x}^j| + \frac{1}{N} \sum_{j =1}^N |a^j - \ov{a}^j| \Big).
   \end{align*}   
    Of course, an analogous bound holds with $D_aL$ replacing $D_xL$. Thus, we find that 
    \begin{align*}
    &\sum_{i = 1}^N \Big( \big(D_x L(x^i,a^i,m_{\bx,\ba}^{N,-i}) - D_x L(\ov{x}^i,\ov{a}^i,m_{\ov{\bx}, \ov{\ba}}^{N,-i} \big) \cdot (x^i - \ov{x}^i) 
        \nonumber \\
        &\qquad + \big(D_a L(x^i,a^i,m_{\bx,\ba}^{N,-i}) - D_a L(\ov{x}^i,\ov{a}^i,m_{\ov{\bx}, \ov{\ba}}^{N,-i} \big) \cdot (a^i - \ov{a}^i) \Big)
    \\
    & \geq \sum_{i = 1}^N \Big( \big(D_x L(x^i,a^i,m_{\bx,\ba}^N) - D_x L(\ov{x}^i,\ov{a}^i,m_{\ov{\bx}, \ov{\ba}}^{N}) \big) \cdot (x^i - \ov{x}^i)
        \\
        &\qquad + \big(D_a L(x^i,a^i,m_{\bx,\ba}^N) - D_a L(\ov{x}^i,\ov{a}^i,m_{\ov{\bx}, \ov{\ba}}^{N}) \big) \cdot (a^i - \ov{a}^i) \Big)
    \\
    &\qquad - \frac{C}{N} \sum_{i = 1}^N \Big(|x^i - \ov{x}^i| + |a^i - \ov{a}^i| \Big) \Big(|x^i - \ov{x}^i| + |a^i - \ov{a}^i| + \frac{1}{N} \sum_{j = 1}^N |x^j - \ov{x}^j| + \frac{1}{N} \sum_{j =1}^N |a^j - \ov{a}^j| \Big)
    \\
    &\geq C_{L,a} \sum_{i = 1}^N |a^i - \ov{a}^i|^2 - C_{L,x} \sum_{i = 1}^N |x^i - \ov{x}^i|^2 - \frac{C}{N} \sum_{i = 1}^N \big(|x^i - \ov{x}^i|^2 + |a^i - \ov{a}^i|^2\big), 
    \end{align*}
    and so the result holds for large enough $N$. The proof of \eqref{dispmonotone.G.N} is similar.
\end{proof}

\begin{lemma} \label{lem.ani}
    Let Assumptions \ref{assump.regularity} and \ref{assump.weakmonotone} hold. Then there exists $N_0 \in \N$ such that for all $N \geq N_0$, the formula \eqref{def.ani}
    uniquely defines a map $\ba^N : (\R^d)^N \times (\R^d)^N \to (\R^d)^N$, and we have 
    \begin{align} \label{an.Lip}
        \sum_{i = 1}^N |a^{N,i}(\bx,\bp) - a^{N,i}(\ov{\bx},\ov{\bp})|^2 \leq C  \left( \sum_{i = 1}^N |x^i - \ov{x}^i|^2 + \sum_{i = 1}^N |p^i - \ov{p}^i|^2\right), 
    \end{align}
    and in addition 
    \begin{align} \label{ani.Lip}
        \left|a^{N,i}(\bx,\bp) - \ba^{N,i}(\ov{\bx},\ov{\bp})\right|^2 \leq C\left( |x^i - \ov{x}^i|^2 + |p^i - \ov{p}^i|^2 + \frac{1}{N} \sum_{j \neq i} \left( |x^j - \ov{x}^j|^2 + |p^j - \ov{p}^j|^2\right)\right)
    \end{align}
    for some constant $C>0$ independent of $N$, an all $\bx, \bp \in (\R^d)^N$. As a consequence, the derivatives $D_{x^j} a^{N,i}$, $D_{p^j}a^{N,i}$ exist a.e., and satisfy 
    \begin{align*}
        \norm{ |D_{x^i} a^{N,i}|^2 + |D_{p^i} a^{N,i}|^2 + N \sum_{j \neq i} \big( |D_{x^j} a^{N,i}|^2 + |D_{p^j} a^{N,i}|^2 \big) }_{\infty} \leq C, 
    \end{align*}
    for some constant $C$ independent of $N$ and all $N$ large enough.
\end{lemma}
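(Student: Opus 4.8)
The plan is to set up the fixed-point equation \eqref{def.ani} as a fixed-point problem in the variable $\ba \in (\R^d)^N$ for each fixed $(\bx,\bp)$, and to use the strict convexity of $L$ in $a$ together with the discrete displacement monotonicity from Lemma~\ref{lem.dispmonotonefiniteN} to produce a contraction (or at least a strongly monotone, coercive operator) whose solution depends Lipschitz-continuously on the data. Concretely, $\ba$ solves \eqref{def.ani} if and only if $a^{N,i} = -D_pH(x^i,p^i,m_{\bx,\ba}^{N,-i})$ for all $i$, which by Legendre duality \eqref{legendre:dual} is equivalent to $D_aL(x^i,a^i,m_{\bx,\ba}^{N,-i}) = -p^i$. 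I would view this as $\bF^N(\bx,\ba) = -\bp$, where the $i$-th block of $\bF^N$ is $D_aL(x^i,a^i,m_{\bx,\ba}^{N,-i})$. For existence, one can either iterate the map $\ba \mapsto (-D_pH(x^i,p^i,m_{\bx,\ba}^{N,-i}))_i$ and show it is a contraction on $(\R^d)^N$ with the Euclidean norm, using that $D_pH$ is Lipschitz in $\mu$ (hence in $\ba$) with a constant that, after passing to $m^{N,-i}$, carries a gain of order $1/N$ from the $j\neq i$ terms but an $O(1)$ self-contribution — so one needs the strict convexity \eqref{strictconvexity} to beat the $O(1)$ part; or, more robustly, invoke that $\bF^N(\bx,\cdot)$ is strongly monotone on $(\R^d)^N$ for $N$ large. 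The strong monotonicity is exactly the content of \eqref{dispmonotone.L.N}: for fixed $\bx = \ov\bx$ it gives $\langle \bF^N(\bx,\ba) - \bF^N(\bx,\ov\ba), \ba - \ov\ba\rangle \ge (C_{L,a} - C/N)\,|\ba - \ov\ba|^2$, which is positive for $N \ge N_0$. Strong monotonicity plus continuity (and the coercivity/growth built into Assumption~\ref{assump.regularity}) yields a unique solution $\ba^N(\bx,\bp)$ by the standard theory of monotone operators on $\R^{dN}$, and simultaneously gives the global Lipschitz bound.

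For the Lipschitz estimate \eqref{an.Lip}: write $\ba = \ba^N(\bx,\bp)$, $\ov\ba = \ba^N(\ov\bx,\ov\bp)$, and test the two fixed-point relations against $\ba - \ov\ba$. Using $D_aL(x^i,a^i,m_{\bx,\ba}^{N,-i}) = -p^i$ and the analogue with bars, subtract and sum over $i$; the left-hand side is exactly the expression appearing in \eqref{dispmonotone.L.N} plus the ``cross'' terms coming from the fact that we are comparing at different $\bx$'s, i.e. terms of the form $(D_xL(x^i,a^i,\cdot) - D_xL(\ov x^i,\ov a^i,\cdot))\cdot(x^i - \ov x^i)$ do not appear directly, but rather we get $\sum_i (D_aL(x^i,a^i,m_{\bx,\ba}^{N,-i}) - D_aL(\ov x^i,\ov a^i,m_{\ov\bx,\ov\ba}^{N,-i}))\cdot(a^i - \ov a^i) = -\sum_i (p^i - \ov p^i)\cdot(a^i-\ov a^i)$. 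To close this I would add and subtract $D_aL(\ov x^i, \ov a^i, m_{\bx,\ba}^{N,-i})$ and $D_xL$ telescoping terms so that \eqref{dispmonotone.L.N} applies to the ``pure $a$'' part, bounding the leftover $x$-dependence by the Lipschitz bounds on the second derivatives of $L$ (Assumption~\ref{assump.regularity}): these contribute $\le C\sum_i |x^i - \ov x^i|\,|a^i - \ov a^i|$. Altogether one arrives at $(C_{L,a} - C/N)\sum_i |a^i - \ov a^i|^2 \le \sum_i |p^i - \ov p^i||a^i-\ov a^i| + C\sum_i |x^i-\ov x^i||a^i-\ov a^i|$, and Young's inequality gives \eqref{an.Lip} for $N \ge N_0$.

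For the \emph{local} estimate \eqref{ani.Lip}, the idea is that each coordinate $a^{N,i}$ depends on $\bp$ only through its own coordinate $p^i$ and on $\bx$ only through $x^i$ plus the empirical measure $m_{\bx,\ba}^{N,-i}$, which is itself a $\frac1{N-1}$-weighted average. So I would write $a^{N,i} - \ov a^{N,i} = -(D_pH(x^i,p^i,m_{\bx,\ba}^{N,-i}) - D_pH(\ov x^i,\ov p^i,m_{\ov\bx,\ov\ba}^{N,-i}))$, use Lipschitzness of $D_pH$ in $(x,p)$ and in $\mu$, and bound $\bd_2(m_{\bx,\ba}^{N,-i}, m_{\ov\bx,\ov\ba}^{N,-i})^2 \le \frac1{N-1}\sum_{j\neq i}(|x^j-\ov x^j|^2 + |a^j - \ov a^j|^2)$; then substitute the already-established global bound \eqref{an.Lip} on $\sum_j |a^j - \ov a^j|^2$ to replace the $a$-differences by $x$- and $p$-differences, yielding \eqref{ani.Lip}. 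Finally the pointwise derivative bounds follow from \eqref{an.Lip} and \eqref{ani.Lip} by a standard difference-quotient / Rademacher argument: \eqref{an.Lip} shows $\ba^N$ is globally Lipschitz hence differentiable a.e.\ with $\sum_{i,j}|D_{x^j}a^{N,i}|^2 + |D_{p^j}a^{N,i}|^2 \le C$, while \eqref{ani.Lip} refines this coordinatewise, giving $|D_{x^i}a^{N,i}|^2 + |D_{p^i}a^{N,i}|^2 \le C$ and $|D_{x^j}a^{N,i}|^2 + |D_{p^j}a^{N,i}|^2 \le C/N$ for $j \neq i$, which is the claimed bound. The main obstacle I anticipate is the bookkeeping in passing from the displacement-monotonicity inequality \eqref{dispmonotone.L.N}, which controls the \emph{combined} $x$- and $a$-differences, to an estimate that isolates $\sum_i|a^i - \ov a^i|^2$ when $\bx \neq \ov\bx$ — one has to be careful that the $D_xL$ contribution appearing there is not present in our fixed-point relation (only $D_aL = -\bp$ is), so the telescoping must be arranged so that \eqref{dispmonotone.L.N} is invoked only at equal spatial arguments, with the genuine spatial discrepancy absorbed via the bounded second derivatives of $L$; getting the $1/N$ error terms to line up correctly here is the delicate part.
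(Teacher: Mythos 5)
Your proposal is correct in substance and for the most part runs parallel to the paper's proof, with one genuinely different ingredient and one slip at the very end.

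The difference is in the existence step. The paper fixes $(\bx,\bp)$ and applies Brouwer's fixed point theorem to $\ba \mapsto \big(-D_pH(x^i,p^i,m^{N,-i}_{\bx,\ba})\big)_i$ on a large ball $B_R^N$, the invariance of the ball coming from the sublinear growth of $D_pH$ in the measure argument (Lemma \ref{lem.sublin}). Your route — rewriting \eqref{def.ani} via Legendre duality as $\bF^N(\bx,\ba)=-\bp$ with $\bF^{N,i}(\bx,\ba)=D_aL(x^i,a^i,m^{N,-i}_{\bx,\ba})$, observing that \eqref{dispmonotone.L.N} at $\bx=\ov{\bx}$ is exactly strong monotonicity of $\bF^N(\bx,\cdot)$ with constant $C_{L,a}-C/N$, and invoking the finite-dimensional Minty--Browder theorem — is a legitimate alternative that delivers existence, uniqueness and coercivity in one stroke, at the price of quoting monotone-operator theory rather than Brouwer. (A small inaccuracy in your discussion of the discarded contraction route: $\Psi^i(\ba)$ does not depend on $a^i$ at all, since $m^{N,-i}$ excludes index $i$; the obstruction is that the $N-1$ off-diagonal contributions of weight $1/(N-1)$ sum to an $O(1)$ Lipschitz constant, not a ``self-contribution''.) For the Lipschitz bound \eqref{an.Lip}, the paper applies \eqref{dispmonotone.L.N} directly to the pair $(\bx,\ba)$ versus $(\ov{\bx},\ov{\ba})$, substitutes $D_aL=-p^i$, and controls the $D_xL$ pairing by Lipschitz continuity plus Young; your telescoping so that the monotonicity is invoked only at equal spatial arguments, with the spatial discrepancy absorbed through the bounded second derivatives of $L$, is an equivalent rearrangement of the same estimate. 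The derivation of \eqref{ani.Lip} and the Rademacher step are identical to the paper's.

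The slip: your final deduction reads \eqref{ani.Lip} coordinatewise, concluding $|D_{x^j}a^{N,i}|^2+|D_{p^j}a^{N,i}|^2\le C/N$ for each $j\neq i$ and calling this ``the claimed bound''. It is not: summing $N-1$ such terms and multiplying by $N$ gives $O(N)$, whereas the lemma asserts $N\sum_{j\neq i}\big(|D_{x^j}a^{N,i}|^2+|D_{p^j}a^{N,i}|^2\big)\le C$, i.e.\ the \emph{aggregate} bound $\sum_{j\neq i}(\cdots)\le C/N$. The correct reading of \eqref{ani.Lip} is as a joint Lipschitz bound: the map $(x^j,p^j)_{j\neq i}\mapsto a^{N,i}$ is Lipschitz with constant $\sqrt{C/N}$ in the Euclidean norm of $(\R^{2d})^{N-1}$, so the entire off-$i$ block of the Jacobian has operator norm at most $\sqrt{C/N}$ and hence squared Frobenius norm at most $dC/N$, which is the stated conclusion. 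The estimate you proved already contains this; only the last inference needs to be restated.
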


\begin{proof}
    Fix $\bx, \bp \in (\R^d)^N$, and define a map $\Psi = (\Psi^1,\dots,\Psi^N) : (\R^d)^N \to (\R^d)^N$ via 
    \begin{align*}
        \Psi^i(\ba) := - D_p H(x^i,p^i, m_{\bx, \ba}^{N,-i}). 
    \end{align*}
    By Lemma \ref{lem.sublin}, there is a constant $C>0$ (which can depend on $\bx,\bp$, hence on $N$) such that
    \begin{align*}
        |\Psi^i(\ba)| \leq C + C \left( \frac{1}{N} \sum_{j = 1}^N |a^j|^2 \right)^{1/4} \leq C + C \max_{j = 1,\dots,N} |a^j|^{1/2}. 
    \end{align*}
    It follows that for large enough $R$, $\Psi(B_R^N) \subset B_R^N$, where $B_R$ is the ball of radius $R$ in $\R^d$ and $B_R^N$ is its $N$-fold product. Since $\Psi$ is continuous, we can apply Brouwer's Fixed Point Theorem to find a fixed point. Thus for each fixed $\bx, \bp \in (\R^d)^N$, there is at least one point $\ba \in (\R^d)^N$ satisfying 
    \begin{align*}
        a^i = -D_pH(x^i,p^i,m_{\ba}^{N,-i}).
    \end{align*}
    Now suppose we are given $\bx, \bp, \ov{\bx}, \ov{\bp}\in (\R^{d})^{N}$, and that $\ba = (a^1,\dots,a^N)$ and $\ov{\ba} = (\ov{a}^1,\dots,\ov{a}^N)$ satisfy the equations
    \begin{align*}
        a^i = - D_p H(x^i,p^i, m_{\ba}^{N,-i}), \quad  \ov{a}^i = - D_p H(\ov{x}^i,\ov{p}^i, m_{\ov{\ba}}^{N,-i}).
    \end{align*}
    Use Lemma \ref{lem.dispmonotonefiniteN} to find that for some $N_0 \in \N$ and all $N \geq N_0$, we have
    \begin{align*}
        \sum_{i = 1}^N |a^i - \ov{a}^i|^2 & \leq C \sum_{i = 1}^N |x^i - \ov{x}^i|^2 + {C}\sum_{i = 1}^N \Big( \big(D_x L(x^i,a^i,m_{\bx,\ba}^{N,-i}) - D_x L(\ov{x}^i,\ov{a}^i,m_{\ov{\bx}, \ov{\ba}}^{N,-i} \big) \cdot (x^i - \ov{x}^i) 
        \\
        &\qquad\qquad + \big(D_a L(x^i,a^i,m_{\bx,\ba}^{N,-i}) - D_a L(\ov{x}^i,\ov{a}^i,m_{\ov{\bx}, \ov{\ba}}^{N,-i} \big) \cdot (a^i - \ov{a}^i) \Big)
        \\
        &= {C} \sum_{i = 1}^N |x^i - \ov{x}^i|^2 + { C}\sum_{i = 1}^N \Big( \big(D_x L(x^i,a^i,m_{\bx,\ba}^{N,-i}) - D_x L(\ov{x}^i,\ov{a}^i,m_{\ov{\bx}, \ov{\ba}}^{N,-i} \big) \cdot (x^i - \ov{x}^i) 
        \\
        &\qquad\qquad 
        - \big(p^i - \ov{p}^i\big) \cdot (a^i - \ov{a}^i) \Big)
    \end{align*}
    where we have again used the identity 
    \begin{align*}
        p = - D_a L(x,-D_pH(x,p,\mu),\mu).
    \end{align*}
To conclude with the desired estimates, let us observe that by the Lipschitz continuity of $D_xL$ (when this takes place with respect to $\bd_1$ in the measure component) and multiple use of Young's and Cauchy--Schwarz's inequalities, we have that there exists a constant $C>0$ independent of $N$, which might change from line to line, such that for all $\varepsilon>0$ we have
\begin{align*}
\sum_{i = 1}^N \Big( \big(D_x L(x^i,a^i,m_{\bx,\ba}^{N,-i}) &- D_x L(\ov{x}^i,\ov{a}^i,m_{\ov{\bx}, \ov{\ba}}^{N,-i} \big) \cdot (x^i - \ov{x}^i)\Big)\\
&\leq C\sum_{i = 1}^N \left(|x^i-\ov{x}^i| + |a^i-\ov{a}^i| + \frac{1}{N-1}\sum_{j\neq i}\left(|x^j-\ov{x}^j| + |a^j-\ov{a}^j|\right) \right)|x^i-\ov{x}^i|\\
&\leq C(1+1/\varepsilon ) \sum_{i = 1}^N |x^i-\ov{x}^i|^2 + \varepsilon \sum_{i = 1}^N |a^i-\ov{a}^i|^2.
\end{align*}
Handling similarly the term $\sum_{i = 1}^N \big(p^i - \ov{p}^i\big) \cdot (a^i - \ov{a}^i),$ by choosing $\varepsilon>0$ small enough, after rearranging the terms we find
\begin{align*}
 \sum_{i = 1}^N |a^i - \ov{a}^i|^2 \le C\left(|x^i - \ov{x}^i|^2 + |p^i - \ov{p}^i|^2  \right),
\end{align*}
which gives precisely \eqref{an.Lip}.

    Now we can easily infer \eqref{ani.Lip} from \eqref{def.ani} and \eqref{an.Lip}. Indeed, by the Lipschitz continuity of $D_pH$ (with respect to $\bd_1$ in the measure component) we have that there exists $C>0$ such that
    
\begin{align*}
 |a^i - \ov{a}^i| &\le \left|D_p H(x^i,p^i, m_{\bx, \ba^N(\bx,\bp)}^{N,-i}) - D_p H(\ov{x}^i,\ov{p}^i, m_{\bx, \ba^N(\ov\bx,\ov\bp)}^{N,-i}) \right|\\
 &\le C\left(|x^i - \ov{x}^i| + |p^i-\ov{p}^i|+\frac{1}{N-1}\sum_{j\neq i}\left(|x^j-\ov{x}^j| + |a^j-\ov{a}^j|\right) \right)
\end{align*}    
 We conclude by squaring the previous inequality, and using Cauchy--Schwarz and Young inequalities.   
    
\end{proof}

\begin{lemma}\label{lem:FPmap_disc} Let Assumptions \ref{assump.regularity} and \ref{assump.weakmonotone} hold.
    Let ${\Phi} : \cP_2(\R^d \times \R^d) \to \cP_2(\R^d \times \R^d)$ be defined by \eqref{def.Phi}. Let $N$ be large enough, and let $\ba^N : (\R^d)^N \times (\R^d)^N \to (\R^d)^N$ be defined by \eqref{def.ani}. Then there exists $N_0 \in \N$ and a constant $C$ independent of $N$ such that
    \begin{align*}
         \bd_2^{2} \Big( \Phi(m_{\bx,\bp}^N), {m}^N_{\bx, \ba^N(\bx,\bp)} \Big) \leq \frac{C}{N^2} \sum_{i = 1}^N \big(|x^i|^2 + |p^i|^2 \big)
    \end{align*}
    for all $N \geq N_0$ each $\bx, \bp \in (\R^d)^N.$ 
\end{lemma}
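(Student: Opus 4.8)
The plan is to compare the two fixed points by a coupling/stability argument, playing off the already-established Lipschitz bounds for $\Phi$ (Lemma~\ref{lem.Phi}) against the Lipschitz bound for $\ba^N$ (Lemma~\ref{lem.ani}), and quantifying the discrepancy between the empirical measure $m^N_{\bx,\ba}$ and the conditioned version $m^{N,-i}_{\bx,\ba}$ that appears in the definition of $\ba^N$. Fix $\bx,\bp\in(\R^d)^N$, write $\ba = \ba^N(\bx,\bp)$, and let $\nu := m^N_{\bx,\bp}$, so the target is to control $\bd_2^2\big(\Phi(\nu), m^N_{\bx,\ba}\big)$. First I would introduce the measure $\mu^\star := \Phi(\nu)$, which by definition (in the random-variable form \eqref{Phidef.RV}) satisfies $\mu^\star = \cL\big(X, -D_pH(X,Y,\mu^\star)\big)$ where $(X,Y)\sim\nu$; concretely, taking $(X,Y)$ uniform on the atoms $(x^i,p^i)$, we have $\mu^\star = \frac1N\sum_i \delta_{(x^i, -D_pH(x^i,p^i,\mu^\star))}$. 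Set $b^i := -D_pH(x^i,p^i,\mu^\star)$. Then $m^N_{\bx,\ba}$ and $\mu^\star$ are both supported on $N$ atoms indexed by $i$, with the same $x$-coordinates $x^i$, so using the obvious synchronous coupling,
\begin{align*}
    \bd_2^2\big(\Phi(\nu), m^N_{\bx,\ba}\big) \le \frac1N\sum_{i=1}^N |a^i - b^i|^2.
\end{align*}

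Next I would estimate $|a^i - b^i|$ atom by atom. Since $a^i = -D_pH(x^i,p^i,m^{N,-i}_{\bx,\ba})$ and $b^i = -D_pH(x^i,p^i,\mu^\star)$, the Lipschitz continuity of $D_pH$ in the measure variable (with respect to $\bd_1$, hence also $\bd_2$) gives
\begin{align*}
    |a^i - b^i| \le C\,\bd_2\big(m^{N,-i}_{\bx,\ba}, \mu^\star\big) \le C\,\bd_2\big(m^{N,-i}_{\bx,\ba}, m^N_{\bx,\ba}\big) + C\,\bd_2\big(m^N_{\bx,\ba}, \mu^\star\big).
\end{align*}
The first term is the ``leave-one-out'' error: a standard computation shows $\bd_2^2\big(m^{N,-i}_{\bx,\ba}, m^N_{\bx,\ba}\big) \le \frac{C}{N^2}\sum_{j=1}^N\big(|x^j - x^i|^2 + |a^j - a^i|^2\big)$ — remove the $i$-th atom and redistribute mass — which after summing over $i$ and dividing by $N$ contributes a term of order $\frac{1}{N^2}\sum_j(|x^j|^2 + |a^j|^2)$, and this is acceptable once one knows (from \eqref{an.Lip} applied with $\ov{\bx} = \ov{\bp} = 0$, together with boundedness of $\ba^N(0,0)$, which follows from the same Brouwer argument and Lemma~\ref{lem.sublin}) that $\sum_j |a^j|^2 \le C\sum_j(|x^j|^2 + |p^j|^2) + CN$. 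The genuinely circular-looking term is $\bd_2\big(m^N_{\bx,\ba}, \mu^\star\big)$, which is exactly the square root of what we are trying to bound. So, squaring, summing, and using the leave-one-out estimate, we arrive at an inequality of the form
\begin{align*}
    D := \bd_2^2\big(\Phi(\nu), m^N_{\bx,\ba}\big) \le \frac1N\sum_i |a^i - b^i|^2 \le \frac{C}{N^2}\sum_i\big(|x^i|^2 + |p^i|^2\big) + \frac{C}{N^2}\cdot(\text{stuff}) + \frac{C}{N}\cdot N \cdot D \cdot \tfrac{1}{N}\,,
\end{align*}
and the key point is that the coefficient in front of $D$ on the right-hand side must be shown to be $\le \tfrac12$ (or at least $<1$) for $N$ large. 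This is where displacement monotonicity has to re-enter: a naive application of Lipschitz continuity of $D_pH$ would give coefficient $C$, not a small constant, so instead I would mimic the monotonicity argument from the proof of Lemma~\ref{lem.Phi}. Specifically, pair the identity $a^i = -D_pH(x^i,p^i,m^{N,-i}_{\bx,\ba})$ with $b^i = -D_pH(x^i,p^i,\mu^\star)$ via the Fenchel relations $p^i = -D_aL(x^i,a^i,m^{N,-i}_{\bx,\ba})$ and $p^i = -D_aL(x^i,b^i,\mu^\star)$, subtract, dot with $a^i - b^i$, and sum over $i$; the $p^i$ terms cancel, and the displacement monotonicity of $L$ in the form of Lemma~\ref{lem.dispmonotonefiniteN} (applied to the configuration with states $x^i$ and controls $a^i$ versus $b^i$, same states) yields $C_{L,a}\sum_i|a^i-b^i|^2 \le C_{L,x}\cdot 0 + (\text{error from measure-argument mismatch})$, where the error is controlled by the Lipschitz continuity of $D_aL$ in measure times $\bd_1(m^{N,-i}_{\bx,\ba},\mu^\star)$, which is again $O(\bd_2(m^N_{\bx,\ba},\mu^\star))$ plus the leave-one-out term, but now the error comes with an extra factor that can be absorbed for large $N$.

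Collecting everything, the inequality becomes $D \le \tfrac{C}{N^2}\sum_i(|x^i|^2+|p^i|^2) + \tfrac12 D + (\text{lower-order})$, which I rearrange to get $D \le \tfrac{C}{N^2}\sum_i(|x^i|^2+|p^i|^2)$, completing the proof. The main obstacle, as indicated, is precisely making the ``self-referential'' term $\bd_2(m^N_{\bx,\ba},\mu^\star)$ appear with a strictly-less-than-one coefficient: Lipschitz continuity alone is insufficient, and one must feed in the strict convexity of $L$ in $a$ (equivalently displacement monotonicity in the control, i.e. $C_{L,a}>0$) exactly as in Lemma~\ref{lem.Phi}, while carefully tracking that the finite-$N$ corrections (the leave-one-out discrepancies and the mismatch between $m^{N,-i}$ and $m^N$ inside the measure arguments) are genuinely $O(1/N)$ relative to the main terms, so that they can be absorbed once $N \ge N_0$. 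A secondary technical point is justifying the a priori bound $\sum_i|a^i|^2 \le C\sum_i(|x^i|^2+|p^i|^2) + CN$ needed to close the leave-one-out estimate; this follows from \eqref{an.Lip} and a uniform bound on $\ba^N(0,0)$, the latter obtained by re-running the Brouwer fixed-point argument of Lemma~\ref{lem.ani} at $(\bx,\bp)=(0,0)$ and observing (via Lemma~\ref{lem.sublin}) that the radius $R$ can be taken independent of $N$.
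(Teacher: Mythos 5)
Your proposal follows essentially the same route as the paper's proof: couple $\Phi(m^N_{\bx,\bp})$ and $m^N_{\bx,\ba^N(\bx,\bp)}$ atom by atom (they share the same $x$-marginal), use displacement monotonicity in the control variable together with the Legendre identities $p^i=-D_aL(x^i,a^i,m^{N,-i}_{\bx,\ba})$ and $p^i=-D_aL(x^i,b^i,\Phi(m^N_{\bx,\bp}))$ to cancel the order-one self-referential term that naive Lipschitz estimation would produce, and reduce everything to the leave-one-out discrepancy between $m^{N,-i}_{\bx,\ba}$ and $m^N_{\bx,\ba}$ plus an a priori linear-growth bound on $\ba^N$. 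This is exactly the paper's argument (phrased there with random variables on a partition of $\Omega$ and with \eqref{dispmonotone.L} applied directly to the laws $\cL(X,\alpha)$ and $\cL(X,\ov\alpha)$), and your route to the uniform bound on $\ba^N(\bm 0,\bm 0)$ via Lemma \ref{lem.sublin} is a valid alternative to the paper's bootstrap from \eqref{alphabound}.

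One quantitative caveat: with the a priori bound $\sum_j|a^j|^2\le C\sum_j(|x^j|^2+|p^j|^2)+CN$, the leave-one-out term contributes an additional $C/N$, so as written you obtain $\bd_2^2\le \frac{C}{N^2}\sum_i(|x^i|^2+|p^i|^2)+C/N$ rather than the stated inequality. To remove the additive $C/N$, keep the leave-one-out cost in its centered form $\frac{C}{N^2}\sum_j(|x^j-x^i|^2+|a^j-a^i|^2)$ and use $\sum_{i,j}|a^i-a^j|^2\le 4N\sum_j|a^{N,j}(\bx,\bp)-a^{N,j}(\bm 0,\bm 0)|^2\le CN\sum_j(|x^j|^2+|p^j|^2)$, which follows from \eqref{an.Lip} and the fact that $a^{N,j}(\bm 0,\bm 0)$ is independent of $j$ by symmetry; the extra $C/N$ would in any case be harmless downstream, since $1/N\lesssim r_{d,p}(N)$ in Theorem \ref{thm.OLconvergence}.
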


\begin{proof}
    Let us start by fixing random variables $X$ and $Y$ such that $\cL(X,Y) = m_{\bx,\bp}^N$. In fact, it will be convenient to be more concrete here, setting $(X,Y) = \sum_{i = 1}^N (x^i,p^i) {\bf1}_{\Omega^i}$, where $\Omega^1,\dots,\Omega^N$ is a partition of $\Omega$ into sets of equal probability. Let us now set 
    \begin{align*}
        \alpha := - D_pH\left(X,Y,\Phi\left(m_{\bx,\bp}^N\right)\right), \quad \ov{\alpha} := \sum_{i=1}^N a^i(\bx,\bp) {\bf1}_{\Omega^i}.
    \end{align*}
    Notice that 
    \begin{align*}
        \Phi(m_{\bx,\bp}^N) = \cL(X, \alpha), \quad m_{\bx, \ba^N(\bx,\bp)}^{N} = \cL(X, \ov{\alpha}).
    \end{align*}
    Moreover, using \eqref{dispmonotone.L} (since the $X$ component is the same) we have 
    \begin{align*}
        \E\big[|\alpha - \ov{\alpha}|^2 \big] &\leq C \E\Big[(\alpha - \ov{\alpha}) \cdot \big( D_a L(X,\alpha, \cL(X,\alpha)) - D_a L(X,\ov{\alpha}, \cL(X, \ov{\alpha}) ) \big) \Big]
        \\
        &= C \E\Big[ (\alpha - \ov{\alpha}) \cdot \big( D_a L(X, - D_p H(X,Y, \cL(X,\alpha)), \cL(X,\alpha)) 
        \\
        &\qquad  
        - D_a L(X, - D_p H(X,Y, \cL(X,\ov{\alpha})), \cL(X,\ov{\alpha}))  \big)
        \\
        &\qquad +  (\alpha - \ov{\alpha}) \cdot \big( D_a L(X, - D_p H(X,Y, \cL(X,\ov{\alpha})), \cL(X, \ov{\alpha})) 
        - D_a L(X, \ov{\alpha}, \cL(X,\ov{\alpha}))  \big) \Big]
        \\
        &= C\E\Big[ (\alpha - \ov{\alpha}) \cdot \big( D_a L(X, - D_p H(X,Y, \cL(X,\ov{\alpha})), \cL(X, \ov{\alpha})) 
        - D_a L(X, \ov{\alpha}, \cL(X,\ov{\alpha}))  \big) \Big]
        \\
        &\leq \frac{1}{2} \E\big[ |\alpha - \ov{\alpha}|^2 \big] + C \E\big[|-D_pH(X,Y,\cL(X,\ov{\alpha})) - \ov{\alpha}|^2]
        \\
        &{=} \frac{1}{2}\E\big[ |\alpha - \ov{\alpha}|^2 \big] + \frac{C}{N} \sum_{i = 1}^N \left|D_p H(x^i,y^i, \cL(X, \ov{\alpha})) - D_p H\left(x^i,y^i,m_{\bx,\ba^N(\bx,\bp)}^{N,-i}\right)\right|^2
        \\
        &= \frac{1}{2}\E\big[ |\alpha - \ov{\alpha}|^2 \big] + \frac{C}{N} \sum_{i = 1}^N \left|D_p H(x^i,y^i, m_{\bx,\ba(\bx,\bp)}^{N}) - D_p H\left(x^i,y^i,m_{\bx,\ba^N(\bx,\bp)}^{N,-i}\right)\right|^2
        \\
        &\leq  \frac{1}{2}\E\big[ |\alpha - \ov{\alpha}|^2 \big] + \frac{C}{N} \sum_{i = 1}^N \bd_2^2\big(m_{\bx, \ba^N(\bx,\bp)}^{N,-i}, m_{\bx, \ba^N(\bx,\bp)}^{N}\big)
        \\
        &\leq \frac{1}{2}\E\big[ |\alpha - \ov{\alpha}|^2 \big] + \frac{C}{N^2} \sum_{i = 1}^N |\ba^{N,i}(\bx,\bp)|^2 + \frac{C}{N^2} \sum_{i = 1}^N |x^i|^2,
    \end{align*}
 where in the second inequality we have used Young's inequality and Lipschitz continuity of $D_{a}L$ and in the penultimate inequality we have used the Lipschitz continuity of $D_{p}H$ in the measure component with respect to $\bd_{2}$.  
    Thus, 
    \begin{align} \label{alphabound}
       \bd_2^2 \Big( \Phi(m_{\bx,\bp}^N), {m}^N_{\bx, \ba^N(\bx,\bp)} \Big) \leq \E\big[ |\alpha - \ov{\alpha}|^2 ] \leq  \frac{C}{N^2} \sum_{i = 1}^N |\ba^{N,i}(\bx,\bp)|^2 {+ \frac{C}{N^2} \sum_{i = 1}^N |x^i|^2}.
    \end{align}
    To complete the proof, we need to only show that there is a constant $C>0$ independent of $N$ such that for $N\in\N$ large enough
    \begin{align} \label{anlingrowth}
        \sum_{i = 1}^N |a^{N,i}(\bx,\bp)|^2 \leq C \sum_{i = 1}^N \big(|x^i|^2 + |p^i|^2 \big).
    \end{align}
    To this end, we use \eqref{alphabound} to estimate 
    \begin{align*}
        \frac{1}{N} \sum_{i = 1}^N |a^{N,i}(\bm{0},\bm{0})|^2 &\leq 2 M_2^2 \big(\Phi(m_{\bm{0},\bm{0}}^N)) + 2 \bd_2^2 \Big( \Phi(m_{\bm{0},\bm{0}}^N), {m}^N_{\bm{0}, \ba^N(\bm{0},\bm{0})} \Big)
        \\
        &\leq C + \frac{C}{N^2} \sum_{i = 1}^N |\ba^{N,i}(\bm{0},\bm{0})|^2.
    \end{align*}
    For large enough $N$ we thus have 
    \begin{align*}
        \frac{1}{N} \sum_{i = 1}^N |a^{N,i}(\bm{0},\bm{0})|^2 & \leq C.
    \end{align*}
    Keeping in mind that $a^{N,i}(\bm{0},\bm{0}) = a^{N,j}(\bm{0},\bm{0})$ for $i \neq j$ by symmetry, we see that for $N\in\N$ large enough we have a bound on $|a^{N,i}(\bm{0}, \bm{0})|$ which is independent of $N$ and $i$. Together with the Lipschitz bound from Lemma \ref{lem.ani}, this is enough to obtain \eqref{anlingrowth}, which completes the proof.
\end{proof}

\section{Uniform in $N$ stability for the Pontryagin system}\label{sec:four}

The following proposition shows that under displacement monotonicity, the $N$-player Pontryagin enjoys certain dimension-free stability properties.

\begin{proposition} \label{prop.uniformstability} Let Assumption \ref{assump.regularity} and \ref{assump.monotone} hold. Then there are constants $N_0\in\N$ and $C>0$ with the following property. Suppose that for some $N \geq N_0$, and $t_0 \in [0,T)$, $(\xi^i)_{i = 1,\dots,N}$ we have a solution $(\bX,\bY,\bZ)$ to the Pontryagin system \eqref{pontryaginNplayer}, for some initial condition $\bxi = (\xi^1,\dots,\xi^N) \in L^2(\cF_{t_0} ; (\R^d)^N)$. Suppose further that we have processes $(\hat{\bX}, \hat{\bY}, \hat{\bZ})$ satisfying 
   \begin{align} \label{pontryagin.errors}
    \begin{cases}
        d\hat{X}_t^i = \Big(- D_p H \big(\hat{X}_t^i,\hat{Y}_t^i, m_{\hat{\bX}_t, \ba^N(\hat{\bX}_t,\hat{\bY}_t)}^{N,-i} \big) + E_t^{1,i} \Big)dt + \sqrt{2} dW_t^i + \sqrt{2 \sigma_0} dW_t^0, 
       \vspace{.1cm} \\
        d\hat{Y}_t^i = \Big(D_x H \big( \hat{X}_t^i, \hat{Y}_t^i, m_{\hat{\bX}_t, \ba^N(\hat{\bX_t},\hat{\bY_t})}^{N,-i} \big) + E_t^{2,i} \Big)dt + \sum_{j = 0}^N \hat{Z}_t^{i,j} dW_t^j
       \vspace{.1cm} \\
        \hat{X}_{t_0}^i = \hat{\xi^i}, \quad \hat{Y}_T^i = D_x G(\hat{X}_T^i,m_{\hat{\bX}_T}^{N,-i}) + E^{3,i}, 
    \end{cases}
\end{align}
for some $\hat{\bxi} = (\hat{\xi}^1,\dots,\hat{\xi}^N) \in L^2(\cF_{t_0} ; (\R^d)^N)$, progressively measurable processes $E^{1,i}, E^{2,i}$, and $E^{3,i}\in L^2(\cF_{T} ; \R^d),$ $i=1,\dots,N$. 
Then we have the bound 
\begin{align*}
    \E\Big[ \sup_{t_0 \leq t \leq T} \sum_{i = 1}^N |X_t^i - \hat{X}_t^i|^2  \Big] &+ \sup_{t_0 \leq t \leq T}\E\Big[ \sum_{i = 1}^N |Y_t^i - \hat{Y}_t^i|^2 \Big] + \E\bigg[ \int_{t_0}^T \sum_{i = 1}^N |\alpha_t^i - \hat{\alpha}_t^i|^2 dt \bigg] 
    \\
    &\leq \E\bigg[ \sum_{i = 1}^N \Big( |\xi^i - \hat{\xi}^i|^2 + |E^{3,i}|^2\Big) + \int_{t_0}^T \sum_{i = 1}^N \Big(|E_t^{1,i}|^2 + |E_t^{2,i}|^2 \Big)dt \bigg], 
\end{align*}
where 
\begin{align*}
    \alpha_t^i = - D_p H \big(X_t^{i},Y_t^{i}, m_{\bX_t, \ba^N(\bX_t,\bY_t)}^{N,-i} \big), \quad \hat{\alpha}_t^i = - D_p H \big(\hat{X}_t^i, \hat{Y}_t^i, m_{\hat{\bX}_t, \ba^N(\hat{\bX}_t^N,\hat{\bY}_t^N)}^{N,-i} \big).
\end{align*}
        
\end{proposition}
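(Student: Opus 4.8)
The plan is to run a synchronous coupling argument, working throughout with the aggregated quantities $S_X(t):=\sum_{i=1}^N|X^i_t-\hat{X}^i_t|^2$, $S_Y(t):=\sum_{i=1}^N|Y^i_t-\hat{Y}^i_t|^2$, $S_\alpha(t):=\sum_{i=1}^N|\alpha^i_t-\hat{\alpha}^i_t|^2$ (and analogously $S_{E^1},S_{E^2},S_{E^3}$ for the perturbation terms), and exploiting the $N$-player displacement monotonicity of Lemma~\ref{lem.dispmonotonefiniteN} together with the Legendre duality between $H$ and $L$. Write $\Delta X^i:=X^i-\hat{X}^i$ and likewise $\Delta Y^i,\Delta\alpha^i,\Delta\xi^i$. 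By definition of $\ba^N$ (see \eqref{def.ani}) one has $\alpha^i_t=-D_pH(X^i_t,Y^i_t,m^{N,-i}_{\bX_t,\balpha_t})$ with $\balpha_t=\ba^N(\bX_t,\bY_t)$, and similarly for the hatted objects, so the drift of $\Delta X^i$ is exactly $\Delta\alpha^i_t-E^{1,i}_t$, the common noise cancels, and $\Delta X^i_t=\Delta\xi^i+\int_{t_0}^t(\Delta\alpha^i_s-E^{1,i}_s)\,ds$ is a finite-variation process. I would then apply It\^o's formula to $\sum_i\Delta X^i_t\cdot\Delta Y^i_t$ on $[t_0,T]$ and take expectations; since $\Delta X^i$ has no martingale part the quadratic covariation vanishes and the stochastic integrals against $W^j$ are genuine martingales after a routine localisation using the $L^2$ bounds. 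Using the envelope identities $D_xH(x,p,\mu)=-D_xL(x,-D_pH(x,p,\mu),\mu)$ (Remark~\ref{rmk.Ham}) and $D_aL(x,-D_pH(x,p,\mu),\mu)=-p$ (see \eqref{legendre:dual}), together with $-D_pH(X^i_t,Y^i_t,m^{N,-i}_{\bX_t,\balpha_t})=\alpha^i_t$, one obtains $Y^i_t=-D_aL(X^i_t,\alpha^i_t,m^{N,-i}_{\bX_t,\balpha_t})$ and $D_xH(X^i_t,Y^i_t,m^{N,-i}_{\bX_t,\balpha_t})=-D_xL(X^i_t,\alpha^i_t,m^{N,-i}_{\bX_t,\balpha_t})$; substituting, and using the terminal condition $\Delta Y^i_T=(D_xG_i-\widehat{D_xG}_i)-E^{3,i}$ — where $D_xL_i,D_aL_i,D_xG_i$ denote $L,G$ evaluated at the unhatted data and $\widehat{(\,\cdot\,)}_i$ the hatted analogues — the resulting identity reads
$$
\E\!\bigg[\int_{t_0}^T\!\!\sum_i\Big((D_xL_i-\widehat{D_xL}_i)\cdot\Delta X^i+(D_aL_i-\widehat{D_aL}_i)\cdot\Delta\alpha^i\Big)\,dt+\sum_i(D_xG_i-\widehat{D_xG}_i)\cdot\Delta X^i_T\bigg]=R,
$$
with $R:=\E\big[\sum_i\Delta\xi^i\cdot\Delta Y^i_{t_0}+\sum_i\Delta X^i_T\cdot E^{3,i}-\int_{t_0}^T\sum_i(\Delta X^i\cdot E^{2,i}+\Delta Y^i\cdot E^{1,i})\,dt\big]$ collecting the perturbations.

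Applied pointwise in $t$ — with the roles of $(\bx,\ba)$ and $(\ov{\bx},\ov{\ba})$ played by $(\bX_t,\balpha_t)$ and $(\hat{\bX}_t,\hat{\balpha}_t)$, which is legitimate precisely because $\balpha_t=\ba^N(\bX_t,\bY_t)$ so that the empirical measures there take exactly the required form — Lemma~\ref{lem.dispmonotonefiniteN} bounds the left-hand side of this identity below by
$$
C_{L,a}\,\E\!\int_{t_0}^T\! S_\alpha\,dt-C_{L,x}\,\E\!\int_{t_0}^T\! S_X\,dt-C_G\,\E[S_X(T)]-\tfrac{C}{N}\Big(\E\!\int_{t_0}^T(S_\alpha+S_X)\,dt+\E[S_X(T)]\Big).
$$
On the other hand, from the representation of $\Delta X^i$ and Cauchy--Schwarz one gets, for every $\delta>0$, the \emph{sharp} forward bounds $\E[S_X(T)]\le(1+\delta)\,T\,\E\!\int_{t_0}^T\! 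S_\alpha\,dt+C_\delta\mathcal E$, $\E\!\int_{t_0}^T\! S_X\,dt\le(1+\delta)\tfrac{T^2}{2}\,\E\!\int_{t_0}^T\! S_\alpha\,dt+C_\delta\mathcal E$ and $\E\sup_{t_0\le t\le T}S_X(t)\le C\,\E\!\int_{t_0}^T\! S_\alpha\,dt+C\mathcal E$, where $\mathcal E:=\E\big[\sum_i(|\Delta\xi^i|^2+|E^{3,i}|^2)+\int_{t_0}^T\sum_i(|E^{1,i}_t|^2+|E^{2,i}_t|^2)\,dt\big]$. The decisive point is that the coefficients $T$ and $T^2/2$ match exactly those appearing in the definition of $C_{\dis}$ in \eqref{disp:const}.

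Next I would control $\Delta Y$ from its backward equation, $\Delta Y^i_t=\E_t[\Delta Y^i_T]+\E_t\!\int_t^T(D_xH_i-\widehat{D_xH}_i-E^{2,i}_s)\,ds$: global Lipschitz continuity of $D_xG$ and $D_xH$ (Remark~\ref{rmk.Ham}) gives $\sum_i|D_xG_i-\widehat{D_xG}_i|^2\le C\,S_X(T)$ and $\sum_i|D_xH_i-\widehat{D_xH}_i|^2\le C(S_X+S_Y+S_\alpha)$, and \eqref{an.Lip} gives $S_\alpha\le C(S_X+S_Y)$, so a backward Gronwall argument yields $\sup_{t_0\le t\le T}\E[S_Y(t)]\le C\big(\E[S_X(T)]+\E\!\int_{t_0}^T\! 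S_X\,dt+\mathcal E\big)\le C\big(\E\!\int_{t_0}^T\! S_\alpha\,dt+\mathcal E\big)$, the last step using the forward bounds. Finally, I would insert the forward and backward bounds into the inequality obtained by combining the duality identity with the monotonicity lower bound, treating each term of $R$ by Young's inequality (e.g.\ $\E\sum_i\Delta\xi^i\cdot\Delta Y^i_{t_0}\le\eta\,\E[S_Y(t_0)]+\tfrac1{4\eta}\E\sum_i|\Delta\xi^i|^2$, then invoking the $\Delta Y$-bound; $\E\sum_i\Delta X^i_T\cdot E^{3,i}\le\varepsilon\,\E[S_X(T)]+\tfrac1{4\varepsilon}\E[S_{E^3}]$; and so on). Collecting all multiples of $\E\!\int_{t_0}^T\! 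S_\alpha\,dt$ on the right, its total coefficient is at most $(1+\delta)\big(\tfrac{T^2}{2}C_{L,x}+T C_G\big)+\tfrac{C(1+T^2)}{N}+C(1+T^2)(\eta+\varepsilon+\cdots)$; choosing $\delta,\eta,\varepsilon,\dots$ small (depending only on the data and $T$) and then $N\ge N_0$ large makes this strictly smaller than $C_{L,a}$, exactly because $C_{\dis}>0$. The $S_\alpha$ term is thus absorbed on the left, yielding $\E\!\int_{t_0}^T\! S_\alpha\,dt\le C\mathcal E$; substituting back gives $\E\sup_{t_0\le t\le T}S_X(t)\le C\mathcal E$ and $\sup_{t_0\le t\le T}\E[S_Y(t)]\le C\mathcal E$, which is the asserted estimate (with the constant from the statement).

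The main obstacle I anticipate is not any single computation but the bookkeeping needed to keep the whole estimate dimension-free: everything must be carried in the aggregated $\ell^2$ form, the uniform-in-$N$ monotonicity of Lemma~\ref{lem.dispmonotonefiniteN} and the uniform-in-$N$ Lipschitz bound \eqref{an.Lip} for $\ba^N$ must be invoked repeatedly, and one must check that the $O(1/N)$ corrections generated at each use can all be absorbed once $N$ is large. The second delicate point is to track the exact constants $T$ and $T^2/2$ produced by the forward equation, so that the final absorption uses precisely the sharp threshold $C_{\dis}>0$ of \eqref{disp:const} rather than a smallness condition on $T$.
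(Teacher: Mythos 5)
Your proposal is correct and follows essentially the same route as the paper's proof: the duality pairing $\sum_i \Delta X^i\cdot\Delta Y^i$ combined with the Legendre identities, the uniform-in-$N$ monotonicity of Lemma \ref{lem.dispmonotonefiniteN}, the sharp forward bounds with coefficients $(1+\delta)T$ and $(1+\delta)T^2/2$ matched against $C_{\dis}$, the backward Gr\"onwall estimate for $\Delta Y$, and the final absorption for small parameters and large $N$. The only (harmless) cosmetic difference is that you eliminate $S_\alpha$ from the backward $\Delta Y$ estimate via \eqref{an.Lip} before substituting the forward bounds, whereas the paper carries $\int S_\alpha$ explicitly into its estimate \eqref{deltay2}.
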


\begin{proof}
Choose $N_0$ large enough that the conclusions of Lemmas \ref{lem.ani} and \ref{lem.dispmonotonefiniteN} hold. For notational simplicity we set
    \begin{align*}
       \Delta \xi^i = \xi^i - \hat{\xi}^i, \quad  \Delta X^i = X^{i} - \hat{X}^i, \quad \Delta Y^i = Y^{i} - \hat{Y}^i, \quad \Delta Z^{i,j} = Z^{i,j} - \hat{Z}^{i,j}, \quad \Delta \alpha_t^i = \alpha_t^i - \hat{\alpha}_t^i.
    \end{align*}
    We note that, using the identity $D_x H(x,p,\mu) = - D_xL(x,-D_pH(x,p,\mu),\mu)$, we have 
    \begin{align*}
      D_x H \big( \hat{X}_t^i, \hat{Y}_t^i, m_{\hat{\bX}_t, \ba^N(\hat{\bX_t},\hat{\bY_t})}^{N,-i} \big) = - D_x L \big(\hat{X}_t^i, a^{N,i}(\hat{\bX}_t, \hat{\bY}_t), m_{\hat{\bX}_t, \ba^N(\hat{\bX_t},\hat{\bY_t})}^{N,-i}\big),  
    \end{align*}
    and so we can compute 
\begin{align*}
    d \Big( &\sum_{i = 1}^N \Delta X_t^i \cdot \Delta Y_t^i \Big) = \bigg( - \sum_{i = 1}^N \Delta X_t^i \cdot \Big( D_xL(X_t^i, \alpha_t^i, m_{\bX_t, \balpha_t}^{N,-i}) - D_xL(\hat{X}_t^i, \hat{\alpha}_t^i, m_{\hat{\bX}_t, \hat{\balpha}_t}^{N,-i}) \Big) 
    \\
    &\quad  + \sum_{i = 1}^N \Delta \alpha_t^i \cdot \Delta Y_t^i - \sum_{i = 1}^N \Delta Y_t^i \cdot E_t^{1,i} - \sum_{i = 1}^N \Delta X_t^i \cdot E_t^{2,i} \bigg) dt + dM_t
    \\
    &= \bigg( - \sum_{i = 1}^N \Delta X_t^i \cdot \Big( D_xL(X_t^i, \alpha_t^i, m_{\bX_t, \balpha_t}^{N,-i}) - D_xL(\hat{X}_t^i, \hat{\alpha}_t^i, m_{\hat{\bX}_t, \hat{\balpha}_t}^{N,-i}) \Big) 
    \\
    &\quad  - \sum_{i = 1}^N \Delta \alpha_t^i \cdot \Big( D_aL(X_t^i, \alpha_t^i, m_{\bX_t, \balpha_t}^{N,-i} ) - D_aL(\hat{X}_t^i, \hat{\alpha}_t^i, m_{\hat{\bX}_t, \hat{\balpha}_t}^{N,-i} ) \Big)   - \sum_{i = 1}^N \Delta Y_t^i \cdot E_t^{1,i} - \sum_{i = 1}^N \Delta X_t^i \cdot E_t^{2,i} \bigg) dt + dM_t, 
\end{align*}
where $M$ is a martingale whose form is not important in the following analysis, and we have used the identities 
\begin{align*}
   Y_t^i  = - D_a L(X_t^i,\alpha_t^i,m_{\bX_t,\balpha_t}^{N,-i})\ \ {\rm{and}}\ \  \hat{Y}_t^i  = - D_a L(\hat{X}_t^i,\hat\alpha_t^i,m_{\hat\bX_t,\hat\balpha_t}^{N,-i}),
\end{align*}
 as a consequence of the Legendre duality \eqref{legendre:dual}.
Integrating from $t_0$ to $T$ and take expectations to get 
\begin{align*}
    &\E\bigg[ \int_{t_0}^T \sum_{i = 1}^N \Delta X_t^i \cdot \Big( D_xL(X_t^i, \alpha_t^i, m_{\bX_t, \balpha_t}^{N,-i}) - D_xL(\hat{X}_t^i, \hat{\alpha}_t^i, m_{\hat{\bX}_t, \hat{\balpha}_t}^{N,-i}) \Big) 
    \\
    & \qquad \qquad + \sum_{i = 1}^N \Delta \alpha_t^i \cdot \Big( D_aL(X_t^i, \alpha_t^i, m_{\bX_t, \balpha_t}^{N,-i} ) - D_aL(\hat{X}_t^i, \hat{\alpha}_t^i, m_{\hat{\bX}_t, \hat{\balpha}_t}^{N,-i} ) \Big) dt   \bigg]
    \\
    &=  \E\bigg[ \sum_{i = 1}^N \Delta \xi^i \cdot \Delta Y_{t_0}^i - \sum_{i = 1}^N \Delta X_T^i \cdot \big(D_x G(X_T^i,m_{\bX_T^N}^{N,-i}) - D_x G(\hat{X}_T^i,m_{\hat{\bX}_T^N}^{N,-i})  +  \sum_{i = 1}^N \Delta X_T^i \cdot E^{3,i} 
    \\
    &\qquad \qquad - \int_{t_0}^T \Big(\sum_{i=1}^N \Delta Y_t^i \cdot E_t^{1,i} + \sum_{i = 1}^N \Delta X_t^i \cdot E_t^{2,i} \Big)dt \bigg].
\end{align*}
Use Lemma \ref{lem.dispmonotonefiniteN} to find that 
\begin{align} \label{alpha.deltax.deltay}
    &(C_{L,a}- C/N) \E\bigg[ \int_{t_0}^T \sum_{i = 1}^N |\Delta \alpha_t^i|^2 dt \bigg] \leq C \E\bigg[ \sum_{i = 1}^N |\Delta \xi^i| |\Delta Y_{t_0}^i| + \sum_{i = 1}^N |\Delta X_T^i| |E^{3,i}| + \frac{1}{N} \sum_{i = 1}^N |\Delta X_T^i|^2 
   \nonumber \\
    & \qquad \qquad + \int_{t_0}^T\sum_{i = 1}^N  \Big(  |\Delta Y_t^i| |E_t^{1,i}| + |\Delta X_t^i| |E_t^{2,i}| + \frac{1}{N} \sum_{i = 1}^N |\Delta X_t^i|^2 \Big) dt \bigg]
    \nonumber \\
    &\qquad \qquad + \E\bigg[ C_G \sum_{i = 1}^N |\Delta X_T^i|^2 + \int_{t_0}^T C_{L,x} \sum_{i = 1}^N |\Delta X_t^i|^2dt \bigg].
\end{align}
Next, notice that 
\begin{align*}
    \Delta X_t^i = \Delta \xi^i + \int_{t_0}^t \Delta \alpha_s^i ds - \int_{t_0}^t E_s^{1,i} ds, 
\end{align*}
so that for any $\delta > 0$,  Young's inequality implies that there exists a constant $C>0$ such that
\begin{align*}
    |\Delta X_t^i|^2 \leq C\left(1+\frac{1}{\delta}\right) \Big(|\Delta \xi^{i}|^2 + (t-t_0)\int_{t_0}^t |E_s^{1,i}|^2 ds\Big) +  (t-t_0)(1 + \delta) \int_{t_0}^t |\Delta \alpha_s^i|^2 ds 
\end{align*}
from which we can easily deduce the bound 
\begin{align} \label{deltax}
    \E\left[|\Delta X^i_t|^2\right] \leq C\left(1+\frac{1}{\delta}\right) \E\left[\Big(|\Delta \xi^{i}|^2 + (t-t_0)\int_{t_0}^t |E_s^{1,i}|^2 ds\Big) \right] + (1 + \delta) (t - t_0) \E\left[\int_{t_0}^t |\Delta \alpha_s^i|^2 ds \right], 
\end{align}
and thus 
\begin{align} \label{deltaxint}
    \E\left[ \int_{t_0}^T |\Delta X^i_t|^2dt \right] \leq CT\left(1+\frac{1}{\delta}\right) \E\left[\Big(|\Delta \xi^{i}|^2 + T\int_{t_0}^{T} |E_s^{1,i}|^2 ds\Big) \right] + (1 + \delta)\frac{T^2}{2} \E\left[\int_{t_0}^{T} |\Delta \alpha_s^i|^2 ds \right].
\end{align}
Using Young's inequality repeatedly, and potentially increasing the constants $C>0$ (still independent of $N$) from one line to the other, for all $\delta>0$ and $\varepsilon>0$ \eqref{alpha.deltax.deltay} implies
\begin{align} \label{alpha.deltax.deltay_2}
    &\left(C_{L,a}- \frac{C}{N}\right) \E\bigg[ \int_{t_0}^T \sum_{i = 1}^N |\Delta \alpha_t^i|^2 dt \bigg] \leq C \E\bigg[ \sum_{i = 1}^N\left( \frac{1}{\varepsilon}|\Delta \xi^i|^{2}+  \varepsilon|\Delta Y_{t_0}^i|^{2} +  \delta|\Delta X_T^i|^{2} + \frac{1}{\delta}|E^{3,i}|^{2}\right) + \frac{1}{N} \sum_{i = 1}^N |\Delta X_T^i|^2 
   \nonumber \\
    & \qquad \qquad + \int_{t_0}^T\sum_{i = 1}^N  \Big( \varepsilon |\Delta Y_t^i|^{2} + \frac{1}{\varepsilon}|E_t^{1,i}|^{2} + \delta|\Delta X_t^i|^{2} + \frac{1}{\delta}|E_t^{2,i}|^{2} + \frac{1}{N} \sum_{i = 1}^N |\Delta X_t^i|^2 \Big) dt \bigg]
    \nonumber \\
    &\qquad \qquad + \E\bigg[ C_G \sum_{i = 1}^N |\Delta X_T^i|^2 + \int_{t_0}^T C_{L,x} \sum_{i = 1}^N |\Delta X_t^i|^2dt \bigg]
\end{align}
Relying on \eqref{deltax} for the terms involving $|\Delta X^{i}_{T}|^{2}$ and on \eqref{deltaxint} for the terms involving $\ds\int_{t_{0}}^{T}|\Delta X^{i}_{t}|dt$, (recalling the value of $C_\dis$ from \eqref{disp:const}) the inequality \eqref{alpha.deltax.deltay_2} implies
\begin{align} \label{alpha.deltax.deltay2}
    (C_{\dis} - C/N - \delta ) \E\bigg[ \int_{t_0}^T \sum_{i = 1}^N |\Delta \alpha_t^i|^2 dt \bigg]& \leq C_{\delta} \E\left[  \sum_{i = 1}^N\left( (1+1/\varepsilon)|\Delta \xi^{i}|^2 + \varepsilon |\Delta Y_{t_0}^i|^{2} + (1/\delta)|E^{3,i}|^{2} \right)\right] \\
    & + C_{\delta}\E\left[\int_{t_0}^T\sum_{i = 1}^N  \Big( (1+1/\varepsilon) |E_t^{1,i}|^2 + \varepsilon |\Delta Y_t^i|^{2} + |E_t^{2,i}|^{2} \Big) dt \right],
   \nonumber 
\end{align}
where $C_{\delta}>0$ is a constant depending on $\delta>0$ and $T>0$, but independent of $N$.

Choose $\delta$ small enough to conclude that for all $N\in\N$ large enough, we have 
\begin{align} \label{alpha.deltax.deltay3}
    \frac{C_{\dis}}{2} \E\bigg[ \int_{t_0}^T \sum_{i = 1}^N |\Delta \alpha_t^i|^2 dt \bigg]& \leq C_{\delta} \E\left[  \sum_{i = 1}^N\left( (1+1/\varepsilon)|\Delta \xi^{i}|^2 + \varepsilon |\Delta Y_{t_0}^i|^{2} + |E^{3,i}|^{2} \right)\right] \\
    & + C_{\delta}\E\left[\int_{t_0}^T\sum_{i = 1}^N  \Big( (1+1/\varepsilon) |E_t^{1,i}|^2 + \varepsilon |\Delta Y_t^i|^{2} + |E_t^{2,i}|^{2} \Big) dt \right].
   \nonumber 
\end{align}

Now we can compute 
\begin{align*}
    d|\Delta Y_t^i|^2 = -2 \Delta Y_t^i \Big( D_x L(X_t^i, \alpha_t^i, m_{\bX_t,\balpha_t}^{N,-i}) - D_x L(\hat{X}_t^i, \hat{\alpha}_t^i, m_{\hat{\bX}_t,\hat{\balpha}_t}^{N,-i}) + {E^{2,i}_{t}}\Big) dt + dS_t^i,  
\end{align*}
with $S^i$ being a martingale, whose particular form will not play a role in the analysis. Integrating in time and taking expectations, and using the Lipschitz regularity of $D_xL$ and $D_x G$, we have 
\begin{align*}
    \E\big[ |\Delta Y_t^i|^2 \big] &\leq \E\left[|\Delta Y_T^i|^2 \right] + C \E\left[ \int_t^T |\Delta Y_s^i| \Big| D_x L(X_s^i, \alpha_s^i, m_{\bX_s,\balpha_s}^{N,-i}) - D_x L(\hat{X}_s^i, \hat{\alpha}_s^i, m_{\hat{\bX}_s,\hat{\balpha}_s}^{N,-i}) \Big| ds   \right]
    \\
    &+C\E\left[\int_t^T |\Delta Y_s^i| |E^{2,i}_{s}|ds\right]\\
    &\leq C\E\left[ |\Delta X_T^i|^2 + \frac{1}{N} \sum_{j = 1}^N |\Delta X_T^j|^2 + |E^{3,i}|^2 \right] + C \E\left[ \int_t^T |\Delta Y_s^i|^2 ds  \right] 
    \\
    & + C \E\left[ \int_t^T \left(|\Delta \alpha_s^i|^2 + \frac{1}{N} \sum_{j = 1}^N |\Delta \alpha_s^j|^2 + |\Delta X_s^i|^2 + \frac{1}{N}\sum_{j = 1}^N |\Delta X_s^j|^2 + |E_s^{2,i}|^2 \right)ds \right].
\end{align*} 
Summing over $i$, we arrive at 
\begin{align*}
  \E\Big[ \sum_{i = 1}^N |\Delta Y_t^i|^2 \Big] &\leq C\E\left[ \sum_{i = 1}^N \Big(|\Delta X_T^i|^2 + |E^{3,i}|^2\Big) \right]
   + C \E\left[ \int_t^T \sum_{i = 1}^N|\Delta Y_s^i|^2 \right] 
    \\
    & + C \E\left[ \int_t^T \sum_{i = 1}^N \Big(|\Delta \alpha_s^i|^2  + |\Delta X_s^i|^2 + |E_s^{2,i}|^2 \Big)ds \right],
\end{align*}
and then applying Gr\"onwall's inequality to the function $t \mapsto \E\big[ \sum_{i = 1}^N |\Delta Y_t^i|^2 \big]$ gives 
    \begin{align*}
  \E\Big[ \sum_{i = 1}^N |\Delta Y_t^i|^2 \Big] &\leq C\E\Big[ \sum_{i = 1}^N \Big(|\Delta X_T^i|^2 + |E^{3,i}|^2\Big) \Big]
    \\
    & + C \E\bigg[ \int_t^T \sum_{i = 1}^N \Big(|\Delta \alpha_s^i|^2  + |\Delta X_s^i|^2 + |E^{2,i}_{s}|^2 \Big)ds \bigg].
\end{align*}
Plugging into this \eqref{deltax} and \eqref{deltaxint}, we find for all $t\in[t_{0},T]$
\begin{align}
 \label{deltay2}
   \E\Big[ \sum_{i = 1}^N |\Delta Y_t^i|^2 \Big] \leq  C_{\delta} \E\bigg[ \sum_{i = 1}^N \Big(|\Delta \xi^i|^2 + |E^{3,i}|^2 \Big)  + \int_{t_0}^T \sum_{i = 1}^N \Big(|\Delta \alpha_s^i|^2 + \sum_{i = 1}^N |E_s^{1,i}|^2 + \sum_{i = 1}^N |E_s^{2,i}|^2 \Big) ds\bigg].
\end{align}
Combining \eqref{alpha.deltax.deltay3}, \eqref{deltax} and \eqref{deltay2}, and then using Young's inequality, we find that for any $\varepsilon > 0$, there is a constant $C_{\varepsilon} > 0$ such that for all $N\in\N$ large enough,
\begin{align*}
   \frac{C_{\dis}}{2} \E\bigg[ \int_{t_{0}}^T \sum_{i = 1}^N |\Delta \alpha_t^i|^2 dt \bigg] &\leq 
    {\varepsilon} \E\bigg[ \int_0^T \sum_{i = 1}^N |\Delta \alpha_t^i|^2 dt \bigg] 
    \\
    & + C_{\varepsilon} \E\bigg[ \int_0^T \Big( \sum_{i = 1}^N |E_t^1|^2 + |E_t^2|^2 \Big)dt + \sum_{i = 1}^N \big( |E^{3,i} |^2 + |\Delta \xi^i|^2 \big) \bigg]
\end{align*}
holds. 
Thus choosing $\varepsilon$ small enough, we see that for all $N\in\N$ large enough, we have 
\begin{align*}
    \E\bigg[ \int_{t_0}^T \sum_{i = 1}^N |\Delta \alpha_t^i|^2 dt \bigg] \leq C \E\bigg[ \int_{t_0}^T \Big( \sum_{i = 1}^N |E_t^{1,i}|^2 + |E_t^{2,i}|^2 \Big)dt + \sum_{i = 1}^N \big( |E^{3,i} |^2 + |\Delta \xi^i|^2 \big) \bigg].
\end{align*}
Then returning to \eqref{deltax} and \eqref{deltay2}, we get the desired bound.
\end{proof}

It turns out that the stability of the FBSDE \eqref{pontryaginNplayer} obtained from displacement monotonicity implies a dimension-free Lipschitz bound on the vector field $\bv = (v^1,\dots,v^N)$. 

\begin{proposition} \label{prop.vLip}
    Let Assumptions \ref{assump.regularity} and \ref{assump.monotone} hold. Then there is constant $C>0$ independent of $N$ {(possibly depending on the time horizon $T$)} such that for all $N\in\N$ large enough, the solution $\bv = (v^{1},\dots,v^{N})$ to \eqref{pontryagin.pde} satisfies 
    \begin{align*}
        \sum_{i = 1}^N |v^{i}(t,\bx) - v^{i}(t,\by)|^2 \leq C \sum_{i =1}^N |x^i - y^i|^2
    \end{align*}
    for each $t \in [0,T]$, $\bx,\by \in (\R^d)^N$. 
\end{proposition}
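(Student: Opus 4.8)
The plan is to realize $\bv = (v^1,\dots,v^N)$ as the decoupling field of the Pontryagin FBSDE \eqref{pontryaginNplayer} and then simply transfer the dimension-free stability of Proposition \ref{prop.uniformstability} into the desired Lipschitz bound. Fix $N$ large enough that Proposition \ref{prop.uniformstability} applies (this also gives the conclusions of Lemmas \ref{lem.ani} and \ref{lem.dispmonotonefiniteN}), fix $t_0 := t \in [0,T]$, and fix $\bx,\by \in (\R^d)^N$. As recalled after \eqref{pontryagin.pde} via the four-step scheme (cf.\ \cite{Delarue2002, ma1994}), since $\ba^N$ is Lipschitz the system \eqref{pontryagin.pde} has a unique classical solution $\bv$ that is globally Lipschitz in space, and for the \emph{deterministic} initial data $\bX_{t_0} = \bx$ (respectively $\hat\bX_{t_0} = \by$) the FBSDE \eqref{pontryaginNplayer} has a unique solution $(\bX,\bY,\bZ)$ (respectively $(\hat\bX,\hat\bY,\hat\bZ)$), which satisfies $Y_s^i = v^i(s,\bX_s)$ and $\hat Y_s^i = v^i(s,\hat\bX_s)$ for all $s \in [t_0,T]$.

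Next I would apply Proposition \ref{prop.uniformstability} to these two \emph{exact} solutions, i.e.\ with all error terms vanishing: $E^{1,i} \equiv E^{2,i} \equiv 0$ and $E^{3,i} = 0$, and with the deterministic initial conditions $\xi^i = x^i$, $\hat\xi^i = y^i$. The stability estimate then yields, in particular,
\[
\sup_{t_0 \le s \le T} \E\Big[ \sum_{i=1}^N |Y_s^i - \hat Y_s^i|^2 \Big] \le C \sum_{i=1}^N |x^i - y^i|^2,
\]
with $C$ independent of $N$.

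Finally, I would evaluate this bound at $s = t_0$. Since $\bX_{t_0} = \bx$ and $\hat\bX_{t_0} = \by$ are deterministic and $\bv$ is a deterministic function, $Y_{t_0}^i = v^i(t_0,\bx)$ and $\hat Y_{t_0}^i = v^i(t_0,\by)$ are deterministic, so the expectation is trivial and
\[
\sum_{i=1}^N |v^i(t_0,\bx) - v^i(t_0,\by)|^2 = \E\Big[ \sum_{i=1}^N |Y_{t_0}^i - \hat Y_{t_0}^i|^2 \Big] \le C \sum_{i=1}^N |x^i - y^i|^2.
\]
As $t_0 \in [0,T]$ and $\bx,\by$ were arbitrary, this is exactly the claimed bound.

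The proof is essentially just a transfer of information, so there is no serious obstacle: all of the difficulty is already absorbed into Proposition \ref{prop.uniformstability} (which is where displacement monotonicity enters, through Lemma \ref{lem.dispmonotonefiniteN}) and into the well-posedness of \eqref{pontryagin.pde}, which relies on the Lipschitz regularity of $\ba^N$ from Lemma \ref{lem.ani}. The only point requiring a moment's care is the identification $Y_{t_0}^i = v^i(t_0,\bx)$, i.e.\ that the FBSDE solution started from a deterministic state has deterministic initial costate given by the decoupling field; this is precisely the content of the four-step scheme, and is the reason the representation $Y_s^i = v^i(s,\bX_s)$ is needed rather than just the abstract solvability of \eqref{pontryaginNplayer}.
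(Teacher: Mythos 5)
Your proof is correct and follows exactly the paper's argument: realize $\bv$ as the decoupling field of \eqref{pontryaginNplayer}, apply Proposition \ref{prop.uniformstability} with $E^{1,i}=E^{2,i}=E^{3,i}=0$ to the two solutions started from the deterministic states $\bx$ and $\by$, and read off the bound at the initial time. Your extra remark about the identification $Y_{t_0}^i = v^i(t_0,\bx)$ via the four-step scheme is exactly the "by design" step the paper invokes.
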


\begin{proof}
    Fix $t_0 \in [0,T]$ and $\bx, \ov{\bx} \in (\R^d)^N$. Let $(\bX, \bY, \bZ)$ and $(\ov{\bX}, \ov{\bY}, \ov{\bZ})$ denote the solutions of \eqref{pontryagin} started from the initial conditions $X_{t_0}^i = x_0^i$ and $\ov{X}_{t_0}^i = \ov{x}_0^i$, respectively. By design, we have 
    \begin{align*}
        Y_{t_0}^i = v^i(t_0,\bx_0), \quad \ov{Y}_{t_0}^i = v^i(t_0,\ov{\bx}_0).
    \end{align*}
    In particular, using Proposition \ref{prop.uniformstability}, {since $E^{1,i}$, $E^{2,i}$ and $E^{3,i}$ are all zero,} we see that there is a constant $C$ such that for all $N\in\N$ large enough,
    \begin{align*}
        \sum_{i = 1}^N |v^i(t_0,\bx_0) - v^i(t_0,\ov{\bx}_0)|^2 \leq C \sum_{i = 1}^N |x^i - \ov{x}^i|^2. 
    \end{align*}
\end{proof}

\section{Convergence of the open-loop Nash equilibria}

\begin{proof}[Proof of Theorem \ref{thm.OLconvergence}]
    First, we note that by Pontryagin's maximum principle, we must have 
    \begin{align*}
        \mu_t = \Phi\big( \cL(X_t,Y_t) \big), 
    \end{align*}
    for some solution $(X,Y,Z,Z^0)$ to \eqref{pontryagin}. Moreover, by Theorem \ref{thm.wellposedness} and Lemma \ref{lem.Lpbound} bounds, \eqref{pontryagin} has a unique solution, and because $m_0 \in \cP_p$, we have 
    \begin{align*}
        \E\left[ \sup_{0 \leq t \leq T} \big( |X_t|^p + |Y_t|^p \big)  \right] < \infty, 
    \end{align*}
    from which it follows that 
    \begin{align} \label{momentbound}
      \E\left[  \sup_{0 \leq t \leq T} \int_{\R^d} \big(|x|^p  + |a|^p \big) \mu_t(dx,da) \right] < \infty. 
    \end{align}
    Let us fix $N \in \N$, and for simplicity set $\hat{\bX}^N = (X^{\mf,1},\dots,X^{\mf,N})$, $\hat{\bY}^N = ({Y}^{\mf,1},\dots,{Y}^{\mf,1})$ and $\hat{\bZ}^N = (Z^{\mf,i,j})_{i = 1,\dots,N, j = 0,\dots,N}$. We rewrite the equation for $(\hat{X}^i,\hat{Y}^i,\hat{Z}^i)$ as
    \begin{align} \label{pontryagin.iidcopieserror}
    \begin{cases}
        d\hat{X}_t^i = \Big( - D_p H\Big( \hat{X}_t^i, \hat{Y}_t^i, m_{\hat{\bX}_t^N, \ba^N(\hat{\bX}_t^N,\hat{\bY}_t^N)}^{N,-i} \Big) + E^{1,i}_t \Big)dt + \sqrt{2} dW_t^i + \sqrt{2\sigma_0} dW_t^0, 
       \vspace{.1cm} \\
        d\hat{Y}_t^i = \Big(D_x H \Big(\hat{X}_t^i, \hat{Y}_t^i,  m_{\hat{\bX}_t^N, \ba^N(\hat{\bX}_t^N,\hat{\bY}_t^N)}^{N,-i} \Big) + E_t^{2,i} \Big) dt 
        \\
        \qquad \qquad + \hat{Z}_t^i dW_t^i + \hat{Z}_t^{i,0} dW_t^0,
       \vspace{.1cm} \\
        \hat{X}_0^i = \xi^i, \quad \hat{Y}_T^i = D_x G(\hat{X}_T^i, m_{\hat{\bX}_T^N}^{N,-i}) + E^{3,i},
    \end{cases}
\end{align}
where 
\begin{align*}
    &E_t^{1,i} := D_p H\Big( \hat{X}_t^i, \hat{Y}_t^i, m_{\hat{\bX}_t^N, \ba^N(\hat{\bX}_t^N,\hat{\bY}_t^N)}^{N,-i} \Big) - D_p H\Big( \hat{X}_t^i, \hat{Y}_t^i, \Phi( \cL^0(\hat{X}_t^i, \hat{Y}_t^i))\Big), 
    \\
    &E_t^{2,i} := -D_x H \Big(\hat{X}_t^i, \hat{Y}_t^i, m_{\hat{\bX}_t^N, \ba^N(\hat{\bX}_t^N,\hat{\bY}_t^N)}^{N,-i} \Big)
    \\
    &\qquad \qquad  + D_x H \Big(\hat{X}_t^i, \hat{Y}_t^i, \Phi( \cL(\hat{X}_t^i, \hat{Y}_t^i)) \Big) 
    \\
    &E^{3,i} := D_x G (\hat{X}_T^i, \cL^0(\hat{X}_T^i) ) - D_x G(\hat{X}_T^i, m_{\hat{\bX}_T^N}^{N,-i}).
\end{align*}
By Proposition \ref{prop.uniformstability}, we get 
 \begin{align*}
       \E\left[\int_0^T \sum_{i = 1}^N |\alpha_t^{\ol,N,i} - \alpha_t^{\mf,i}|^2dt \right]
       \leq C \E\left[ \sum_{i = 1}^N |E^{3,i}|^2 + \int_0^T \sum_{i = 1}^N \Big( |E_t^{1,i}|^2 + |E_t^{2,i}|^2 \Big)dt \right].
    \end{align*}
The next step is to estimate the error terms. We have 
\begin{align*}
    |E_t^{1,i}|^2 &\leq C\bd_2^2\Big(\Phi(\cL^0(\hat{X}^{i}_t,\hat{Y}^{i}_t)), m_{\hat{\bX}_t,\ba(\hat{\bX}_t,\hat{\bY}_t)}^{N,-i} \Big)
    \\
    &\leq C \bd_2^2 \Big( \Phi(\cL^0(\hat{X}^{i}_t,\hat{Y}^{i}_t)), \Phi\big(m_{\hat{\bX}_t,\hat{\bY}_t}^{N} \big) \Big) + C \bd_2^2 \Big(\Phi\big(m_{\hat{\bX}_t,\hat{\bY}_t}^{N} \big), m_{\hat{\bX}_t,\ba(\hat{\bX}_t,\hat{\bY}_t)}^{N}  \Big)
    \\
    &\qquad +  C \bd_2^2 \Big(m_{\hat{\bX}_t,\ba(\hat{\bX}_t,\hat{\bY}_t)}^{N}, m_{\hat{\bX}_t,\ba(\hat{\bX}_t,\hat{\bY}_t)}^{N,-i}  \Big)
    \\
    &\leq C \bd_2^2\big(\cL^0(\hat{X}^{i}_t,\hat{Y}^{i}_t), m_{\hat{\bX}_t,\hat{\bY}_t}^{N}\big) + \frac{C}{N^2} \sum_{j = 1}^N \Big( |\hat{X}_t^j|^2 + |\hat{Y}_t^j|^2 \Big) + \frac{C}{N^2} \sum_{j \neq i} |a^i(\hat{\bX}_t,\hat{\bY}_t) - a^j(\hat{\bX}_t,\hat{\bY}_t)|^2,
\end{align*}
where we have used Lemma \ref{lem.Phi} and Lemma \ref{lem:FPmap_disc}. Summing up and using the linear growth of $\ba^N$, we find that 
\begin{align*}
    \E\left[ \sum_{i = 1}^N |E_t^{1,i}|^2 \right] \leq CN \left( \E\big[ \bd_2^2\big(\mu_t, m_{\hat{\bX}_t,\hat{\bY}_t}^{N}\big)  \big] + \frac{C}{N}\right) \leq C{N} r_{d,p}(N),
\end{align*}
where $\mu_t$ denotes the common law of the i.i.d. random variables $(\hat{X}^i,\hat{Y}^i)_{i = 1,\dots,N}$,
with the last bound following from the well-known result in \cite{FouGui}, and the bound \eqref{momentbound} on the moments of $\mu_t$.
Similar arguments give the same bound for $E^{2,i}$ and $E^{3,i}$, so that in the end 
\begin{align*}
   \E\bigg[\int_0^T \sum_{i = 1}^N |\alpha_t^{\ol,N,i} - \alpha_t^{\mf,i}|^2dt \bigg] \leq C Nr_{d,p}(N).
\end{align*}
By symmetry, we conclude that for each $i = 1,\dots,N$, 
\begin{align*}
   \E\bigg[\int_0^T |\alpha_t^{\ol,N,i} - \alpha_t^{\mf,i}|^2dt \bigg] \leq  C r_{d,p}(N),
\end{align*}
and then recalling the dynamics for $X^{\ol,N,i}$ and $X^{\mf,i}$ we find
\begin{align*}
    \E\Big[ \sup_{t_0 \leq t \leq T} |{X}_t^{\ol,N,i} - X_t^{\mf,i}|^2 \Big]\leq  \E\bigg[\int_0^T |\alpha_t^{\ol,N,i} - \alpha_t^{\mf,i}|^2dt \bigg] \leq  C r_{d,p}(N).
\end{align*} 
To obtain the estimate 
 \begin{align*}
       \sup_{0 \leq t \leq T}  \E\Big[ \bd_2^2\Big( \mu_t^x, m_{\bX^{\ol,N}_t}^N \Big)\Big] \leq C r_{d,p}(N), 
    \end{align*}
we note that 
\begin{align*}
    \bd_2^2\Big( \mu_t^x, m_{\bX^{\ol,N}_t}^N \Big) &\leq 2\bd_2^2\Big( \mu_t^x, m_{\hat{\bX}^{N}_t}^N \Big) + 2\bd_2^2\Big( m_{\hat{\bX}^{N}_t}^N, m_{\bX^{\ol,N}_t}^N \Big) 
    \\
    &\leq 2\bd_2^2\Big( \mu_t^x, m_{\hat{\bX}^{N}_t}^N \Big) + \frac{2}{N} \sum_{i = 1}^N |X_t^{\ol,N,i} - X_t^{\mf,i}|^2, 
\end{align*}
so that 
\begin{align*}
   \E\Big[ \bd_2^2\Big( \mu_t^x, m_{\bX^{\ol,N}_t}^N \Big)\Big] \leq 2 \E\Big[\bd_2^2\Big( \mu_t^x, m_{\hat{\bX}^{N}_t}^N \Big) \Big] + C r_{d,p}(N) \leq C r_{d,p}(N)
\end{align*}
where for the last step we again use the fact that $X_t^{\mf,i}$ are i.i.d. conditionally on $\bbF^0$, with common law $\mu_t$, and apply the results of \cite{FouGui}. A similar argument gives the bound 
\begin{align*}
    \E\bigg[\int_0^T \bd_2^2\Big( \mu_t, m_{\bX_t^{\ol,N}, \balpha_t^{\ol,N}}^N \Big) dt \bigg] \leq C r_{d,p}(N),
\end{align*}
which completes the proof.
\end{proof}

\section{Bounds on the $N$-player Nash system}\label{sec:six}

\subsection{From the Pontryagin system to the Nash system}
Throughout this section, Assumptions \ref{assump.regularity} and \ref{assump.monotone} will be in force. We will always work with $N$ large enough that the conclusion of Lemma \ref{lem.ani} holds (i.e. the functions $\ba^N$ are well-defined and Lipschitz continuous), and we will work with an admissible solution 
\begin{align*}
    \bu^N = (u^{N,1},\dots,u^{N,N})
\end{align*}
to \eqref{nashsystem}, even if this is not specified every time.
Our goal is to obtain some uniform in $N$ information on these admissible solutions to the Nash system \eqref{nashsystem}. Our strategy is going to be to argue that $\diag u^N = (D_1u^{N,1},\dots,D_Nu^{N,N})$ behaves like a small perturbation of the solution $\bv^N = (v^{N,1},\dots,v^{N,N})$ of \eqref{pontryagin.pde}. We first fix some notation. For simplicity, for $t_0 \in [0,T]$ and $\bx_0 \in (\R^d)^N$, we will denote by $\bX^{t_0,\bx_0} = (X^{t_0,\bx_0,1},\dots,X^{t_0,\bx_0,N})$ the closed-loop Nash equilibrium trajectory started from $(t_0,\bx_0)$, which satisfies
\begin{align*}
    dX_t^{t_0,\bx_0,i} &= - D_pH \big( X_t^{t_0,\bx_0,i}, D_i u^{N,i}(t, \bX_{t_0}^{t_0,\bx_0}), m_{\bX_t^{t_0,\bx_0}, \diag \bu^{N}(t,\bX_t^{t_0,\bx_0})}^{N,-i} \big)dt + \sqrt{2} dW_t^i + \sqrt{2 \sigma_0} dW_t^0
    \\
    &= a^{N,i}\big(\bX_t^{t_0,\bx_0}, \diag \bu^N(t,\bX_t^{t_0,\bx_0}) \big)dt + \sqrt{2} dW_t^i + \sqrt{2 \sigma_0} dW_t^0, \quad t_0 \leq t \leq T, \quad X_{t_0}^{t_0,\bx_0,i} = x_0^i.
\end{align*} 
A special role will be played by the matrix $\bA^N = (A^{N,i,j})_{i,j = 1,\dots,N}$, given by the formula
\begin{align}\label{form:A_u}
    A^{N,i,j}(t,\bx) = D_{ji}u^i(t,\bx).
\end{align}
In particular, we are going to start by deriving a series of estimates on $(u^{N,1},\dots,u^{N,N})$ under the assumption that 
\begin{align} \label{startingpoint}
    \sup_{T_0 \leq t_0 \leq T, \bx_0 \in (\R^d)^N} \E\bigg[ \int_{t_0}^T |\bA^N(t, \bX_t^{t_0,\bx_0})|_{\op}^2 dt\bigg] \leq K, 
\end{align}
for some $T_0 \in [0,T)$ and $K > 0$, where $|\cdot |_{\op}$ denotes the operator norm. It will also be useful to use the notation 
\begin{align} \label{hatadef}
 \hat{\ba}^N = (\hat{a}^{N,1},\dots,\hat{a}^{N,N}), \quad   \hat{a}^{N,i}(t,\bx) = a^{N,i}\big(\bx, \diag \bu^N(t,\bx) \big).
\end{align}
We now obtain some bounds on the matrix $D \hat{\ba}^N = (D_j \hat{a}^{N,i})_{i,j = 1,\dots,N}$. 

\begin{lemma} \label{lem.hataprops}
   Let Assumptions \ref{assump.regularity} and \ref{assump.monotone} hold. Then there is a constant $C>0$ such that for all $N\in\N$ large enough, the bounds
    \begin{align} \label{Dhata.opbound}
        |D \hat{\ba}^{N}|_{\op} \leq C |\bA^N|_{\op}
    \end{align}
    and
    \begin{align} \label{Dhata.d2lip}
         \left\| \sum_{j \neq i} |D_j \hat{a}^{N,i}|^2 \right\|_{\infty} \leq C \Big( (1+|\bA^N|^2_{\op})/N + \sum_{j \neq i} |D_{ji} u^{N,i}|^2 + \frac{1}{N} \sum_{j \neq i} \sum_{k \neq j} |D_{jk} u^{N,k}|^2 \Big), \quad i = 1,\dots,N
    \end{align}
    holds almost everywhere on $[0,T] \times (\R^d)^N$.
\end{lemma}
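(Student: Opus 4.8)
The plan is to differentiate the defining identity $\hat{a}^{N,i}(t,\bx) = a^{N,i}\big(\bx,\diag\bu^N(t,\bx)\big)$ in the spatial variables. Since $\bu^N$ is an admissible solution, it is $C^{1,2}$ with bounded second spatial derivatives, so $\bx\mapsto\diag\bu^N(t,\bx)$ is $C^1$; and $\ba^N$ is itself $C^1$, which one obtains from the implicit function theorem applied to \eqref{def.ani}: its Jacobian in the $\ba$-variable is the identity plus a perturbation of operator norm $O(1/N)$ (essentially the estimate behind Lemma \ref{lem.ani}), using that the data is $C^2$ and $L$ is uniformly strictly convex in $a$. (Alternatively one keeps the a.e.\ derivatives of the Lipschitz map $\ba^N$ and validates the chain rule below through difference quotients, using \eqref{an.Lip} and \eqref{ani.Lip}.) Recalling from \eqref{form:A_u} that $D_{x^j}\big(D_k u^{N,k}(t,\bx)\big) = A^{N,k,j}(t,\bx)$, the chain rule then gives, almost everywhere on $[0,T]\times(\R^d)^N$,
\begin{align*}
D_j\hat{a}^{N,i} = D_{x^j} a^{N,i}\big(\bx,\diag\bu^N\big) + \sum_{k=1}^N D_{p^k} a^{N,i}\big(\bx,\diag\bu^N\big)\,A^{N,k,j}.
\end{align*}

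To prove \eqref{Dhata.opbound}, I would read the last display at the level of $Nd\times Nd$ block matrices as $D\hat{\ba}^N = D_\bx\ba^N + D_\bp\ba^N\cdot\bA^N$, where $D_\bx\ba^N = (D_{x^j}a^{N,i})_{i,j}$ and $D_\bp\ba^N = (D_{p^k}a^{N,i})_{i,k}$. The estimate \eqref{an.Lip} says precisely that $\ba^N$ is Lipschitz in $(\bx,\bp)$ with a constant independent of $N$, so $|D_\bx\ba^N|_{\op}$ and $|D_\bp\ba^N|_{\op}$ are bounded uniformly in $N$; submultiplicativity of the operator norm then yields $|D\hat{\ba}^N|_{\op}\le C(1+|\bA^N|_{\op})$, which is \eqref{Dhata.opbound} up to an additive constant that plays no role downstream.

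For \eqref{Dhata.d2lip} I would fix $i$ and decompose the sum in the chain rule into (a) the direct term $D_{x^j}a^{N,i}$, (b) the diagonal term $D_{p^i}a^{N,i}A^{N,i,j}$ (the $k=i$ summand), and (c) the off-diagonal remainder $\sum_{k\ne i}D_{p^k}a^{N,i}A^{N,k,j}$, and then sum the squared norms over $j\ne i$. Lemma \ref{lem.ani} gives $\sum_{j\ne i}|D_{x^j}a^{N,i}|^2\le C/N$, which controls (a); and $|D_{p^i}a^{N,i}|_{\op}\le C$ gives $\sum_{j\ne i}|D_{p^i}a^{N,i}A^{N,i,j}|^2\le C\sum_{j\ne i}|A^{N,i,j}|^2 = C\sum_{j\ne i}|D_{ji}u^{N,i}|^2$, the second term on the right of \eqref{Dhata.d2lip}. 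For (c), the key move is to recognize $\sum_{j\ne i}\big|\sum_{k\ne i}D_{p^k}a^{N,i}A^{N,k,j}\big|^2$ as $|BM|_F^2$ with $B = (D_{p^k}a^{N,i})_{k\ne i}$ and $M = (A^{N,k,j})_{k,j\ne i}$, and to bound it by $|B|_F^2\,|M|_{\op}^2 \le (C/N)|\bA^N|_{\op}^2$: here $|B|_F^2 = \sum_{k\ne i}|D_{p^k}a^{N,i}|^2\le C/N$ by Lemma \ref{lem.ani}, while $M$ is obtained from $\bA^N$ by deleting the $i$-th block row and the $i$-th block column, so $|M|_{\op}\le|\bA^N|_{\op}$. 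Collecting (a)--(c) yields \eqref{Dhata.d2lip} (in fact the sharper bound without the last summand; that summand is what one gets if (c) is instead estimated by Cauchy--Schwarz in $k$ for each fixed $j$, after which the diagonal $k=j$ blocks must be peeled off).

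The main obstacle is exactly step (c): one must retain the factor $1/N$ in front of $|\bA^N|_{\op}^2$. That gain is available only because the \emph{collective} smallness of the off-diagonal $\bp$-derivatives of $\ba^N$ (their sum of squares, not merely each term, is $O(1/N)$ — this is the content of the $N\sum_{j\ne i}|D_{p^j}a^{N,i}|^2\le C$ bound in Lemma \ref{lem.ani}) is paired with the \emph{operator} norm, rather than the much larger Frobenius norm, of the relevant submatrix of $\bA^N$, via $|XY|_F\le|X|_F|Y|_{\op}$. A careless estimate would produce $|\bA^N|_{\op}^2$ with no power of $N$, which is useless for the bootstrap carried out in the remainder of this section. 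The only other point needing attention is the rigorous justification of the chain rule, i.e.\ the $C^1$ regularity of $\ba^N$, which I would derive from the implicit function theorem as indicated above.
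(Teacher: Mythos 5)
Your argument is correct and structurally close to the paper's: both proofs rest on the chain rule for $\hat{\ba}^N(t,\bx)=\ba^N(\bx,\diag\bu^N(t,\bx))$ combined with the uniform bounds of Lemma \ref{lem.ani}, and your reading of \eqref{Dhata.opbound} as a composition of Lipschitz maps is exactly the paper's (the additive constant you pick up is indeed harmless where the estimate is used). Two remarks on where you diverge. First, the justification of the a.e.\ chain rule: the paper mollifies the outer map $\ba^N$, proves both estimates for $\hat{\ba}^{N,\eps}$ with constants uniform in $\eps$ and $N$, and sends $\eps\to0$ using a.e.\ convergence of the mollified derivatives --- this is the rigorous implementation of your ``difference quotient'' alternative, and it is the route you should keep. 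Your primary route via the implicit function theorem rests on the claim that the Jacobian of the fixed-point map in $\ba$ is $I$ plus a perturbation of operator norm $O(1/N)$; this is not justified, since each off-diagonal block is only $O(1/N)$ and there are $N-1$ of them per block row, so the Schur test gives only an $O(1)$ bound on the perturbation --- invertibility here comes from displacement monotonicity, not from a Neumann series. Second, for \eqref{Dhata.d2lip} your handling of the off-diagonal remainder is genuinely different from, and sharper than, the paper's: you estimate $\sum_{j\neq i}\bigl|\sum_{k\neq i}D_{p^k}a^{N,i}A^{N,k,j}\bigr|^2$ by $|B|_F^2\,|M|_{\op}^2\le (C/N)|\bA^N|_{\op}^2$, using $|XY|_F\le |X|_F|Y|_{\op}$ together with the fact that deleting block rows and columns cannot increase the operator norm; the paper instead applies Cauchy--Schwarz in the summation index for each fixed $j$, which is precisely what generates the third summand $\frac1N\sum_{j\neq i}\sum_{k\neq j}|D_{jk}u^{N,k}|^2$ on the right of \eqref{Dhata.d2lip}. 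Your bound implies the stated one, and either version suffices for the bootstrap in the rest of the section.
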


\begin{proof}
    For $\eps > 0$, denote by $\ba^{N,\eps} = (a^{N,\eps,1},\dots,a^{N,\eps,N})$ a mollification of the form 
    \begin{align*}
        a^{N,\eps,i}(\bx,\bp) = \int_{(\R^d)^N} \int_{(\R^d)^N} a^{N,i}(\bx - \by, \bp - \bz) d \rho_{\eps}^{\otimes N}(\by) d \rho_{\eps}^{\otimes N}(\bz),
    \end{align*}
    with $(\rho_{\eps})_{0 < \eps < 1}$ being a standard approximation to the identity on $\R^d$.
    Then it is straightforward to check using Lemma \ref{lem.ani} that there is a constant independent of $N$ and $\eps$ such 
    \begin{align*}
        &\sum_{i = 1}^N |a^{N,\eps,i}(\bx,\bp) - \ba^{N,\eps,i}(\ov{\bx},\ov{\bp})|^2 \leq C \Big( \sum_{i = 1}^N |x^i - \ov{x}^i|^2 + \sum_{i = 1}^N |p^i - \ov{p}^i|^2 \Big), 
    \end{align*}  
    and, thanks to Lemma \ref{lem.ani}, 
    \begin{align*}
        &|D_{x^i} a^{N,\eps,i}(\bx,\bp)|^2 + |D_{p^i} a^{N,\eps,i}|^2 + N\sum_{j \neq i} |D_{x^j} a^{N,\eps,i}(\bx,\bp)|^2 + N\sum_{j \neq i} |D_{p^j} a^{N,\eps,i}(\bx,\bp)|^2 \leq C
    \end{align*}
    for each $\bx, \ov{\bx}, \bp, \ov{\bp} \in (\R^d)^N$.
    We now set 
    \begin{align*}
        \hat{\ba}^{N,\eps}(t,\bx) = \ba^{N,\eps}(\bx, \diag \bu^N), 
    \end{align*}
    and note that 
     \begin{align} \label{Dhata.opbound.eps}
        |D \hat{\ba}^{N,\eps}|_{\op} \leq C |\bA^N|_{\op}
    \end{align}
    because $\hat{\ba}^{N,\eps}$ is the composition of two Lipschitz functions, one of which has Lipschitz constant independent of $N$ and $\eps$, and the other of which has Lipschitz constant $|\bA^N|_{\op}$. For \eqref{Dhata.d2lip}, we compute
     \begin{align*}
         D_j \hat{a}^{N,\eps,i} = D_{x^j} a^{N,\eps,i} + \sum_l D_{p^l} a^{N,\eps,i} D_{jl} u^{N,l}.
     \end{align*}
     Thus, 
     \begin{align*}
         |D_j \hat{a}^{N,\eps,i}|^2 &\leq C \Big( |D_{x^j} a^{N,\eps,i}|^2 + \big|\sum_{l \neq i,j} D_{p^l} a^{N,\eps,i} D_{jl}u^{N,l} \big|^2 + |D_{p^i} a^{N,\eps,i}|^2 |D_{ji} u^{N,i}|^2 + |D_{p^j} a^{N,\eps,i}|^2 |D_{jj} u^{N,j}|^2 \Big)
         \\
         &\leq C \Big( |D_{x^j} a^{N,\eps,i}|^2 + \big|\sum_{l \neq i,j} D_{p^l} a^{N,\eps,i} D_{jl}u^{N,l} \big|^2 +  |D_{ji} u^{N,i}|^2 + |\bA^N|_{\op}^2 |D_{p^j} a^{N,\eps,i}|^2  \Big),
     \end{align*}
     where we have used that $|D_{p^i} a^{N,\eps,i}|$ is uniformly bounded and $|D_{jj} u^{N,j}|\le |\bA^N|_{\op}.$
     It follows that 
     \begin{align*}
         \sum_{j \neq i} |D_j \hat{a}^{N,\eps,i}|^2 &\leq C \Big( \sum_{j \neq i} |D_{x^j} a^{N,\eps,i}|^2 + \sum_{j \neq i} \big| \sum_{l \neq i,j} D_{p^l} a^{N,\eps,i} D_{jl} u^{N,l} \big|^2 + \sum_{j \neq i} |D_{ji} u^{N,i}|^2 + |\bA^N|_{\op}^2 \sum_{j \neq i} |D_{p^j} a^{N,\eps,i}|^2 \Big)
         \\
         &\leq C(1+|\bA^N|_{\op}^2)/N  + C\sum_{j \neq i}  \big( \sum_{l \neq i} |D_{p^l} a^{N,\eps,i}|^2 \big) \big( \sum_{l \neq j} |D_{jl} u^{N,l}|^2  \big) + C \sum_{j \neq i} |D_{ji} u^{N,i}|^2
         \\
         &\leq C(1+|\bA^N|_{\op}^2)/N + C \sum_{j \neq i} |D_{ji} u^{N,i}|^2 + \frac{C}{N} \sum_{j \neq i} \sum_{l \neq j} |D_{jl} u^{N,l}|^2.
     \end{align*}
     Thus, we have 
     \begin{align} \label{Dhata.d2lip.eps}
         \left\| \sum_{j \neq i} |D_j \hat{a}^{N,\eps,i}|^2 \right\|_{\infty} \leq C \Big( (1+|\bA^N|^2_{\op})/N + \sum_{j \neq i} |D_{ji} u^{N,i}|^2 + \frac{1}{N} \sum_{j \neq i} \sum_{k \neq j} |D_{jk} u^{N,k}|^2 \Big), \quad i = 1,\dots,N
    \end{align}
    on all of $[0,T] \times (\R^d)^N$, with $C>0$ independent of $\eps$ and $N$. By the Lipschitz continuity of $\ba^{N}$, we know that $D_{x^j} a^{N,\eps,i} \xrightarrow{\eps \to 0} D_{x^j} a^{N,i}$ almost everywhere, and so $D_{x^j} \hat{a}^{N,\eps,i} \xrightarrow{ \eps \to 0} D_{x^j} \hat{a}^{N,i}$ almost everywhere. Thus sending $\eps \to 0$ and using \eqref{Dhata.d2lip.eps} and \eqref{Dhata.opbound.eps}, we see that \eqref{Dhata.d2lip} and \eqref{Dhata.opbound} hold almost everywhere.
\end{proof}

Next, we will obtain a bound on the off-diagonal derivatives. We first note that because $\bu^N$ has bounded second derivatives by assumption, it is a straightforward consequence of the (local) Calder\'on--Zygmund estimates for linear parabolic equations (see e.g. \cite[Chapter 4.9]{lady}) that in fact $u^{N,i} \in W_{t,\bx, \text{loc}}^{3,p}$ for each $p < \infty$. In particular, the derivatives 
$$u^{N,i,j} := D_j u^{N,i}$$ 
belong to $W_{t,\bx,\text{loc}}^{2,p}$ for each $p < \infty$, i.e. the weak derivatives $\partial_t u^{N,i,j}$, $(D_{x^k} u^{N,i,j})_{k = 1,\dots,N}$ and $D_{x^kx^l} u^{N,i,j})$ all belong to $L_{t,\bx}^p$ for each $p < \infty$. Moreover, by differentiating \eqref{nashsystem} with respect to $x^{j}$, we find 
\begin{align} \label{uNij.eqn}
    \partial_t u^{N,i,j} + \sum_{k = 1}^N \Delta_k u^{N,i,j} + \sigma_0 \sum_{k,l = 1}^N \tr\big( D_{kl} u^{N,i,j} \big)= F^{N,i,j}
\end{align}
as elements of $L_{t,\bx}^p$, where the operator $\partial_t + \sum_{k = 1}^N \Delta_k + \sigma_{0}\sum_{k,l = 1}^N \tr\big( D_{kl} \big)$ is applied component-wise to the $\R^d$-valued function $u^{N,i,j}$, and $F^{N,i,j} : [0,T] \times (\R^d)^N \to \R^d$ is the locally bounded function
\begin{align} \label{F.def}
       \ds F^{N,i,j} &= \sum_{k = 1}^N D_k u^{N,i,j} D_p H\big(x^k, u^{N,k,k}, m_{\bx,\hat{\ba}^N}^{N,-k} \big) 
       \nonumber \\
        \ds &\quad + {\bf 1}_{i = j} D_x H(x^i, u^{N,i,i}, m_{\bx, \hat{\ba}^N}^{N,-i}) + {\bf 1}_{i \neq j} D_{px} H\big(x^j, u^{N,j,j}, m_{\bx, \hat{\ba}^N}^{N,-j} \big) u^{N,i,j}
      \nonumber \\
       \ds &\quad +  \frac{{\bf{1}}_{i \neq j}}{N-1} D_{\mu}^x H\big(x^i, u^{N,i,i}, m_{\bx, \hat{\ba}^N}^{N,-i}, x^j\big) + \frac{1}{N-1} \sum_{k \neq i,j} \Big(D_{\mu}^x  D_p  H\big( x^k, u^{N,k,k}, m_{\bx, \hat{\ba}^N}^{N,-k}, x^j\big) \Big)^\top u^{N,i,k}
      \nonumber \\
       \ds &\quad + \sum_{k \neq i} \big(D_{jk} u^{N,k}\big)^\top D_{pp} H\big(x^k, u^{N,k,k}, m_{\bx,\hat{\ba}^N}^{N,-k} \big) u^{N,i,k}
      \nonumber \\
       \ds &\quad + \frac{1}{N-1} \sum_{k \neq i} \big(D_{j} \hat{a}^{N,k}\big)^\top D_{\mu}^a H\big(x^i,D_iu^{N,i}, m_{\bx,\hat{\ba}^N}^{N,-i}, \hat{a}^{N,k}\big) 
      \nonumber \\
       \ds &\quad + \frac{1}{N-1} \sum_{k \neq i} \sum_{l \neq k} \big(D_j \hat{a}^{N,l}\big)^\top  \Big(D_{\mu}^a D_p   H \big(x^k, D_ku^{N,k}, m_{\bx, \hat{\ba}^N}^{N,-k}, \hat{a}^{N,l} \big) \Big)^\top  u^{N,i,k}.
\end{align}
Above, we are using standard conventions for products of matrices and vectors. We are viewing mixed second derivatives as $d \times d$ matrices by identifying the first subscript with the column number, so that e.g. 
\begin{align*}
    (D_{px} H)_{q,r} = \big(D_p D_x H \big)_{q,r} = D_{x_q p_r} H.
\end{align*}
We recall that since $\mu \in \cP_2(\R^d \times \R^d)$, $D_{\mu} H$ takes values in $\R^d \times \R^d$, and we are writing $D_{\mu} H = (D_{\mu}^x H, D_{\mu}^a H) \in \R^d \times \R^d$. Moreover, $D_j \hat{a}^{N,l}$ represents the Jacobian of $\hat{a}^{N,l}$ in the direction $j$, i.e. a $d \times d$ matrix of the form $(D_j \hat{a}^{N,l})_{q,r} = D_{j_r} \hat{a}^{N,l_q}$. So, for example, we have
\begin{align*}
    \Big( &\big(D_j \hat{a}^{N,l}\big)^\top \Big( D_{\mu}^a D_p  H  \big(x^k, D_ku^{N,k}, m_{\bx, \hat{\ba}^N}^{N,-k}, \hat{a}^{N,l} \big) \Big)^\top  D_k u^{N,i} \Big)_q 
    \\
    &= \sum_{r,s = 1}^d D_{j_q} \hat{a}^{N,l_r}   D_{\mu}^{a_r} D_{p_s} H \big(x^k, D_ku^{N,k}, m_{\bx, \hat{\ba}^N}^{N,-k}, \hat{a}^{N,l} \big) D_{k_s}u^i.
\end{align*}
In addition, we have the terminal condition 
\begin{align} \label{unij.terminal}
       u^{N,i,j}(T, \bx) = {\bf1}_{i = j} D_x G(x^i,m_{\bx}^{N,-i}) + \frac{{\bf1}_{i \neq j}}{N-1} D_m G(x^i,m_{\bx}^{N,-i},x^j).
\end{align}

\begin{lemma} \label{lem.offdiagbound}
    Suppose that Assumptions \ref{assump.regularity} and \ref{assump.monotone} hold. Then there are constants $C>0, N_0\in\N$ with the following property. If $N \geq N_0$ and \eqref{startingpoint} holds for some $T_0 \in [0,T)$, then we have
    \begin{align*}
       \sup_{T_0 \leq t_0 \leq T, \bx_0 \in (\R^d)^N} \sum_{j \neq i} |D_j u^{N,i}|^2  +  \sup_{T_0 \leq t_0 \leq T, \bx_0 \in (\R^d)^N} \E\bigg[ \int_{t_0}^T \sum_{j \neq i} \sum_{k = 1}^N |D_{jk} u^{N,i}(t,\bX^{t_0,\bx_0}_t)|^2 dt \bigg]
       \\
       \leq C \exp( CK)/N.
    \end{align*}
\end{lemma}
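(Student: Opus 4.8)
The plan is to run an energy/It\^o argument along the closed-loop trajectory $\bX^{t_0,\bx_0}$, following the roadmap sketched in the introduction. Fix $i$ and write $u^{N,i,j}=D_ju^{N,i}$. Since each $u^{N,i,j}$ solves the linear parabolic system \eqref{uNij.eqn} with source $F^{N,i,j}$ from \eqref{F.def}, applying It\^o's formula to $t\mapsto\sum_{j\neq i}|u^{N,i,j}(t,\bX_t^{t_0,\bx_0})|^2$ and taking expectations yields
$$\sum_{j\neq i}|u^{N,i,j}(t_0,\bx_0)|^2 + \E\Big[\int_{t_0}^T 2\sum_{j\neq i}\sum_{k=1}^N|D_{jk}u^{N,i}(s,\bX_s)|^2\,ds\Big] = \E\Big[\sum_{j\neq i}|u^{N,i,j}(T,\bX_T)|^2\Big] - 2\E\Big[\int_{t_0}^T\sum_{j\neq i}u^{N,i,j}\cdot\tilde F^{N,i,j}(s,\bX_s)\,ds\Big],$$
where $\tilde F^{N,i,j}$ is $F^{N,i,j}$ with the transport term $\sum_k D_ku^{N,i,j}\,D_pH(x^k,u^{N,k,k},m_{\bx,\hat\ba^N}^{N,-k})$ removed — that term cancels exactly against the drift $\hat\ba^N(t,\bX_t)$ of the trajectory, since $D_pH(x^k,u^{N,k,k},m_{\bx,\hat\ba^N}^{N,-k})=-\hat a^{N,k}$ — and the coercive term on the left comes from the quadratic variation (the $\sigma_0$-part being nonnegative). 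The terminal term is $O(1/N)$: by \eqref{unij.terminal}, for $j\neq i$ it equals $|\tfrac1{N-1}D_mG(x^i,m_{\bx}^{N,-i},x^j)|^2\leq C/N^2$, and there are $N-1$ summands, $D_\mu G$ being bounded.

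The heart of the matter is to estimate $\E[\int_{t_0}^T\sum_{j\neq i}u^{N,i,j}\cdot\tilde F^{N,i,j}(s,\bX_s)\,ds]$ term by term, using: boundedness of the second derivatives of $H$ and of $D_\mu H$ (Assumption \ref{assump.regularity}, Remark \ref{rmk.Ham}); Lemma \ref{lem.ani}, which makes all off-diagonal derivatives of $\ba^N$ of size $O(1/\sqrt N)$; Lemma \ref{lem.hataprops}, which gives $|D\hat\ba^N|_{\op}\leq C|\bA^N|_{\op}$ and the pointwise bound \eqref{Dhata.d2lip} on $\sum_{j\neq i}|D_j\hat a^{N,i}|^2$; and the crude bound $|D_{kk}u^{N,k}|\leq|\bA^N|_{\op}$ for diagonal blocks. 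The bookkeeping must be arranged so that, after repeated use of Young's and Cauchy--Schwarz's inequalities, every contribution falls into one of three categories: (i) a small multiple of the coercive term $\sum_{j\neq i}\sum_k|D_{jk}u^{N,i}|^2$, absorbed into the left-hand side; (ii) a term bounded by $C(1+|\bA^N(s,\bX_s)|^2_{\op})\sum_{j\neq i}|u^{N,i,j}(s,\bX_s)|^2$; (iii) a genuinely negligible term bounded by $\tfrac CN(1+|\bA^N(s,\bX_s)|^2_{\op})$, or by $\tfrac1N\sum_k\sum_{j\neq k}|D_{jk}u^{N,k}(s,\bX_s)|^2$ — the latter all-player sum equals, in expectation along $\bX$ and by exchangeability, $N$ times the per-index coercive quantity we are estimating, so the extra $1/N$ restores the right scale. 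Using \eqref{startingpoint} (and the flow property, so that $\E[\int_s^T|\bA^N(r,\bX_r)|^2_{\op}\,dr\mid\mathcal F_s]\leq K$ a.s.\ for $s\geq T_0$) to bound the integrals of $1+|\bA^N|^2_{\op}$ by $T+K$, one is left with an inequality of the schematic form
$$\Theta_i(t_0,\bx_0) + c\,\E\Big[\int_{t_0}^T\sum_{j\neq i}\sum_k|D_{jk}u^{N,i}(s,\bX_s)|^2\,ds\Big] \leq \frac{C(1+K)}{N} + C\,\E\Big[\int_{t_0}^T(1+|\bA^N(s,\bX_s)|^2_{\op})\,\Theta_i(s,\bX_s)\,ds\Big],$$
where $\Theta_i(t,\bx)=\sum_{j\neq i}|D_ju^{N,i}(t,\bx)|^2$.

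Finally I would close the estimate not by a plain Gr\"onwall argument — the weight $1+|\bA^N|^2_{\op}$ and the unknown $\Theta_i$ are both evaluated along the random trajectory — but by iterating the displayed inequality in Duhamel/Neumann-series style: each substitution of the inequality for $\Theta_i(s,\bX_s)$ into the last term produces one more time-integral of $1+|\bA^N|^2_{\op}$, whose conditional expectation is $\leq T+K$ by the flow property, so the $n$-th iterate contributes at most $\tfrac{C(1+K)}{N}\cdot\tfrac{(C(T+K))^n}{n!}$; summing gives $\Theta_i(t_0,\bx_0)\leq\tfrac{C(1+K)}{N}e^{C(T+K)}\leq Ce^{CK}/N$ (absorbing the polynomial factor into a larger exponential), uniformly over $t_0\in[T_0,T]$ and $\bx_0$, and the same bound for the coercive integral then follows by feeding this estimate back into the display. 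The main obstacle is squarely the second step: organizing the many terms of \eqref{F.def} — in particular those built from $D\hat\ba^N$ and from the measure-derivatives $D_\mu^aH$, $D_\mu^xD_pH$, $D_\mu^aD_pH$ — so that no factor of $|\bA^N|^2_{\op}$ (nor a diagonal second derivative, nor $D\hat\ba^N$) is ever left unaccompanied by either $\Theta_i$ or an extra $1/N$, and correctly accounting via exchangeability for the all-player second-derivative sums that appear; getting this pairing right is where essentially all the work lies.
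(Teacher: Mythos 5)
Your setup and the bulk of the term-by-term analysis match the paper's proof: It\^o--Krylov along the closed-loop trajectory, cancellation of the transport term against the drift, the $O(1/N)$ terminal contribution from \eqref{unij.terminal}, and the three-way classification of the source terms in \eqref{F.def} (absorbable into the coercive term, a weight times $\Theta_i$, or negligible after exchangeability/max over $i$) is essentially how the paper organizes its terms $T^1,\dots,T^6$, using Lemmas \ref{lem.ani} and \ref{lem.hataprops} exactly as you indicate. The gap is in your closing step, and it is twofold. First, your category (ii) carries the weight $1+|\bA^N|_{\op}^2$. The paper is careful to keep only the \emph{first} power $|\bA^N|_{\op}$ multiplying $\sum_{j\neq i}|Y^{i,j}|^2$ (the dangerous terms are linear in $\bA^N$, resp.\ in $D\hat\ba^N\lesssim|\bA^N|_{\op}$, and Cauchy--Schwarz preserves this); applying Young's inequality to symmetrize destroys exactly the feature that saves the argument. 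Indeed, the only route to closure under \eqref{startingpoint} --- which controls just the \emph{first} moment of $\int|\bA^N|_{\op}^2$ --- is to pull the deterministic quantity $\|\sum_{j\neq i}|u^{N,i,j}|^2\|_{L^\infty([t_1-\eps,t_1]\times(\R^d)^N)}$ outside the expectation and use
\[
\E\Big[\int_{t_1-\eps}^{t_1}\big(1+|\bA^N(t,\bX_t)|_{\op}\big)\,dt\Big]\le \eps+\sqrt{K\eps},
\]
which is small for small $\eps$; with your squared weight the corresponding integral is only $\le \eps+K$, never small, so no absorption on short time windows is possible.

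Second, the Duhamel/Neumann iteration with the $1/n!$ is not justified. The factorial gain in iterated Gr\"onwall comes from the pathwise simplex identity $\int_{t_0}^T w(s_1)\int_{s_1}^T w(s_2)\cdots ds = \tfrac{1}{n!}\big(\int_{t_0}^T w\big)^n$, so taking expectations requires $\E\big[(\int_{t_0}^T w)^n\big]$, i.e.\ $n$-th moments of $\int|\bA^N|_{\op}^2\,dt$, which \eqref{startingpoint} does not provide. If instead you replace each innermost time integral by its conditional expectation $\le T+K$ via the flow property, you give up the simplex structure and obtain $(C(T+K))^n$ \emph{without} the $1/n!$ --- a divergent series. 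Either way the claimed bound $\frac{C(1+K)}{N}\cdot\frac{(C(T+K))^n}{n!}$ is unavailable. The paper's substitute is deterministic-in-time bootstrapping: with the first-power weight, choose $\eps\sim(1+\sqrt K)^{-2}$ so that the coefficient $C(\eps+\sqrt{K\eps})\le 1/2$, absorb the sup-norm over $[t_1-\eps,t_1]$, and iterate backward over $\lceil C(T-T_0)(1+\sqrt K)^2\rceil$ windows, each at most doubling the bound and adding $C(1+K)/N$; this is precisely where the $\exp(CK)/N$ comes from. You need to rework both the choice of weight in step (ii) and the final iteration along these lines.
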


\begin{proof}
    We fix $t_0,\bx_0$, and set 
    \begin{align*}
        \bX = \bX^{t_0,\bx_0}, \quad Y_t^{i,j} = u^{N,i,j}(t,\bX_t), \quad Z_t^{i,j,k} = \sqrt{2} D_k u^{N,i,j}(t,\bX_t), \quad {Z}_t^{N,i,0} = \sqrt{2\sigma_0} \sum_{k = 1}^N D_k u^{N,i,j}(t,\bX_t).
    \end{align*}
   Let us recall that we need to consider the case $i\neq j$ only throughout this proof.
    Then by the It\^o--Krylov formula (see \cite[Section 2.10]{Krylov1980}), \eqref{uNij.eqn}, {\eqref{form:A_u}} and \eqref{F.def}, we find that for $j \neq i$,
    \begin{align*}
        dY_t^{i,j} &= \bigg( D_{px} H\big(X_t^j, Y_t^{j,j}, m_{\bX_t, \hat{\ba}(t,\bX_t)}^{N,-j} \big) Y_t^{i,j} + \frac{1}{N-1} D_{\mu}^x H\big( X_t^i, Y_t^{i,i}, m_{\bX_t, \hat{\ba}^N(t,\bX_t)}^{N,-i}, X_t^j  \big)
        \\
        &\quad + \frac{1}{N-1} \sum_{k \neq i,j} D_p  D_{\mu}^x H\big( X_t^k, Y_t^{k,k}, m_{\bX_t, \hat{\ba}^N(t,\bX_t)}^{N,-k}, X_t^j  \big) Y_t^{i,k}
        \\
        &\quad + \sum_{k \neq i} \big(A^{N,k,j}(t,\bX_t)\big)^\top D_{pp} H\big( X_t^k, Y_t^{k,k}, m_{\bX_t, \hat{\ba}^N(t,\bX_t)}^{N,-k} \big) Y_t^{i,k}
        \\
        &\quad + \frac{1}{N-1} \sum_{k \neq i} \big(D_j \hat{a}^{N,k}(t,\bX_t)\big)^\top D_{\mu}^a H\big( X_t^i, Y_t^{i,i}, m_{\bX_t, \hat{\ba}^N(t,\bX_t)}^{N,-i}, \hat{a}^{N,k}(t,\bX_t) \big)  
        \\
        &\quad + \frac{1}{N-1} \sum_{k \neq i} \sum_{l \neq k} \big(D_j \hat{a}^{N,l}(t,\bX_t) \big)^\top \Big(D_{\mu}^a  D_p  H \big(X_t^k, Y_t^{k,k}, m_{\bX_t, \hat{\ba}^N(t,\bX_t)}^{N,-k}, \hat{a}^{N,l}(t,\bX_t) \big) \Big)^\top  Y_t^{i,k} \bigg) dt 
        \\
        &\quad + \sum_{k = 0}^N Z_t^{i,j,k} dW_t^k.
    \end{align*}
    It follows that
    \begin{align*}
        d &\sum_{j \neq i} |Y_t^{i,j}|^2 = \sum_{j \neq i} \sum_{k =1}^N |Z_t^{i,j,k}|^2 dt 
        \\
        &+ 2\sum_{j \neq i} \big(Y_t^{i,j}\big)^\top \bigg( D_{px} H\big(X_t^j, Y_t^{j,j}, m_{\bX_t, \hat{\ba}^N(t,\bX_t)}^{N,-j} \big) Y_t^{i,j} + \frac{1}{N-1} D_{\mu}^x H\big( X_t^i, Y_t^{i,i}, m_{\bX_t, \hat{\ba}^N(t,\bX_t)}^{N,-i}, X_t^j  \big)
        \\
        &\quad + \frac{1}{N-1} \sum_{k \neq i,j} \Big( D_{\mu}^x  D_p H\big( X_t^k, Y_t^{k,k}, m_{\bX_t, \hat{\ba}^N(t,\bX_t)}^{N,-k}, X_t^j  \big) \Big)^\top Y_t^{i,k} 
        \\
        &\quad + \sum_{k \neq i} \big(A^{N,k,j}(t,\bX_t)\big)^\top D_{pp} H\big(  X_t^k, Y_t^{k,k}, m_{\bX_t, \hat{\ba}^N(t,\bX_t)}^{N,-k} \big) Y_t^{i,k}
        \\
        &\quad + \frac{1}{N-1} \sum_{k \neq i} \big(D_j \hat{a}^{N,k}(t,\bX_t)\big)^\top D_{\mu}^a H\big( X_t^i, Y_t^{i,i}, m_{\bX_t, \hat{\ba}^N(t,\bX_t)}^{N,-i}, \hat{a}^{N,k}(t,\bX_t)  \big)  
        \\
        &\quad + \frac{1}{N-1} \sum_{k \neq i} \sum_{l \neq k} \big(D_j \hat{a}^{N,l}(t,\bX_t)\big)^\top  \Big(D_{\mu}^a D_p H \big(X_t^k, Y_t^{k,k}, m_{\bX_t, \hat{\ba}^N(t,\bX_t)}^{N,-k}, \hat{a}^{N,l}(t,\bX_t) \big)\Big)^\top  Y_t^{i,k} \bigg) dt + dM_t,
    \end{align*}
    with $M$ being a martingale. Integrating in time and taking expectations, we find that for any $t_1 \in (t_0,T)$, we have 
    \begin{align} \label{t1through6}
        \E\bigg[ \sum_{j \neq i} |Y_{t_0}^{i,j}|^2 + {\int_{t_{0}}^{t_{1}}} \sum_{j \neq i} \sum_k |Z_t^{i,j,k}|^2 dt \bigg] \leq C \E\bigg[ \sum_{j \neq i} |Y_{t_1}^{i,j}|^2 \bigg] + {\tilde C}\E\bigg[ \int_{t_0}^{t_1} \sum_{m = 1}^6 |T_t^m| dt \bigg], 
    \end{align}
    where 
    \begin{align*}
        &T_t^1 = \sum_{j \neq i} (Y_t^{i,j})^\top D_{px} H(X_t^j, Y_t^{j,j}, m_{\bX_t, \hat{\ba}^N(t,\bX_t)}^{N,-j})Y_t^{i,j}, 
        \\
        &T_t^2 = \frac{1}{N-1} \sum_{j \neq i} (Y_t^{i,j})^\top D_{\mu}^x H\big( X_t^i, Y_t^{i,i}, m_{\bX_t, \hat{\ba}^N(t,\bX_t)}^{N,-i}, X_t^j  \big), 
        \\
        &T_t^3 = \frac{1}{N-1} \sum_{j \neq i} \sum_{k \neq i,j} (Y_t^{i,j})^\top  D_{\mu}^x D_p H\big( X_t^k, Y_t^{k,k}, m_{\bX_t, \hat{\ba}^N(s,\bX_t)}^{N,-k}, X_t^j  \big) Y_t^{i,k}, 
        \\
        &T_t^4 = \sum_{j \neq i} \sum_{k \neq i} (A^{N,k,j}(s,\bX_t) Y_t^{i,j})^\top  D_{pp} H\big(X_t^k,Y_t^{k,k}, m_{\bX_t,\hat{\ba}^N(t,\bX_t)}^{N,-k}\big) Y_t^{i,k}
        \\
        &T_t^5 = \frac{1}{N-1} \sum_{j \neq i} \sum_{k \neq i} (Y_t^{i,j})^\top \big(D_j \hat{a}^{N,k}(t,\bX_t)\big)^\top D_{\mu}^a H\big( X_t^i, Y_t^{i,i}, m_{\bX_t, \hat{\ba}^N(t,\bX_t)}^{N,-i}, \hat{a}^{N,k}(t,\bX_t) \big)   
        \\
        &T_t^6 = \frac{1}{N-1} \sum_{j \neq i} \sum_{k \neq i} \sum_{l \neq k} (Y_t^{i,j})^\top \big(D_j \hat{a}^{N,l}(t,\bX_t)\big)^\top  \Big(D_{\mu}^a D_p  H \big(X_t^k, Y_t^{k,k}, m_{\bX_t, \hat{\ba}^N(s,\bX_t)}^{N,-k}, \hat{a}^{N,l}(s,\bX_t) \big) \Big)^\top  Y_t^{i,k}.
    \end{align*}
    Using the boundedness of $D_{px} H$, we easily find that 
    \begin{align} \label{T1bound}
        |T_t^1| \leq C \sum_{j \neq i} |Y_t^{i,j}|^2.
    \end{align}
    By the boundedness of $D_{\mu}^xH$ and the Cauchy--Schwarz and Young inequalities, we find that 
    \begin{align} \label{T2bound}
        |T_t^2| \leq \frac{C}{\sqrt{N}} \left(\sum_{j \neq i} |Y_t^{i,j}|^2 \right)^{1/2} \leq \frac{C}{N} + C \sum_{j \neq i} |Y_t^{i,j}|^2.
    \end{align}
    Next, by boundedness of $D_{\mu}^x D_p H$, we find 
    \begin{align} \label{T3bound}
        |T_t^3| \leq \frac{C}{N} \sum_{j \neq i} \sum_{k \neq i} |Y_t^{i,j}| |Y_t^{i,k}| \leq C \sum_{j \neq i} |Y_t^{i,j}|^2.
    \end{align}
    For the fourth term, we use the boundedness of $D_{pp} H$ to get
    \begin{align} \label{T4bound}
        |T_t^4| \leq C |\bA^N(t,\bX_t)|_{\op} \sum_{j \neq i} |Y_t^{i,j}|^2. 
    \end{align}
    For the fifth term, we us Lemma \ref{lem.hataprops} and the boundedness of $D_{\mu}^{a} H$ to get 
    \begin{align*} 
        |T_t^5| &\leq \frac{C}{N} \sum_{j \neq i} |Y_t^{i,j}| |D_j \hat{a}^{N,j}| + \frac{C}{N} \sum_{j \neq i} \sum_{k \neq j,i} |Y_t^{i,j}| |D_j \hat{a}^{N,k}| 
        \nonumber \\
        &\leq \frac{C}{\eps} \sum_{j \neq i} |Y_t^{i,j}|^2 + \frac{C}{N} |\bA^N(t,\bX_t)|^2_{\op} + \frac{C \eps}{N} \sum_{j=1}^N \sum_{k \neq j} |D_j \hat{a}^{N,k}|^2 
       \nonumber  \\
        &\leq \frac{C}{\eps} \sum_{j \neq i} |Y_t^{i,j}|^2 + \frac{C}{N} |\bA^N(t,\bX_t)|^2_{\op} + \frac{C \eps}{N} \Big( 1 + |\bA^N|_{\op}^2(t,\bX_t) + \sum_{j = 1}^N \sum_{k \neq j} |Z^{k,j,k}|^2  \Big).
    \end{align*}
    Choosing $\eps$ small enough, we get 
    \begin{align} \label{T5bound}
           |T_t^5| \leq C \sum_{j \neq i} |Y_t^{i,j}|^2 + \frac{C}{N} |\bA^N(t,\bX_t)|_{\op}^2 +\frac{C}{N} + \frac{1}{2N {\tilde C}} \sum_{j = 1}^N \sum_{k \neq j} |Z_t^{k,j,k}|^2,
    \end{align}
 {where $\tilde C$ is the constant appearing in \eqref{t1through6}.} Finally for the last term, we use Lemma \ref{lem.hataprops} and the boundedness of $D_{\mu}^a D_p H$ to get 
     \begin{align} \label{T6bound}
            |T_t^6| &\leq \frac{C}{N} \sum_{k \neq i} |Y_t^{i,k}| \left| \sum_{j \neq i} \sum_{l \neq i} (Y_t^{i,j})^\top \big(D_j \hat{a}^{N,l}(t,\bX_t) \big)^\top \Big(D_{\mu}^a  D_p H\big( X_t^k, Y_t^{k,k},m_{\bX_t, \hat{\ba}^N(t,\bX_t)}^{N,-k},\hat{a}^{N,l}(t,\bX_t) \big) \Big)^\top \right| \nonumber
            \\
            &\leq \frac{C}{N} \sum_{k \neq i}|Y_t^{i,k}|  \Big( \sum_{j \neq i}|Y_t^{i,j}|^2 \Big)^{1/2} |D\hat{\ba}^{N}|_{\op} \Big( \sum_{l \neq i} |D_{\mu}^a D_pH \big( X_t^k, Y_t^{k,k},m_{\bX_t, \hat{\ba}^N(t,\bX_t)}^{N,-k},\hat{a}^{N,l}(t,\bX_t) \big)|^2 \Big)^{1/2}
            \nonumber\\ 
            &\leq \frac{C}{\sqrt{N}}|\bA^N(t,\bX_t)|_{\op}\Big( \sum_{j \neq i}|Y_t^{i,j}|^2 \Big)^{1/2} \sum_{k \neq i} |Y_t^{i,k}|\le C|\bA^N(t,\bX_t)|_{\op} \sum_{j \neq i} |Y_t^{i,j}|^2
    \end{align}
    Putting together \eqref{T1bound}, \eqref{T2bound}, \eqref{T3bound}, \eqref{T4bound}, \eqref{T5bound}, and \eqref{T6bound}, we arrive at 
    \begin{align*}
        {\tilde C}\sum_{m = 1}^N |T_t^m| &\leq \frac{C(1 + |\bA^N(t,\bX_t)|^2_{\op})}{N} + C \big(1 + |\bA^N(t,\bX_t)|_{\op} \big) \sum_{j \neq i} |Y_t^{i,j}|^2 + \frac{1}{2N} \sum_{j = 1}^N \sum_{k \neq j} |Z^{k,j,k}_t|^2
        \\
        &\leq \frac{C(1 + |\bA^N(t,\bX_t)|^2_{\op})}{N} + C \big(1 + |\bA^N(t,\bX_t)|_{\op} \big) \sum_{j \neq i} |Y_t^{i,j}|^2 + \frac{1}{2N} \sum_{j = 1}^N \sum_{k \neq j} \sum_{l = 1}^N |Z^{k,j,l}_t|^2.
    \end{align*}
{Here, let us emphasize that it is crucial to choose $\eps>0$ in the estimate for $T^{5}_{t}$ small enough so, that we have precisely the factor $1/(2N)$ in the last term of the previous chain of inequalities.} Coming back to \eqref{t1through6}, {using the assumption \eqref{startingpoint},} we deduce that for any $T_0 \leq t_0 < t_1 \leq T$, we have
    \begin{align*}
     \E\bigg[ &\sum_{j \neq i} |Y_{t_0}^{i,j}|^2 + \int_{t_0}^{t_1}  \sum_{j \neq i} \sum_k |Z_t^{i,j,k}|^2 dt \bigg] \leq \E\big[ \sum_{j \neq i} |{Y_{t_1}}^{i,j}|^2 \big]  \\
     &\qquad +\E\bigg[ \int_t^T \frac{C(1 + |\bA^N(s,\bX_t)|^2_{\op})}{N}
     + C |\bA^N(t,\bX_t)| \sum_{j \neq i} |Y_t^{i,j}|^2 + \frac{1}{2N} \sum_{j = 1}^N \sum_{k \neq j} |Z^{k,j,k}_t|^2 \Big) dt \bigg]
        \\
        &\leq  \E\big[ \sum_{j \neq i} |Y_{t_1}^{i,j}|^2 \big] + \E\bigg[  {\int_{t_0}^{t_1}} \Big(\frac{C(1 + |\bA^N(t,\bX_t)|^2_{\op})}{N} + C \big(1 + |\bA^N(t,\bX_t)| \big)\sum_{j \neq i} |Y_t^{i,j}|^2 
        \\
        &\qquad + \frac{1}{2N} \sum_{j = 1}^N \sum_{k \neq j} \sum_{l = 1}^N |Z^{k,j,l}_t|^2dt \Big) \bigg]
        \\
        &\leq  \E\big[ \sum_{j \neq i} |Y_{t_1}^{i,j}|^2 \big] + C(1+{K})/N + C \left\|\sum_{j \neq i} |u^{N,i,j}|^2 \right\|_{L^{\infty}([t_0,t_1] \times (\R^d)^N} \E\bigg[ {\int_{t_0}^{t_1}} \big(1 + |\bA^N(t,\bX_t)| \big) dt \bigg] 
        \\
        &\qquad + \frac{1}{2N} \E\bigg[ \int_{t_0}^T \sum_{j = 1}^N \sum_{k \neq j} \sum_{l = 1}^N |Z^{k,j,l}_t|^2dt \Big) \bigg]
        \\
        &\leq  \E\big[ \sum_{j \neq i} |Y_{t_1}^{i,j}|^2 \big] + C(1+{K})/N + C \left\|\sum_{j \neq i} |u^{N,i,j}|^2 \right\|_{L^{\infty}([t_0,t_1] \times (\R^d)^N}{\left(t_{1}-t_{0} + \sqrt{K} \sqrt{t_1-t_0}\right)} 
        \\
        &\qquad + \frac{1}{2N} \E\bigg[ \int_{t_0}^{t_1} \sum_{j = 1}^N \sum_{k \neq j} \sum_{l = 1}^N |Z^{k,j,l}_t|^2dt \Big) \bigg].
    \end{align*}
    Now take a maximum over $i = 1,\dots,N$ and then absorb the last term on the right hand-side to get 
    \begin{align} \label{zijkest}
       \max_{i = 1,\dots,N} \E\bigg[ &\sum_{j \neq i} |Y_{t_0}^{i,j}|^2 + \int_{t_0}^{t_1} \sum_{j \neq i} \sum_k |Z_t^{i,j,k}|^2 dt \bigg] \leq \left\|\sum_{j \neq i} |u^{N,i,j}({t_1},\cdot)|^2 \right\|_{L^{\infty}((\R^d)^N)} + C(1+K)/N
       \nonumber \\
       &+ C \max_{i = 1,\dots,N} \left\|\sum_{j \neq i} |u^{N,i,j}|^2 \right\|_{L^{\infty}([t_0,T] \times (\R^d)^N} \left(t_{1}-t_{0} + \sqrt{K} \sqrt{t_1-t_0}\right) . 
    \end{align}
    Recall the definition of $Y^{i,j}$ and take a supremum over $t_0 \in [t_1 - \eps, t_1]$ {(for $\eps>0$ small, to be chosen later)} and $\bx_0 \in (\R^d)^N$ to find that for any $t_1 \in [T_0, T]$, we have
    \begin{align*}
        &\max_{i = 1,\dots,N}  \left\|\sum_{j \neq i} |u^{N,i,j}|^2 \right\|_{L^{\infty}([t_1-\eps,t_1] \times (\R^d)^N)} \leq \max_{i \neq j} \left\|\sum_{j \neq i} |u^{N,i,j}(t_1,\cdot)|^2 \right\|_{L^{\infty}((\R^d)^N)} + C(1+K)/N
        \\
        &\qquad + C \max_{i = 1,\dots,N} \left\|\sum_{j \neq i} |u^{N,i,j}|^2 \right\|_{L^{\infty}([t_1 - \eps,t_1] \times (\R^d)^N)} {\left(\eps+\sqrt{K} \sqrt{\eps}\right)}.
    \end{align*}
{As for $\eps\le 1$ we have that $\eps\le\sqrt{\eps}$, and so in this case $\eps+\sqrt{K} \sqrt{\eps} \le (1+\sqrt{K})\sqrt{\eps}$, we have that for any $\eps \leq \min\left\{1,\frac{1}{C^2 4 (1+\sqrt{K})^{2}}\right\}$} and any $t_1$ with $T_0 + \eps \leq t_1 \leq T$, we have
    \begin{align*}
          \max_{i = 1,\dots,N}  \left\|\sum_{j \neq i} |u^{N,i,j}|^2 \right\|_{L^{\infty}([t_1-\eps,t_1] \times (\R^d)^N)} \leq 2\max_{i = 1,\dots,N} \left\|\sum_{j \neq i} |u^{N,i,j}(t_1,\cdot)|^2 \right\|_{L^{\infty}((\R^d)^N)} + C(1+K)/N.
    \end{align*}
 {By \eqref{unij.terminal} and by our standing assumptions on $D_{m}G$, we have that 
 $$
 \left\|\sum_{j \neq i} |u^{N,i,j}(T,\cdot)|^2\right\|_{L^{\infty}((\R^d)^N)} \le C/N,
 $$
 so, iterating the previous} inequality $\lceil (T-T_0) C^2 4{(1+\sqrt{K})^{2}} \rceil$ times, we get the desired bound on the quantity\\ $\ds\left\| \sum_{j \neq i} |u^{N,i,j}|^2\right\|_{\linf([T_0,T] \times (\R^d)^N)}$, and the bound on $D_{jk} u^{N,i}$ can then be obtained from taking $t_1 = T$ in \eqref{zijkest}, after recalling the definition of $Z^{i,j,k}$. 
\end{proof}

\begin{proposition}
    \label{prop.uiivi}
    Suppose that Assumptions \ref{assump.regularity} and \ref{assump.monotone} hold. Then there are constants $C>0, N_0\in\N$ with the following property. If $N \geq N_0$ and \eqref{startingpoint} holds for some $T_0 \in [0,T)$, then 
    \begin{align*}
        \norm{ \sum_{i = 1}^N |v^{N,i} - D_i u^{N,i}|^2 }_{\linf([T_0,T] \times (\R^d)^N)} & + \sup_{T_0 \leq t_0 \leq T, \bx_0 \in (\R^d)^N} \E\bigg[ \int_{t_0}^T \sum_{i,j = 1,\dots,N} |D_{ji} u^{N,i} - D_j v^{N,i}|^2(t,\bX^{t_0,\bx_0}_t) |^2 dt \bigg] 
        \\
       & \leq \frac{C \exp(CK)}{N}.
    \end{align*}
\end{proposition}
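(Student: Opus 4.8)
The plan is to compare $\diag\bu^N=(D_1u^{N,1},\dots,D_Nu^{N,N})$ to $\bv^N=(v^{N,1},\dots,v^{N,N})$ by viewing the former as a solution of the \emph{same} FBSDE driving $\bv^N$, up to controllable error terms, and then invoking the uniform stability estimate of Proposition~\ref{prop.uniformstability}. Concretely, write $Y_t^{i}=D_iu^{N,i}(t,\bX_t^{t_0,\bx_0})$ and $\hat\alpha_t^i=\hat a^{N,i}(t,\bX_t^{t_0,\bx_0})$, which by the Nash system \eqref{nashsystem} and the It\^o--Krylov formula satisfy a backward equation of the form
\begin{align*}
 dY_t^i = D_xH\big(X_t^i,Y_t^i,m_{\bX_t,\hat{\ba}^N(t,\bX_t)}^{N,-i}\big)dt + \big(\text{error}_t^i\big)dt + \sum_{j=0}^N Z_t^{i,j}dW_t^j,
\end{align*}
where $\text{error}_t^i$ collects exactly the off-diagonal contributions: the terms $\sum_{k\neq i}(D_iu^{N,k})^\top(\cdots)$ coming from the transport part of \eqref{nashsystem}, together with the $\frac{1}{N-1}D_\mu^aH$-type and $D_j\hat a^{N,k}$-type terms. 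Meanwhile the forward dynamics of $\bX^{t_0,\bx_0}$ use the drift $-D_pH(X_t^i,Y_t^i,m^{N,-i}_{\bX_t,\hat\ba^N})$, which is \emph{not} exactly $-D_pH(X_t^i,Y_t^i,m^{N,-i}_{\bX_t,\ba^N(\bX_t,\bY_t)})$ because in the Nash system the measure argument involves $\diag\bu^N$ whereas in \eqref{pontryagin.pde} it involves the full $\bv$; but these agree once we treat the difference as a further forward error $E^{1,i}$. Thus $(\bX,\bY,\bZ)$ solves the perturbed Pontryagin system \eqref{pontryagin.errors} with error processes $E^{1,i}$, $E^{2,i}$ built from the off-diagonal quantities, and $E^{3,i}$ coming from the $\frac{1}{N-1}D_mG$ part of the terminal condition \eqref{unij.terminal}.

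The next step is to bound the $L^2$-norms of these errors along the trajectory using Lemma~\ref{lem.offdiagbound}. Each error term is a sum over $k\neq i$ (or a double sum) of products of bounded derivatives of $H$ with off-diagonal quantities $D_iu^{N,k}=Y_t^{i,k}$, off-diagonal Hessians $D_{jk}u^{N,k}$, and off-diagonal Jacobians $D_j\hat a^{N,k}$; by Cauchy--Schwarz in the summation index and Lemma~\ref{lem.hataprops} (to convert $D_j\hat a^{N,k}$ bounds into $|\bA^N|_{\op}$ and Hessian terms) together with the assumed bound \eqref{startingpoint} on $\E\int|\bA^N|_{\op}^2$, one gets
\begin{align*}
 \sup_{T_0\le t_0\le T,\,\bx_0}\E\bigg[\sum_{i=1}^N|E^{3,i}|^2 + \int_{t_0}^T\sum_{i=1}^N\big(|E_t^{1,i}|^2+|E_t^{2,i}|^2\big)dt\bigg]\le \frac{C\exp(CK)}{N}.
\end{align*}
Feeding this into Proposition~\ref{prop.uniformstability} (with $\hat{\bxi}$ deterministic so $\Delta\xi^i$ contributes nothing once we also identify $\bv^N$ with the unperturbed system started from the same point) yields $\E\big[\sum_i|D_iu^{N,i}(t_0,\bx_0)-v^{N,i}(t_0,\bx_0)|^2\big]$ and $\E\int_{t_0}^T\sum_{i,j}|D_{ji}u^{N,i}-D_jv^{N,i}|^2dt$ bounded by $C\exp(CK)/N$; since $t_0,\bx_0$ are deterministic the first quantity is pointwise, giving the $L^\infty$-bound after taking a sup.

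The main obstacle, as in the proof of Lemma~\ref{lem.offdiagbound}, will be the bookkeeping of the error terms that involve $D_j\hat a^{N,k}$: these are only controlled \emph{on average along the trajectory} via \eqref{Dhata.d2lip} in terms of $|\bA^N|_{\op}^2/N$ plus off-diagonal Hessians $|D_{jk}u^{N,k}|^2$, so one must be careful that the $\eps$-splitting in the Young inequalities is arranged so that the resulting $\frac{1}{N}\E\int\sum_{j}\sum_{k\neq j}|D_{jk}u^{N,k}|^2$ terms are exactly the ones already estimated by $C\exp(CK)/N$ in Lemma~\ref{lem.offdiagbound}, rather than creating a circular dependence on the quantity being bounded. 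A secondary subtlety is that the forward error $E^{1,i}$ compares $m^{N,-i}_{\bX_t,\hat\ba^N}$ against the measure built from $\ba^N(\bX_t,\bY_t)$ with $\bY_t=\diag\bu^N$, which are literally equal, so that particular ``error'' is zero and the only genuine forward/backward errors are the off-diagonal transport and measure-derivative terms—this should be stated carefully to avoid confusion. Once the error estimate is in place the conclusion is immediate from Proposition~\ref{prop.uniformstability}.
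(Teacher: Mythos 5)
Your identification of the error terms is essentially right (they are the four terms $T^{m,i}$ of \eqref{Tdef}, with $E^{1,i}=0$ as you observe; note also that $E^{3,i}=0$, since for $j=i$ the terminal condition \eqref{unij.terminal} is exactly $D_xG(x^i,m_{\bx}^{N,-i})$ --- the $\tfrac{1}{N-1}D_mG$ term only appears off-diagonal), and their $L^2$ estimation via Lemmas \ref{lem.hataprops} and \ref{lem.offdiagbound} is the same as in the paper. Your route for the first half of the claim --- feed these errors into Proposition \ref{prop.uniformstability} against the exact Pontryagin solution started from the deterministic point $(t_0,\bx_0)$, and read off the pointwise bound on $\sum_i|D_iu^{N,i}(t_0,\bx_0)-v^{N,i}(t_0,\bx_0)|^2$ from the $Y$-estimate at $t=t_0$ --- is legitimate, and is in fact the mechanism used later in Proposition \ref{prop.OLCL.comp}.

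The gap is in the second half of the statement: the bound on $\E\int_{t_0}^T\sum_{i,j}|D_{ji}u^{N,i}-D_jv^{N,i}|^2(t,\bX_t^{t_0,\bx_0})\,dt$. This is a bound on the difference of the \emph{martingale integrands} $\Delta Z^{i,j}$, and Proposition \ref{prop.uniformstability} simply does not provide one --- its conclusion controls $\Delta X$, $\Delta Y$ and $\Delta\alpha$ only. Worse, even if you strengthened Proposition \ref{prop.uniformstability} to include a $Z$-estimate, your comparison pairs $\diag\bu^N$ along the closed-loop trajectory with the Pontryagin solution along its \emph{own} forward trajectory, so the two $Z$-processes are second derivatives evaluated along different paths, whereas the statement requires both $D_{ji}u^{N,i}$ and $D_jv^{N,i}$ to be evaluated along the same trajectory $\bX^{t_0,\bx_0}$. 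The paper avoids both problems by setting $\ov Y_t^i=v^{N,i}(t,\bX_t^{t_0,\bx_0})$, i.e.\ transporting $\bv^N$ along the \emph{closed-loop} flow (which introduces the explicit compensation term $-\sum_jD_jv^{N,i}(D_pH(\cdot,Y^j,\cdot)-D_pH(\cdot,\ov Y^j,\cdot))$, controlled by Proposition \ref{prop.vLip} and Lemma \ref{lem.ani}), and then computing $d|\Delta Y_t^i|^2$ directly: the It\^o bracket produces $\sum_{i,j}|\Delta Z^{i,j}_t|^2\,dt$ on the favorable side of the inequality, so a single Gr\"onwall argument delivers both halves of the estimate simultaneously. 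This second half is not cosmetic: it is exactly what Lemma \ref{lem.improvementofM} needs to bound $\E\int|\bA^N-D\bv^N|_{\op}^2$ and close the bootstrap, so the proposal as written would leave the main a priori argument incomplete.
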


\begin{proof}
    The starting point is to fix $t_0 \in [T_0,T)$, $\bx_0 \in (\R^d)^N$, and set 
    \begin{align*}
        &\bX_t = \bX_t^{t_0,\bx_0}, \quad Y_t^{i} = u^{N,i,i}(t,\bX_t), \quad Z_t^{i,j} = \sqrt{2} D_j u^{N,i,i}(t,\bX_t), \quad Z_t^{i,0} = \sqrt{2\sigma_0} \sum_{k = 1}^N D_k u^{N,i,i}(t,\bX_t),
        \\
        &\ov{Y}_t^i = v^{N,i}(t,\bX_t), 
       \quad \ov{Z}_t^{N,i,j} = \sqrt{2} D_j v^{N,i}(t,\bX_t), \quad \ov{Z}_t^{N,i,0} = \sqrt{2\sigma_0} \sum_{k = 1}^N D_k v^{N,i}(t,\bX_t). 
    \end{align*}
    Then we have 
    \begin{align*}
        dY_t^{i} &= \Big( D_x H(X_t^i, Y_t^{i}, m_{\bX_t, \ba^N(\bX_t,\bY_t)}^{N,-i})
       + \sum_{m = 1}^4 T_t^m \Big) dt  + \sum_{j = 0}^N Z_t^{i,j} dW_t^j,
    \end{align*}
    where 
    \begin{align} \label{Tdef}
        \nonumber &T_t^{1,i} =  \frac{1}{N-1} \sum_{k \neq i}  \Big(D_{\mu}^x D_p  H(X_t^k,Y_t^{k}, m_{\bX_t, \hat{\ba}^N(t,\bX_t}^{N,-k}), X_t^i)\Big)^\top D_k u^{N,i}(t,\bX_t), 
        \\
        &T_t^{2,i} = \sum_{k \neq i} \big(D_{ik}u^{N,k}(t,\bX_t) \big)^\top D_{pp}H(X_t^k, Y_t^k, m_{\bX_t, \hat{\ba}^N(t,\bX_t)}^{N,-k})D_k u^{N,i}(t,\bX_t), 
       \nonumber \\
        &T_t^{3,i} = \frac{1}{N-1} \sum_{k \neq i}  \big(D_i \hat{a}^{N,k}(t,\bX_t)\big)^\top D_{\mu}^aH(X_t^i, Y_t^i,m_{\bX_t, \hat{\ba}^N(\bX_t)}^{N,-i}, \hat{a}^{N,k}(t,\bX_t)), 
       \nonumber \\
        &T_t^{4,i} = \frac{1}{N-1} \sum_{k \neq i} \sum_{l \neq k} \big(D_i \hat{a}^{N,l}(t,\bX_t)\big)^\top   \Big(D_{\mu}^a D_p  H\big( X_t^k, Y_t^k, m_{\bX_t, \hat{\ba}^N(t,\bX_t)}^{N,-k}, \hat{a}^{N,l}(t,\bX_t) \big)\Big)^T D_k u^{N,i}(t,\bX_t) .
    \end{align}
    Similarly, 
    \begin{align*}
        d \ov{Y}_t^i &= \Big( D_x H(X_t^i, \ov{Y}_t^{i}, m_{\bX_t, \ba^N(\bX_t,\ov{\bY}_t)}^{N,-i}) 
        \\
        &\quad - \sum_{j =1}^N D_j v^{N,i}(t,\bX_t) \big( D_pH(X_t^j, Y_t^j, m_{\bX_t, \ba^N(\bX_t, \bY_t)}^{N,-j}) - D_pH(X_t^j, \ov{Y}_t^j, m_{\bX_t, \ba^N(\bX_t, \ov{\bY}_t)}^{N,-j}) \big) \Big) dt + \sum_{j = 0}^N \ov{Z}_t^{i,j} dW_t^j.
    \end{align*}
    We now set 
    \begin{align*}
        \Delta Y_t^i := Y_t^i - \ov{Y}_t^i, \quad {\Delta}Z_t^{i,j} := Z_t^{i,j} - \ov{Z}_t^{i,j}, 
    \end{align*}
    and we compute 
    \begin{align*}
        d  |\Delta Y_t^i|^2 &= \bigg( \sum_{i = 1}^N (\Delta Y_t^i)^\top \Big( D_x H(X_t^i, Y_t^{i}, m_{\bX_t, \ba(\bX_t,\bY_t)}^{N,-i}) - D_x H(X_t^i, \ov{Y}_t^{i}, m_{\bX_t, \ba^N(\bX_t,\ov{\bY}_t)}^{N,-i}) \Big) 
        \\
        &\quad + \sum_{i,j = 1}^N (\Delta Y_t^i)^\top D_j v^{N,i}(t,\bX_t) \Big( D_pH(X_t^j, Y_t^j, m_{\bX_t, \ba^N(\bX_t, \bY_t)}^{N,-j}) - D_pH(X_t^j, \ov{Y}_t^j, m_{\bX_t, \ba(\bX_t, \ov{\bY}_t)}^{N,-j}) \Big)  
        \\
        &\quad + \sum_{m = 1}^4 (\Delta Y_t^i)^\top T_t^{m,i} + \sum_{i,j = 1}^N |\Delta Z_t^{i,j}|^2 \bigg) dt + dM_t
    \end{align*}
    with $M$ being a martingale. In particular, for any $t_0 \leq t \leq T$, we find 
    \begin{align*}
        \E\bigg[ &\sum_{i = 1}^N |\Delta Y_t^i|^2 + \int_t^T \sum_{i,j = 1}^N |\Delta Z_s^{i,j}|^2 \bigg] 
        \\
        &\leq -\E\bigg[ \int_{t}^T \sum_{i = 1}^N (\Delta Y_s^i)^\top \Big( D_x H(X_s^i, Y_s^{i}, m_{\bX_s, \ba^N(\bX_s,\bY_s)}^{N,-i}) - D_x H(X_s^i, \ov{Y}_s^{i}, m_{\bX_s, \ba^N(\bX_s,\ov{\bY}_s)}^{N,-i}) \Big) 
        \\
        &\qquad + \sum_{i,j = 1}^N (\Delta Y_s^i)^\top D_j v^{N,i}(s,\bX_s) \Big( D_pH(X_s^j, Y_s^j, m_{\bX_s, \ba^N(\bX_s, \bY_s)}^{N,-j}) - D_pH(X_s^j, \ov{Y}_s^j, m_{\bX_s, \ba(\bX_s, \ov{\bY}_s)}^{N,-j}) \Big)  
        \\
        &\qquad + \sum_{m = 1}^4 \sum_{i = 1}^N (\Delta Y_s^i)^{\top} T_s^{m,i} \Big)ds   \bigg].
    \end{align*}
    Next, notice that 
    \begin{align*}
       \Big| \sum_{i = 1}^N &(\Delta Y_s^i)^\top \Big( D_x H(X_s^i, Y_s^{i}, m_{\bX_s, \ba^N(\bX_s,\bY_s)}^{N,-i}) - D_x H(X_s^i, \ov{Y}_s^{i}, m_{\bX_s, \ba^N(\bX_s,\ov{\bY}_s)}^{N,-i}) \Big) \Big| 
       \\ &\leq \sum_{i = 1}^N |\Delta Y_s^i|^2 + \sum_{i = 1}^N \Big| D_x H(X_s^i, Y_s^{i}, m_{\bX_s, \ba^N(\bX_s,\bY_s)}^{N,-i}) - D_x H(X_s^i, \ov{Y}_s^{i}, m_{\bX_s, \ba^N(\bX_s,\ov{\bY}_s)}^{N,-i}) \Big|^2
       \\
       &\leq C \sum_{i = 1}^N |\Delta Y_s^i|^2 + C \sum_{i = 1}^N \bd_2^2\big(m_{\bX_s, \ba^N(\bX_s,\bY_s)}^{N,-i}, m_{\bX_s, \ba^N(\bX_s,\ov{\bY}_s)}^{N,-i} \big)
       \\
       &\leq C \sum_{i = 1}^N |\Delta Y_s^i|^2 + C \sum_{i = 1}^N |a^{N,i}(\bX_s, \bY_s) - a^{N,i}(\bX_s, \ov{\bY}_s)|^2 \leq C \sum_{i = 1}^N |\Delta Y_s^i|^2,
    \end{align*}
{where we have used the Lipschitz continuity of $D_{x}H$ and Lemma \ref{lem.ani}.}    
    Similarly, using the Lipschitz bound on $\bv^N$ from Proposition \ref{prop.vLip}, we get 
    \begin{align*}
        \Big| \sum_{i,j = 1}^N & (\Delta Y_s^i)^\top D_j v^{N,i}(s,\bX_s) \Big( D_pH(X_s^j, Y_s^j, m_{\bX_s, \ba^N(\bX_s, \bY_s)}^{N,-j}) - D_pH(X_s^j, \ov{Y}_s^j, m_{\bX_s, \ba^N(\bX_s, \ov{\bY}_s)}^{N,-j}) \Big|
        \\
        &\leq C \big( \sum_{i = 1}^N |\Delta Y_s^i|^2 \big)^{1/2} \Big( \sum_{i = 1}^N \big|D_pH(X_s^i, Y_s^i, m_{\bX_s, \ba^N(\bX_s, \bY_s)}^{N,-i}) - D_pH(X_s^i, \ov{Y}_s^i, m_{\bX_s, \ba^N(\bX_s, \ov{\bY}_s)}^{N,-i}) \big|^2 \Big)^{1/2}
        \\
        &\leq C\sum_{i = 1}^N |\Delta Y_s^i|^2 + C \sum_{i = 1}^N \bd_2^2\big(m_{\bX_s, \ba^N(\bX_s,\bY_s)}^{N,-i}, m_{\bX_s, \ba^N(\bX_s,\ov{\bY}_s)}^{N,-i} \big) \leq C \sum_{i = 1}^N |\Delta Y_s^i|^2. 
    \end{align*}
    Together with an application of Young's inequality to handle the term $\sum_{ m =1}^4 \sum_{i = 1}^N (\Delta Y_s^i)^{\top} {T_s^{m,i}}$, this allows us to deduce 
    \begin{align*}
        \E\bigg[ &\sum_{i = 1}^N |\Delta Y_t^i|^2 + \int_t^T \sum_{i,j = 1}^N |\Delta Z_s^{i,j}|^2 \bigg] 
        \\
        &\leq C\E\bigg[ \int_{t}^T \Big( \sum_{i = 1}^N |\Delta Y_s^i|^2 + \sum_{m = 1}^4 \sum_{i = 1}^N |T_s^{m,i}|^2 \Big)ds   \bigg].
    \end{align*}
    Applying Gr\"onwall's inequality to the function $t \mapsto \E\big[ \sum_{i = 1}^N |\Delta  Y_t^i|^2 \big]$, we deduce that
    \begin{align*}
        \E\bigg[ \sum_{i = 1}^N |\Delta Y_{t_0}^i|^2 + \int_{t_0}^T \sum_{i,j = 1}^N |\Delta Z_t^{i,j}|^2 \bigg] 
       \leq C\E\bigg[ \int_{t_0}^T  \sum_{i = 1}^N |T_t^{m,i}|^2 dt  \bigg].
    \end{align*}
    It remains to estimate the error terms $T^{m,i}$ in $L^2$. We start by using Lemma \ref{lem.offdiagbound} and the boundedness of $ D_p  D_{\mu}^x H$ to get
    \begin{align*}
        \sum_{i = 1}^N |T_t^{1,i}|^2 &\leq \frac{C}{N^2} \sum_{i = 1}^N \Big|\sum_{k \neq i} D_p  D_{\mu}^x H\big( X_t^k,Y_t^k, m_{\bX_t, \hat{\ba}^N(\bX_t)}^{N,-k}, X_t^i \big) D_k u^{N,i}(t,\bX_t)\Big|^2
        \\
        &\leq \frac{C}{N} \sum_{i = 1}^N \sum_{k \neq i} |D_ku^{N,i}(t,\bX_t)|^2 \leq C \exp(CK)/N.
    \end{align*}
    Next, we have 
    \begin{align*}
      \sum_{i = 1}^N &|T_t^{2,i}|^2 = \sum_{i = 1}^N \Big|  \sum_{k \neq i} (D_{ik}u^{N,k}(t,\bX_t))^\top D_{pp}H(X_t^k, Y_t^k, m_{\bX_t, \hat{\ba}^N(t,\bX_t)}^{N,-k})D_k u^{N,i}(t,\bX_t) \Big|^2
       \\
       &\leq C\sum_{i = 1}^N \Big( \sum_{k \neq i} |D_{ik}u^{N,k}(t,\bX_t)|^2 \Big) \Big( \sum_{k \neq i} |D_k u^{N,i}(t,\bX_t)|^2 \Big) \leq \frac{C \exp(CK)}{N} \sum_{i = 1}^N \sum_{k \neq i} |D_{ik}u^{N,k}(t,\bX_t)|^2, 
    \end{align*}
    so that in particular
    \begin{align*}
        \E\bigg[ \int_{t_0}^T \sum_{i = 1}^N |T_t^{2,i}|^2 dt \bigg] \leq \frac{C \exp(CK)}{N} \E\bigg[ \sum_{i = 1}^N \sum_{j \neq i} \int_{t_0}^T |D_{ij}u^{N,i}(t,\bX_t)|^2 \bigg] \leq \frac{C \exp(CK)}{N}.
    \end{align*}
    Next, using Lemma \ref{lem.hataprops} and the boundedness of $D_{\mu}^a H$, we have

    
    \begin{align*}
        \sum_{ i = 1}^N |T_t^{3,i}|^2 &\leq  \frac{C}{N^2} \sum_{i = 1}^N \Big| \sum_{k \neq i} \big(D_i \hat{a}^{N,k}(t,\bX_t)\big)^\top D_{\mu}^aH(X_t^i, Y_t^i,m_{\bX_t, \hat{\ba}^N(t,\bX_t)}^{N,-i}, \ba^k(t,\bX_t))  \Big|^2
        \\
        &\leq \frac{C}{N} \sum_{i = 1}^N \sum_{k \neq i} |D_i \hat{a}^k(t,\bX_t)|^2\\
        &\leq \frac{C}{N} \Big( 1 + |\bA^N|^2_{\op} + \sum_{i = 1}^N \sum_{j \neq i} |D_{ij}u^{N,i}|^2(t,\bX_t) \Big), 
    \end{align*}
    so that 
    \begin{align*}
        \E\bigg[ \int_{t_0}^T \sum_{i = 1}^N |T_t^{3,i}|^2 dt \bigg] &\leq \frac{C}{N} \big(1 + {K}\big) + \frac{C}{N} \E\bigg[ \int_{t_0}^T  \sum_{i = 1}^N \sum_{j \neq i} |D_{ij}u^{N,i}|^2(t,\bX_t) dt \bigg]
        \\
        &\leq \frac{C \exp(CK)}{N}.
    \end{align*}
    Finally, we again use Lemma \ref{lem.hataprops} {and Lemma \ref{lem.offdiagbound}}  together with the boundedness of $ D_{\mu}^a D_p  H$ to get 
    \begin{align*}
        \sum_{i = 1}^N &|T_t^{4,i}|^2 \leq \frac{C}{N^2} \sum_{i = 1}^N \Big| \sum_{k \neq i} \sum_{l \neq k} \big(D_i \hat{a}^{N,l}(t,\bX_t)\big)^\top  \Big(D_{\mu}^a D_p H\big( X_t^k, Y_t^k, m_{\bX_t, \hat{\ba}^N(t,\bX_t)}^{N,-k}, \hat{a}^{N,l}(t,\bX_t) \big)\Big)^\top  D_k u^{N,i}(t,\bX_t)\Big) \Big|^2
        \\
        &\leq \frac{C}{N^2} \sum_{i = 1}^N \Big| \sum_{l= 1}^N |D_i \hat{a}^{N,l}(t,\bX_t)| \sqrt{N} \big(\sum_{k \neq i} |D_k u^{N,i}(t,\bX_t)|^2\big)^{1/2} \Big|^2
        \\
        &\leq \frac{C \exp(CK)}{N^2} \sum_{i = 1}^N \Big| \sum_{l = 1}^N |D_i \hat{a}^{N,l}(t,\bX_t)| \Big|^2
        \\
        &\leq \frac{C \exp(CK)}{N^2} \sum_{i = 1}^N |D_i \hat{a}^{N,i}(t,\bX_t)|^2 + \frac{C \exp(C{K})}{N} \sum_{i = 1}^N \sum_{j \neq i} |D_j \hat{a}^{N,i}(t,\bX_t)|^2
        \\
        &\leq \frac{C \exp(CK)}{N}  + \frac{C \exp(CK)}{N} \sum_{i = 1}^N \sum_{j \neq i} |D_{ij} u^{N,i}(t,\bX_t)|^2, 
    \end{align*}
    so that
    \begin{align*}
         \E\bigg[ \int_{t_0}^T \sum_{i = 1}^N |T_t^{4,i}|^2 dt \bigg] &\leq \frac{C \exp(CK)}{N}  + \frac{C \exp(CK)}{N} \E\bigg[{\int_{t_0}^T}\sum_{i = 1}^N \sum_{j \neq i} |D_{ij} u^{N,i}(t,\bX_t)|^2 dt \bigg] \leq \frac{C \exp(CK)}{N}.
    \end{align*}
    We thus deduce that 
     \begin{align*}
        \E\bigg[ \sum_{i = 1}^N |\Delta Y_{t_0}^i|^2 + \int_{t_0}^T \sum_{i,j = 1}^N |\Delta Z_t^{i,j}|^2 \bigg] 
       \leq \frac{C \exp(CK)}{N}.
    \end{align*}
    Recalling the definition of $\Delta Y^i$ and $\Delta Z^{i,j}$, and taking a supremum over $t_0 \in [T_0,T]$, $\bx_0 \in (\R^d)^N$ completes the proof.
\end{proof}

\begin{lemma}
    \label{lem.improvementofM}
    Suppose that Assumptions \ref{assump.regularity} and \ref{assump.monotone} hold. Then there are constants $C>0, N_0\in \N$ with the following property. If $N \geq N_0$ and \eqref{startingpoint} holds for some $T_0 \in [0,T)$, then we have 
    \begin{align} 
    \sup_{T_0 \leq t_0 \leq T, \bx_0 \in (\R^d)^N} \E\bigg[ \int_{t_0}^T |\bA^N(t, \bX_t^{t_0,\bx_0})|_{\op}^2 dt\bigg] \leq C + \frac{C \exp(CK)}{N}.
\end{align}
\end{lemma}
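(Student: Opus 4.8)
The plan is to obtain Lemma \ref{lem.improvementofM} by combining the dimension-free Lipschitz bound of Proposition \ref{prop.vLip} with the perturbative comparison of Proposition \ref{prop.uiivi}. The key observation is that $\bA^N(t,\cdot) = (D_{ji}u^{N,i}(t,\cdot))_{i,j=1,\dots,N}$ is precisely the spatial Jacobian of the $(\R^d)^N$-valued map $\bx\mapsto\diag\bu^N(t,\bx) = (D_1u^{N,1},\dots,D_Nu^{N,N})(t,\bx)$, since its $i$-th component is $D_iu^{N,i}$ and $D_{x^j}D_{x^i}u^{N,i}=D_{ji}u^{N,i}=A^{N,i,j}$; likewise the matrix $D\bv^N(t,\cdot):=(D_jv^{N,i}(t,\cdot))_{i,j}$ is the Jacobian of $\bx\mapsto\bv^N(t,\bx)$.

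First I would record a bound on $D\bv^N$ that is independent of both $N$ and $K$. Proposition \ref{prop.vLip} states that $\sum_i|v^{N,i}(t,\bx)-v^{N,i}(t,\by)|^2\le C\sum_i|x^i-y^i|^2$ with $C$ independent of $N$; equivalently, $\bv^N(t,\cdot):(\R^d)^N\to(\R^d)^N$ is $\sqrt C$-Lipschitz, and since it is $C^1$ in space (being a classical solution of \eqref{pontryagin.pde}), this forces $|D\bv^N(t,\bx)|_{\op}\le\sqrt C$ for every $(t,\bx)\in[0,T]\times(\R^d)^N$.

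Next I would split $\bA^N=D\bv^N+(\bA^N-D\bv^N)$ and use that the operator norm of a block matrix is dominated by its Hilbert--Schmidt norm, together with Young's inequality, to get
\begin{align*}
|\bA^N(t,\bx)|_{\op}^2 &\le 2|D\bv^N(t,\bx)|_{\op}^2 + 2\sum_{i,j=1}^N|D_{ji}u^{N,i}-D_jv^{N,i}|^2(t,\bx) \\
&\le 2C + 2\sum_{i,j=1}^N|D_{ji}u^{N,i}-D_jv^{N,i}|^2(t,\bx).
\end{align*}
Evaluating along the closed-loop trajectory $\bX^{t_0,\bx_0}$, integrating over $[t_0,T]$ and taking expectations, the first term contributes at most $2CT$ and the second is bounded by $C\exp(CK)/N$ by Proposition \ref{prop.uiivi}. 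Taking the supremum over $t_0\in[T_0,T]$ and $\bx_0\in(\R^d)^N$ and relabelling constants then gives the claimed estimate $C+C\exp(CK)/N$.

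I do not expect any genuine obstacle at this step, since the hard work --- the self-improving decay of the off-diagonal derivatives and the comparison of $\diag\bu^N$ with $\bv^N$ --- is already carried out in Lemma \ref{lem.offdiagbound} and Proposition \ref{prop.uiivi}. The one subtlety worth flagging is that the additive constant $C$ in the conclusion must be independent of $K$; this holds precisely because it comes only from the dimension-free Lipschitz constant of $\bv^N$ supplied by Proposition \ref{prop.vLip}, which has nothing to do with the a priori hypothesis \eqref{startingpoint}.
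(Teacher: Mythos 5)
Your proposal is correct and follows essentially the same route as the paper: decompose $\bA^N = D\bv^N + (\bA^N - D\bv^N)$, bound the first term by the $N$-independent Lipschitz constant from Proposition \ref{prop.vLip} and the second by Proposition \ref{prop.uiivi}. The paper's proof is just this two-line triangle inequality; your version merely spells out the (correct) details that the operator norm of the difference is dominated by its Hilbert--Schmidt norm and that the Lipschitz bound on $\bv^N$ controls $|D\bv^N|_{\op}$ pointwise.
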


\begin{proof}
    Using Proposition \ref{prop.uiivi} and Proposition \ref{prop.vLip}, we get 
    \begin{align*}
        \E\bigg[ \int_{t_0}^T |\bA^N(t,\bX_t)|_{\op}^2 dt \bigg] &\leq 2  \E\bigg[ \int_{t_0}^T |\bA^N(t,\bX_t) - D\bv^N(t,\bX_t)|_{\op}^2 dt \bigg] +  2 \E\bigg[ \int_{t_0}^T |D \bv^N(t,\bX_t)|_{\op}^2 dt \bigg]
        \\
        &\leq \frac{C \exp(CK)}{N} + C.
    \end{align*}
\end{proof}

\begin{proposition}
    \label{prop.closetheloop}
    Suppose that Assumptions \ref{assump.regularity} and \ref{assump.monotone} hold. Then there is a constant $C_0>0$ such that for all $N\in\N$ large enough, 
    \begin{align*}
         \sup_{{0} \leq t_0 \leq T, \bx_0 \in (\R^d)^N} \E\bigg[ \int_{t_0}^T |\bA^N(t, \bX_t^{t_0,\bx_0})|_{\op}^2 dt\bigg] \leq C_0.
    \end{align*}
\end{proposition}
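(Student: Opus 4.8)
The plan is to run a backward-in-time continuation (bootstrap) argument driven by Lemma~\ref{lem.improvementofM}. First I would fix $N$ large enough that Lemma~\ref{lem.improvementofM} applies, let $C$ be the constant appearing there (WLOG $C\ge 1$), set $C_0:=3C$, and additionally require $C\exp(CC_0)/N\le C$, which holds as soon as $N\ge e^{3C^2}$. Since $\bu^N$ is admissible, $|\bA^N|_{\op}$ is bounded by some finite (possibly $N$-dependent) constant $B_N$; hence
$$
M(T_0):=\sup_{T_0\le t_0\le T,\ \bx_0\in(\R^d)^N}\E\!\left[\int_{t_0}^T|\bA^N(t,\bX^{t_0,\bx_0}_t)|_{\op}^2\,dt\right]\le (T-T_0)B_N^2<\infty
$$
for every $T_0\in[0,T]$, $M$ is non-increasing in $T_0$, and $M(T)=0$. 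Writing $g(s):=\sup_{\bx_0}\E[\int_s^T|\bA^N(t,\bX^{s,\bx_0}_t)|_{\op}^2\,dt]$, so that $M(T_0)=\sup_{t_0\in[T_0,T]}g(t_0)$, I would split $\int_{t_0}^T=\int_{t_0}^{t_0'}+\int_{t_0'}^T$, bound the first integral by $(t_0'-t_0)B_N^2$, and use the flow property of the closed-loop SDE (well-posed since its drift is Lipschitz, by Lemma~\ref{lem.ani} and admissibility of $\bu^N$) on the second, to obtain the one-sided continuity estimate
$$
g(t_0)\le (t_0'-t_0)B_N^2+g(t_0')\qquad\text{for }t_0<t_0'.
$$

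Next I would introduce $S:=\{T_0\in[0,T]:M(T_0)\le C_0\}$, which is nonempty ($T\in S$) and upward-closed in $[0,T]$ because $M$ is non-increasing, and set $T_0^*:=\inf S$; then $(T_0^*,T]\subseteq S$. A first application of the one-sided estimate shows $T_0^*\in S$: for $s\in(T_0^*,T]$ we have $g(s)\le M(s)\le C_0$, and letting $t_0'\downarrow T_0^*$ in $g(T_0^*)\le (t_0'-T_0^*)B_N^2+g(t_0')$ gives $g(T_0^*)\le C_0$, so $M(T_0^*)=\sup_{[T_0^*,T]}g\le C_0$.

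Finally I would argue by contradiction that $T_0^*=0$. If $T_0^*>0$, then since $M(T_0^*)\le C_0$ the hypothesis \eqref{startingpoint} holds at $T_0=T_0^*$ with $K=C_0$, so Lemma~\ref{lem.improvementofM} improves this to $M(T_0^*)\le C+C\exp(CC_0)/N\le 2C$. Applying the one-sided estimate once more, for $\delta:=\min(C/B_N^2,T_0^*)>0$ we get $g(s)\le\delta B_N^2+M(T_0^*)\le C+2C=C_0$ for $s\in[T_0^*-\delta,T_0^*]$, while $g\le M(T_0^*)\le C_0$ on $[T_0^*,T]$; hence $M(T_0^*-\delta)\le C_0$, i.e. $T_0^*-\delta\in S$ with $T_0^*-\delta<T_0^*$, contradicting $T_0^*=\inf S$. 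Thus $T_0^*=0$, and $0\in S$ gives $M(0)\le C_0=3C$, which is the claim with a constant independent of $N$. The main point here is conceptual rather than computational: the a priori finiteness of $M(T_0)$ only comes with the $N$-dependent constant $B_N$, so one cannot simply pass to the limit $N\to\infty$, and the whole purpose of the continuation scheme is to convert the self-improving implication of Lemma~\ref{lem.improvementofM} into an $N$-independent bound — the slack built into the choice $C_0=3C$ being exactly what makes the extension step close.
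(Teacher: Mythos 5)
Your proof is correct and follows essentially the same continuation/bootstrap argument as the paper: define the critical time as the infimum of the set where the bound holds, use the $N$-dependent $L^\infty$ bound on $\bA^N$ together with the flow property to extend the good interval slightly, and invoke Lemma~\ref{lem.improvementofM} to derive a contradiction. The only cosmetic difference is that you define the good set with threshold $3C$ and improve to $2C$ before extending, whereas the paper uses threshold $2C$ and extends to $3C$ before improving; your version is slightly more careful about verifying that the infimum itself lies in the good set.
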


\begin{proof}
    Let $C$ be as in the statement of Lemma \ref{lem.improvementofM}, and choose $N_0$ large enough that 
    \begin{align*}
        C + \frac{C \exp(3C^2)}{N_0} \leq 2C.
    \end{align*}
    Define 
    \begin{align*}
        T^* := \inf \bigg\{ T_0 \in [0,T] :  \sup_{T_0 \leq t_0 \leq T, \bx_0 \in (\R^d)^N} \E\bigg[ \int_{t_0}^T |\bA^N(t, \bX_t^{t_0,\bx_0})|_{\op}^2 dt\bigg]  \leq 2C \bigg\}.
    \end{align*}
    Suppose towards a contradiction that $T^* > 0$. Then because $\bA^N \in L^{\infty}$ (because by definition admissible solutions to the Nash system satisfy $D_{jk} u^{N,i} \in L^{\infty}$), for any $\eps > 0$ and $T^* - \eps \leq t_0 \leq T$, we have 
    
    \begin{align*}
        \E\bigg[ \int_{t_0}^T |\bA^N(t, \bX_t^{t_0,\bx_0})|_{\op}^2 dt\bigg] &=  \E\bigg[ \int_{t_0}^{T_0} |\bA^N(t, \bX_t^{t_0,\bx_0})|_{\op}^2 dt\bigg] +  \E\bigg[ \int_{T_0}^{T} |\bA^N(t, \bX_t^{t_0,\bx_0})|_{\op}^2 dt\bigg]
        \\
        &\leq C' \eps + \E\bigg[ \int_{T_0}^{T} |\bA^N(t, \bX_t^{t_0,\bX_{T_0}^{t_0,\bx_0}})|_{\op}^2 dt\bigg]
        \\
        &\leq C' \eps + 2C.
    \end{align*}
    In particular, we can find $\eps > 0$ such that
    \begin{align*}
        \sup_{T^*-\eps \leq t_0 \leq T, \bx_0 \in (\R^d)^N} \E\bigg[ \int_{t_0}^T |\bA^N(t, \bX_t^{t_0,\bx_0})|_{\op}^2 dt\bigg] \leq 3C. 
    \end{align*}
    But then by Lemma \ref{lem.improvementofM}, we get 
    \begin{align*}
         \sup_{T^*-\eps \leq t_0 \leq T, \bx_0 \in (\R^d)^N} \E\bigg[ \int_{t_0}^T |\bA^N(t, \bX_t^{t_0,\bx_0})|_{\op}^2 dt\bigg] \leq  C + \frac{C \exp(3C^2)}{N_0} \leq 2C, 
    \end{align*}
    which contradicts the definition of $T^*$. Thus the result holds with $C_0 = 3C$.
\end{proof}

\section{Convergence of the closed-loop Nash equilibria}\label{sec:seven}

\begin{proposition} \label{prop.OLCL.comp}
    There is a constant $C$ such that for all $N\in\N$ large enough, we have 
    \begin{align*}
        \E\Big[ \sup_{0 \leq t \leq T}  |X_t^{\ol,N,i} - X_t^{\cl, N,i}|^2  \Big] + \E\bigg[ \int_0^T \sum_{i = 1}^N |\alpha_t^{\ol,N,i} - \alpha^{\cl,N,i}(t,\bX_t^{\cl,N,i})|^2 dt \bigg] \leq C/N^2
    \end{align*}
    for each $i = 1,\dots,N$, and as a consequence 
    \begin{align} \label{olcl.empirical}
       \E\Big[ \sup_{0 \leq t \leq T} \bd_2^2\Big( m_{\bX^{\ol,N}_t}^N , m_{\bX^{\cl,N}_t}^N \Big) \Big] +  \E\bigg[\int_0^T \bd_2^2\Big( m_{\bX_t^{\ol,N}, \balpha^{\ol,N}_t)}^N, m_{\bX_t^{\cl,N}, \balpha^{\cl,N}(t,\bX_t^{\cl,N})}^N \Big) dt \bigg] \leq C r_{d,p}(N).
    \end{align}
    
\end{proposition}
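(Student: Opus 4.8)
The plan is to realize the closed-loop equilibrium as a \emph{perturbed} solution of the $N$-player Pontryagin system \eqref{pontryaginNplayer} and then to invoke the uniform-in-$N$ stability estimate of Proposition \ref{prop.uniformstability}. First I would record that, by Proposition \ref{prop.closetheloop}, there is a constant $C_0$ independent of $N$ such that the a priori bound \eqref{startingpoint} holds with $T_0=0$ and $K=C_0$ for all $N$ large enough; consequently Lemma \ref{lem.hataprops}, Lemma \ref{lem.offdiagbound} and Proposition \ref{prop.uiivi} all apply with $T_0=0$ and $K=C_0$, so that the factor $\exp(CK)$ appearing in their conclusions is just a constant $C'$ independent of $N$.

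Next, fix $i$ and set $Y^i_t:=D_iu^{N,i}(t,\bX^{\cl,N}_t)$, $Z^{i,j}_t:=\sqrt2\,D_{ji}u^{N,i}(t,\bX^{\cl,N}_t)$ for $j=1,\dots,N$, and $Z^{i,0}_t:=\sqrt{2\sigma_0}\sum_{k=1}^N D_{ki}u^{N,i}(t,\bX^{\cl,N}_t)$; these are well defined and square-integrable since admissible solutions of \eqref{nashsystem} lie in $W^{3,p}_{t,\bx,\mathrm{loc}}$ (as recalled before Lemma \ref{lem.offdiagbound}) and $\bX^{\cl,N}$ has finite moments. Applying the It\^o--Krylov formula to $u^{N,i,i}=D_iu^{N,i}$ along $\bX^{\cl,N}$ and using the differentiated Nash equation \eqref{uNij.eqn}--\eqref{F.def} with $j=i$ exactly as in the proof of Proposition \ref{prop.uiivi}, the transport and second-order terms cancel the forward drift of $\bX^{\cl,N}$ and leave
\begin{align*}
dY^i_t=\Big(D_xH\big(X^{\cl,N,i}_t,Y^i_t,m^{N,-i}_{\bX^{\cl,N}_t,\ba^N(\bX^{\cl,N}_t,\bY_t)}\big)+\sum_{m=1}^4 T^{m,i}_t\Big)dt+\sum_{j=0}^N Z^{i,j}_t\,dW^j_t,
\end{align*}
where the $T^{m,i}$ are the terms from \eqref{Tdef} evaluated along $\bX^{\cl,N}$, and where one uses $\hat{\ba}^N(t,\bX^{\cl,N}_t)=\ba^N(\bX^{\cl,N}_t,\bY_t)$ because $\diag\bu^N(t,\bX^{\cl,N}_t)=\bY_t$. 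By \eqref{def.ani} the forward equation of $\bX^{\cl,N}$ is precisely $dX^{\cl,N,i}_t=-D_pH(X^{\cl,N,i}_t,Y^i_t,m^{N,-i}_{\bX^{\cl,N}_t,\ba^N(\bX^{\cl,N}_t,\bY_t)})dt+\sqrt2\,dW^i_t+\sqrt{2\sigma_0}\,dW^0_t$, and the terminal datum of \eqref{nashsystem} gives $Y^i_T=D_xG(X^{\cl,N,i}_T,m^{N,-i}_{\bX^{\cl,N}_T})$ (using that $m^{N,-i}_{\bx}$ does not depend on $x^i$). Hence $(\bX^{\cl,N},\bY,\bZ)$ solves the perturbed system \eqref{pontryagin.errors} with $\hat{\bxi}=\bxi$, $E^{1,i}\equiv 0$, $E^{3,i}\equiv 0$ and $E^{2,i}=\sum_{m=1}^4 T^{m,i}$.

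Since $(\bX^{\ol,N},\bY^{\ol,N},\bZ^{\ol,N})$ is an exact solution of \eqref{pontryaginNplayer} with the same initial data, Proposition \ref{prop.uniformstability} then gives
\begin{align*}
\E\Big[\sup_{0\le t\le T}\sum_{i=1}^N|X^{\ol,N,i}_t-X^{\cl,N,i}_t|^2+\int_0^T\sum_{i=1}^N|\alpha^{\ol,N,i}_t-\alpha^{\cl,N,i}(t,\bX^{\cl,N}_t)|^2\,dt\Big]\le C\,\E\Big[\int_0^T\sum_{i=1}^N\Big|\sum_{m=1}^4 T^{m,i}_t\Big|^2 dt\Big],
\end{align*}
and the right-hand side is bounded by $C/N$ by re-running the estimates for $T^{1,i},\dots,T^{4,i}$ from the proof of Proposition \ref{prop.uiivi} (which rest on Lemma \ref{lem.offdiagbound}, Lemma \ref{lem.hataprops} and the boundedness of the mixed derivatives of $H$). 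Exchangeability of the joint family $((X^{\ol,N,i},X^{\cl,N,i}))_{i=1}^N$ --- coming from the i.i.d.\ $\xi^i$, the i.i.d.\ $W^i$, the symmetry of \eqref{pontryagin.pde}, and the exchangeability built into Definition \ref{def.admissible} --- means each summand has the same expectation, so dividing by $N$ and using $\sup_{t}|X^{\ol,N,i}_t-X^{\cl,N,i}_t|^2\le T\int_0^T|\alpha^{\ol,N,i}_t-\alpha^{\cl,N,i}(t,\bX^{\cl,N}_t)|^2\,dt$ (the two trajectories have the same Brownian drivers and start together) yields the stated $C/N^2$ per-player bound. The empirical-measure estimate \eqref{olcl.empirical} then follows by coupling through indices, $\bd_2^2(m^N_{\bx},m^N_{\by})\le\frac1N\sum_i|x^i-y^i|^2$ and $\bd_2^2(m^N_{\bx,\ba},m^N_{\by,\bb})\le\frac1N\sum_i(|x^i-y^i|^2+|a^i-b^i|^2)$, together with $C/N^2\le C\,r_{d,p}(N)$.

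The main obstacle is the identification in the second paragraph: one must check rigorously that $(\bX^{\cl,N},\bY,\bZ)$ is a solution of \eqref{pontryagin.errors} in the sense required by Proposition \ref{prop.uniformstability} --- which needs the interior $W^{3,p}$-regularity of admissible solutions and a careful application of the It\^o--Krylov formula --- and, above all, that the transport and second-order terms generated by differentiating the Nash system cancel exactly against the closed-loop drift, so that the genuine error $E^{2,i}$ reduces to the \emph{small} corrections $\sum_m T^{m,i}$ already controlled in the proof of Proposition \ref{prop.uiivi}; everything else is bookkeeping and a symmetry argument.
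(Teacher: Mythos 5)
Your proposal is correct and follows essentially the same route as the paper: identify $(\bX^{\cl,N},\diag\bu^N(\cdot,\bX^{\cl,N}),\bZ)$ as a solution of the perturbed Pontryagin system \eqref{pontryagin.errors} with $E^{1,i}=E^{3,i}=0$ and $E^{2,i}=\sum_{m=1}^4 T^{m,i}$, invoke Proposition \ref{prop.uniformstability}, bound the error terms by $C/N$ via the estimates of Proposition \ref{prop.uiivi} with $K$ controlled by Proposition \ref{prop.closetheloop}, and conclude by exchangeability. Your write-up merely makes explicit the It\^o--Krylov identification step that the paper compresses into ``following the computations in the proof of Proposition \ref{prop.uiivi}.''
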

\begin{proof}
    Set $\hat{\bX} = \bX^{\cl,N}$, and $\hat{\bY} = (\hat{Y}^1,..,\hat{Y}^N)$, $\hat{\bZ} = (\hat{Z}^{i,j})_{i,j = 1,\dots,N}$ via
    \begin{align*}
        \hat{Y}_t^{i} = D_i u^{N,i}(t,\bX_t), \quad \hat{Z}_t^{i,j} = \sqrt{2} D_{ji} u^{N,i}(t,\bX_t), \quad \hat{Z}_t^{i,0} = \sqrt{2 \sigma_0} \sum_{k = 1}^N D_{ki} u^{N,i}(t,\bX_t)
    \end{align*}
    We also use the notation 
    \begin{align*}
        \alpha_t^{\cl,N,i} = \alpha^{\cl,N,i}(t,\bX_t^{\cl,N,i})
    \end{align*}
    for simplicity.
    Following the computations in the proof of Proposition \ref{prop.uiivi}, we have that $(\hat{\bX}^N, \hat{\bY}^N, \hat{\bZ}^N)$ satisfy \eqref{pontryagin.errors} with errors 
    \begin{align*}
        E_t^{1,i} = E_t^{3,i} = 0, \quad E_t^{2,i} = \sum_{m = 1}^4 T_t^{m,i},
    \end{align*}
    with $T^{m,i}$ defined as in \eqref{Tdef}. By Proposition \ref{prop.uniformstability}, we get 
     \begin{align*}
        \E\bigg[ \int_0^T \sum_{i = 1}^N |\alpha_t^{\ol,N,i} -  \alpha_t^{\cl,N,i}|^2 dt \bigg] \leq C\E\bigg[ \int_0^T \sum_{i = 1}^N \sum_{m = 1}^4 |T_t^{m,i}|^2 dt \bigg].
    \end{align*}
    By following the computation in the proof of Proposition \ref{prop.uiivi} and using Proposition \ref{prop.closetheloop} to bound $K$, we get 
    \begin{align*}
        \E\bigg[ \int_0^T \sum_{i = 1}^N \sum_{m = 1}^4 |T_t^{m,i}|^2 dt \bigg] \leq C/N,
    \end{align*}
    so that in particular
    \begin{align*}
      \E\bigg[ \int_0^T \sum_{i = 1}^N |\alpha_t^{\ol,N,i} -  \alpha_t^{\cl,N,i}|^2 dt \bigg] \leq C/N.
    \end{align*}
   By the well-posedness of \eqref{pontryagin} and the assumed exchangeability of $\bu^N$ from Definition \ref{def.admissible}, the collection 
    \begin{align*}
        \Big( X_t^{\ol,N,i}, X_t^{\cl,N,i} \Big)
    \end{align*}
    is exchangeable, and it follows that $(\alpha^{\ol,N,i},  \alpha_t^{\cl,N,i})_{i = 1,\dots,N}$ are identically distributed, so for each fixed $i$
    \begin{align} \label{cl.ol.abound}
       \E\bigg[ \int_0^T |\alpha_t^{\ol,N,i} -  \alpha_t^{\cl,N,i}|^2 dt \bigg] \leq C/N^2.
    \end{align}
    Now from the dynamics for $X^{\cl,N,i}$ and $X^{\ol,N,i}$, we conclude that
    \begin{align} \label{cl.ol.xbound}
        \E\Big[ \sup_{0 \leq t \leq T} |X_t^{\cl,N,i} - X_t^{\ol,N,i}|^2 \Big] \leq \E\bigg[ \int_0^T |\alpha_t^{\ol,N,i} -  \alpha_t^{\cl,N,i}|^2 dt \bigg] \leq C/N^2
    \end{align}
    for each $i$. The bound \eqref{olcl.empirical} is a straightforward consequence of \eqref{cl.ol.xbound} and \eqref{cl.ol.abound}.
\end{proof}

By combining Proposition \ref{prop.OLCL.comp} with Theorem \ref{thm.OLconvergence}, we obtain the following convergence result for closed-loop equilibria.

\begin{proof}[Proof of Theorem \ref{thm.clconv}]
    Combine Proposition \ref{prop.OLCL.comp} and Theorem \ref{thm.OLconvergence}.
\end{proof}

\appendix

\section{Well-posedness of the mean field Pontryagin system}
Let us denote by $\mathcal{S}^2 = \cS^2(\bbF; \R^d)$ the space of continuous, $\bbF$-adapted processes $Y = (Y_t)_{0 \leq t \leq T}$ with 
\begin{align*}
    \norm{Y}^2_{\cS^2} = \E\left[ \sup_{0 \leq t \leq T} |Y_t|^2 \right] < \infty. 
\end{align*}
We also use $L^2 = L^2(\bbF; \R^d)$ to denote the space of square-integrable, $\bbF$-progressive processes $Z$ with 
\begin{align*}
    \norm{Z}^2_{L^2} = \E\bigg[ \int_0^T |Z_t|^2 dt \bigg] < \infty.
\end{align*}
Here we establish that the Pontryagin system system 
\begin{align} \label{pontryagin.app}
    \begin{cases}
       \ds  dX_t = - D_p H\Big(X_t,Y_t,\Phi\big(\cL^0(X_t,Y_t) \big) \Big) dt + \sqrt{2} dW_t + \sqrt{2\sigma_0} dW_t^0, 
       \vspace{.1cm}  \\
        \ds dY_t = D_x H \Big(X_t,Y_t, \Phi(\cL^0(X_t,Y_t))  \Big) dt + Z_t dW_t + Z_t^0 dW_t^0, \vspace{.1cm} 
        \\
        \ds X_{t_0} = \xi, \quad Y_T = D_x G(X_T,\cL^0(X_T))
    \end{cases}
\end{align}
has a unique solution $(X,Y,Z,Z^0)$ in the space $\cS^2 \times \cS^2 \times L^2 \times L^2$, for any $\xi \in L^2(\cF_0; \R^d)$. In fact, we will use the method of continuation, and view \eqref{pontryagin} as a specific instance of the mean field FBSDE 
\begin{align} \label{fbsde.app}
    \begin{cases}
       \ds  dX_t = b\big(X_t,Y_t,\cL^0(X_t,Y_t) \big) dt + \sqrt{2} dW_t + \sqrt{2\sigma_0} dW_t^0, 
       \vspace{.1cm}  \\
        \ds dY_t = f  \big(X_t,Y_t,\cL^0(X_t,Y_t) \big)  dt + Z_t dW_t + Z_t^0 dW_t^0, \vspace{.1cm} 
        \\
        \ds X_0 = \xi, \quad Y_T = g(X_T,\cL^0(X_T)), 
    \end{cases}
\end{align}
where $\xi \in L^2(\cF_0)$, and $b,f : \R^d \times \R^d \times \cP_2(\R^d \times \R^d) \to \R^d$. 

\begin{assumption} \label{assump.appendix}
    The coefficients $b, f$, and $g$ are Lipschitz continuous with respect to $\bd_2$, i.e. there exists a constant $C_{L}>0$ such that we have 
    \begin{align*}
        |b(x,y,\mu) - b(x',y',\mu')| \leq C_L \big(|x-x'| + |y-y'| + \bd_2(\mu,\mu') \big)
    \end{align*}
    for any $x,x',y,y' \in \R^d$, and $\mu,\mu' \in \cP_2(\R^d)$, and likewise for $f$ and $g$. Moreover, there are constants $C_{L,a} > 0$, $C_{L,x} \geq 0$, and $C_{g} \geq 0$ such that
    \begin{align} \label{monotone.app}
        \E\Big[ &(Y - Y') \cdot \Big( b\big( X,Y,\cL(X,Y) \big) - b\big( X',Y',\cL(X',Y') \big) \Big) 
        \nonumber \\
        &\quad + (X - X') \cdot \Big( f\big( X,Y,\cL(X,Y) \big) - f\big( X',Y',\cL(X',Y') \big) \Big)  \Big]
       \nonumber  \\
        &\leq - C_{L,a} \E \Big[ \Big|b\big( X,Y,\cL(X,Y) \big) -b\big( X,Y,\cL(X,Y) \big)\Big|^2 \Big] + C_{L,x} \E\big[ |X - X'|^2 \big]
    \end{align}
    as well as 
    \begin{align} \label{monotone.g.app}
        \E\big[ (X-X') \cdot \big(g(X,\cL(X)) - g(X',\cL(X')) \big) \big] \geq - C_g \E\big[|X - X'|^2\big]
    \end{align}
    for all square-integrable random variables $X,X',Y,Y'$. Finally, we impose that
    \begin{align*}
        C_{\dis} \coloneqq C_{L,a} - C_{L,x} \frac{T^2}{2} - C_g T > 0.
    \end{align*}
\end{assumption}

\begin{lemma}
    Let Assumptions \ref{assump.regularity} and \ref{assump.monotone} hold. Then the functions 
    \begin{align*}
        b(x,y,\mu) := - D_p H(x,y, \Phi(\mu)), \quad f(x,y,\mu) := - D_x L(x, -D_p H(x,y,\Phi(\mu)) = D_x H(x,y, \Phi(\mu))
    \end{align*}
    satisfy Assumption \ref{assump.appendix}. 
\end{lemma}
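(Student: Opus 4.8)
The plan is to check the three components of Assumption \ref{assump.appendix} separately: the $\bd_2$-Lipschitz bounds on $b,f,g$, the monotonicity inequalities \eqref{monotone.app}--\eqref{monotone.g.app}, and the sign condition $C_{\dis}>0$. The Lipschitz bounds and the condition on $C_{\dis}$ are essentially immediate; the monotonicity inequality \eqref{monotone.app} is the substantive point, and the fixed-point identity for $\Phi$ is what makes it work.

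For the Lipschitz estimates I would combine the $\bd_2$-Lipschitz continuity of $\Phi$ from Lemma \ref{lem.Phi} with the regularity of $H$ recorded in Remark \ref{rmk.Ham}: since $H$ is $C^2$ with uniformly bounded second derivatives and $D_\mu H$ is uniformly bounded, the maps $D_pH$ and $D_xH$ are Lipschitz jointly in $(x,p)$ and (with respect to $\bd_2$) in the measure argument, so the compositions $(x,y,\mu)\mapsto -D_pH(x,y,\Phi(\mu))$ and $(x,y,\mu)\mapsto D_xH(x,y,\Phi(\mu))$ are $\bd_2$-Lipschitz. For $g=D_xG$ the Lipschitz bound follows directly from the bounded second derivatives of $G$ and the boundedness of $D_\mu G$ in Assumption \ref{assump.regularity}.

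The heart of the argument is \eqref{monotone.app}. Fix square-integrable $(X,Y)$ and $(X',Y')$, set $\mu:=\cL(X,Y)$, $\mu':=\cL(X',Y')$, $\nu:=\Phi(\mu)$, $\nu':=\Phi(\mu')$, and $\alpha:=b(X,Y,\mu)=-D_pH(X,Y,\nu)$, $\alpha':=b(X',Y',\mu')=-D_pH(X',Y',\nu')$ (these are square-integrable by Lemma \ref{lem.sublin}). The defining property \eqref{Phidef.RV} of $\Phi$ gives exactly $\cL(X,\alpha)=\nu$ and $\cL(X',\alpha')=\nu'$. Combining this with the Legendre duality identity $D_aL(x,-D_pH(x,p,\nu),\nu)=-p$ and the envelope identity $D_xH(x,p,\nu)=-D_xL(x,-D_pH(x,p,\nu),\nu)$ yields $D_aL(X,\alpha,\cL(X,\alpha))=-Y$ and $f(X,Y,\mu)=D_xH(X,Y,\nu)=-D_xL(X,\alpha,\cL(X,\alpha))$, and likewise for the primed quantities. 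I would then apply the displacement monotonicity \eqref{dispmonotone.L} of $L$ to the four random variables $X,X',\alpha,\alpha'$ and substitute these identities; the result is precisely \eqref{monotone.app}, with the term $-C_{L,a}\E[|\alpha-\alpha'|^2]$ matching $-C_{L,a}\E\big[|b(X,Y,\cL(X,Y))-b(X',Y',\cL(X',Y'))|^2\big]$ since $\alpha-\alpha'=b(X,Y,\mu)-b(X',Y',\mu')$. Finally, \eqref{monotone.g.app} is literally \eqref{dispmonotone.G} (with $C_g=C_G$), and the condition $C_{\dis}=C_{L,a}-C_{L,x}\tfrac{T^2}{2}-C_gT>0$ coincides with \eqref{disp:const}.

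The only delicate point — more a bookkeeping subtlety than a genuine obstacle — is making sure that the measure arguments of $D_aL$ and $D_xL$ are exactly $\cL(X,\alpha)$ and $\cL(X',\alpha')$, so that \eqref{dispmonotone.L} applies verbatim; this is guaranteed by the fixed-point relation $\cL(X,\alpha)=\Phi(\cL(X,Y))$. Once this is in place, the verification of \eqref{monotone.app} is a direct substitution with no quantitative loss.
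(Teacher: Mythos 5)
Your proposal is correct and follows essentially the same route as the paper's proof: the Lipschitz bounds come from composing the Lipschitz maps $\Phi$, $D_pH$, $D_xH$, and the monotonicity \eqref{monotone.app} follows by setting $\alpha=-D_pH(X,Y,\Phi(\cL(X,Y)))$, invoking the Legendre identities $D_aL(x,-D_pH(x,p,\nu),\nu)=-p$ and $D_xH=-D_xL$, and applying \eqref{dispmonotone.L} to $(X,X',\alpha,\alpha')$. Your explicit observation that the fixed-point relation $\cL(X,\alpha)=\Phi(\cL(X,Y))$ is what aligns the measure argument of $D_aL$, $D_xL$ with $\cL(X,\alpha)$ so that \eqref{dispmonotone.L} applies verbatim is exactly the point the paper's terse proof relies on.
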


\begin{proof}
    The Lipschitz bounds are clear. For the monotonicity condition \eqref{monotone.app}, we set $\alpha = -D_pH(X,Y,\cL(X,Y))$, $\alpha' = - D_pH(X',Y',\cL(X',Y'))$, and use the identity
    \begin{align*}
       y = - D_a L(x,- D_pH(x,y,\mu),\mu)
    \end{align*}
    to obtain 
    \begin{align*}
        &(Y - Y') \cdot \Big( b\big( X,Y,\cL(X,Y) \big) - b\big( X',Y',\cL(X',Y') \big) \Big) 
        \\
        &\quad = -(\alpha - \alpha') \Big(D_a L(X,Y, \cL(X,Y)) - D_a L(X',Y',\cL(X',Y')) \Big), 
    \end{align*}
    and similarly 
    \begin{align*}
         &(X - X') \cdot \Big( f\big( X,Y,\cL(X,Y) \big) - f\big( X',Y',\cL(X',Y') \big) \Big)
         \\
         &\quad = - (X-X') \cdot \Big( D_xL \big(X,Y,\cL(X,Y)\big) - D_xL \big(X',Y',\cL(X',Y')\big)  \Big).
    \end{align*}
    Thus the bounds \eqref{monotone.app} and \eqref{monotone.g.app} hold with the same constants $C_{L,x}$, $C_{L,a}$, $C_g$ appearing in Assumption \ref{assump.monotone}.
\end{proof}

In order to establish the well-posedness of \eqref{fbsde.app}, we use the method of continuation; see e.g. \cite[Section 8.4]{Zhang2017} for an introduction to the method of continuation in the setting of standard (rather than McKean--Vlasov) FBSDEs. In particular, for $\lambda \in [0,1]$ and $\alpha, \beta \in L^2$, $\gamma \in L^2(\cF_T)$, we introduce the auxiliary FBSDE 
\begin{align} \label{fbsde.lambda.app}
    \begin{cases}
       \ds  dX_t = \Big( b^{\lambda}\big(X_t,Y_t,\cL^0(X_t,Y_t) \big) + \alpha_t \Big) dt + \sqrt{2} dW_t + \sqrt{2\sigma_0} dW_t^0, 
       \vspace{.1cm}  \\
        \ds dY_t = \Big(f^{\lambda}  \big(X_t,Y_t,\cL^0(X_t,Y_t) \big) + \beta_t\Big)  dt + Z_t dW_t + Z_t^0 dW_t^0, \vspace{.1cm} 
        \\
        \ds X_{t_0} = \xi, \quad Y_T = g^{\lambda}(X_T,\cL^0(X_T)) + \gamma, 
    \end{cases}
\end{align}
where 
\begin{align*}
    b^{\lambda}(x,y,\mu) = \lambda b(x,y,\mu) - C_{L,a}(1-\lambda)y, \quad f^{\lambda}(x,y,\mu) = \lambda f(x,y,\mu), \quad g^{\lambda}(x,m) = \lambda g(x,m).
\end{align*}
\begin{lemma}
    \label{lem.uniflambda}
    If $b,f$ and $g$ satisfy Assumption \ref{assump.appendix}, then the coefficients $b^{\lambda}, f^{\lambda}$, and $g^{\lambda}$ satisfy Assumption \ref{assump.appendix}, with constants which are uniform in $\lambda$.
\end{lemma}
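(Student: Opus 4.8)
The plan is to verify, one by one, the three structural requirements of Assumption~\ref{assump.appendix} for the triple $(b^\lambda, f^\lambda, g^\lambda)$, with constants that do not depend on $\lambda\in[0,1]$. The Lipschitz bounds are immediate: since $b$ is $C_L$-Lipschitz and $|\lambda|,|1-\lambda|\le 1$, the identity $b^\lambda = \lambda b - C_{L,a}(1-\lambda)y$ together with the triangle inequality shows $b^\lambda$ is Lipschitz with constant $C_L + C_{L,a}$, while $f^\lambda=\lambda f$ and $g^\lambda=\lambda g$ are $C_L$-Lipschitz. The terminal monotonicity \eqref{monotone.g.app} for $g^\lambda$ is equally immediate: since $g^\lambda=\lambda g$, its left-hand side equals $\lambda$ times that of $g$, hence is bounded below by $-\lambda C_g\,\E[|X-X'|^2]\ge -C_g\,\E[|X-X'|^2]$ because $0\le\lambda\le1$ and $C_g\ge0$.

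The substantive point is the monotonicity \eqref{monotone.app} for the pair $(b^\lambda,f^\lambda)$. The idea is to regard $(b^\lambda,f^\lambda)$ as the convex combination $\lambda(b,f)+(1-\lambda)(\tilde b,0)$, where $\tilde b(x,y,\mu):=-C_{L,a}\,y$, so that the left-hand side of \eqref{monotone.app} — being affine in the coefficients — splits into $\lambda$ times the expression for $(b,f)$ plus $(1-\lambda)$ times the expression for the reference pair $(\tilde b,0)$. One checks directly that $(\tilde b,0)$ satisfies \eqref{monotone.app} with constants $(C_{L,a},C_{L,x})$: its left-hand side is exactly $-C_{L,a}\E[|Y-Y'|^2]$, and since the corresponding increment is $-C_{L,a}(Y-Y')$ this is the required inequality once $C_{L,a}\le1$ (see below). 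Bounding each of the two pieces by its own right-hand side, using $\lambda C_{L,x}\le C_{L,x}$, and invoking the convexity of $v\mapsto|v|^2$ to pass from $\lambda\E[|\Delta b|^2]+(1-\lambda)\E[|\Delta\tilde b|^2]$ down to $\E[|\Delta b^\lambda|^2]$ (with $\Delta b^\lambda,\Delta b,\Delta\tilde b$ the increments of $b^\lambda,b,\tilde b$ along $(X,Y,\cL(X,Y))$ and $(X',Y',\cL(X',Y'))$), one obtains \eqref{monotone.app} for $(b^\lambda,f^\lambda)$ with the unchanged constants $(C_{L,a},C_{L,x})$. In particular $C_{\dis}^{\lambda}=C_{\dis}>0$, so the last condition of Assumption~\ref{assump.appendix} is inherited as well.

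The one subtlety — and the only real obstacle — is that the reference coefficient $\tilde b(y)=-C_{L,a}y$ satisfies the quadratic bound in \eqref{monotone.app} exactly when $C_{L,a}\le1$ (its increment has squared norm $C_{L,a}^2|Y-Y'|^2$, and one needs $C_{L,a}\cdot C_{L,a}^2\le C_{L,a}$). To remove this restriction I would first perform the harmless rescaling $Y\mapsto\kappa Y$, $Z\mapsto\kappa Z$, $Z^0\mapsto\kappa Z^0$ with a small $\kappa\in(0,1/C_{L,a}]$: this turns \eqref{fbsde.app} into an equivalent FBSDE (whose solutions correspond bijectively), with new coefficients still satisfying Assumption~\ref{assump.appendix}, now with monotonicity constants $(\kappa C_{L,a},\kappa C_{L,x},\kappa C_g)$ — hence displacement constant $\kappa C_{\dis}>0$ — and with $\kappa C_{L,a}\le1$. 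Running the convex-combination argument above on the rescaled coefficients then gives the claim. Everything else is a direct bookkeeping consequence of $\lambda\in[0,1]$ and the linearity of the defining formulas $b^\lambda=\lambda b-C_{L,a}(1-\lambda)y$, $f^\lambda=\lambda f$, $g^\lambda=\lambda g$.
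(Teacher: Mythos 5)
Your argument is essentially the paper's: split the bilinear form for $(b^{\lambda},f^{\lambda})$ as $\lambda$ times the one for $(b,f)$ plus $(1-\lambda)$ times the one for the reference pair $(-C_{L,a}y,0)$, apply \eqref{monotone.app} to the first piece, and use convexity of $v\mapsto|v|^2$ to reassemble $\E[|\Delta b^{\lambda}|^2]$; the Lipschitz and terminal-monotonicity claims are handled identically. The one point where you go beyond the paper is worthwhile: the paper's final inequality $\lambda|\Delta b|^2+(1-\lambda)|\Delta Y|^2\ge|\lambda\Delta b-C_{L,a}(1-\lambda)\Delta Y|^2$ implicitly requires $C_{L,a}\le 1$, and your rescaling $Y\mapsto\kappa Y$ (equivalently, replacing $C_{L,a}$ in the definition of $b^{\lambda}$ by $\min\{C_{L,a},1/C_{L,a}\}$) is a legitimate way to remove that normalization, preserving $C_{\dis}>0$.
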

\begin{proof}
    We have 
     \begin{align} \label{monotone.app}
        \E\Big[ &(Y - Y') \cdot \Big( b^{\lambda}\big( X,Y,\cL(X,Y) \big) - b\big( X',Y',\cL(X',Y') \big) \Big) 
        \nonumber \\
        &\quad + (X - X') \cdot \Big( f^{\lambda}\big( X,Y,\cL(X,Y) \big) - f\big( X',Y',\cL(X',Y') \big) \Big)  \Big]
       \nonumber  \\
        &= \lambda \E\Big[ (Y - Y') \cdot \Big( b\big( X,Y,\cL(X,Y) \big) - b\big( X',Y',\cL(X',Y') \big) \Big) 
        \nonumber \\
        &\quad + (X - X') \cdot \Big( f\big( X,Y,\cL(X,Y) \big) - f\big( X',Y',\cL(X',Y') \big) \Big)  \Big]
        \nonumber \\
        &\quad - {C_{L,a}} (1-\lambda) \E\big[|Y - Y'|^2\big]
        \nonumber \\
        &\leq - \lambda C_{L,a} \E \Big[ \Big|b\big( X,Y,\cL(X,Y) \big) -b\big( X',Y',\cL(X',Y') \big)\Big|^2 \Big] \big| - C_{L,a} (1-\lambda) \E\big[ |Y- Y'|^2] + \lambda C_{L,x} \E\big[|X - X'|^2 \big]
        \nonumber \\
        &= -C_{L,a} \E \Big[ \lambda \Big|b\big( X,Y,\cL(X,Y) \big) -b\big( X',Y',\cL(X',Y') \big)\Big|^2 + (1-\lambda) |Y- Y'|^2 \Big] + \lambda C_{L,x} \E\big[|X - X'|^2 \big]
       \nonumber \\
        &\leq - C_{L,a} \E\Big[ \Big|b^{\lambda}\big( X,Y,\cL(X,Y) \big) -b^{\lambda}\big( X',Y',\cL(X',Y') \big)\Big|^2 \Big] + C_{L,x} \E\big[|X - X'|^2 \big],
    \end{align} 
    The monotonicity condition for $g^{\lambda}$ is straightforward.
\end{proof}

\begin{lemma} \label{lem.continuation}
    There is a constant $\eps > 0$ with the following property. If $\lambda_0 \in [0,1]$ has the property that \eqref{fbsde.lambda.app} has a unique solution in $ \cS^2 \times \cS^2 \times L^2 \times L^2$ for any $\alpha,\beta,\gamma$, then the same is true for any $\lambda \in [0,1]$ with $|\lambda - \lambda_0| < \eps$.
\end{lemma}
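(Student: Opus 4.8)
The plan is to prove this by the standard method of continuation: I would fix $\lambda_0$ for which \eqref{fbsde.lambda.app} is uniquely solvable for arbitrary data $(\alpha,\beta,\gamma)$, write $\lambda = \lambda_0 + \delta$, recast the $\lambda$-equation as a $\lambda_0$-equation with suitably modified input terms, and then solve it by Banach's fixed point theorem on $\cS^2 \times \cS^2 \times L^2 \times L^2$ whenever $|\delta|$ is small enough. The essential point to monitor throughout is that the admissible size of $\delta$ depend only on the structural constants of the problem and not on $\lambda_0$.

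The first step is to establish a uniform-in-$\lambda$ a priori stability estimate. Given two solutions $(X,Y,Z,Z^0)$ and $(X',Y',Z',Z^{0,\prime})$ of \eqref{fbsde.lambda.app} at the same $\lambda$, with data $(\xi,\alpha,\beta,\gamma)$ and $(\xi',\alpha',\beta',\gamma')$ respectively, I would apply It\^o's formula to $t\mapsto (X_t - X_t')\cdot(Y_t - Y_t')$, integrate on $[0,T]$ and take expectations. Conditioning on $\bbF^0$ puts the relevant increments into the form required by Assumption \ref{assump.appendix}, so the monotonicity inequalities \eqref{monotone.app}--\eqref{monotone.g.app} for $(b^\lambda, f^\lambda, g^\lambda)$ --- valid with $\lambda$-independent constants by Lemma \ref{lem.uniflambda} --- would yield control of $\E\big[\int_0^T |b^\lambda(X_t,Y_t,\cL^0(X_t,Y_t)) - b^\lambda(X_t',Y_t',\cL^0(X_t',Y_t'))|^2 dt\big]$ in terms of the data. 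Feeding this back into the forward and backward equations and running Gr\"onwall's and Young's inequalities exactly as in the proof of Proposition \ref{prop.uniformstability} should produce a constant $C_0 = C_0(T,C_L,C_{L,a},C_{L,x},C_g)$, independent of $\lambda$, with
\[
\norm{X-X'}_{\cS^2}^2 + \norm{Y-Y'}_{\cS^2}^2 + \norm{Z-Z'}_{L^2}^2 + \norm{Z^0-Z^{0,\prime}}_{L^2}^2 \le C_0\Big(\E\big[|\xi-\xi'|^2 + |\gamma-\gamma'|^2\big] + \norm{\alpha-\alpha'}_{L^2}^2 + \norm{\beta-\beta'}_{L^2}^2\Big).
\]

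Next I would exploit the decomposition $b^\lambda = b^{\lambda_0} + \delta\,\tilde b$, $f^\lambda = f^{\lambda_0} + \delta f$, $g^\lambda = g^{\lambda_0} + \delta g$, where $\tilde b(x,y,\mu) := b(x,y,\mu) + C_{L,a} y$: a quadruple solves \eqref{fbsde.lambda.app} at $\lambda$ precisely when it solves the same equation at $\lambda_0$ with shifted inputs $\tilde\alpha_t = \alpha_t + \delta\,\tilde b(X_t,Y_t,\cL^0(X_t,Y_t))$, $\tilde\beta_t = \beta_t + \delta\, f(X_t,Y_t,\cL^0(X_t,Y_t))$, $\tilde\gamma = \gamma + \delta\, g(X_T,\cL^0(X_T))$. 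This lets me define a self-map $\Psi$ of $\cS^2 \times \cS^2 \times L^2 \times L^2$ sending $(X,Y,Z,Z^0)$ to the unique solution (existing by the hypothesis on $\lambda_0$) of the $\lambda_0$-equation with the corresponding shifted data. Using the $\bd_2$-Lipschitz continuity of $b,f,g$ together with the elementary bound $\bd_2^2(\cL^0(U),\cL^0(V)) \le \Ex[|U-V|^2 \mid \cF_T^0]$, the shifted data would be $C|\delta|$-Lipschitz in $(X,Y)$ for the relevant norms; combined with the a priori estimate this makes $\Psi$ a contraction as soon as $|\delta| < \eps$ for some $\eps = \eps(T,C_L,C_{L,a},C_{L,x},C_g) > 0$, and Banach's theorem then gives the required unique solution at $\lambda$. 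Since $\eps$ is manifestly independent of $\lambda_0$, the lemma follows.

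I expect the main obstacle to be the a priori estimate with a constant uniform in $\lambda$ --- this is exactly where the uniform displacement monotonicity of Lemma \ref{lem.uniflambda} is indispensable, and the computation is a mean-field version of Proposition \ref{prop.uniformstability}, with the conditional laws $\cL^0(\cdot)$ playing the role played there by empirical measures. The one subtlety is that one must condition on $\bbF^0$ before invoking the monotonicity inequalities; after that the argument is routine FBSDE estimation.
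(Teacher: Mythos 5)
Your proposal is correct and follows essentially the same route as the paper: the same continuation map $\Psi$ (recasting the $\lambda$-equation as the $\lambda_0$-equation with inputs shifted by $(\lambda-\lambda_0)$ times the coefficients evaluated on the input process), with the contraction property obtained from the It\^o computation on $(X_t-X_t')\cdot(Y_t-Y_t')$ and the $\lambda$-uniform monotonicity of Lemma \ref{lem.uniflambda}. Your packaging of the estimate as a standalone $\lambda$-uniform stability bound composed with the $C|\lambda-\lambda_0|$-Lipschitz dependence of the shifted data is only an organizational variant of the paper's direct estimate, and your remark about conditioning on $\bbF^0$ before invoking Assumption \ref{assump.appendix} is a correct (and in the paper implicit) detail.
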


\begin{proof}
    We set $t_0 = 0$ for simplicity. Fix, $\alpha, \beta, \gamma$, and consider the map $\Psi : \cS^2 \times \cS^2 \times L^2 \times L^2 \to \cS^2 \times \cS^2 \times L^2 \times L^2$ which assigns to $(x,y,z)$ the unique solution $(X,Y,Z)$ of 
    \begin{align} \label{psidef.app}
    \begin{cases}
       \ds  dX_t = \Big( b^{\lambda_0}\big(X_t,Y_t,\cL^0(X_t,Y_t) \big) + \alpha_t + (\lambda - \lambda_0) \big(b(x_t,y_t,\cL^0(x_t,y_t) + {C_{L,a}} y_t \big) \Big) dt + \sqrt{2} dW_t + \sqrt{2\sigma_0} dW_t^0, 
       \vspace{.1cm}  \\
        \ds dY_t = \Big(f^{\lambda_0}\big(X_t,Y_t,\cL^0(X_t,Y_t) \big) + \beta_t+ (\lambda - \lambda_0) f\big(x_t,y_t,\cL^0(x_t,y_t)\big) \Big)  dt + Z_t dW_t + Z_t^0 dW_t^0, \vspace{.1cm} 
        \\
        \ds X_0 = \xi, \quad Y_T = g^{\lambda_0}(X_T,\cL^0(X_T)) + \gamma + (\lambda - \lambda_0) g(x_T,\cL^0(x_T)), 
    \end{cases}
\end{align}
and observe that $(X,Y,Z)$ solves \eqref{fbsde.lambda.app} {with the parameter $\lambda$} if and only if $(X,Y,Z)$ is a fixed-point of $\Psi$. So, the goal is to show that there is an $\eps > 0$ such that if $|\lambda - \lambda_0| < \eps$, then $\Psi$ is a contraction. We fix $(x,y,z)$ and $(x',y',z')$, and set $(X,Y,Z) = \Psi(x,y,z)$, $(X',Y',Z') = \Psi(x',y',z')$. We furthermore set 
\begin{align*}
    \Delta X_t := X_t - X_t', \quad \Delta Y_t := Y_t - Y_t', \quad \Delta Z_t := Z_t - Z_t', 
\end{align*}
and we use similar notation for $\Delta x$, $\Delta y$, $\Delta z$. Finally, we set 
\begin{align*}
    \Delta b^{\lambda_0}_t & := b^{\lambda_0}(X_t,Y_t,\cL^0(X_t,Y_t)) - b^{\lambda_0}(X'_t,Y'_t,\cL^0(X'_t,Y'_t)),
    \\
    \Delta f^{\lambda_0}_t & := f^{\lambda_0}(X_t,Y_t,\cL^0(X_t,Y_t)) - f^{\lambda_0}(X'_t,Y'_t,\cL^0(X'_t,Y'_t))
\end{align*}
In what follows, $C$ denotes a constant which is independent of $\lambda, \alpha$, $\beta$, and $\gamma$. We compute 
\begin{align*}
    d \Delta X_t &\cdot \Delta Y_t = \Big( \Delta Y_t \cdot \Delta b^{\lambda_0} + \Delta X_t \cdot  \Delta f_t^{\lambda_0}
    \\
    &+ (\lambda - \lambda_0) \Delta Y_t \cdot \big( b(x_t,y_t,\cL^0(x_t,y_t)) - b(x_t',y_t',\cL^0(x_t',y_t') )  + C_{L,a} \Delta y_t \big)
    \\
    &+ (\lambda - \lambda_0) \Delta X_t \cdot \big( f(x_t,y_t,\cL^0(x_t,y_t)) - f(x_t',y_t',\cL^0(x_t',y_t') \big) \Big)dt + dM_t, 
\end{align*}
with $M$ being a martingale. Taking expectations and using Lemma \ref{lem.uniflambda}, we find that 
\begin{align*}
   C_{L,a} &\E\bigg[ \int_0^T |\Delta b^{\lambda_0}_t|^2 dt \bigg] \leq C_{L,x} \E\bigg[ \int_0^T |\Delta X_t|^2 dt \bigg] + C_g \E\Big[|\Delta X_T|^2 \Big]
   \\
   &\qquad + C |\lambda - \lambda_0| \E\bigg[ \int_0^T \Big( |\Delta X_t| \left|f(x_t,y_t,\cL^0(x_t,y_t)) - f(x_t',y_t', \cL^0(x_t',y_t'))\right|
    \\
    &\qquad + |\Delta Y_t| \left|b(x_t,y_t, \cL^0(x_t,y_t)) - b(x_t',y_t',\cL^0(x_t',y_t')) + {C_{L,a}} \Delta y_t\right| \Big)dt 
    \\
    &\qquad + |\Delta X_T| \left|g(x_T,\cL^0(x_T)) - g(x_T',\cL^0(x_T'))\right|  \bigg]
    \\
    &\leq C_{L,x} \E\bigg[ \int_0^T |\Delta X_t|^2 dt \bigg] + C_g \E\Big[|\Delta X_T|^2 \Big]
    \\
    &\qquad +C|\lambda - \lambda_0| \E\bigg[ \int_0^T \Big( |\Delta X_t| + |\Delta Y_t| \Big) \Big(|\Delta y_t| + |\Delta x_t| + \bd_2(\cL^0(x_t,y_t),\cL^0(x_t',y_t')) \Big) dt 
    \\
    &\qquad + |\Delta X_T| \big(|\Delta x_T| + \bd_2(\cL^0(x_T),\cL^0(x_T')) \big) \bigg].
\end{align*}
Recalling the dynamics of $X$ and $X'$, we find that 
\begin{align*}
    |\Delta X_t|^2 &= \Big|\int_0^t \Big(\Delta b_s^{\lambda_0} + (\lambda - \lambda_0) \Big(b(x_s,y_s, \cL^0(x_s,y_s)) - b(x_s',y_s',\cL^0(x_s',y_s')) + {C_{L,a}}\Delta y_s \Big) \Big)ds \Big|^2  
    \\
    &\leq t \int_0^t \Big|\Delta b_s^{\lambda_0} + (\lambda - \lambda_0) \Big(b(x_s,y_s, \cL^0(x_s,y_s)) - b(x_s',y_s',\cL^0(x_s',y_s')) + {C_{L,a}}\Delta y_s \Big) \Big|^2 ds, 
\end{align*}
and so for each $\delta > 0$, there exists a constant $C_{\delta}>0$ such that we have 
\begin{align} \label{xbest}
    \E\big[ |\Delta X_t|^2 \big] \leq (1 + \delta) t \E\bigg[\int_0^t |\Delta b_s^{\lambda_0}|^2 ds\bigg] + C_{\delta}|\lambda - \lambda_0|^2 \E\bigg[\int_0^t |\Delta x_s|^2 + |\Delta y_s|^2 ds \bigg], 
\end{align}
{where we have used the fact that 
$$
\bd_2^{2}(\cL^0(x_s,y_s),\cL^0(x_s',y_s'))\le \E\left[ |\Delta x_s|^2 + |\Delta y_s|^2\right].
$$}
So, plugging this in above we have 
\begin{align*}
    \Big(C_{L,a} - &(1 + \delta) T C_G - (1+\delta) \frac{T^2}{2} C_{L,x} \Big) \E\bigg[ \int_0^T |\Delta b^{\lambda_0}_t|^2 dt \bigg] 
    \\
    &\leq C|\lambda - \lambda_0| \E\bigg[ \int_0^T \Big( |\Delta X_t| + |\Delta Y_t| \Big) \Big(|\Delta y_t| + |\Delta x_t| + \bd_2(\cL^0(x_t,y_t),\cL^0(x_t',y_t')) \Big) dt 
    \\
    &\qquad + |\Delta X_T| \big(|\Delta x_T| + \bd_2(\cL^0(x_T),\cL^0(x_T')) \big) \bigg]
    \\
    &\qquad +{(1+T)}C_{\delta}|\lambda - \lambda_0|^2 \E\bigg[\int_0^{T} |\Delta x_s|^2 + |\Delta y_s|^2 ds \bigg]. 
\end{align*}
Choosing $\delta$ small enough, and returning to \eqref{xbest}, we deduce  that 
\begin{align*}
    \E\Big[ \sup_{0 \leq t \leq T} |\Delta X_t|^2 \Big] &\leq C|\lambda - \lambda_0| \E\bigg[ \int_0^T \bigg(\Big( |\Delta X_t| + |\Delta Y_t| \Big) \Big(|\Delta y_t| + |\Delta x_t| + \bd_2(\cL^0(x_t,y_t),\cL^0(x_t',y_t')) \Big)
    \\
    &\qquad + |\lambda - \lambda_0| \big(|\Delta y_t| + |\Delta x_t| + \bd_2(\cL^0(x_t,y_t),\cL^0(x_t',y_t')) \big) \bigg) dt 
    \\
    &\qquad + |\Delta X_T| \big(|\Delta x_T| + \bd_2(\cL^0(x_T),\cL^0(x_T')) \big) \bigg]\\
    &\qquad +(1+T)C_{\delta}|\lambda - \lambda_0|^2 \E\bigg[\int_0^{T} |\Delta x_s|^2 + |\Delta y_s|^2 ds \bigg].
\end{align*}

Young's inequality leads to a bound of the form
\begin{align*}
    \| \Delta X \|_{\cS^2}^2 \leq C_{\delta} |\lambda - \lambda_0| \|(\Delta x,\Delta y)\|_{\cS^2 \times \cS^2}^2 + \delta \|\Delta Y\|_{\cS^2}^2.
\end{align*}
Using this bound together with the dynamics for $Y$ and $Y'$ (in particular, applying \cite[Theorem 4.2.1]{Zhang2017}), we obtain a bound of the form 
\begin{align*}
\| \Delta Y\|_{\cS^2} + \|\Delta Z\|_2^2 + \|\Delta Z^0\|_2^2 \leq C|\lambda - \lambda_0| \|(\Delta x,\Delta y)\|_{\cS^2 \times \cS^2}^2 + C   \| \Delta X \|_{\cS^2}^2 
  \\
  \leq C_{\delta} |\lambda - \lambda_0| \|(\Delta x,\Delta y)\|_{\cS^2 \times \cS^2}^2 + C \delta \|\Delta Y\|_{\cS^2}^2.
\end{align*}

Choosing $\delta$ small enough, we find that there is a constant $C$ independent of $\lambda, \alpha, \beta$, and $\gamma$ such that 
\begin{align*}
    \| \Psi(x,y,z) - \Psi(x',y',z') \|_{\cS^2 \times \cS^2 \times L^2 \times L^2} \leq C|\lambda -\lambda_0|  \| \Psi(x,y,z) - \Psi(x',y',z') \|_{\cS^2 \times \cS^2 \times L^2 \times L^2}, 
\end{align*}
and so choosing a small enough $\eps$, we see that if $|\lambda - \lambda_0| < \eps$, $\Psi$ is a contraction. This completes the proof.
\end{proof}

\begin{theorem} \label{thm.wellposedness}
    Let Assumption \ref{assump.appendix} hold. Then for any $t_0 \in [0,T)$, $\xi \in L^2(\cF_0)$, the FBSDE \eqref{fbsde.app} has a unique solution in $\cS^2 \times \cS^2 \times L^2 \times L^2$. 
\end{theorem}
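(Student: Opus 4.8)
The plan is to prove Theorem~\ref{thm.wellposedness} by the method of continuation, exactly as set up in the preceding lemmas. First I would verify the base case $\lambda = 0$: when $\lambda = 0$, the coefficients become $b^0(x,y,\mu) = -C_{L,a} y$, $f^0 \equiv 0$, $g^0 \equiv 0$, so the auxiliary system \eqref{fbsde.lambda.app} decouples. Indeed, the backward equation reads $dY_t = \beta_t\,dt + Z_t\,dW_t + Z_t^0\,dW_t^0$ with terminal condition $Y_T = \gamma$, which is a standard linear BSDE with Lipschitz (in fact, zero) generator; by classical BSDE theory (e.g.\ \cite[Theorem 4.2.1]{Zhang2017}) it has a unique solution $(Y,Z,Z^0) \in \cS^2 \times L^2 \times L^2$ depending only on $\beta$ and $\gamma$. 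Once $Y$ is known, the forward equation becomes $dX_t = (-C_{L,a} Y_t + \alpha_t)\,dt + \sqrt{2}\,dW_t + \sqrt{2\sigma_0}\,dW_t^0$ with $X_{t_0} = \xi$, which has an explicit unique solution in $\cS^2$ by direct integration. Hence \eqref{fbsde.lambda.app} is uniquely solvable for $\lambda = 0$, for any data $\alpha, \beta, \gamma$.

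Next I would run the continuation argument. By Lemma~\ref{lem.uniflambda}, the coefficients $b^\lambda, f^\lambda, g^\lambda$ satisfy Assumption~\ref{assump.appendix} with constants uniform in $\lambda \in [0,1]$, and in particular the displacement-monotonicity constant $C_{\dis}$ is the same for all $\lambda$; this is what makes the step size $\eps$ in Lemma~\ref{lem.continuation} independent of $\lambda_0$. Therefore, starting from $\lambda_0 = 0$ (which is solvable by the base case), Lemma~\ref{lem.continuation} gives unique solvability of \eqref{fbsde.lambda.app} for all $\lambda \in [0,\eps)$; then bootstrapping from any $\lambda_0 \in [0,\eps)$ gives solvability on $[0, 2\eps)$, and after $\lceil 1/\eps \rceil$ steps we reach $\lambda = 1$. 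Since \eqref{fbsde.lambda.app} at $\lambda = 1$ with $\alpha = \beta = 0$, $\gamma = 0$ is exactly \eqref{fbsde.app}, this yields existence and uniqueness of a solution $(X,Y,Z,Z^0) \in \cS^2 \times \cS^2 \times L^2 \times L^2$.

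Finally, I would remark that although Lemma~\ref{lem.continuation} was stated with $t_0 = 0$, the argument is insensitive to the initial time: one simply replaces the interval $[0,T]$ by $[t_0, T]$ throughout, and all the estimates (the It\^o product-rule computation for $d(\Delta X_t \cdot \Delta Y_t)$, the Gr\"onwall-type bounds, and the application of \cite[Theorem 4.2.1]{Zhang2017}) go through verbatim with $T$ replaced by $T - t_0 \leq T$; in particular the smallness threshold $\eps$ can be chosen uniformly over $t_0 \in [0,T)$. This gives the claimed well-posedness for every $t_0 \in [0,T)$ and every $\xi \in L^2(\cF_{t_0})$.

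The main obstacle in this proof is not really in the continuation mechanics—those are by now standard—but in making sure the uniform-in-$\lambda$ monotonicity bookkeeping of Lemma~\ref{lem.uniflambda} is genuinely exploited: the contraction constant $C|\lambda - \lambda_0|$ in Lemma~\ref{lem.continuation} must be controlled using $C_{\dis} > 0$, and the fact that $C_{\dis}$ does \emph{not} degenerate as $\lambda \to 0$ (because the artificial $-C_{L,a}(1-\lambda)y$ term is precisely engineered to preserve it) is what allows a single $\eps$ to work for the whole path from $\lambda = 0$ to $\lambda = 1$. Since that bookkeeping is already discharged in the lemmas above, the proof of Theorem~\ref{thm.wellposedness} itself reduces to invoking the base case and iterating Lemma~\ref{lem.continuation}.
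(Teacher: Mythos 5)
Your proposal is correct and follows essentially the same route as the paper: the paper's proof is exactly the observation that the $\lambda=0$ case is solvable (a straightforward adaptation of \cite[Theorem 8.4.2]{Zhang2017}, which your decoupling argument makes explicit) followed by iterating Lemma \ref{lem.continuation} with the uniform step size $\eps$. Your additional remarks on the uniformity in $t_0$ and on why Lemma \ref{lem.uniflambda} keeps $\eps$ independent of $\lambda_0$ are accurate elaborations of what the paper leaves implicit.
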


\begin{proof}
    It is clear that \eqref{fbsde.lambda.app} is well-posed when $\lambda = 0$ (this is a straightforward adaptation of e.g. \cite[Theorem 8.4.2]{Zhang2017}), and Lemma \ref{lem.continuation} allows us to conclude that the same is true for any $\lambda \in [0,1]$.
\end{proof}

\begin{proposition}
    \label{prop.fbsde.stab} 
    Let Assumption \ref{assump.appendix} hold. Then there is a constant $C > 0$ with the following property. Suppose that $(X^i,Y^i,Z^i,Z^{0,i})_{i = 1,2}$ are the unique solution to \eqref{fbsde.app} with initial conditions $X_{t_0}^i = \xi^i$. Then we have
    \begin{align*}
        \E\bigg[ \sup_{t_0 \leq t \leq T} \Big(|X_t^1 - X_t^2|^2 + |Y_t^1 - Y_t^2|^2\Big) + \int_{t_0}^T \Big(|Z_t^1 - Z_t^2|^2 + |Z_t^{0,1} - Z_t^{0,2}|^2 \Big) dt \bigg] \leq C \E\big[ |\xi^1 - \xi^2| \big].
    \end{align*}
\end{proposition}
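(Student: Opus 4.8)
The plan is to repeat, in the mean field setting with common noise, the synchronous--coupling argument already used for the $N$-player Pontryagin system in the proof of Proposition~\ref{prop.uniformstability}. I would write $\Delta X = X^1 - X^2$, $\Delta Y = Y^1 - Y^2$, $\Delta Z = Z^1 - Z^2$, $\Delta Z^0 = Z^{0,1} - Z^{0,2}$, $\Delta\xi = \xi^1 - \xi^2$, and set $\Delta b_t := b(X^1_t,Y^1_t,\cL^0(X^1_t,Y^1_t)) - b(X^2_t,Y^2_t,\cL^0(X^2_t,Y^2_t))$ and $\Delta f_t$ analogously. Since both solutions are driven by the same $W$ and $W^0$, one has $d\Delta X_t = \Delta b_t\,dt$, so the forward equation for $\Delta X$ carries no martingale part. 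First I would apply It\^o's formula to $\Delta X_t\cdot\Delta Y_t$, integrate on $[t_0,T]$ and take expectations to obtain
\[
  \E\big[\Delta X_T\cdot\Delta Y_T\big] - \E\big[\Delta\xi\cdot\Delta Y_{t_0}\big] = \E\bigg[\int_{t_0}^T \big(\Delta Y_t\cdot\Delta b_t + \Delta X_t\cdot\Delta f_t\big)\,dt\bigg].
\]

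Next I would invoke the monotonicity conditions. Applying \eqref{monotone.app} (for each fixed $t$, then integrating in $t$) to the right-hand side, and \eqref{monotone.g.app} together with $\Delta Y_T = g(X^1_T,\cL^0(X^1_T)) - g(X^2_T,\cL^0(X^2_T))$ to the boundary term, the identity above turns into
\[
  C_{L,a}\,\E\bigg[\int_{t_0}^T|\Delta b_t|^2\,dt\bigg] \le \E\big[\Delta\xi\cdot\Delta Y_{t_0}\big] + C_g\,\E\big[|\Delta X_T|^2\big] + C_{L,x}\,\E\bigg[\int_{t_0}^T|\Delta X_t|^2\,dt\bigg].
\]
From the representation $\Delta X_t = \Delta\xi + \int_{t_0}^t\Delta b_s\,ds$ and Young's inequality, for every $\delta>0$ one has $\E[|\Delta X_T|^2]\le (1+\delta)T\,\E[\int_{t_0}^T|\Delta b_s|^2\,ds] + C_\delta\,\E[|\Delta\xi|^2]$ and $\E[\int_{t_0}^T|\Delta X_t|^2\,dt]\le (1+\delta)\tfrac{T^2}{2}\,\E[\int_{t_0}^T|\Delta b_s|^2\,ds] + C_\delta\,\E[|\Delta\xi|^2]$. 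Substituting and recalling that $C_{\dis}\coloneqq C_{L,a}-C_gT-C_{L,x}T^2/2>0$ by Assumption~\ref{assump.appendix}, for $\delta$ small enough the coefficient of $\E[\int|\Delta b|^2]$ on the left is at least $C_{\dis}/2$, so after a further application of Young's inequality to $\E[\Delta\xi\cdot\Delta Y_{t_0}]$ I would arrive at
\[
  \tfrac{C_{\dis}}{2}\,\E\bigg[\int_{t_0}^T|\Delta b_t|^2\,dt\bigg] \le \eta\,\E\big[|\Delta Y_{t_0}|^2\big] + C_\eta\,\E\big[|\Delta\xi|^2\big],
\]
with $\eta>0$ at my disposal.

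To close the estimate I would bound $\E[|\Delta Y_{t_0}|^2]$ separately. Applying It\^o to $|\Delta Y_t|^2$, using the Lipschitz bound on $f$ together with $\E[\bd_2^2(\cL^0(X^1_s,Y^1_s),\cL^0(X^2_s,Y^2_s))]\le \E[|\Delta X_s|^2+|\Delta Y_s|^2]$, and the Lipschitz bound on $g$ (which gives $\E[|\Delta Y_T|^2]\le C\,\E[|\Delta X_T|^2]$), and then applying Gr\"onwall's inequality in backward time to $t\mapsto\E[|\Delta Y_t|^2]$ and inserting the bounds on $\Delta X$ above, yields $\E[|\Delta Y_{t_0}|^2]\le C\,\E[\int_{t_0}^T|\Delta b_t|^2\,dt] + C\,\E[|\Delta\xi|^2]$. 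Combining this with the previous display and choosing $\eta$ small enough to absorb the $\E[\int|\Delta b|^2]$ term, I obtain $\E[\int_{t_0}^T|\Delta b_t|^2\,dt]\le C\,\E[|\Delta\xi|^2]$. From here: $\E[\sup_{t_0\le t\le T}|\Delta X_t|^2]\le C\,\E[|\Delta\xi|^2]$ follows from the representation of $\Delta X$; $\E[\int_{t_0}^T|\Delta f_t|^2\,dt]\le C\,\E[|\Delta\xi|^2]$ follows from the Lipschitz bound on $f$ and the bounds just obtained (together with $\E[\int|\Delta Y|^2]\le C\,\E[|\Delta\xi|^2]$ from the Gr\"onwall step); and the standard stability estimate for the backward SDE (e.g.\ \cite[Theorem~4.2.1]{Zhang2017}), together with the Burkholder--Davis--Gundy inequality for the supremum in $Y$, gives $\E[\sup_{t_0\le t\le T}|\Delta Y_t|^2 + \int_{t_0}^T(|\Delta Z_t|^2+|\Delta Z^0_t|^2)\,dt]\le C\,\E[|\Delta\xi|^2]$, which is the asserted bound (the right-hand side of the statement should read $\E[|\xi^1-\xi^2|^2]$).

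The step that requires care — and the main obstacle — is the presence of the common noise: Assumption~\ref{assump.appendix} is phrased in terms of ordinary laws $\cL$, whereas \eqref{fbsde.app} involves the conditional laws $\cL^0 = \cL(\,\cdot\mid\cF^0_T)$. I would handle this by conditioning on $\cF^0_T$: for $\P$-a.e.\ realization of $W^0$, the pairs $(X^i_t,Y^i_t)$ are square-integrable random variables on the (still atomless, since $\cF_0$ is atomless and independent of $W^0$ and $W$) conditional probability space, with law $\cL^0(X^i_t,Y^i_t)$ there, so the monotonicity inequalities \eqref{monotone.app}--\eqref{monotone.g.app} apply; integrating the resulting inequalities back over the realizations of $W^0$ produces precisely the inequalities used above. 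With this understood, everything else is a routine repetition of the estimates already carried out in the proof of Proposition~\ref{prop.uniformstability}.
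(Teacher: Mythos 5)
Your proposal is correct and follows essentially the same route as the paper, which proves this proposition by declaring the argument "essentially the same as the one appearing in Lemma \ref{lem.continuation} (in particular computing $d(X_t^1-X_t^2)\cdot(Y_t^1-Y_t^2)$)" — i.e.\ exactly your synchronous coupling, the monotonicity conditions \eqref{monotone.app}--\eqref{monotone.g.app}, the representation of $\Delta X$ with Young's inequality, Gr\"onwall for $\Delta Y$, and the standard BSDE stability estimate. Your two side remarks are also well taken: the right-hand side of the statement should indeed read $\E\big[|\xi^1-\xi^2|^2\big]$, and the passage from unconditional to conditional laws via conditioning on $\cF_T^0$ is the correct (and in the paper tacit) way to apply the monotonicity assumptions.
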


\begin{proof}
   The argument is essentially the same as the one appearing in Lemma \ref{lem.continuation} (in particular computing $d (X_t^1 - X_t^2) \cdot (Y_t^1 - Y_t^2)$) and so is omitted. 
\end{proof}

Given $(t_0,\xi)$, we denote by $(X^{t_0,\xi},Y^{t_0,\xi},Z^{t_0,\xi}, Z^{t_0,\xi,0})$ the unique solution to \eqref{fbsde.app}. Then we consider the FBSDE 
\begin{align} \label{fbsde.2.app}
    \begin{cases}
       \ds  dX_t = b\Big(X_t, \cL^0(X_t,Y_t), \cL^0(X^{t_0,\xi}_t,Y^{t_0,\xi}_t)  \Big) dt + \sqrt{2} dW_t + \sqrt{2\sigma_0} dW_t^0, 
       \vspace{.1cm}  \\
        \ds dY_t = f\Big(X_t,Y_t,\cL^0\big(X_t^{t_0,\xi},Y^{t_0,\xi}_t \big) \Big) dt + Z_t dW_t + Z_t^0 dW_t^0, \vspace{.1cm} 
        \\
        \ds X_{t_0} = x_0, \quad Y_T = g(X_T,\cL^0(X_T^{t_0,\xi}))
    \end{cases}
\end{align}

The following Lemma is again a straightforward extension of \cite[Theorem 8.4.2]{Zhang2017}.

\begin{lemma}
    \label{lem.fbsde2}
    If Assumption \ref{assump.appendix} holds, then for each $(t_0,x_0,\xi)$, there is a unique solution to \eqref{fbsde.2.app}.
\end{lemma}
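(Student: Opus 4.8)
The plan is to run the method of continuation exactly as in the proof of Theorem~\ref{thm.wellposedness}, i.e.\ to repeat Lemmas~\ref{lem.uniflambda} and \ref{lem.continuation} with the coefficients of \eqref{fbsde.2.app} in place of those of \eqref{fbsde.app}; the only genuinely new point is that \eqref{fbsde.2.app} is driven by the exogenous data supplied by the already-constructed solution of \eqref{fbsde.app}, and one must check that this input leaves the structural hypotheses intact.

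First I would record that, by Theorem~\ref{thm.wellposedness}, the pair $(X^{t_0,\xi},Y^{t_0,\xi})\in\cS^2\times\cS^2$ is well defined, so that $t\mapsto\nu_t:=\cL^0(X_t^{t_0,\xi},Y_t^{t_0,\xi})$ is a legitimate $\bbF^0$-progressively measurable, $\cP_2(\R^d\times\R^d)$-valued process with $\sup_{t_0\le t\le T}\E\big[M_2(\nu_t)^2\big]<\infty$, and likewise $\nu_T^x:=\cL^0(X_T^{t_0,\xi})$ is square-integrable. Freezing these, \eqref{fbsde.2.app} becomes a forward--backward system whose coefficients, regarded as (possibly mean-field) functions of the unknowns $(X,Y,Z,Z^0)$, are Lipschitz in $\bd_2$ uniformly in $(\omega,t)$ with the constant of Assumption~\ref{assump.appendix}, and still obey the monotonicity inequalities \eqref{monotone.app}--\eqref{monotone.g.app} with the \emph{same} constants $C_{L,a},C_{L,x},C_g$: in the monotone pairing $d\big((X_t^1-X_t^2)\cdot(Y_t^1-Y_t^2)\big)$ the frozen arguments $\nu_t,\nu_T^x$ are common to both copies and drop out of the relevant inner products, so that for $b(x,y,\mu)=-D_pH(x,y,\Phi(\mu))$, $f(x,y,\mu)=D_xH(x,y,\Phi(\mu))$, $g(x,m)=D_xG(x,m)$ the quadratic form that must be controlled is the one already treated in the lemma immediately preceding \eqref{fbsde.lambda.app} (via the Legendre-duality identities $D_aL(x,-D_pH(x,y,\rho),\rho)=-y$ and $f=-D_xL(x,-D_pH(x,y,\rho),\rho)$, together with the strict convexity of $L$ in $a$ and the bounds on its second derivatives from Assumption~\ref{assump.regularity} to absorb the cross terms in $x$). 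In particular $C_{\dis}=C_{L,a}-C_{L,x}\frac{T^2}{2}-C_gT>0$ is unchanged.

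With this in hand the continuation argument is verbatim. Introduce the interpolated coefficients as in \eqref{fbsde.lambda.app} -- multiply the genuine data by $\lambda$ and subtract $(1-\lambda)C_{L,a}y$ from the forward drift -- and observe, as in Lemma~\ref{lem.uniflambda}, that they satisfy Assumption~\ref{assump.appendix} uniformly in $\lambda\in[0,1]$. At $\lambda=0$ the system decouples: $Y$ solves a backward equation with trivial generator and square-integrable terminal condition, and then $X$ solves a forward SDE with linear drift, so \eqref{fbsde.lambda.app} is uniquely solvable for all inhomogeneities $\alpha,\beta\in L^2$, $\gamma\in L^2(\cF_T)$. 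The local step of Lemma~\ref{lem.continuation} then supplies an $\eps>0$, independent of $\lambda_0$, for which solvability at $\lambda_0$ propagates to $(\lambda_0-\eps,\lambda_0+\eps)\cap[0,1]$: the map $\Psi$ of \eqref{psidef.app} is a contraction on $\cS^2\times\cS^2\times L^2\times L^2$, using the a~priori bound on $\|\Delta X\|_{\cS^2}$ coming from the monotone It\^o computation and the standard a~priori estimate for $(\Delta Y,\Delta Z,\Delta Z^0)$ applied to the backward equation. Iterating from $\lambda=0$ reaches $\lambda=1$, which is exactly \eqref{fbsde.2.app}, and yields a unique solution in $\cS^2\times\cS^2\times L^2\times L^2$ for every deterministic $x_0$. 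I do not expect a real obstacle here: the content is essentially bookkeeping, the one point needing care being the robustness of the monotonicity to freezing $\nu$, which -- as indicated -- reduces to the Legendre-duality identities already exploited in the excerpt plus the second-derivative bounds of Assumption~\ref{assump.regularity}, while everything else (It\^o on products, Gr\"onwall, martingale representation, the contraction estimate, the stability bound of Proposition~\ref{prop.fbsde.stab}) is insensitive to the coefficients being random but $\bbF$-progressive and to the terminal datum depending measurably on $\bbF^0$.
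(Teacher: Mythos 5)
Your overall route (freeze the measure flow at the already-constructed McKean--Vlasov solution and rerun the continuation scheme) is the same one the paper intends: the paper disposes of Lemma \ref{lem.fbsde2} with a one-line citation to the standard well-posedness theorem for monotone FBSDEs, of which your argument is the worked-out version. But in working it out you assert a step that does not hold as stated, and it is precisely the step the citation sweeps under the rug.

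The gap is your claim that the frozen coefficients satisfy \eqref{monotone.app}--\eqref{monotone.g.app} ``with the same constants $C_{L,a},C_{L,x},C_g$'', so that ``$C_{\dis}$ is unchanged''. The monotonicity verified in the lemma preceding \eqref{fbsde.lambda.app} is the \emph{displacement} inequality, in which the measure argument of $b$ and $f$ is the joint law $\cL(X,Y)$ of the very random variables being compared; after Legendre duality it reduces to \eqref{dispmonotone.L}, where $D_aL$ and $D_xL$ are evaluated at $\cL(X,\alpha)$ and $\cL(\ov X,\ov\alpha)$ respectively. In \eqref{fbsde.2.app} both copies in the contraction estimate see the \emph{same} frozen flow $\nu_t$, so what you actually need is the fixed-common-measure inequality
\begin{align*}
\E\Big[\big(D_aL(X,\alpha,\rho)-D_aL(X',\alpha',\rho)\big)\cdot(\alpha-\alpha')+\big(D_xL(X,\alpha,\rho)-D_xL(X',\alpha',\rho)\big)\cdot(X-X')\Big]\ \geq\ C'_{L,a}\,\E|\alpha-\alpha'|^2-C'_{L,x}\,\E|X-X'|^2
\end{align*}
with $\rho=\Phi(\nu_t)$ fixed. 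This is a different statement from \eqref{dispmonotone.L}: the discrepancy between the two involves terms like $\frac{\delta}{\delta m}D_xL$ integrated against $\rho-\cL(X,\alpha)$, which are not controlled by $\E|X-X'|^2+\E|\alpha-\alpha'|^2$, so the displacement constants do not transfer. What \emph{does} give the fixed-measure inequality is exactly what you mention in passing -- $D_{aa}L\geq C I$ from \eqref{strictconvexity} plus the uniform bounds on $D_{xa}L$ and $D_{xx}L$, with Young's inequality to absorb the cross terms -- but that procedure produces constants $C'_{L,a},C'_{L,x},C'_g$ determined by the second-derivative bounds, not by $C_{L,a},C_{L,x},C_G$. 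The continuation argument for \eqref{fbsde.2.app} then requires $C'_{L,a}-\tfrac{T^2}{2}C'_{L,x}-TC'_g>0$, which is an additional restriction on $T$ that is not implied by \eqref{disp:const}. So either you must add this as a hypothesis, or you must supply a different argument for the frozen system (e.g.\ convexity of $(x,a)\mapsto L(x,a,\rho)$ and of $G(\cdot,m)$ when $C_{L,x}=C_G=0$, in which case \eqref{fbsde.2.app} is the Pontryagin system of a strictly convex control problem). As written, the sentence ``In particular $C_{\dis}$ is unchanged'' is inconsistent with your own acknowledgment two lines earlier that cross terms must be absorbed.

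Everything else in your proposal -- the measurability and integrability of $t\mapsto\nu_t$, the $\lambda=0$ decoupling, the uniform-in-$\lambda$ step size, the contraction estimate, and the backward a priori bound -- is fine and matches the standard scheme the paper is invoking.
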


We denote by $(X^{t_0,\xi_0,x_0}, Y^{t_0,\xi_0,x_0}, Z^{t_0,\xi_0,x_0}, Z^{t_0,\xi_0,x_0,0})$ the unique solution to \eqref{fbsde.2.app} with initial condition $X_{t_0} = x_0$. 

\begin{lemma} \label{lem.fbsde2.stab}
    We have 
    \begin{align*}
        &\E\bigg[ \sup_{t_0 \leq t \leq T} \Big(|X_t^{t_0,\xi_0,x_0} - X_t^{t_0,\xi_0',x_0'}|^2 + |Y_t^{t_0,\xi_0,x_0} - Y_t^{t_0,\xi_0',x_0'}|^2\Big) + \int_{t_0}^T \Big(|Z_t^{t_0,\xi_0,x_0} - Z_t^{t_0,\xi_0',x_0'}|^2 
        \\
        &\qquad + |Z_t^{t_0,\xi_0,x_0,0} - Z_t^{t_0,\xi_0',x_0',0}|^2 \Big) dt \bigg] \leq C \Big(|x_0 - x_0'|^2+ \E\big[ |\xi - \xi' \big] \Big).
    \end{align*}
\end{lemma}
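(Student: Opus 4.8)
The plan is to prove Lemma~\ref{lem.fbsde2.stab} by running the same argument that already underlies Lemma~\ref{lem.continuation} and Proposition~\ref{prop.fbsde.stab}: differentiate the coupling $\Delta X_t\cdot\Delta Y_t$, use the monotonicity of Assumption~\ref{assump.appendix} together with $C_{\dis}>0$ to absorb the forward contribution, and treat the change of starting point $x_0\mapsto x_0'$ and the change of reference flow $\xi\mapsto\xi'$ as two independent, exogenous perturbations. Write $(X^1,Y^1,Z^1,Z^{0,1})$ and $(X^2,Y^2,Z^2,Z^{0,2})$ for the solutions of \eqref{fbsde.2.app} (which exist and are unique by Lemma~\ref{lem.fbsde2}) associated with $(t_0,\xi,x_0)$ and $(t_0,\xi',x_0')$, set $\Delta X=X^1-X^2$, $\Delta Y=Y^1-Y^2$, $\Delta Z=Z^1-Z^2$, $\Delta Z^0=Z^{0,1}-Z^{0,2}$, $\Delta x_0=x_0-x_0'$, and let $\nu^i_t=\cL^0(X^{t_0,\xi^i}_t,Y^{t_0,\xi^i}_t)$ denote the ($\cF^0_t$-measurable) reference measures entering \eqref{fbsde.2.app} through the frozen flows. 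Because the frozen flow does not depend on the solution of \eqref{fbsde.2.app}, Proposition~\ref{prop.fbsde.stab} applied to \eqref{fbsde.app}, together with the natural coupling of conditional laws, gives
\begin{align*}
  \E\Big[\sup_{t_0\le t\le T}\bd_2^2(\nu^1_t,\nu^2_t)\Big]\le\E\Big[\sup_{t_0\le t\le T}\big(|X^{t_0,\xi}_t-X^{t_0,\xi'}_t|^2+|Y^{t_0,\xi}_t-Y^{t_0,\xi'}_t|^2\big)\Big]\le C\,\E\big[|\xi-\xi'|^2\big],
\end{align*}
so the reference-flow perturbation is controlled, once and for all, by $\E[|\xi-\xi'|^2]$.

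Next, since $X^1$ and $X^2$ carry the same Brownian increments $\sqrt2\,dW+\sqrt{2\sigma_0}\,dW^0$, the process $\Delta X$ is of finite variation, so It\^o's formula yields
\begin{align*}
  d\big(\Delta X_t\cdot\Delta Y_t\big)=\big(\Delta Y_t\cdot\Delta b_t+\Delta X_t\cdot\Delta f_t\big)\,dt+dM_t,
\end{align*}
where $\Delta b_t$ and $\Delta f_t$ are the differences of the forward and backward drifts of \eqref{fbsde.2.app} and $M$ is a martingale. Adding and subtracting $b(X^2_t,Y^2_t,\nu^1_t)$ and $f(X^2_t,Y^2_t,\nu^1_t)$ splits the drift into a monotone part — governed by the inequality \eqref{monotone.app} for the coefficients of \eqref{fbsde.2.app} with the reference law $\nu^1_t$ frozen in (the same structural fact used in Lemma~\ref{lem.fbsde2}), producing a coercive term $-C_{L,a}\,\E\!\int_{t_0}^T|\Delta b^{(1)}_t|^2\,dt$ with $\Delta b^{(1)}_t:=b(X^1_t,Y^1_t,\nu^1_t)-b(X^2_t,Y^2_t,\nu^1_t)$, together with the defects $C_{L,x}\,\E\!\int_{t_0}^T|\Delta X_t|^2\,dt$ and $C_g\,\E|\Delta X_T|^2$ — and an error part bounded, using the Lipschitz dependence of $b,f$ (and of $g$ at the terminal time) on the measure argument and Young's inequality, by
\begin{align*}
  \tfrac{C_{L,a}}{4}\,\E\!\int_{t_0}^T|\Delta b^{(1)}_t|^2\,dt+C\,\E\!\int_{t_0}^T\!\big(|\Delta X_t|^2+|\Delta Y_t|^2+\bd_2^2(\nu^1_t,\nu^2_t)\big)\,dt+C\,\E\big[|\Delta X_T|^2+\bd_2^2(\nu^1_T,\nu^2_T)\big].
\end{align*}

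To close, I would use that $\Delta X_t=\Delta x_0+\int_{t_0}^t\Delta b_s\,ds$, which yields for every $\delta>0$ an estimate of the form $\E|\Delta X_t|^2\le(1+\delta)(t-t_0)\,\E\!\int_{t_0}^t|\Delta b^{(1)}_s|^2\,ds+C_\delta\big(|\Delta x_0|^2+\E\!\int_{t_0}^t\bd_2^2(\nu^1_s,\nu^2_s)\,ds\big)$. Plugging this in and invoking $C_{\dis}=C_{L,a}-\tfrac{T^2}{2}C_{L,x}-T C_g>0$ with $\delta$ small enough absorbs the forward contribution, and, after Step~1, gives $\E\!\int_{t_0}^T|\Delta b^{(1)}_t|^2\,dt+\E[\sup_{t_0\le t\le T}|\Delta X_t|^2]\le C\big(|\Delta x_0|^2+\E[|\xi-\xi'|^2]\big)$ (a Young step on the boundary term $\E[\Delta x_0\cdot\Delta Y_{t_0}]$, closed against the backward a priori bound, is needed exactly as in Lemma~\ref{lem.continuation}). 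The standard a priori estimate for the backward equation solved by $(\Delta Y,\Delta Z,\Delta Z^0)$ (e.g.\ \cite[Theorem 4.2.1]{Zhang2017}, combined with a Gr\"onwall step to handle the Lipschitz-in-$Y$ part of $\Delta f$ and the terminal contribution $\E|\Delta Y_T|^2\le C(\E|\Delta X_T|^2+\E[|\xi-\xi'|^2])$) then gives $\E[\sup_{t_0\le t\le T}|\Delta Y_t|^2]+\E\!\int_{t_0}^T(|\Delta Z_t|^2+|\Delta Z^0_t|^2)\,dt\le C\big(|\Delta x_0|^2+\E[|\xi-\xi'|^2]\big)$, and combining the two bounds is the assertion.

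The step I expect to require the most care is the handling of the reference-flow perturbation: one must check that $(X^{t_0,\xi},Y^{t_0,\xi})$ enters \eqref{fbsde.2.app} purely as an exogenous input (so that Proposition~\ref{prop.fbsde.stab} applies without circularity) and that the conditional laws $\nu^i_t$ are coupled correctly when passing from $\bd_2^2(\nu^1_t,\nu^2_t)$ to $\E[|\xi-\xi'|^2]$. The only other non-bookkeeping point is that the (random) coefficients of \eqref{fbsde.2.app}, viewed as functions of $(x,y)$ with the reference law inserted, still satisfy the monotonicity of Assumption~\ref{assump.appendix}, so that the coercive $-C_{L,a}|\Delta b^{(1)}|^2$ term survives — precisely the structural fact already underlying Lemma~\ref{lem.fbsde2}, which I would not reprove here. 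Everything else reproduces the computation carried out in full in the proof of Lemma~\ref{lem.continuation}, which is why Lemma~\ref{lem.fbsde2.stab} is essentially a corollary of it.
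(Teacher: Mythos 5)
Your proposal is correct and follows exactly the route the paper intends: it controls the exogenous reference-flow perturbation via Proposition \ref{prop.fbsde.stab}, runs the $\Delta X_t\cdot\Delta Y_t$ monotonicity coupling for the frozen-measure system \eqref{fbsde.2.app}, and closes the backward part with \cite[Theorem 4.2.1]{Zhang2017} — which is precisely the one-line combination the paper gives as its proof. You have simply written out in full (including the correct caveat that the frozen-measure monotonicity is the structural fact already underlying Lemma \ref{lem.fbsde2}) what the paper leaves implicit.
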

\begin{proof}
This follows from combining Proposition \ref{prop.fbsde.stab} with \cite[Theorem 4.2.1]{Zhang2017}.
\end{proof}
We now define, for each $(t_0,x_0,m_0)$, 
\begin{align}\label{Psidef}
    \Psi(t_0,x_0,m_0) = Y_{t_0}^{t_0,\xi_0,x_0}, \quad \text{where } \xi_0 \sim m_0.
\end{align}
Following e.g. \cite[Section 5.1,2,]{Carmona2018}, we have that $\Psi$ is well-defined and we have 
\begin{align*}
    Y^{t_0,\xi_0}_t = \Psi\big(t,X_t^{t_0,\xi_0} ,\cL^0(X_t^{t_0,\xi_0}) \big).
\end{align*}
\begin{lemma} \label{lem.psilingrowth}
    The map $\Psi$ satisfies 
    \begin{align*}
        |\Psi(t,x,m)| \leq C\Big(1 + |x| + \big(M_2(m)\big)^{1/2} \Big).
    \end{align*}
\end{lemma}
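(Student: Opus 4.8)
The plan is to reduce the bound to the single reference point $(x,m)=(0,\delta_0)$, where it follows from the a~priori estimates underlying the well-posedness theory of this appendix, and then to transport it to an arbitrary pair $(x,m)$ by means of the stability estimate for the decoupling field recorded in Lemma~\ref{lem.fbsde2.stab}. The structural observation that makes this work is that the right-hand side of Lemma~\ref{lem.fbsde2.stab} depends on the initial data only through $|x_0-x_0'|^2+\E[|\xi_0-\xi_0'|]$: it is exactly the first power of $\E[|\xi_0-\xi_0'|]$ that turns the second moment of $m$ into its square root.

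First I would establish a time-uniform a~priori estimate for the mean field FBSDE~\eqref{fbsde.app}: there is a constant $C$, depending only on the data of Assumption~\ref{assump.appendix} and on $T$, such that
\begin{align*}
  \sup_{t_0\in[0,T]}\E\Big[\sup_{t_0\le t\le T}\big(|X_t^{t_0,\xi}|^2+|Y_t^{t_0,\xi}|^2\big)\Big]\le C\big(1+\E[|\xi|^2]\big)
\end{align*}
for all $\xi\in L^2(\cF_{t_0})$. This is proved exactly as in the proofs of Lemma~\ref{lem.continuation} and Theorem~\ref{thm.wellposedness} (and as in the classical theory, cf.~\cite{Carmona2018}): one computes $\d\big(X_t\cdot Y_t\big)$, uses the monotonicity conditions \eqref{monotone.app}--\eqref{monotone.g.app} together with $C_{\dis}>0$ to control $\E\int_{t_0}^T\big|b(X_t,Y_t,\cL^0(X_t,Y_t))\big|^2\,\d t$ by $\E[|\xi|^2]$ and the finite constants $|b(0,0,\delta_0)|,\,|f(0,0,\delta_0)|,\,|g(0,\delta_0)|$, deduces the bound on $\norm{X^{t_0,\xi}}_{\cS^2}$, and finally closes the estimate for $Y^{t_0,\xi}$ through the standard backward SDE estimate \cite[Theorem~4.2.1]{Zhang2017} using the Lipschitz continuity of $f$ and $g$; since shrinking the horizon only improves these steps, the constant can be taken uniform in $t_0$. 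Specializing to $\xi=0$, and using the decoupling identity $Y_t^{t_0,0}=\Psi\big(t,X_t^{t_0,0},\cL^0(X_t^{t_0,0})\big)$ at $t=t_0$, so that $\Psi(t_0,0,\delta_0)=Y_{t_0}^{t_0,0}$, we get a constant $C_0$, independent of $t_0$, with $|\Psi(t_0,0,\delta_0)|^2\le C_0$ for every $t_0\in[0,T]$.

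To conclude I would fix $t\in[0,T)$, $x\in\R^d$ and $m\in\cP_2(\R^d)$, pick $\xi\in L^2(\cF_t)$ with $\xi\sim m$, and note that $\Psi(t,x,m)=Y_t^{t,\xi,x}$ and $\Psi(t,0,\delta_0)=Y_t^{t,0,0}$ by~\eqref{Psidef}, both being deterministic by construction of $\Psi$. Applying Lemma~\ref{lem.fbsde2.stab} with the two sets of initial data $(\xi,x)$ and $(0,0)$ then yields
\begin{align*}
  \big|\Psi(t,x,m)-\Psi(t,0,\delta_0)\big|^2\le C\big(|x|^2+\E[|\xi|]\big)=C\big(|x|^2+M_1(m)\big),
\end{align*}
and combining this with the estimate of the previous paragraph, using $M_1(m)\le M_2(m)$ (Cauchy--Schwarz) and $\sqrt{a+b}\le\sqrt a+\sqrt b$, we arrive at
\begin{align*}
  |\Psi(t,x,m)|\le|\Psi(t,0,\delta_0)|+C\big(|x|+M_1(m)^{1/2}\big)\le C\Big(1+|x|+\big(M_2(m)\big)^{1/2}\Big),
\end{align*}
as claimed.

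No step here is genuinely hard: the argument is a reduction to a base point plus one stability estimate. The only points needing care are making the a~priori estimate of the middle paragraph uniform in the starting time $t_0$ and tracking the inhomogeneous constants produced by evaluating $b$, $f$ and $g$ at the origin.
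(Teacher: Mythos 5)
Your argument is correct and follows essentially the same route as the paper, whose proof of this lemma is a one-line appeal to Lemma \ref{lem.fbsde2.stab}: you compare $(x,m)$ to the reference point $(0,\delta_0)$ via that stability estimate and exploit the first power of $\E[|\xi-\xi'|]$ to produce the $(M_2(m))^{1/2}$ growth. The only addition is that you explicitly supply the a priori bound $|\Psi(t,0,\delta_0)|\le C_0$ uniformly in $t$, which the paper leaves implicit but which is indeed needed to convert the difference estimate into an absolute bound.
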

\begin{proof}
    This is a consequence of Lemma \ref{lem.fbsde2.stab}.
\end{proof}
\begin{lemma} \label{lem.Lpbound}
    Let Assumption \ref{assump.appendix} hold, and let $(X,Y,Z)$ be the unique solution to \eqref{fbsde.app} with initial data $\xi \in L^p(\cF_0)$, $p > 2$. Then we have 
    \begin{align*}
        \E\big[ \sup_{0 \leq t \leq T} |X_t|^p + \sup_{0 \leq t \leq T} |Y_t|^p \big] < +\infty.
    \end{align*}
\end{lemma}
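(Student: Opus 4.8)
The plan is to leverage the decoupling field $\Psi$ constructed in \eqref{Psidef}. The solution of \eqref{fbsde.app} satisfies $Y_t = \Psi(t, X_t, \cL^0(X_t))$, so, writing $m_t := \cL^0(X_t)$ and $\nu_t := \cL^0(X_t, Y_t)$, the forward equation closes into an SDE for $X$ alone with \emph{additive} noise,
\[
 dX_t = b\big(X_t, \Psi(t,X_t,m_t), \nu_t\big)\, dt + \sqrt{2}\, dW_t + \sqrt{2\sigma_0}\, dW_t^0 .
\]
The key structural observation is that this drift has only \emph{square-root} growth in the measure argument. Indeed $b(x,y,\mu) = -D_pH(x,y,\Phi(\mu))$, and Lemma \ref{lem.sublin} bounds $|D_pH(x,p,\mu')|$ by $C(1 + |x| + |p| + (\int |z|^2\, \mu'(dz))^{1/4})$; combining this with the $\bd_2$-Lipschitz continuity of $\Phi$ (Lemma \ref{lem.Phi}), the linear growth $|\Psi(t,x,m)| \le C(1 + |x| + M_2(m)^{1/2})$ (Lemma \ref{lem.psilingrowth}), and the resulting bound $\bd_2(\nu_t,\delta_0)^2 = M_2(m_t)^2 + \E[|Y_t|^2 \mid \cF_T^0] \le C(1 + M_2(m_t)^2)$, one gets $|b(X_t, Y_t, \nu_t)| \le C(1 + |X_t| + M_2(m_t)^{1/2})$. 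Because the noise is additive, subtracting $\sqrt{2} W_t + \sqrt{2\sigma_0} W_t^0$ and applying Gr\"onwall's inequality \emph{pathwise} to $t \mapsto \sup_{s \le t}|X_s - \sqrt{2} W_s - \sqrt{2\sigma_0} W_s^0|$ yields, almost surely,
\[
 \sup_{0 \le t \le T}|X_t| \le C\Big( |\xi| + 1 + \sup_{0 \le t \le T}|W_t| + \sup_{0 \le t \le T}|W_t^0| + \int_0^T M_2(m_s)^{1/2}\, ds \Big);
\]
no localization is needed for this step.

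Next I would take $p$-th moments. All terms other than the measure term are plainly in $L^p$, while Jensen's inequality gives $\E[(\int_0^T M_2(m_s)^{1/2}\, ds)^p] \le C \int_0^T \E[M_2(m_s)^{p/2}]\, ds$. Writing $M_2(m_s)^{p/2} = (\E[|X_s|^2 \mid \cF_s^0])^{p/4}$ and applying Jensen once more — conditionally when $p \ge 4$, in the outer expectation when $p \le 4$ — produces $\E[M_2(m_s)^{p/2}] \le 1 + \E[\sup_t |X_t|^{\max(2,\, p/2)}]$, hence
\[
 \E\Big[ \sup_{0 \le t \le T}|X_t|^p \Big] \le C\Big( 1 + \E[|\xi|^p] + \E\big[\sup_{0 \le t \le T}|X_t|^{\max(2,\, p/2)}\big] \Big).
\]
Since Theorem \ref{thm.wellposedness} already supplies $\sup_t|X_t| \in L^2$, since $\xi \in L^p$, and since $\max(2, p/2) < p$ for every $p > 2$, this inequality can be bootstrapped: from $L^2$ one obtains $\sup_t|X_t| \in L^{\min(4,p)}$, then $L^{\min(8,p)}$, and after finitely many doublings of the exponent $\sup_t|X_t| \in L^p$.

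For the backward component I would once more use $Y_t = \Psi(t, X_t, m_t)$ together with Lemma \ref{lem.psilingrowth}, which gives $\sup_t|Y_t| \le C(1 + \sup_t|X_t| + \sup_t M_2(m_t)^{1/2})$; it then remains to bound $\E[\sup_t M_2(m_t)^{p/2}]$. Since $M_2(m_t)^{p/2} \le (\E[\sup_u|X_u|^2 \mid \cF_t^0])^{p/4}$, and $t \mapsto \E[\sup_u|X_u|^2 \mid \cF_t^0]$ is a nonnegative $\bbF^0$-martingale whose terminal value lies in $L^{p/2}$ by the $L^p$-bound on $\sup|X|$ just obtained (and, in the borderline case $p = 4$, in $L^{1+\eps}$ for some small $\eps > 0$ because $\sup|X| \in L^p$ with $p > 2$), Doob's maximal inequality delivers $\E[\sup_t M_2(m_t)^{p/2}] < \infty$, whence $\E[\sup_t|Y_t|^p] < \infty$.

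The step I expect to be the main obstacle — and the reason the statement is not purely routine — is the circularity inherent in the McKean--Vlasov structure: the conditional law $m_t = \cL^0(X_t)$ feeds back into the drift of $X$, so a crude estimate would bound $\E[\sup|X|^p]$ in terms of itself. The loop is broken precisely because $D_pH$, and hence the decoupling field $\Psi$, grows only like the \emph{square root} of the second moment of the measure (Lemmas \ref{lem.sublin} and \ref{lem.psilingrowth}); this is what produces the strictly smaller exponent $\max(2, p/2) < p$ above and makes the bootstrap work — with only linear growth in the measure one would remain stuck at $L^2$. A secondary technical point is the common noise, which obliges one to condition on $\bbF^0$ and to use Doob's inequality, with the borderline exponent $p = 4$ handled by a minor $L\log L$-type refinement.
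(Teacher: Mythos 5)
Your argument is sound for the concrete Pontryagin system, but it does not prove the lemma as stated. Lemma \ref{lem.Lpbound} is asserted under Assumption \ref{assump.appendix} alone, i.e.\ for a \emph{general} drift $b$ that is merely Lipschitz in $(x,y,\mu)$ with respect to $\bd_2$; such a $b$ grows \emph{linearly} in $M_2(\mu)$, not like its square root. Your proof hinges on the identification $b=-D_pH(\cdot,\cdot,\Phi(\cdot))$ together with Lemma \ref{lem.sublin}, which is extra structure not contained in Assumption \ref{assump.appendix} — and you explicitly concede that your bootstrap "would remain stuck at $L^2$" with only linear growth. So as written the proof covers the application the paper needs (system \eqref{pontryagin}) but leaves the stated lemma unproved in its generality.

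The deeper issue is that the obstruction you identify is illusory, and removing it is exactly what the paper's proof does. With the linear bound $|b_t|\le C\big(1+|X_t|+(\E[|X_t|^2\mid\cF_t^0])^{1/2}\big)$ (which follows from Lipschitzness of $b$ and $\bd_2$ plus the linear growth of $\Psi$), the self-referential term is handled in one stroke by the \emph{conditional Jensen inequality}: since $p/2\ge 1$,
\begin{align*}
\E\Big[\big(\E[|X_t|^2\mid\cF_t^0]\big)^{p/2}\Big]\le \E\big[|X_t|^p\big],
\end{align*}
so applying It\^o's formula to $|X_t|^p$ (or your pathwise Gr\"onwall followed by taking $p$-th moments) yields $\tfrac{d}{dt}\E[|X_t|^p]\le C(1+\E[|X_t|^p])$ directly, and Gr\"onwall closes the estimate in a single pass — no bootstrap on exponents, no square-root structure, and no appeal to $D_pH$. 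I would therefore replace your appeal to Lemma \ref{lem.sublin} and the exponent-doubling iteration by this one-line Jensen step. Your remaining ingredients — the pathwise Gr\"onwall exploiting the additive noise to get $\E[\sup_t|X_t|^p]$ rather than just $\sup_t\E[|X_t|^p]$, and the Doob-plus-Jensen treatment of $\sup_t M_2(m_t)$ to transfer the bound to $Y$ via $Y_t=\Psi(t,X_t,\cL^0(X_t))$ — are correct and in fact supply details that the paper's own (rather terse) proof leaves implicit.
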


\begin{proof}
    We have 
    \begin{align*}
        dX_t = b \Big( X_t, \Psi(t, X_t, \cL^0(X_t)), \cL^0\big( X_t, \Psi(t, X_t, \cL^0(X_t)) \big)\Big) dt + \sqrt{2} dW_t + \sqrt{2\sigma_0} dW_t^0, 
    \end{align*}
    where $\Psi$ is defined by \eqref{Psidef}.
   For simpliticy, we set 
   \begin{align*}
       b_t = b \Big( X_t, \Psi(t, X_t, \cL^0(X_t)), \cL^0\big( X_t, \Psi(t, X_t, \cL^0(X_t)) \big)\Big). 
   \end{align*}We note that by the Lipschitz regularity of $b$ and the linear growth of $\Phi$ (from Lemma \ref{lem.psilingrowth}, we have 
    \begin{align*}
        |b_t| \leq C \Big(|X_t| + \E\Big[ |X_t|^2 | \cF_t^0 \Big]^{1/2} \Big). 
    \end{align*}
    We now compute 
    \begin{align*}
        d|X_t|^p = p|X_t|^{p-1} dX_t + \frac{p(p-1)}{2} |X_t|^{p-2}(1 + \sigma_0) dt = \Big( p|X_t|^{p-1} b_t + \frac{p(p-1)}{2} |X_t|^{p-2}(1 + \sigma_0)  \Big) dt + dM_t, 
    \end{align*}
    with $M = \int p|X_t|^{p-1}d \big( \sqrt{2}dW_t + \sqrt{2\sigma_0} dW_t^0 \big)$ being a local martingale. We will argue as if $M$ is a true martingale, this assumption being easily removed by a localization argument. We thus have 
    \begin{align*}
        \frac{d}{dt} &\E\big[ |X_t|^p \big] = \E\Big[  p|X_t|^{p-1} b_t + \frac{p(p-1)}{2} |X_t|^{p-2}(1 + \sigma_0) \Big] \leq C\Big(1 +  \E\Big[ |X_t|^{p-1} \Big( |X_t| + \E\Big[|X_t|^2 | \cF_t^0 \Big]^{1/2} \Big)  \Big]
        \\
        &\qquad \leq C \Big(1 + \E\big[ |X_t|^p \big] + \E\Big[ \E\big[ |X_t|^2  | \cF_t^0 \big]^{p/2} \Big] \Big) \leq C \Big( 1 + \E\Big[ |X_t|^p \Big] \Big). 
    \end{align*}
    An application of Gr\"onwall's Lemma completes the proof. 
\end{proof}

\bibliographystyle{alpha}
\bibliography{joe_alpar}

\end{document}